\numberwithin{equation}{section}
\newenvironment{red}{\relax\color{red}}{\relax}
\newenvironment{blue}{\relax\color{blue}}{\hspace*{.5ex}\relax}
\newcommand{\ber}{\begin{red}}
\newcommand{\er}{\end{red}}
\newcommand{\beb}{\begin{blue}}
\newcommand{\eb}{\end{blue}}
\theoremstyle{plain}
\newtheorem{lemma}{Lemma}[section]
\newtheorem{proposition}[lemma]{Proposition}
\newtheorem{theorem}[lemma]{Theorem}
\newtheorem{corollary}[lemma]{Corollary}
\newtheorem{conjecture}{Conjecture}
\theoremstyle{definition}
\newtheorem{remark}[lemma]{Remark}
\newtheorem{example}[lemma]{Example}
\newtheorem{definition}[lemma]{Definition}
\newcommand{\hd}{{\operatorname{hd}}}
\newcommand{\soc}{{\operatorname{soc}}}
\newcommand{\g}{\mathfrak{g}}
\newcommand{\F}{\mathcal{F}}
\newcommand{\C}{\mathbb{C}}
\newcommand{\Z}{\mathbb{Z}}
\newcommand{\ta}{\mathtt{t}}
\newcommand{\A}{\mathbb{A}}
\newcommand{\N}{\mathsf{N}}
\newcommand{\Q}{\mathcal{Q}}
\newcommand{\cmA}{\mathsf{A}}
\newcommand{\W}{\mathsf{W}_0}
\newcommand{\seteq}{\mathbin{:=}}
\newcommand{\hooklongrightarrow}{\lhook\joinrel\longrightarrow}
\newcommand{\longtwoheadrightarrow}{\relbar\joinrel\twoheadrightarrow}
\newcommand{\al}{\alpha}
\newcommand{\be}{\beta}
\newcommand{\xiQ}{\xi^Q}
\newcommand{\mQ}{m^Q}
\newcommand{\ga}{\gamma}
\newcommand{\cqm}{\C[[q^{1/m}]]\;q^{1/m}}
\newcommand{\hconv}{\mathbin{\scalebox{.9}{$\nabla$}}}
\newcommand{\NR}{\Phi^-}
\newcommand{\WPR}{\widehat{\Phi}^+}
\newcommand{\PR}{\Phi^+}
\newcommand{\GQ}{\Gamma_Q}
\newcommand{\tw}{{\widetilde{w}}}
\newcommand{\len}{{\rm len}}
\newcommand{\rev}{{\rm rev}}
\newcommand{\redex}{{\widetilde{w}}}
\newcommand{\redez}{{\widetilde{w}_0}}
\newcommand{\um}{\underline{m}}
\newcommand{\us}{\underline{s}}
\newcommand{\up}{\underline{p}}
\newcommand{\tb}{\mathtt{b}}
\newcommand{\rl}{\mathsf{Q}}
\newcommand{\wl}{\mathsf{P}}
\newcommand{\cl}{{\rm cl}}
\newcommand{\wt}{{\rm wt}}
\newcommand{\dist}{{\rm dist}}
\newcommand{\gdist}{{\rm gdist}}
\newcommand{\mul}{{\rm mul}}
\newcommand{\het}{{\rm ht}}
\newcommand{\rds}{{\rm rds}}
\newcommand{\Rep}{{\rm Rep}}
\newcommand{\soplus}{\mathop{\mbox{\normalsize$\bigoplus$}}\limits}
\newcommand{\supp}{\operatorname{supp}}
\newcommand{\ve}{\varepsilon}
\newcommand{\lf}{\langle}
\newcommand{\rf}{\rangle}
\newcommand{\ko}{\mathbf{k}}
\newcommand{\conv}{\mathbin{\mbox{\large $\circ$}}}
\newcommand{\Lto}{\longrightarrow}
\newcommand{\Lgets}{\longleftarrow}
\newcommand{\tens}{\mathop\otimes}
\newcommand{\Rnorm}{R^{\rm{norm}}}
\newcommand{\Rren}{R^{\rm{ren}}}
\newcommand{\nmn}{ \{ n-1,n\} }
\newcommand{\kp}{ \kappa }
\newcommand{\FQup}{\mathbf{F}_Q^{{\rm up}}}
\newcommand{\Stom}{\overset{\to}{S}_{\redez}(\um)}
\newcommand{\Sgetsm}{\overset{\gets}{S}_{\redez}(\um)}
\newcommand{\sconv}{\mathbin{\mbox{$\Delta$}}}
\newcommand{\dct}[2]{\overset{#2}{\underset{#1}{\conv}} }
\newcommand{\prt}[2]{ \fontsize{5}{5}\selectfont \left( \begin{matrix} #1 \\ #2 \end{matrix} \right) \fontsize{10}{10}\selectfont }
\newcommand{\pprt}[4]{ {\scriptstyle \left( \begin{matrix} #1 \\ #2 \\ #3 \\ #4 \end{matrix}\right)} }
\newcommand{\To}[1][{\hspace{2ex}}]{\xrightarrow{\,#1\,}}
\newcommand{\cox}[1]{\tau_{\mspace{-2mu}\raisebox{-.5ex}{${\scriptstyle{#1}}$}}}
\newcommand{\AR}[1]{\Gamma_{\mspace{-2mu}\raisebox{-.5ex}{${\scriptstyle{#1}}$}}}
\newcommand{\rmat}[1]{{\mathbf r}_{\mspace{-2mu}\raisebox{-.5ex}{${\scriptstyle{#1}}$}}}
\newlength{\mylength}
\newcommand{\DpathOne}
{ \scalebox{0.82}{{\xy
(-20,0)*{}="DL";(-10,-10)*{}="DD";(0,20)*{}="DT";(10,10)*{}="DR";
"DT"+(-30,-4); "DT"+(55,-4)**\dir{.};
"DD"+(-20,-6); "DD"+(65,-6) **\dir{.};
"DD"+(-20,-10); "DD"+(65,-10) **\dir{.};
"DT"+(-34,-4)*{\scriptstyle 1};
"DT"+(-34,-8)*{\scriptstyle 2};
"DT"+(-34,-12)*{\scriptstyle \vdots};
"DT"+(-34,-16)*{\scriptstyle \vdots};
"DT"+(-36,-36)*{\scriptstyle n-1};
"DT"+(-34,-40)*{\scriptstyle n};
"DD"+(10,4); "DD"+(20,-6) **\dir{-};
"DD"+(10,4); "DD"+(0,-6) **\dir{-};
"DD"+(-10,4); "DD"+(0,-6) **\dir{-};
"DD"+(10,4); "DD"+(32,26) **\dir{.};
"DD"+(45,19);"DD"+(20,-6) **\dir{-};
"DD"+(45,19);"DD"+(38,26) **\dir{.};
"DD"+(45,19)*{\bullet};
"DD"+(10,4)*{\bullet};
"DD"+(10,6)*{\scriptstyle \al+\be};
"DD"+(27,10)*{(\GQ)};
"DD"+(20,-8)*{\scriptstyle k-\text{swing}};
"DD"+(55,19)*{\scriptstyle \al=\ve_k-\ve_{k+1}};
"DD"+(0,-6)*{\circ};
"DD"+(0,-10)*{\circ};
"DD"+(0,-8)*{\scriptstyle k+1-\text{swing}};
"DD"+(32,26); "DD"+(38,26) **\crv{"DD"+(35,28)};
"DD"+(35,29)*{\scriptstyle 2};
\endxy}}
}
\newcommand{\DpathTwo}
{\scalebox{0.82}{{\xy
(-20,0)*{}="DL";(-10,-10)*{}="DD";(0,20)*{}="DT";(10,10)*{}="DR";
"DT"+(-30,-4); "DT"+(55,-4)**\dir{.};
"DD"+(-20,-6); "DD"+(65,-6) **\dir{.};
"DD"+(-20,-10); "DD"+(65,-10) **\dir{.};
"DD"+(30,4); "DD"+(20,-6) **\dir{-};
"DD"+(30,4); "DD"+(40,-6) **\dir{-};
"DD"+(50,4); "DD"+(40,-6) **\dir{-};
"DD"+(30,4); "DD"+(8,26) **\dir{.};
"DD"+(-5,19);"DD"+(20,-6) **\dir{-};
"DD"+(-5,19);"DD"+(2,26) **\dir{.};
"DD"+(-5,19)*{\bullet};
"DD"+(30,4)*{\bullet};
"DD"+(27,10)*{(\Gamma_{s_k(Q)})};
"DD"+(30,6)*{\scriptstyle \al+\be'};
"DD"+(20,-8)*{\scriptstyle k-\text{swing}};
"DD"+(-15,19)*{\scriptstyle \al=\ve_k-\ve_{k+1}};
"DD"+(40,-6)*{\odot};
"DD"+(40,-10)*{\odot};
"DD"+(40,-8)*{\scriptstyle k+1-\text{swing}};
"DD"+(8,26); "DD"+(2,26) **\crv{"DD"+(5,28)};
"DD"+(5,29)*{\scriptstyle 2};
\endxy}}
}
\newcommand{\GammaA}
{\scalebox{0.85}{\xymatrix@R=0.5ex{
(i,p) & -6 & -5 & -4 & -3 & -2 & -1 & 0 \\
1&[5] \ar@{->}[dr] && [4]\ar@{->}[dr] && [2,3]\ar@{->}[dr] && [1] \\
2&& [4,5]\ar@{->}[dr]\ar@{->}[ur] && [2,4]\ar@{->}[dr]\ar@{->}[ur] && [1,3]\ar@{->}[dr]\ar@{->}[ur] \\
3&&& [2,5]\ar@{->}[dr]\ar@{->}[ur] && [1,4]\ar@{->}[dr]\ar@{->}[ur] && [3] \\
4&& [2] \ar@{->}[dr]\ar@{->}[ur], && [1,5]\ar@{->}[dr]\ar@{->}[ur] && [3,4]\ar@{->}[ur] \\
5&&& [1,2]\ar@{->}[ur] && [3,5]\ar@{->}[ur]
}}
}
\newcommand{\GammaD}
{\scalebox{0.85}{\xymatrix@R=0.5ex{
1&\lf 1 , -2 \rf \ar@{->}[dr] && \lf 2 , 4 \rf\ar@{->}[dr] && \lf 1 , -4 \rf \ar@{->}[dr]  \\
2&& \lf 1 , 4 \rf \ar@{->}[dr]\ar@{->}[ddr]\ar@{->}[ur] && \lf 1 , 2 \rf\ar@{->}[ddr]\ar@{->}[dr]\ar@{->}[ur] && \lf 2 , -4 \rf \ar@{->}[dr] \\
3&&& \lf 1 , 3\rf \ar@{->}[ur] && \lf 2 , -3 \rf \ar@{->}[ur] && \lf 3 , -4 \rf \\
4& \lf 3 , 4 \rf \ar@{->}[uur] && \lf 1 , -3 \rf \ar@{->}[uur] && \lf 2 , 3 \rf \ar@{->}[uur]
}}
}
\newcommand{\swing}
{\raisebox{2em}{\xymatrix@C=2.5ex@R=0.5ex{ &&&&\be\ar@{->}[dr] \\S_r \ar@{->}[r] & S_{r+1} \ar@{->}[r] & \cdots \ar@{->}[r]& S_{n-2}
\ar@{->}[ur]\ar@{->}[dr] && N_{n-2} \ar@{->}[r]& N_{n-3} \ar@{->}[r] & \cdots \ar@{->}[r] & N_s\\
&&&&\al\ar@{->}[ur] }}
}
\newcommand{\thetaswing}
{ \raisebox{1.8em}{\scalebox{0.9}{\xymatrix@C=3ex@R=0.5ex{ &&&&\ve_k \pm \ve_{\ta}\ar@{->}[dr]
\\ \theta^Q_k=S_{k} \ar@{->}[r] & S_{k+1} \ar@{->}[r] & \cdots \ar@{->}[r]& S_{n-2} \ar@{->}[ur]\ar@{->}[dr] &&
N_{n-2} \ar@{->}[r]& N_{n-3} \ar@{->}[r] & \cdots \ar@{->}[r] & N_{1}\\
&&&&\ve_k \mp \ve_{\ta}\ar@{->}[ur] }}}
}
\newcommand{\gammaswing}
{ \raisebox{1.8em}{\scalebox{0.9}{
\xymatrix@C=3ex@R=0.5ex{ &&&&\ve_k \pm \ve_{\ta}\ar@{->}[dr] \\S_{1} \ar@{->}[r] & S_{2} \ar@{->}[r] & \cdots \ar@{->}[r]& S_{n-2} \ar@{->}[ur]\ar@{->}[dr] &&
N_{n-2} \ar@{->}[r]& N_{n-3} \ar@{->}[r] & \cdots \ar@{->}[r] & N_{k}=\gamma^Q_k\\
&&&&\ve_k \mp \ve_{\ta}\ar@{->}[ur] }}}
}
\newcommand{\leftrightinterA}
{\xymatrix@R=3ex{ *{ \ }<3pt> \ar@{..}[r]  &*{\circ}<3pt>
\ar@{<-}[r]_<{i-1}  &*{\circ}<3pt>
\ar@{<-}[r]_<{i} &*{\circ}<3pt>
\ar@{..}[r]_<{i+1}  & *{ \ }<3pt>
}\quad
\text{ (resp. }
\xymatrix@R=3ex{ *{ \ }<3pt> \ar@{..}[r]  &*{\circ}<3pt>
\ar@{->}[r]_<{i-1}  &*{\circ}<3pt>
\ar@{->}[r]_<{i} &*{\circ}<3pt>
\ar@{..}[r]_<{i+1}  & *{ \ }<3pt>
} ) \quad
\text{in $Q$}
}
\definecolor{darkred}{rgb}{0.7,0,0} 
\newcommand{\defn}[1]{{\color{darkred}\emph{#1}}} 
\title[AR-quiver and KLR-type duality] {Auslander-Reiten quiver and representation theories related to KLR-type Schur-Weyl duality}
\author[Se-jin Oh]{Se-jin Oh}
\address{Department of Mathematics Ewha Womans University Seoul 120-750, Korea}
\email{sejin092@gmail.com}
\thanks{This work was supported by NRF Grant \#2016R1C1B1010721.}
\subjclass[2010]{Primary 05E10, 16T30, 17B37; Secondary 81R50}
\keywords{Auslander-Reiten quiver, positive roots, convex orders, $[Q]$-distance, $[Q]$-socle,
KLR algebra, generalized KLR-type Schur-Weyl duality, distance polynomial, Exceptional E-types}
\date{\today}
\begin{document}

\begin{abstract}
We introduce new partial orders on the sequence positive roots and study the statistics of the poset
by using Auslander-Reiten quivers for finite type ADE. Then we can prove that the statistics provide
interesting information on the representation theories of KLR-algebras, quantum groups and
quantum affine algebras including Dorey's rule, bases theory for quantum groups, and
denominator formulas between fundamental representations. As applications, we prove Dorey's rule for quantum affine algebras $U_q(E_{6,7,8}^{(1)})$ and
partial information of denominator formulas for $U_q(E_{6,7,8}^{(1)})$. We also suggest conjecture on complete denominator formulas for $U_q(E_{6,7,8}^{(1)})$.
\end{abstract}

\maketitle

\tableofcontents

\section*{Introduction}
The category $\Rep(R)$ consisting of finite dimensional graded modules over KLR algebra $R$
provides the categorification of negative part of $U_q(\mathsf{g})$
for all symmetrizable Kac-Moody algebras $\mathsf{g}$ (\cite{KL09,R08}). When $U_q(\g_0)$ is associated with a finite simple Lie algebra $\g_0$ of type $A$,$D$ or $E$,
$U^-_{q}(\g_0)$ is also categorified by the categories $\mathcal{C}_Q^{(1)}$ consisting of finite dimensional integrable modules over
the quantum affine algebras $U_q'(\g)$ ($\g=A_n^{(1)}$, $D_n^{(1)}$ or $E_{6,7,8}^{(1)}$) (\cite{HL11}).
Here the definition of $\mathcal{C}_Q^{(1)}$ is closely related to the
Auslander-Reiten quiver (AR-quiver) $\Gamma_Q$ of a Dynkin quiver $Q$ for $\g_0$.

The two categories, $\Rep(R)$ and $\mathcal{C}_Q^{(1)}$ for $\g=A_n^{(1)}$ or $D_n^{(1)}$,
are closely related to each other by the KLR-type Schur-Weyl functor $\mathcal{F}^{(1)}_Q$ (\cite{KKK13A,KKK13B}) where
\begin{itemize}
\item[{\rm (i)}] $\mathcal{F}^{(1)}_Q$ is an exact tensor functor and sends simples to simples,
\item[{\rm (ii)}] $\mathcal{F}^{(1)}_Q$ is constructed by observations on the properties of AR-quiver $\Gamma_Q$ and
{\it denominator formulas} $d_{k,l}(z)$ for fundamental representations over $U_q'(\g)$.
\end{itemize}
The statement {\rm (ii)} implies that the partial information about $d_{k,l}(z)$ can be read from $\Gamma_Q$
(see~\cite[Introduction]{Oh14D} for more detail).

In $U^-_{q}(\g_0)$, there are distinguished bases, so called (dual) PBW-bases, which are associated to reduced
expressions $\redez$ of the longest element $w_0$ of its Weyl group $\W$ (\cite{Lus93}). Also, in $U^-_{\A}(\g_0)$,
there exists a unique basis, the Kashiwara/Lusztig's lower global/canonical (resp. upper global/dual canonical) basis (\cite{Lus93,Kash93}).

Interestingly, those distinguished bases for $U^-_{q}(\g_0)$ are categorified by modules over KLR-algebras $R$ (\cite{BKM12,Kato12,Mc12} and
\cite{R11,VV09}) under the suitable assumptions. More precisely, (dual) PBW-bases
are categorified by the set of all (proper) standard modules over $R$ and the (dual) canonical basis is categorified by the set of all
principal indecomposable (simple) modules over $R$. Furthermore, the transition map between the (dual) PBW-bases
associated to $\redez$ and (dual) canonical basis can be described by the composition series of the (proper) standard modules.
For the description of the transition map, the bi-lexicographical order $<^\tb_\redez$ on $\Z^{\ell(w_0)}_{\ge 0}$, induced by the reduced expression $\redez$ and
its convex total order $<_\redez$ on the positive roots $\PR$ for $\g_0$, plays important roles.

On the other hand, the dual PBW-basis associated to $\redez$ {\it adapted to} a Dynkin quiver $Q$ (see~\eqref{eq: adapted})
is also categorified by ordered tensor products of fundamental representations in $\mathcal{C}_Q^{(1)}$ with respect to the convex total order $<_\redez$ on $\PR$.
Also, the dual canonical basis is also categorified by the set of simple modules in $\mathcal{C}_Q^{(1)}$ (\cite{HL11}).

Interestingly, the AR-quiver $\Gamma_Q$ visualizes the convex partial order $\prec_{Q}$ on $\PR$
which is induced by convex total orders $<_\redez$ for all $\redez \in [Q]$. Here $[Q]$ denotes the set all reduced expressions $\redez$ of $w_0$ adapted to the Dynkin quiver $Q$
(see~\cite{B99}). Also it is known that each $\Gamma_Q$ has a unique Coxeter element $\tau_Q$ and $\Gamma_Q$ is determined by $\tau_Q$ (see~\cite{HL11}).

In the representation theory of quantum affine algebra $U_q'(\g)$, Coxeter elements and their twisted analogues
play an important role for {\it Dorey's rule}, which are closely related to the three-point coupling between the quantum particles
in Toda field theory~\cite{CP96,DO94}. More precisely, for untwisted quantum affine algebras of classical types $\g=A_n^{(1)}$, $B_n^{(1)}$, $C_n^{(1)}$ and $D_n^{(1)}$,
Chari-Pressley used the Coxeter elements and their twisted analogues to prove that
the tensor product of two fundamental representations $V^{(1)}(\varpi_i)_x \tens V^{(1)}(\varpi_j)_y$ over $U_q'(\g)$ satisfying {\it certain condition} has the simple socle as the another fundamental representation $V^{(1)}(\varpi_k)_z $:
\begin{align} \label{eq: the condition}
V^{(1)}(\varpi_k)_z \rightarrowtail V^{(1)}(\varpi_i)_x \tens V^{(1)}(\varpi_j)_y \ \ \text{ if $\{ (i,x),(j,y),(k,z) \}$ satisfies {\it certain conditions}.}
\end{align}

To sum up, AR-quivers $\Gamma_Q$ and hence their Coxeter elements $\tau_Q$ play central roles in the representation theories of
quantum affine algebras, quantum group, system of positive roots.
The goal of this paper is to understand those
representation theories including KLR-algebras one step further by developing new {\it statistics} on the sequences of positive roots and by using
AR-quivers and commutation classes for reduced expressions $\tw$ of $w \in \W$. As applications of these results, we can prove the Dorey's rule (Theorem~\ref{thm: Dorey's step2}) and obtain partial information of denominator formulas for quantum affine algebras of type $E^{(1)}_{6,7,8}$, which were studied in \cite{FH15,YZ11} and not known completely to the best knowledge of the author.

The most important new notion in this paper is a partial order
$$\text{ $\prec_{[\redez]}^\tb$ on $\Z_{\ge 0}^{\ell(w_0)}$ which is {\it far coarser} than $<^\tb_{\redez}$ (see Definition~\ref{def: redezprectb}).}$$

Using this new order, we prove (Theorem~\ref{thm: [redez] works}) that the transition map between a dual PBW-basis associated to any $\redez$ of {\it any finite type} and
the Kashiwara/Lusztig's  upper global/dual canonical can be refined by far. Also, we can prove that each dual PBW-basis of $U^-_q(\g_0)$ {\it does depend only} on the commutation class
$[\redez]$ of $\redez$ (up to $q^\Z$) indeed (see~\cite{Kato12} for $ADE$ cases).

With new orders $\prec_{[\redez]}^\tb$, we define new statistics on the sequences and the pairs (sequences consisting two distinct positive roots) of positive roots,
which arise from the poset $(\Z_{\ge 0}^{\ell(w_0)},\prec_{[\redez]}^\tb)$ and depend only on the commutation class $[\redez]$.
The new statistics on pairs are motivated by the work of Hernandez in~\cite{H01S} which was concentrated on the tensor product of two simple modules
over quantum affine algebras.

Using the statistics, we can obtain interesting results on the representation theories of
KLR-algebras, quantum affine algebras, quantum group, system of positive roots by concentrating on the special commutation classes $[Q]$ and corresponding AR-quivers $\Gamma_Q$.

Among the results in this paper, we focus on two results in this introduction.

(A) By the works in~\cite{Oh14A,Oh14D}, the Dorey's rules in~\eqref{eq: the condition}
for $U_q'(A^{(1)}_n)$ and $U_q'(D^{(1)}_n)$ were extended and interpreted in their corresponding category $\Rep(R)$ as follows:
There exist some Dynkin quiver $Q$ and $\al,\be,\ga=\al+\be \in \PR$  such that (see Definition~\ref{thm: BkMc})
$$V^{(1)}(\varpi_k)_{(-q)^c} \rightarrowtail V^{(1)}(\varpi_i)_{(-q)^a} \tens V^{(1)}(\varpi_j)_{(-q)^b} \text{ if and only if }
S_Q(\ga) \rightarrowtail S_Q(\al) \conv S_Q(\be),$$
where (i) the coordinates of $\al$, $\be$ and $\ga$ in $\Gamma_Q$ are $(i,a)$, $(j,b)$, $(l,c)$, respectively, (ii) $S_Q(\al)$ for $\al \in \PR$ is
a proper standard $R$-module corresponding to the dual root vector $\mathbf{F}_Q^{{\rm up}}(\al)$ of the dual PBW basis associated to $[Q]$.

By removing the condition that $\al+\be \in \PR$, we prove in this paper that the socle of $S_Q(\al) \conv S_Q(\be)$ (resp. $V^{(t)}_Q(\al) \tens V^{(t)}_Q(\be)$) is isomorphic to the simple module
$S_Q(\soc_Q(\al,\be))$ $\left(\text{resp}. \ V^{(t)}_Q(\soc_Q(\al,\be))\right)$
for any $\al,\be \in \PR$ and any $Q$ of types $A_n$, $D_n$ and $E_{6,7,8}$, which can be considered as
the generalization of Dorey's rule in both categories:
$$S_Q(\soc_Q(\al,\be)) \rightarrowtail S_Q(\al) \conv S_Q(\be) \quad \text{ and } \quad
V^{(t)}_Q(\soc_Q(\al,\be)) \rightarrowtail V^{(t)}_Q(\al) \tens V^{(t)}_Q(\be).$$
Here,
{\rm (i)} $\soc_Q(\al,\be)$ denotes the sequence of positive roots, called the $[Q]$-socle of a pair $(\al,\be)$,

\noindent
{\rm (ii)} $V^{(t)}_Q(\eta)$ $(\eta \in \PR)$ is a fundamental modules over $U_q'(A_n^{(t)})$, $U_q'(D_n^{(t)})$ ($t=1,2$) and $U_q'(E_{6,7,8}^{(1)})$.

\noindent
We prove also that $S_Q(\soc_Q(\al,\be))$ $\left(\text{resp}. \ V^{(t)}_Q(\soc_Q(\al,\be))\right)$ is a convolution product (resp. tensor product) of simple $R$-modules (resp. $U_q'(\g)$-modules) corresponding to dual root vectors (Theorem~\ref{thm: socle of Q-pair} and Theorem~\ref{thm: for C_Q}).
Finding the socle and the head of tensor product of two simple modules has been intensively studied (for instance, see~\cite{KKKO14S,L03}), as the linear combination
of the product of two dual canonical basis elements in terms of dual canonical basis elements. Interestingly, the result on socles provides the characterizations
{\rm (i)} for elements in $U^-_q(\g_0)$ which are contained in the dual PBW-basis and the dual canonical basis simultaneously (Corollary~\ref{cor: simple iff simple}),
{\rm (ii)} for a pair of dual root vectors whose product is a linear combination of two distinct canonical basis elements, known as length two property (Corollary~\ref{cor: l2}),
when the PBW-basis is associated to $[Q]$ for some Dynkin quiver $Q$.

(B) The denominator formulas $d_{k,l}(z)$ between fundamental representations over $U_q'(\g)$ provides crucial information about the representation theory
for integrable modules over $U_q'(\g)$ (see, Theorem~\ref{Thm: basic properties}).
As we mentioned above, one can observe partial information about $d_{k,l}(z)$ from
the combinatorics of $\Gamma_Q$. In this paper, we use the new notions to get complete information  $d_{k,l}(z)$
from $\Gamma_Q$. In other word, we can read the denominator formulas $d_{k,l}(z)$ in $\Gamma_Q$ {\it completely} for $\g=A^{(1)}_{n}$ and $D^{(1)}_{n}$, which were calculated in~\cite{{DO94,KKK13B}}, by
defining the distance polynomials $D_{k,l}(z,-q)$ on $\Gamma_Q$, which does not depend on the choice of $Q$ of type $A_n$, $D_n$ and $E_{6,7,8}$.
From the distance polynomials $D_{k,l}(z,-q)$ on $\Gamma_Q$ of type $E_{6,7,8}$, we can obtain
partial information of denominator formulas $d_{k,l}(z)$ for $U_q'(E_{6,7,8}^{(1)})$ (Corollary~\ref{cor: dist denom}). Moreover,
we can expect naturally that the complete formulas $d_{k,l}(z)$ and for $U_q'(E^{(1)}_{6,7,8})$
can be read from any $\Gamma_Q$ of type $E_{6,7,8}$ as in the cases of types $A_n$ and $D_n$ (see Conjecture~\ref{conj: dist E}). For examples, we give the conjectural denominators for $U_q'(E_6^{(1)})$ and $U_q'(E_7^{(1)})$
(see Subsection~\ref{subsec: E conjectures}).

\medskip

\noindent
{\bf Acknowledgements.} The author would like to express his sincere gratitude to Professor
Masaki Kashiwara, Myungho Kim and Chul-hee Lee for many fruitful discussions.

\section*{Notions and Conventions}

In this preliminary section, we fix the notions and the conventions.

\subsection*{Symmetrizable Cartan datum and quantum group}

Let $I$ be an index set. A \defn{symmetrizable Cartan datum} is a quintuple $(\cmA,\wl,\Pi,\wl^{\vee},\Pi^{\vee})$ consisting of
{\rm (a)} a \defn{symmetrizable generalized Cartan matrix} $\cmA=(a_{ij})_{i,j \in I}$,
{\rm (b)} a free abelian group $\wl$, called the \defn{weight lattice},
{\rm (c)} $\Pi= \{ \alpha_i \in \wl \mid \ i \in I \}$, called
the set of \defn{simple roots},
{\rm (d)} $\wl^{\vee}\seteq {\rm Hom}(\wl, \Z)$, called the \defn{coweight lattice},
{\rm (e)} $\Pi^{\vee}= \{ h_i \ | \ i \in I \}\subset P^{\vee}$, called
the set of \defn{simple coroots}.

The free abelian group $\rl\seteq \bigoplus_{i \in I} \Z \alpha_i$ is also called the
\defn{root lattice}. Set $\rl^{+}= \sum_{i \in I} \Z_{\ge 0}
\alpha_i$. For $\mathsf{b}=\sum_{i\in I}m_i\al_i\in\rl^+$,
we set $\het(\mathsf{b})=\sum_{i\in I}m_i$.

There exists a positive-definite symmetric bilinear form $\ \cdot \ : \rl \times \rl \to \Z$ satisfying
$$\cmA=(a_{ij})_{i,j \in I} = \left( \dfrac{2 \al_i \cdot \al_j }{ \al_i\cdot \al_i} \right)_{i,j \in I}.$$

For $\mathsf{b}= \sum_{i \in I}n_i\alpha_i \in \rl^+$ and $k \in \Z_{\ge 0}$, we define
$$\supp(\mathsf{b}) \seteq \left\{ i \in I \ | \ n_i \ne 0 \right \}.
$$

We denote by $U_q(\mathsf{g})$ the \defn{quantum group} associated with a Cartan datum $(\cmA,\wl,\Pi,\wl^{\vee},\Pi^{\vee})$.

\subsection*{Dynkin diagrams and Positive roots}

In this paper, we mainly deal with the \defn{Dynkin diagrams} of type $A_n$ and $D_n$, $E_{6,7,8}$.
In the following, we list Dynkin diagrams $\Delta$ with an enumeration of vertices by simple roots.

\vskip -1.5em

\begin{align*}
& A_n \ : \ \xymatrix@C=4ex@R=3ex{ *{ \circ }<3pt> \ar@{-}[r]_<{1}  &\cdots\ar@{-}[r] &*{\circ}<3pt>
\ar@{-}[r]_<{n-1} &*{\circ}<3pt>
\ar@{-}[l]^<{\ \ n}},  \ \
D_n \ : \ \raisebox{1.3em}{\xymatrix@C=4ex@R=0.1ex{
&&&*{\circ}<3pt> \ar@{-}[dl]^<{n-1}  \\
*{\circ}<3pt>\ar@{-}[r]_<{1 \ }&\cdots\ar@{-}[r]  &*{\circ}<3pt>
\ar@{-}[dr]_<{n-2} \\  &&&*{\circ}<3pt>
\ar@{-}[ul]^<{\ \ n}}}, \ \
 E_6 \ : \ \raisebox{1.3em}{\xymatrix@C=4ex@R=3ex{ && *{\circ}<3pt>\ar@{-}[d]^<{6} \\
*{ \circ }<3pt> \ar@{-}[r]_<{1}  &*{\circ}<3pt>
\ar@{-}[r]_<{2} &*{ \circ }<3pt> \ar@{-}[r]_<{3} &*{\circ}<3pt>
\ar@{-}[r]_<{4} &*{\circ}<3pt>
\ar@{-}[l]^<{\ \ 5}}}, \allowdisplaybreaks \\
& E_7 \ : \ \raisebox{1.3em}{\xymatrix@C=4ex@R=3ex{ && *{\circ}<3pt>\ar@{-}[d]^<{2} \\
*{ \circ }<3pt> \ar@{-}[r]_<{1}  &*{\circ}<3pt>
\ar@{-}[r]_<{3} &*{ \circ }<3pt> \ar@{-}[r]_<{4} &*{\circ}<3pt>
\ar@{-}[r]_<{5} &*{\circ}<3pt>
\ar@{-}[r]_<{6} &*{\circ}<3pt>
\ar@{-}[l]^<{7} } }, \ \
E_8 \ : \ \raisebox{1.3em}{\xymatrix@C=4ex@R=3ex{ && *{\circ}<3pt>\ar@{-}[d]^<{2} \\
*{ \circ }<3pt> \ar@{-}[r]_<{1}  &*{\circ}<3pt>
\ar@{-}[r]_<{3} &*{ \circ }<3pt> \ar@{-}[r]_<{4} &*{\circ}<3pt>
\ar@{-}[r]_<{5} &*{\circ}<3pt>
\ar@{-}[r]_<{6} &*{\circ}<3pt>
\ar@{-}[r]_<{7} &*{\circ}<3pt>
\ar@{-}[l]^<{8} } }.
\end{align*}
We denote by $\Delta(k,l)$ the \defn{distance} between vertices $k$ and $l$ in $\Delta$. We say that a vertex $i$ in $\Delta$ \defn{extremal} if
there exists only one arrow incident with $i$.

To denote a positive root, we will use several notations as follows (see~\cite[PLATE I$\sim$IX]{Bour}):
\begin{enumerate}
\item[{\rm (a)}] For $A_n$ cases, we use a notation $[a,b]$ for the positive root $\sum_{a \le k \le b} \al_k$ where $a \le b \le n$. In particular, if $a=b$, then we use $[a]$
instead of $[a,a]$.
\item[{\rm (b)}] For $D_n$ cases, we use a notation $\lf  a , \pm b \rf$ for the positive root $\ve_a\pm \ve_b$ where $a < b \le n$.
For $b \in \Z_{< 0}$, we sometimes use $\ve_b$ instead of $-\ve_{-b}$ and $\lf a , b \rf$ ($1 \le a \le -b$) instead of $\lf a , -(-b) \rf$.
\item[{\rm (c)}] For $E_6$, $E_7$ and $E_8$ cases, we use a notation $(c_1c_2\cdots c_n)$  for the positive root
$\sum_{1 \le k \le n} c_k\al_k$ $(n=6,7,8)$ where $c_k \in \Z_{\ge 0}$.
\end{enumerate}

\section{Positive roots of finite type} \label{sec: Positive roots of finite}
In this section, we briefly recall the system of positive roots of finite types and introduce orders on the system. Then we define
statistics by considering Poset structure given by the orders.

\subsection{System of positive roots.}  We choose $\cmA$ as the Cartan matrix of
a finite-dimensional simple Lie algebra $\g_0$ and the index set $I$ as
$\{ 1,2,\ldots,n\}$. 
We also denote by $\PR$ the set of \defn{positive roots} and $\NR$ the set of \defn{negative roots} associated to $\g_0$.

Let $\W$ be the Weyl group associated to $\g_0$, which is generated by \defn{simple reflections} $(s_i)_{i \in I}$.
We denote by $\ell(w)$ the \defn{length} of an element $w \in \W$ and $w_0$ the \defn{longest element} of $\W$. We also denote by $^*$ the involution on $I$ induced by
$w_0$; i.e.,
\begin{align} \label{eq: * involution}
w_0(\al_i) = -\al_{i^*}.
\end{align}

\medskip

The following proposition is well-known (see, for instance~\cite{Bour}).

\begin{proposition}
For $w \in \W$ and its reduced expression
$\tw=s_{i_1} \cdots s_{i_t}$, the set
\begin{align} \label{eq: PRw}
\PR_{\tw} \seteq \{ s_{i_1} \cdots s_{i_{k-1}}(\al_{i_k}) \ | \ 1 \le k \le t\}
\end{align}
is contained in $\PR$ and has its cardinality as $t=\ell(w)$.
Moreover, the set does not depend on the choice of reduced expression of $w$.
\end{proposition}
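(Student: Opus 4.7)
The plan is to identify $\PR_{\tw}$ with an intrinsic inversion set attached to $w$, namely
\[
N(w) \seteq \{\,\al \in \PR : w^{-1}(\al) \in \NR \,\},
\]
which manifestly depends only on $w$ and not on the chosen reduced expression. Once the equality $\PR_{\tw} = N(w)$ is established, both containment in $\PR$ and independence of the reduced expression are automatic, and the cardinality statement reduces to the well-known identity $|N(w)| = \ell(w)$.

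The workhorse I will use throughout is the standard length–positivity criterion: for any $v \in \W$ and $i \in I$,
\[
\ell(v \re_i) > \ell(v) \iff v(\al_i) \in \PR, \qquad \ell(v\re_i) < \ell(v) \iff v(\al_i) \in \NR.
\]
Writing $w_k = \re_{i_1}\cdots\re_{i_k}$ and $\be_k = w_{k-1}(\al_{i_k})$, the fact that $\tw$ is reduced means $\ell(w_{k-1}\re_{i_k}) = \ell(w_{k-1})+1$, so the criterion applied to $v = w_{k-1}$ gives $\be_k \in \PR$ for every $k$. This handles the containment $\PR_{\tw} \subset \PR$.

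Next I would verify $\PR_{\tw} \subset N(w)$. A direct computation gives
\[
w^{-1}(\be_k) = \re_{i_t}\cdots\re_{i_k}(\al_{i_k}) = -\re_{i_t}\cdots\re_{i_{k+1}}(\al_{i_k}).
\]
Applying the length–positivity criterion to the reduced tail $\re_{i_t}\cdots\re_{i_{k+1}}$ (which is reduced because $\tw$ is), the root $\re_{i_t}\cdots\re_{i_{k+1}}(\al_{i_k})$ is positive, so $w^{-1}(\be_k)\in\NR$, i.e.\ $\be_k\in N(w)$. For the reverse inclusion and, simultaneously, pairwise distinctness of the $\be_k$, I would argue by induction on $t = \ell(w)$: if $\be_j = \be_k$ with $j<k$, then $\re_{i_{j+1}}\cdots\re_{i_{k-1}}(\al_{i_k}) = \al_{i_j}$, whose reflection identity allows one to apply the strong exchange condition and shorten $\tw$, contradicting reducedness. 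Hence $|\PR_{\tw}| = t$. Combined with the classical fact $|N(w)| = \ell(w) = t$, the inclusion $\PR_{\tw} \subset N(w)$ becomes an equality.

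I do not anticipate a serious obstacle: every step above is a standard consequence of the exchange condition and the length–positivity criterion for Coxeter groups. The only point that demands genuine care is the distinctness of the $\be_k$, since a careless induction risks circularity with the cardinality claim; I would therefore prove distinctness first via the exchange argument above and only afterwards invoke the known identity $|N(w)| = \ell(w)$ to close the loop. Once $\PR_{\tw} = N(w)$, independence of the reduced expression is immediate because the right-hand side involves only $w$.
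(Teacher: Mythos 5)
Your argument is the standard Bourbaki-style proof, and the paper itself offers no proof beyond citing Bourbaki and immediately recording the inversion-set characterization $\PR_w = \{\be\in\PR : w^{-1}(\be)\in\NR\}$ in \eqref{eq: cha of Phiw}, which is exactly the set $N(w)$ you reconstruct. One small slip in the distinctness step: from $\be_j=\be_k$ one obtains $\re_{i_j}\re_{i_{j+1}}\cdots\re_{i_{k-1}}(\al_{i_k})=\al_{i_j}$ (not $\re_{i_{j+1}}\cdots\re_{i_{k-1}}(\al_{i_k})=\al_{i_j}$, which would actually equal $-\al_{i_j}$); either version feeds the exchange argument, so the proof survives, but the index should be corrected.
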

Thus we can write $\PR_{w}$ instead of $\PR_{\tw}$.


\subsection{Total orders on $\PR_w$ and convexity.} For a fixed reduced expression $\tw=s_{i_1} \cdots s_{i_t}$,
the characterization $\PR_w$ in~\eqref{eq: PRw} gives a way of defining the total order $<_\tw$ on $\PR_w$ as follows:  Set $\be_k \seteq s_{i_1} \cdots s_{i_{k-1}}(\al_{i_k})$.
\begin{align} \label{def: total order tw}
\be_k <_\tw \be_l \quad \text{  if and only if } \quad k < l.
\end{align}

The following theorem tells that $<_\tw$ on $\PR_w$ is convex in the following sense:

\begin{theorem}~\cite{Papi94,Zel87} %
 For $\al,\be \in \PR_w$ with $\al+\be \in \PR_w$, we have either
\[
\al <_\tw  \al+\be <_\tw \be \quad \text{ or } \quad \be <_\tw \al+\be <_\tw \al.
\]
\end{theorem}


We say an order $<$ ({\it not necessarily total order}) on a subset $L \subset \PR$ \defn{convex} if $\al,\be \in L$ with $\al+\be \in L$, we have either
$ \al <  \al+\be < \be$ or $\be < \al+\be < \al$.

\subsection{Commutation equivalence relation and convex partial orders}
We say that two reduced expressions $\widetilde{w}=s_{i_1}s_{i_2}\cdots s_{i_{\ell(w)}}$ and $\widetilde{w}'=s_{j_1}s_{j_2}\cdots s_{j_{\ell(w)}}$ of $w \in \W$
are \defn{commutation equivalent}, denoted by $\widetilde{w} \sim \widetilde{w}'$ and $[\widetilde{w}]$ its equivalence class, if $s_{j_1}s_{j_2}\cdots s_{j_{\ell(w)}}$ is obtained
from $s_{i_1}s_{i_2}\cdots s_{i_{\ell(w)}}$ by applying the commutation relations
$s_{k}s_{l} = s_{k}s_{l}$ for $|k-l|>1$.

By using $[\redex]$ and $<_{\tw'}$ for all $\redex' \in [\redex]$, \defn{the convex partial order $\prec_{[\redex]}$} is defined on $\PR_w$ as follows:
\begin{align}\label{eq:coarset order}
\alpha\prec_{[\redex]}\beta \quad \text{ if and only if } \quad \alpha <_{\redex'}\beta \quad \text{ for any }\quad \redex' \in [\redex].
\end{align}

\begin{remark} \label{rmk:coarset order property}
For a commutation class $[\tw]$, if $\{\al,\be\} \subset \big(\PR_w\big)^2$ is an incomparable pair with respect to $\prec_{[\tw]}$, then there exist
reduced expressions $\tw^{(1)},\tw^{(2)} \in [\tw]$ such that
$$   \al <_{\tw^{(1)}} \be \quad \text{ and } \quad  \be <_{\tw^{(2)}} \al.$$
\end{remark}

\subsection{Partial orders on the set of sequences of $\PR_w$}
Let us choose a reduced expression $\tw=s_{i_1}s_{i_2}\cdots s_{i_{\ell(w)}}$ of $w \in \W$. Fix the convex total order $\le_\tw$ as in~\eqref{def: total order tw}.
\begin{enumerate}
\item[{\rm (i)}] We identify a sequence $\um_\tw=(m_1,m_2,\ldots,m_{\ell(w)}) \in \Z_{\ge 0}^{\ell(w)}$ with
$$(m_1\beta^\tw_1 ,m_2\beta^\tw_2,\ldots,m_{\ell(w)} \beta^\tw_{\ell(w)}) \in   (\Z_{\ge 0}\be_k^\tw)_{1 \le k \le \ell(w)} .$$
\item[{\rm (ii)}] For $\um_\tw$ and another reduced expression $\tw'$ of $w$, $\um_{\tw'}$ is a sequence in $\Z_{\ge 0}^{\ell(w)}$
by considering $\um_\tw$ as a sequence of positive roots,
rearranging with respect to $<_{\tw'}$ and applying {\rm (i)}.
\end{enumerate}
For simplicity of notations, we usually drop the script $\tw$ if there is no fear of confusion.

The \defn{weight} $\wt(\um)$ of a sequence $\um$ is defined by $$\displaystyle\sum_{i=1}^{\ell(w)} m_i\beta_i \in \rl^+.$$

The following order on $\Z_{\ge 0}^{\ell(w)}$ was introduced in~\cite{Mc12} and can be considered as the bi-lexicographical order corresponding
to $<_{\redex}$.

\begin{definition}~\cite{Mc12} \label{def: redezletb} 
For sequences $\um$, $\um' \in \Z_{\ge 0}^{\ell(w)}$, we define an order $\le^\tb_{\tw}$ as follows:
\begin{eqnarray*}&&
\parbox{95ex}{
$\um'=(m'_1,\ldots,m'_{\ell(w)}) <^\tb_{\redex}  \um=(m_1,\ldots,m_{\ell(w)}) $ if and only if $\wt(\um')=\wt(\um)$, and there exist integers $k$, $s$ such that $1 \le k \le s \le \ell(w)$,
$m_t'=m_t$ $(t<k)$, $m'_k  <  m_k$, and $m_t'=m_t$ $(s<t\le \ell(w))$,
$m'_{s}  <  m_{s}$.
}
\end{eqnarray*}
\end{definition}
\noindent
Note that $\le^\tb_{\redex}$ is a partial order on the set of sequences of length $\ell(w)$.

\medskip

As we define $\prec_{[\tw]}$ from $<_{\tw}$, we define new order  $\prec^\tb_{[\tw]}$.

\begin{definition} \label{def: redezprectb}
For sequences $\um$, $\um' \in \Z_{\ge 0}^{\ell(w)}$, we define an order $\prec^\tb_{[\tw]}$ as follows:
\begin{eqnarray}&&
\parbox{85ex}{
$\um'=(m'_1,\ldots,m'_{\ell(w)}) \prec^\tb_{[\tw]}  \um=(m_1,\ldots,m_{\ell(w)}) $ if and only if  $\um'_{\tw'} <^\tb_{\tw'}  \um_{\tw'}$
for all reduced expressions $\tw' \in [\tw]$.
}\label{eq: redezprectb}
\end{eqnarray}
\end{definition}

\begin{remark}
Note that the order $\prec^\tb_{[\tw]}$ is {\it far coarser} than $<^\tb_{\tw}$, since some sequences are comparable with respect to $\prec^\tb_{[\tw]}$ only if they are
comparable with respect to $<^\tb_{\tw'}$ for all $\tw'\in [\tw]$.
\end{remark}

\begin{example}
For a reduced expression $\tw= s_3s_2s_1s_3s_4s_2s_4s_3s_1s_2$ of type $D_4$, we have
$$  (\lf 2,-3 \rf,\lf 1,3 \rf) <^\tb_{\tw} (\lf 1,-4 \rf,\lf 2,4 \rf) $$
while they are {\it not} comparable with respect to $\prec^\tb_{[\tw]}$.
\end{example}

\subsection{Sequences and new statistics} \label{subsec: new notion} In this subsection, we define new statistics arising from the Poset $(\Z_{\ge 0}\be_k^\tw)_{1 \le k \le \ell(w)} \simeq \Z_{\ge 0}^{\ell(w)}$ with respect to the partial order $\prec^\tb_{[\tw]}$.
These statistics will be used in the later sections to investigate the categories which we are interested in.


A sequence $\um \in \Z^{\ell(w)}_{\ge 0}$ is called a \defn{pair} if $m_i \le 1$ for all $i$, and $|\um|\seteq \sum_{i=1}^{\ell(w)} m_i=2$.
In this paper, we use the notation $\up$ for a pair sequence. For brevity, we write a pair $\up$ as $(\alpha,\beta) \in (\PR_w)^2$ or $(\up_{i_1},\up_{i_2})$ such that
$\beta_{i_1}=\alpha$, $\beta_{i_2}=\beta$ and $i_1 < i_2$.

We say a sequence $\um=(\um_1,\um_2,\ldots,\um_{\ell(w)}) \in \Z^{\ell(w)}_{\ge 0}$ is \defn{$[\tw]$-simple} if it is minimal with respect to the partial order $\prec^\tb_{[\tw]}$.
For a given $[\tw]$-simple sequence $\us=(s_1,\ldots,s_{\ell(w)}) \in \Z^{\ell(w)}_{\ge 0}$, we call a cover\footnote{Recall that a cover of $x$ in a poset $P$ with partial order $\prec$ is an element $y \in P$ such that $x \prec y$ and there does not exists $y' \in P$ such that $x \prec y' \prec y$.} of $\us$ under $\prec^{\tb}_{[\tw]}$ a \defn{$[\tw]$-minimal sequence of $\us$} (see also~\cite{Mc12}).

For a pair $\up$, the \defn{$[\tw]$-distance} of $\up$ is the {\it largest} integer $k \geq 0$ such that
\[
\up^{(0)} \prec^\tb_{[\tw]} \cdots \prec^\tb_{[\tw]} \up^{(k)} = \um
\]
and $\up^{(0)}$ is a $[\tw]$-simple pair. We denoted $[\tw]$-distance by $\dist_{[\tw]}(\up)$.
(see Remark~\ref{ex: non-trivial 1} for example).

Consider a pair $\up$ such that there exists a unique $[\tw]$-simple sequence $\us$ satisfying
$\us \preceq^\tb_{[\tw]} \up$, we call $\us$ the \defn{$[\tw]$-socle} of $\up$ and denoted it by $\soc_{[\tw]}(\up)$.
At this moment, the existence and the uniqueness of $\soc_{[\tw]}(\up)$ are not guaranteed. In later section,
we will prove that $\soc_{[\tw]}(\up)$ exists uniquely for a certain family of commutation classes $[\tw]$.
The following is a consequence of Remark~\ref{rmk:coarset order property} {\rm (ii)}.

\begin{lemma} \label{lem: incomp simple}
For a pair $(\al,\be)$ which is incomparable with respect to $\prec_{[\tw]}$, $$\soc_{[\tw]}(\al,\be)=(\al,\be).$$
\end{lemma}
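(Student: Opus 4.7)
The plan is to show that any pair $(\alpha,\beta)$ which is incomparable with respect to $\prec_{[\tw]}$ is in fact $[\tw]$-simple; then since $\preceq^\tb_{[\tw]}$ is reflexive, $(\alpha,\beta)$ trivially satisfies the defining properties of $\soc_{[\tw]}(\alpha,\beta)$, and the $[\tw]$-simplicity ensures the uniqueness.

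The first step is to extract a particularly well-behaved reduced expression from the incomparability hypothesis. By Remark \ref{rmk:coarset order property}~(ii), the assumption yields $\tw^{(1)},\tw^{(2)}\in [\tw]$ with $\alpha <_{\tw^{(1)}} \beta$ and $\beta <_{\tw^{(2)}} \alpha$. Since any two elements of $[\tw]$ are linked by a finite chain of commutations $\re_k\re_l = \re_l\re_k$ of commuting simple reflections, the relative order of $\alpha$ and $\beta$ must flip at some intermediate step of the chain. Such a flip can only occur when $\alpha$ and $\beta$ occupy adjacent positions $i,i+1$ of a reduced expression $\tw^{(0)}\in[\tw]$, with the two simple reflections at those positions commuting. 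In particular, in the total order $<_{\tw^{(0)}}$, no element of $\PR_w$ lies strictly between $\alpha$ and $\beta$.

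Next I would suppose for contradiction that $(\alpha,\beta)$ is not $[\tw]$-simple; that is, there exists $\um \in \Z^{\ell(w)}_{\ge 0}$ with $\wt(\um)=\alpha+\beta$ and $\um \prec^\tb_{[\tw]} (\alpha,\beta)$. By Definition \ref{def: redezprectb} this implies $\um_{\tw^{(0)}} <^\tb_{\tw^{(0)}} (\alpha,\beta)_{\tw^{(0)}}$. In the ordering $<_{\tw^{(0)}}$ the pair $(\alpha,\beta)_{\tw^{(0)}}$ has $1$'s only at positions $i$ and $i+1$ and zeros elsewhere. Unwinding Definition \ref{def: redezletb}, the indices $k\le s$ witnessing the strict inequality must be $k=i$ and $s=i+1$: we have forced $m_t=0=m_t'$ for $t<i$, $m'_i<1$, $m'_{i+1}<1$, and $m_t=0=m_t'$ for $t>i+1$, and there is no position strictly between $i$ and $i+1$. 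Hence $\um_{\tw^{(0)}}$ is the zero sequence, contradicting $\wt(\um)=\alpha+\beta\neq 0$.

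This contradiction shows that $(\alpha,\beta)$ is $[\tw]$-simple. Since $(\alpha,\beta)\preceq^\tb_{[\tw]}(\alpha,\beta)$ and $\wt(\alpha,\beta)=\wt(\alpha,\beta)$, the pair itself satisfies the conditions of Definition \ref{def: Q-socle}. Any other candidate $\us$ for the $[\tw]$-socle would satisfy $\us \prec^\tb_{[\tw]}(\alpha,\beta)$ with $\wt(\us)=\alpha+\beta$, which the argument just given rules out. Therefore $\soc_{[\tw]}(\alpha,\beta)=(\alpha,\beta)$. The only delicate point is justifying the existence of the reduced expression $\tw^{(0)}$ in which $\alpha$ and $\beta$ are adjacent, which rests on the observation that a commutation move can swap the relative order of exactly one pair of adjacent roots at a time.
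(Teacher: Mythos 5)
Your proof is correct and supplies exactly the details that the paper leaves implicit: the text only asserts that the lemma is a consequence of Remark~\ref{rmk:coarset order property}~(ii) and Definition~\ref{def: tw-simple}, without spelling out the argument. Your use of a commutation chain from $\tw^{(1)}$ to $\tw^{(2)}$ to find a $\tw^{(0)}\in[\tw]$ in which $\alpha$ and $\beta$ sit at adjacent positions is the right way to make the deduction precise (a single commutation move swaps two adjacent roots in the total order and fixes all others, so the sign of the position difference of $\alpha$ and $\beta$ can only change through a jump from $+1$ to $-1$), and from there the contradiction with $\wt(\um)=\alpha+\beta$ follows directly from Definition~\ref{def: redezletb}.
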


\begin{proposition} \label{pro: BKM minimal}~\cite[Lemma 2.6]{BKM12}
For $\ga \in \PR \setminus \Pi$ and any $\redez$ of $w_0$, a $[\redez]$-minimal sequence of $\ga$ is indeed a pair $(\al,\be)$ for some $\al,\be \in \PR$ such that
$\al+\be = \ga$.
\end{proposition}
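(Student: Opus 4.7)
The plan is a proof by contradiction: I assume $\um$ is a $[\redez]$-minimal sequence of $\ga$ with $|\um| \ge 3$ (if $|\um| \le 2$, then $(\ga) \prec^\tb_{[\redez]} \um$ rules out $\um = (\ga)$ while any sequence with $|\um|=2$ is forced to be basic with two distinct-position summands $\al,\be$ satisfying $\al+\be=\ga$, already a pair), and would produce an intermediate sequence $\um'$ strictly between $(\ga)$ and $\um$ in $\prec^\tb_{[\redez]}$. Existence of a $[\redez]$-minimal sequence follows by writing $\ga = \al+\be$ with $\al,\be\in\PR$ (possible since $\ga \notin \Pi$) and observing that convexity of each $<_{\tw'}$ ($\tw' \in [\redez]$) places $\ga$ strictly between $\al$ and $\be$, so $(\al,\be)$ is a valid candidate; the candidate set is finite, hence admits a minimal element.

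The central input is a root-combining lemma: whenever $r \ge 3$ positive roots (counted with multiplicity) sum to a positive root $\ga$, there exist two distinct support positions $p,q$ with $\be_p + \be_q \in \PR$. For simply-laced types this follows from the identity $\sum_i m_i^2 + \sum_{i<j} m_i m_j (\be_i \cdot \be_j) = 1$, valid when $\ga = \sum m_i \be_i$ and every root has norm $2$, combined with the trichotomy $\be_i \cdot \be_j \in \{-1,0,1\}$ for distinct simply-laced roots, which forces at least one pair $p,q$ to satisfy $\be_p \cdot \be_q = -1$ and hence $\be_p + \be_q \in \PR$. Applied to $\um$, this yields a root $\delta \seteq \be_p + \be_q$; let $\um'$ be obtained from $\um$ by decrementing the entries at the positions of $\be_p,\be_q$ by one each and incrementing the entry at the position of $\delta$ by one, so that $\wt(\um')=\ga$ and $|\um'| = |\um|-1 \ge 2$.

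It remains to verify $(\ga) \prec^\tb_{[\redez]} \um' \prec^\tb_{[\redez]} \um$. For each $\tw' \in [\redez]$, convexity of $<_{\tw'}$ applied to $\delta = \be_p + \be_q$ places $\delta$ strictly between $\be_p$ and $\be_q$ in $<_{\tw'}$; thus $\um$ and $\um'$ agree outside the interval bounded by the positions of $\be_p$ and $\be_q$, and at those endpoints $\um'$ is strictly smaller. By Definition \ref{def: redezletb} this yields $\um' <^\tb_{\tw'} \um$. For $(\ga) <^\tb_{\tw'} \um'$, an induction on $|\um'|$ via the same combining lemma shows that any decomposition of $\ga$ into at least two positive roots has summands on both sides of $\ga$ in $<_{\tw'}$, fulfilling the bi-lex definition. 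The main obstacle I anticipate is this uniform verification across \emph{every} $\tw' \in [\redez]$: a strict inequality for a single representative would not yield $\prec^\tb_{[\redez]}$. Resolution: convexity is intrinsic to every convex total order (cf.\ Remark \ref{rmk:coarset order property}), so the combining step and resulting strictness propagate uniformly across the class, contradicting minimality of $\um$ and forcing $|\um|=2$.
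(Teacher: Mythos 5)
Your proof is correct. Note, however, that the paper does not prove this proposition itself; it cites \cite[Lemma 2.6]{BKM12}, where the analogous reduction to pairs is established for the finer order $<^\tb_{\redez}$ attached to a single reduced expression, so there is no in-paper proof to compare against. Your argument is the standard BKM-style one: the norm-$2$ computation $\sum_i m_i^2 + \sum_{i<j} m_i m_j(\be_i\cdot\be_j)=1$ (valid in simply-laced type) together with $\sum_i m_i^2 \ge |\um| \ge 3$ forces some $\be_p\cdot\be_q=-1$, hence $\delta:=\be_p+\be_q\in\PR$, and the contraction $\um\mapsto\um'$ decreases $|\um|$ by one while convexity sandwiches $\um'$ strictly between $(\ga)$ and $\um$ under the bi-lex order. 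What the paper's statement adds relative to BKM, and what you correctly address, is the passage to the commutation-class order $\prec^\tb_{[\redez]}$: since the combining step and the sandwiching use only convexity, and every $<_{\tw'}$ with $\tw'\in[\redez]$ is convex, the strict inequalities hold simultaneously for all representatives, which is exactly what $\prec^\tb_{[\redez]}$ demands. Your parenthetical reduction of the $|\um|=2$ case is also right and worth spelling out: $\wt(\um)=\ga\in\PR$ excludes a single entry equal to $2$ since $2\al\notin\PR$, so the two units sit at distinct positions, making $\um$ a pair.
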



\begin{example} \label{ex: D4} \hfill
\begin{enumerate}
\item[(a)] Let us consider the reduced expression $\redez$ of $D_4$ given as follows:
$$ \redez= s_3s_2s_1s_4s_3s_2s_1s_4s_3s_2s_1s_4.$$
Then we have (see also Example~\ref{ex: example D})
\begin{itemize}
\item $(\lf 2 , 4\rf ,\lf 1 , 3 \rf )$ is $[\redez]$-simple,
\item $(\lf 2 , -4\rf ,\lf 1 , 2 \rf )$ is not $[\redez]$-simple and
\[
\soc_{[\redez] }\bigl(\lf 2 , -4 \rf ,\lf 1 , 2 \rf \bigr)= \bigl(\lf 1 , -4 \rf ,\lf 2 , 3 \rf ,\lf 2 , -3 \rf \bigr),
\]
\item minimal pairs of  $(\lf 1 , 2 \rf )$ are
\[
\left\{ \bigl(\lf 1 , -4 \rf ,\lf 2 , 4 \rf \bigr), \
 \bigl(\lf 2 , -3 \rf ,\lf 1 , 3 \rf \bigr), \ \bigl(\lf 2 , 3 \rf ,\lf 1 , -3 \rf \bigl) \right\}.
 \]
\end{itemize}
\item[(b)] Let us consider the following reduced expression $\redez$ of $w_0$ of $E_6$:
\begin{align*}
\qquad \redez  = s_1s_2s_6s_3s_5s_4s_6s_1s_3s_2s_6s_3s_5s_6 s_4s_1s_3s_2s_6
s_3s_5s_6s_4s_1s_3s_2s_6s_3s_5s_6s_4s_1s_3s_2s_6s_3.
\end{align*}
Then one can check that, the pair $\up=(110000,123211)$
is not $[\redez]$-simple with $\dist_{[\redez]}(\up)=1$, and there are three $[\redez]$-simple pairs
$$ \us^{(1)}=(123111,111100), \ \us^{(2)}=(111000,122211), \ \us^{(3)}=(111110,122101)$$
satisfying $\wt(\us^{(i)}=\wt(\up)$ and $\us^{(i)} \prec_{[\redez]}^\tb \up$ for all $1 \le i \le 3$. Thus $\soc_{[\redez]}(\up)$ is {\it not} well-defined for the $[\redez]$
(see Appendix~\ref{Sec:Dynkin E6} (2)).
\end{enumerate}
\end{example}

Using the notion of $[\tw]$-distances only for pairs in $(\PR_w)^2$ (not sequences), we shall define  sequences of  pairs which are
{\it nearest} with respect to $\prec^{\tb}_{[\tw]}$
satisfying some property. These notion will be crucially used in later section, when we consider the socle and composition length of modules in certain categories.

\begin{definition} \label{def: tw-adjacent}  
For  pairs $\up'=(\al^{(1)},\be^{(1)}) \prec^\tb_{[\tw]} \up=(\al^{(2)},\be^{(2)}) \in (\PR_w)^2$, we say that they are \defn{good adjacent neighbors} if
\begin{enumerate}
\item[{\rm (i)}] there exists $\eta \in \PR_w$ satisfying one of the following conditions:
\begin{enumerate}
\item[{\rm (a)}] $\eta+\beta^{(2)}=\beta^{(1)}, \ \eta+\al^{(1)}=\al^{(2)}$ and $\dist_{[\tw]}(\eta,\beta^{(2)}),\dist_{[\tw]}(\eta,\al^{(1)})<\dist_{[\tw]}(\up)$,
\item[{\rm (b)}] $\beta^{(1)}+\eta=\beta^{(2)}, \ \al^{(2)}+\eta=\al^{(1)}$ and $\dist_{[\tw]}(\beta^{(1)},\eta),\dist_{[\tw]}(\al^{(2)},\eta)<\dist_{[\tw]}(\up)$,
\end{enumerate}
\item[{\rm (ii)}] there exists no pair $\up''\prec^\tb_{[\tw]} \up$ such that it satisfies the conditions {\rm (i)} or
$$ \up' \prec^\tb_{[\tw]} \up'' \prec^\tb_{[\tw]} \up.$$
\end{enumerate}
\end{definition}

\begin{example}
For the $Q$ in Appendix~\ref{Sec:Dynkin E7}, one can check the pair $(0112100,0000011)$
is the cover of $(0111111,0001000)$ with respect to $\prec^{\tb}_{Q}$ but they are {\it not} good adjacent pair. More precisely,
\begin{itemize}
\item $(\al',\be') \seteq (0111111,0001000) \prec^{\tb}_{Q} (\al,\be) \seteq (0112100,0000011)$,
\item $\al - \al' \not \in \PR$ and $\be - \be' \not \in \PR$.
\end{itemize}
\end{example}

\begin{remark}\label{rem:p-dist}
When we restrict the order $\prec^\tb_{[\tw]}$ only on the set of pairs $(\PR_w)^2$, $\up$ in Definition~\ref{def: tw-adjacent} can be thought as a cover of $\up'$
satisfying the condition {\rm (i)}.
\end{remark}

\begin{definition} \label{def: good neighbor} 
For a pair $\up\in (\PR_w)^2,$
the \defn{$[\tw]$-length} of $\up$, denoted by $\len_{[\tw]}(\up)$, is the integer which counts the number of all non $[\tw]$-simple pairs
$\up'\in (\PR_w)^2$ satisfying the following properties:
\begin{itemize}
\item $\up' \prec^\tb_{[\tw]} \up$ and there exists a sequence of pairs
$$\up^{(0)}=\up' \prec^\tb_{[\tw]} \up^{(1)} \prec^\tb_{[\tw]} \up^{(2)} \prec^\tb_{[\tw]} \cdots \prec^\tb_{[\tw]} \up^{(k)}=\up \quad  (k \in \Z_{\ge 1})$$
such that $\up^{(i)}, \up^{(i+1)}$ are good adjacent neighbor for all $0 \le i \le k-1$.
\end{itemize}
We call the pairs $\up',\up$ \defn{good neighbors} (see Example~\ref{ex: good neighbor} for good neighbors).
\end{definition}

\smallskip

Now we consider when $w$ is the longest element $w_0$. For a non-simple positive root $\gamma \in \PR \setminus \Pi$, the \defn{$[\redez]$-radius} of $\gamma$, denoted by $\rds_{[\redez]}(\gamma)$, is the integer defined as follows
$$\rds_{[\redez]}(\gamma)=\max({\rm dist}_{[\redez]}(\up) \ | \ \gamma  \prec_{[\redez]}^\tb \up).$$

\begin{example} In Example~\ref{ex: D4}, one can check that
$$\text{$\rds_{[\redez]}(\lf 1,2 \rf)=2$ and $\rds_{[\redez]}(\alpha)=1$ for all $\al \in \PR \setminus  \Pi \sqcup \{ \lf 1,2 \rf \} $ }$$
(see Example~\ref{ex: rds not eq mul} for exceptional cases).
\end{example}

\section{Auslander-Reiten quiver} \label{sec: AR-quiver}

In this section, we briefly review the Auslander-Reiten quiver and its basic properties. For more detail, we refer~\cite{ARS,ASS,HL11}.

\subsection{Dynkin quiver $Q$ of finite type ADE} Let $Q$ be a \defn{Dynkin quiver} of a Dynkin diagram $\Delta$ of type $A_n$, $D_n$ and $E_{6,7,8}$.
For any $i \in I$, let $s_iQ$ denote the quiver obtained by $Q$ by reversing the arrows incident with $i$.

For a reduced expression $\tw=s_{i_1}s_{i_2}\cdots s_{i_{\ell(w)}}$ of $w \in \W$, it is called
\defn{adapted to $Q$} if
\begin{align} \label{eq: adapted}
\text{ $i_k$ is a source of the quiver $s_{i_{k-1}} \cdots s_{i_2}s_{i_1}Q$ for all $1 \le k \le \ell(w)$.}
\end{align}

For a reduced expression $\redez$ of $w_0$ adapted to $Q$, if $\redez' \in [\redez]$, then $\redez'$ is adapted to $Q$. Conversely,
any $\redez'$ adapted to $Q$ is contained in $[\redez]$.
Thus the set of all reduced expressions $[Q]$ of $w_0$ adapted to $Q$ is well-defined.

\begin{remark} \label{rem: coxeter element and sink}
The followings are well-known:
\begin{itemize}
\item[{\rm (i)}] There is a unique \defn{Coxeter element} $\cox{Q} \in \W$ (a product of all simple reflections) whose reduced expressions are adapted to $Q$.
\item[{\rm (ii)}] $\cox{Q}^{-1}$ is the Coxeter element of $Q^{{\rm rev}}$ where $Q^{{\rm rev}}$ is the quiver obtained
by reversing all arrows of $Q$.
\item[{\rm (iii)}] For $\redez=s_{i_1} \cdots s_{i_{\N}}$ of $w_0$ adapted to $Q$ ($\N \seteq |\PR|$), we have
$$ s_{i_1} \cdots s_{i_{\N}} Q = Q^*,$$
where $Q^*$ is the quiver obtained from $Q$ by replacing vertices of $Q$ from $i$ to $i^*$.
\end{itemize}
\end{remark}


\subsection{Auslander-Reiten quiver and system of positive roots} \label{subsec: AR-quiver and PR}
A map $\xiQ:I \to \Z$ is called a \defn{height function} on $Q$ if $\xiQ_j=\xiQ_i-1$ when there exists
an arrow $i \to j$ in $Q$. Since $Q$ is connected, such a height function on $Q$ is unique up to $\Z$.
Set
\[
\Z Q \seteq \{ (i,p) \in I \times \Z \ | \ p -\xiQ_i \in 2\Z \}.
\]
By assigning arrows $(i,p) \to (j,p+1)$ for indices $i,j \in I$ with $\Delta(i,j)=1$, we call $\Z Q$ \defn{the repetition quiver}.
Note that $\Z Q$ does not depend on $Q$ but only on $\Delta$.

For $i \in I$, we define positive roots $\gamma^Q_i$ and $\theta^Q_i$ in the following way:
\begin{align}\label{eq: gamma,theta 2}
\gamma^Q_i = \sum_{j \in B(i)} \alpha_j \qquad  \text{and} \qquad \theta^Q_i = \sum_{j \in C(i)} \alpha_j,
\end{align}
where  $B(i)$ (resp. $C(i)$)  is the set of vertices $j$
such that there exists a path from $j$ to $i$ (resp. from $i$ to $j$) in $Q$.

The following relationship between $\{\gamma^Q_i \}$ and $\{\theta^Q_i \}$ is known as \defn{Nakayama permutation}:
\begin{align} \label{eq: Nakayama}
\theta^Q_{i^*} = \tau_Q^{\mQ_i}(\gamma^Q_{i}),\quad \text{ where } \mQ_i \seteq \max (k \ge 0 \ | \ \tau_Q^k(\ga^Q_i) \in \PR).
\end{align}

Set $\WPR \seteq \PR \times \Z$. There exists a bijection $\phi_Q: \Z Q \to \WPR$ in \cite[\S 2.2]{HL11}: 
\begin{itemize}
\item[{\rm (i)}] $\phi_Q(i,\xiQ_i) \seteq (\gamma^Q_i,0)$,
\item[{\rm (ii)}] for a given $\beta \in \PR$ with $\phi_Q(i,p)=(\beta,m)$, if $\cox{Q}^{\pm 1}(\beta) \in \Phi^\pm$, set
\[
\phi_Q(i,p \mp 2)=(\pm\cox{Q}^{\pm 1}(\beta), m).
\]
\end{itemize}

For $\be \in \PR$ and $\phi_Q^{-1}(\beta,0)=(i,p) \in I \times \Z$, we denote by
\[
 i=\phi^{-1}_{Q,1}(\beta) \qquad \text{ and } \qquad p=\phi^{-1}_{Q,2}(\beta).
\]
We call $i$ the \defn{residue} of $\beta$ with respect to $Q$.

The \defn{Auslander-Reiten quiver} (AR-quiver) $\GQ$ is the full subquiver of $\Z Q$ whose set of vertices is $\phi_Q^{-1}(\PR \times 0)$.
Thus we can label the vertices of $\GQ$ by $\PR$.

It is well-known that an AR-quiver $\GQ$ satisfies the \defn{additive property} which can be described as follows:
For $\al \in \PR$ we have
\begin{align} \label{eq: additive}
 \al + \tau_Q^{\pm 1}(\al) = \displaystyle\sum_{\be \in \al^\mp} \be \qquad  \text{ if $\tau^{\pm 1}_Q(\al) \in \PR$},
\end{align}
where $\al^-$ (resp. $\al^+$) denotes the set of positive roots $\be$ such that $\be \to \al$ (resp. $\al \to \be$) in $\GQ$.


\medskip

Interestingly, $\GQ$ can be understood as the Hasse diagram of $\prec_{[\redez]}$ on $\PR_{w_0}=\PR$ for $\redez \in [Q]$.

\begin{theorem}~\cite{B99,R96} \label{thm: Ar-quiver and equ redu}
$\al \prec_{[Q]} \be$ if and only if there exists a path from $\be$ to $\al$ in $\GQ$.
Each $\redez' \in [Q]$ can be obtained by reading the residue of every vertex in a way compatible with the opposite directions of arrows.
\end{theorem}

\begin{remark}
In~\cite{OS15}, the author and Suh introduced new combinatorial model, \defn{combinatorial Auslander-Reiten quiver} $\Upsilon_{[\redex]}$
for {\it any} reduced expression $\redex$ of $w \in \W$ of {\it any} finite type and give an effective labeling algorithm for $\Upsilon_{[\redex]}$.
It can be understood as a generalization of AR-quiver since it
corresponds to the Hasse diagram of $\prec_{[\redex]}$ and every $\redez' \in [\redez]$ can be read from $\Upsilon_{[\redex]}$.
\end{remark}

\begin{remark}  Hereafter, we use $\prec_Q$, $\prec^\tb_Q$, $\soc_Q$, $\dist_Q$, $\len_Q$ and $\rds_Q$ instead of
$\prec_{[Q]}$, $\prec^\tb_{[Q]}$, $\soc_{[Q]}$, $\dist_{[Q]}$, $\len_{[Q]}$ and $\rds_{[Q]}$, respectively.
\end{remark}

\subsection{Reflection functor}\label{subsec: reflection functor} 
%
The following proposition is well-known:

\begin{proposition}
For $\redez=s_{i_1}s_{i_2}\cdots s_{i_{\N-1}}s_{i_\N}$,
$\mathbf{r}^+_{i_\N} \cdot \redez \seteq \redez'=s_{i_\N^*}s_{i_1}s_{i_2}\cdots s_{i_{\N-1}}$ is a reduced expression of $w_0$ and $[\redez'] \ne [\redez]$. Similarly,
$\mathbf{r}^-_{i_1} \cdot \redez \seteq \redez''=s_{i_2}\cdots s_{i_{\N-1}}s_{i_\N}s_{i^*_1}$ is a reduced expression of $w_0$
and $[\redez''] \ne [\redez]$.
\end{proposition}

The maps $\mathbf{r}^\pm_{i}$ are called \defn{the reflection functors}. For a sink $i$ of $Q$, Equation~\eqref{def: total order tw}, Remark~\ref{rem: coxeter element and sink} and Theorem~\ref{thm: Ar-quiver and equ redu} tell that
\begin{enumerate}
\item[{\rm (i)}] there exists a reduced expression $\redez=s_{i_1}\cdots s_{i_{\N}} \in [Q]$ such that $i_{\N} = i^*$,
\item[{\rm (ii)}] there exists a convex total order $<_{\redez}$ determined by $\{\be^\redez_k \ | \ 1 \le k \le \N \seteq  \ell(w_0)\}$
such that $\be^\redez_\N=\theta^Q_{i^*}=\al_{i}$.
\end{enumerate}

\begin{remark} \label{rem: comb refl}
The reflection functor $\mathbf{r}^+_i$ can be interpreted as the way of obtaining $\AR{s_iQ}$ from $\GQ$ as follows: Let $\mathsf{h}^\vee$ be the dual Coxeter number corresponding to Dynkin diagram $\Delta$ of $Q$.
\begin{enumerate}
\item[{\rm (A1)}] Remove the vertex $(i^*,p)$ such that $\phi_Q(i^*,p)=\al_i$ and the arrows exiting from $(i^*,p)$ in $\GQ$.
\item[{\rm (A2)}] Add the vertex $(i,p+\mathsf{h}^\vee)$ and the arrows entering into $(i,p+\mathsf{h}^\vee)$ in $\Z Q$.
\item[{\rm (A3)}] Label the vertex $(i,p+\mathsf{h}^\vee)$ with $\al_i$ and change the labels $\be$ to $s_i(\be)$ for all $\be \in \GQ \setminus \{\al_i\}$.
\end{enumerate}
\end{remark}

Thus taking the reflection functor $\mathbf{r}^+_i$ provides
\begin{enumerate}
\item[{\rm (i$'$)}] $\mathbf{r}^+_i \cdot \redez \seteq s_{i}s_{i_1}\cdots s_{i_{\N-1}} \in [s_iQ]$,
\item[{\rm (ii$'$)}] the convex total order $<_{\mathbf{r}^+_i \cdot \redez}$ determined by $\{ \be^{\mathbf{r}^+_i \cdot \redez}_k \ | \ 1 \le k \le \N \}$ such that
$$\text{$\be^{\mathbf{r}^+_i \cdot \redez}_1=\al_i$} \quad \text{ and } \quad \text{$\be^{\mathbf{r}^+_i \cdot \redez}_{k+1}=s_i(\be^\redez_{k}) \in \PR$ for all $1 \le k \le \N-1$.}$$
\end{enumerate}

\begin{remark} \label{eq: observations reflection} 
For a sink $i$ of $Q$, let us denote by $Q' \seteq s_iQ$, $\al'\seteq s_i\al$ and $\be'\seteq s_i\be$ and assume that $\be' \in \PR$. Then we can observe the followings:
\begin{enumerate}
\item[{\rm(i)}] $\al \prec_{Q} \be$ if and only if $\al' \prec_{Q'} \be'$.
\item[{\rm(ii)}] $\dist_{Q}(\al,\be) =\dist_{Q'}(\al',\be')$.
\item[{\rm(iii)}] For $\ga \in \PR \setminus \Pi$,
\[
\rds_{Q}(\ga) =\rds_{Q'}(s_i\ga)
\]
unless $(\al,\al_i)$ is the unique {\it biggest} pair of $\gamma$
with respect to $\prec^\tb_{Q}$  determining $\rds_{Q}(\ga)$.
\item[{\rm(iv)}] Under the assumption that $\soc_{Q}$ (resp. $\soc_{Q'}$) is well-defined,
\[
\soc_{Q}(\al,\be)= \us \quad \text{ if and only if } \quad \soc_{Q'}(\al',\be')= s_i(\us) .
\]
Here $s_i(\us) \seteq (s'_i \ | \ 1 \le i \le \N)_{\mathbf{r}_i^+ \cdot \redez}$ where $\us=\us_{\redez}$ such that $\redez = s_{i_1} \cdots s_{i_{\N-1}}s_{i^*} \in [Q]$,
$s_i'=s_{i+1}$ $(1\le i \le \N-1)$ and $s_1'=s_{\N}=0$.
\end{enumerate}
\end{remark}

\begin{remark} \label{rem: strategy E} 
Note that there are only finitely many Dynkin quivers for type $E_{6,7,8}$. In later sections, we will prove our assertions for type $E_{6,7,8}$ by observing
that each argument holds for one special quiver $Q$ in Appendices.
Then we can check for other quivers by applying the strategy given in Remark~\ref{eq: observations reflection},
since every Dynkin quiver $Q'$ can be obtained from the $Q$ by applying reflections functors properly:
\[
Q'=s_{i_1}\cdots s_{i_r}Q \qquad \text{for some $r \in \Z_{\ge 0}$ and $i_t \in I$ $(1 \le t \le r)$.}
\]
\end{remark}

\section{Combinatorial description of $\GQ$ of type $A_n$ and $D_n$} \label{Sec: Sejin results}

In this section, we review combinatorial descriptions of $\GQ$ of type $A_n$ and $D_n$
which are analyzed in~\cite{Oh14A,Oh14D}. The combinatorial descriptions play an important role in later sections for proving an
existence of the socle for $[\redex]$ adapted to $Q$ and well-definedness of distance polynomial, etc. 

Let $\rl$ be the root lattice associated to the Dynkin diagrams of type $A_n$, $D_n$ and $E_{6,7,8}$.
The \defn{multiplicity} of $\gamma = \sum_{i \in I} n_i \alpha_i \in \rl^+ \setminus \Pi$ is the integer, denoted by $\mul(\gamma)$,
defined as follows:
$$ \mul(\gamma) := \max( n_i \ | \ i \in I ).$$
In particular, if $\mul(\ga)=1$, then we say that $\ga$ is \defn{multiplicity free}.

\smallskip

For an AR-quiver $\GQ$,
\begin{enumerate}
\item[{\rm (i)}] a pair $(\al,\be)$ of positive roots is \defn{sectional} in $\Gamma_Q$ if
$$  \Delta(i,j)= |p-q| \qquad \text{ for } \phi_Q^{-1}(\al,0)=(i,p) \quad \text{ and } \quad \phi_Q^{-1}(\be,0)=(j,q),$$
\item[{\rm (ii)}] a full subquiver $\rho$ of $\GQ$ is \defn{sectional} if every pair $(\al,\be)$ in $\rho$ is
sectional.
\item[({\rm iii})] a connected subquiver $\rho$ in $\GQ$ is called an \defn{$S$-sectional} (resp. \defn{$N$-sectional}) path if it is a concatenation of downward (upward) arrows,
and there is no longer connected path consisting of downward arrows (resp. upward arrows) containing $\rho$.
\end{enumerate}
We write \defn{$N$-path} (resp. \defn{$S$-path}) instead of
$N$-sectional path (resp. $S$-sectional path) for brevity.

\subsection{Type $A_n$} In this subsection, we assume that $Q$ is of type $A_n$.
%
For $\be=[a,b] \in \PR$ of type $A_n$, we say $a$ the \defn{first component} of $\be$ and $b$ the \defn{second component} of $\be$.

\begin{example} \label{ex: example A}
Consider the quiver $Q$ $\xymatrix@R=3ex{ *{ \circ }<3pt> \ar@{->}[r]_<{1}  &*{\circ}<3pt>
\ar@{<-}[r]_<{2}  &*{\circ}<3pt>
\ar@{->}[r]_<{3} &*{\circ}<3pt>
\ar@{->}[r]_<{4}  & *{\circ}<3pt> \ar@{-}[l]^<{\ \ 5}
}$ of type $A_5$. We set $\xiQ_1=0$. 
$$\Gamma_Q= \raisebox{4.5em}{ \GammaA}$$
\end{example}

In Example~\ref{ex: example A}, the full subquivers
\begin{align*}
& [1,2]\to[1,5]\to[1,4]\to[1,3]\to[1]  \ \ \text{and} \ \ [5]\to[4,5]\to[2,5]\to[1,5]\to[3,5]
\end{align*}
are $N$-path and $S$-path, respectively.

\begin{theorem}\cite[Theorem 1.11]{Oh14A} \label{Thm: sectional A}
Every positive root in an $N$-path has the same first
component and every positive root in an $S$-path has the same
second component. Thus for $1 \le i \le n$, $\GQ$ contains an $N$-path with $(n-i)$-arrows
  once and exactly once. At the same time, $\GQ$ contains an $S$-path with $(i-1)$-arrows once and exactly once.
\end{theorem}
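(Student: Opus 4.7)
The plan is to prove Theorem~\ref{Thm: sectional A} in two stages: first show that each sectional arrow preserves one endpoint of the interval label, then derive existence and uniqueness of the maximal sectional paths by counting.

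For the first stage, I would start from the initial positive roots. By \eqref{eq: gamma,theta 2}, since the underlying Dynkin diagram of $Q$ is a linear path, the set $B(i)$ of vertices $j$ admitting a path $j \to \cdots \to i$ in $Q$ is an interval of $I$ containing $i$; hence $\gamma^Q_i = [a_i, c_i]$ for some $a_i \le i \le c_i$. By \eqref{eq: cha AR}, every vertex of $\GQ$ has the form $(i, p)$ with $\xiQ_i - 2\mQ_i \le p \le \xiQ_i$, and its label is obtained from $\gamma^Q_i$ by iterating $\tau_Q^{-1}$ via the recursion in \eqref{eq:phi}. The key combinatorial claim is that in type $A_n$ the application of $\tau_Q^{-1}$ to an interval $[a, b] \in \PR$ (whenever the result remains in $\PR$) modifies exactly one endpoint. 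Using the additive property \eqref{eq: additive}, one can then pin down the neighbors of $[a,b]$ in $\GQ$: the arrow $(i,p) \to (i+1, p+1)$, when present, carries $[a, b]$ to a positive root still having second component $b$, while $(i, p) \to (i-1, p+1)$ carries it to one still having first component $a$. I would prove this by induction on $p$, with the base case handled directly by the interval shape of $\gamma^Q_i$, and the inductive step forced by the constraint $[a,b] + \tau_Q^{-1}[a,b] = \sum_{\beta \in [a,b]^+} \beta$ from \eqref{eq: additive}.

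For the second stage, once Stage~1 is in hand, the positive roots with second component equal to a fixed $i$ are precisely $\{[a, i] : 1 \le a \le i\}$, a set of cardinality $i$. Each such root lies on some $S$-sectional path, and since Stage~1 forces these paths to consist only of roots sharing the same second component and each positive root appears at a unique vertex of $\GQ$, the $i$ roots with second component $i$ must assemble into a single $S$-sectional path of length $i-1$. Maximality is immediate, since any strictly longer $S$-sectional path would contain a root with a different second component, violating Stage~1; uniqueness follows from the fact that each positive root labels exactly one vertex of $\GQ$. The statement for the $N$-sectional paths, using the first component $i$ and the $n-i+1$ roots $\{[i, b] : i \le b \le n\}$, is entirely symmetric.

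The main obstacle lies in Stage~1: tracking the action of $\tau_Q^{-1}$ on interval labels uniformly across all orientations of $Q$. A clean way to organize this is via the reflection functor description of Remark~\ref{rem: comb refl}. I would first verify the endpoint-preservation claim for one convenient orientation (for instance the linear orientation $1 \to 2 \to \cdots \to n$, where the action of $\tau_Q$ on intervals is especially transparent), and then transport to an arbitrary Dynkin quiver of type $A_n$ by expressing it as $\re_{i_1}\cdots \re_{i_r}Q$ for some sequence of sink-reflections, as in Convention~\ref{conv: strategy E}. Each reflection functor removes one source vertex from $\GQ$ and adds one sink vertex while relabeling the remaining roots by $\re_i$, and one checks that this operation maps $S$-sectional (resp.\ $N$-sectional) arrows to $S$-sectional (resp.\ $N$-sectional) arrows and preserves the endpoint-preservation property, completing the reduction.
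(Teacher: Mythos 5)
This theorem is cited from \cite[Theorem 1.11]{Oh14A}, so the present paper contains no proof to compare against; I am assessing your proposal on its own terms. Your overall plan (derive the arrow behavior from the additive mesh relation~\eqref{eq: additive}, then count) is reasonable, but the step you flag as the ``key combinatorial claim'' in Stage~1 is false. You assert that applying $\tau_Q^{-1}$ to an interval $[a,b]\in\PR$ modifies exactly one endpoint. Already for $Q\colon 1\to 2\to 3$ of type $A_3$ one has $\tau_Q=s_1s_2s_3$ and a direct computation gives $\tau_Q([1,2])=[2,3]$, hence $\tau_Q^{-1}([2,3])=[1,2]$, with both endpoints shifted. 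The same happens in the paper's Example~\ref{ex: example A}: $[4,5]$ sits at $(2,-5)$ and $[2,4]$ at $(2,-3)$, so $\tau_Q^{-1}([4,5])=[2,4]$, again changing both endpoints. The correct picture at an interior level $1<i<n$ is that $\tau_Q^{-1}[a,b]=[c,d]$ \emph{meets} $[a,b]$ nontrivially and the mesh relation $[a,b]+[c,d]=\eta_{i-1}+\eta_{i+1}$ is resolved by the interval exchange $\eta_{i-1}=[a,d]$, $\eta_{i+1}=[c,b]$; at a boundary level $i\in\{1,n\}$, $\tau_Q^{-1}[a,b]$ instead abuts $[a,b]$ (so their union is the single outgoing neighbor). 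Since what you want to \emph{prove} is precisely the endpoint behavior along arrows, deducing it from the additive relation requires knowing this overlap/abutment structure for $\tau_Q^{-1}$ as input, and you have replaced it with a statement that is not true.

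There is also a secondary gap in Stage~2: knowing that each $S$-sectional arrow preserves the second component only tells you that the roots $\{[a,i]:1\le a\le i\}$ partition into $S$-sectional paths, not that they form a \emph{single} maximal path. You need an additional argument (e.g.\ about the boundary of $\GQ$, or a uniqueness step in an induction) to rule out two disjoint maximal $S$-sectional paths carrying the same second component. Your fallback of verifying for one orientation and transporting by reflection functors as in Remark~\ref{rem: comb refl} is sound in principle, but it is not carried out, and it would in any case still require a correct base case rather than the false claim above.
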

Hence we can say that an $N$-path $\rho$ is the \defn{$(N,i)$-path} if it contains all positive roots whose first components are $i$. Similarly, the notion \defn{$(S,i)$-path} is defined.


Let $\kp$ and $\sigma$ be subsets of $\PR$ defined as follows:
\[
\kp \seteq \{  \beta \in \PR \ | \ \phi^{-1}_{Q,1}(\beta)=1  \}, \qquad \sigma \seteq \{  \beta \in \PR \ | \ \phi^{-1}_{Q,1}(\beta)=n  \}.
\]
We label the positive roots in $\kp=\{\kp_1,\ldots,\kp_r\}$ and $\sigma=\{\sigma_1,\ldots,\sigma_s\}$ in the following way:
$$  \phi^{-1}_{Q,2}(\kp_{i+1})+2= \phi^{-1}_{Q,2}(\kp_{i}), \   \phi^{-1}_{Q,2}(\sigma_{j+1})-2= \phi^{-1}_{Q,2}(\sigma_{j}) \   \text{for } 1 \le i < r
\text{ and } 1 \le j <s.$$

\begin{lemma}~\cite[Corollary 1.15]{Oh14A} \label{lem: first and last A} 
\begin{enumerate}
\item[{\rm(a)}] If $\kp_i=[a,b]$, then $\kp_{i+1}=[b+1,c]$ for some $a \le b < c$.
\item[{\rm(b)}] If $\sigma_j=[a,b]$, then $\sigma_{j+1}=[b+1,c]$ for some $a \le b < c$.
\end{enumerate}
\end{lemma}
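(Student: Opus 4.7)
The plan is to deduce both (a) and (b) from the additive property \eqref{eq: additive} of $\GQ$ combined with Theorem~\ref{Thm: sectional A}, exploiting that the vertices $1$ and $n$ are leaves of the Dynkin diagram and therefore have a unique neighbor in $\Z Q$, so that the additive sum collapses to a single term.

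For (a), write $\phi_Q^{-1}(\kp_i, 0) = (1, p)$; the enumeration of $\kp$ gives $\phi_Q^{-1}(\kp_{i+1}, 0) = (1, p-2)$, equivalently $\kp_{i+1} = \cox{Q}(\kp_i)$. Applying \eqref{eq: additive} with the $+$ sign yields
$$\kp_i + \kp_{i+1} = \sum_{\be \in \kp_i^{-}} \be.$$
The only in-neighbor of $(1, p)$ in $\Z Q$ is $(2, p-1)$, so either $\kp_i^{-} = \emptyset$ or $\kp_i^{-} = \{\eta\}$, where $\eta$ is the positive root occupying $(2, p-1)$ in $\GQ$. The left-hand side is strictly positive, so the second case must hold and $\eta = \kp_i + \kp_{i+1}$. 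The arrow $(2, p-1) \to (1, p)$ is $N$-sectional, so Theorem~\ref{Thm: sectional A} forces $\eta$ and $\kp_i = [a, b]$ to share their first component; writing $\eta = [a, c]$ gives $\kp_{i+1} = [a, c] - [a, b] = [b+1, c]$, and positivity of $\kp_{i+1}$ forces $c > b$.

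For (b) the argument is symmetric. Write $\phi_Q^{-1}(\sigma_j, 0) = (n, p)$, so $\sigma_{j+1}$ occupies $(n, p+2)$. The unique out-neighbor of $(n, p)$ in $\Z Q$ is $(n-1, p+1)$, and by the same reasoning \eqref{eq: additive} forces a positive root $\eta'$ of $\GQ$ at $(n-1, p+1)$ with $\sigma_j + \sigma_{j+1} = \eta'$. The $N$-sectional arrow $(n, p) \to (n-1, p+1)$ forces $\eta'$ to share the first component of $\sigma_j = [a, b]$, while the $S$-sectional arrow $(n-1, p+1) \to (n, p+2)$ forces it to share the second component of $\sigma_{j+1}$; hence $\eta' = [a, c]$ where $c$ is the second component of $\sigma_{j+1}$, and therefore $\sigma_{j+1} = \eta' - \sigma_j = [b+1, c]$ with $c > b$. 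The only delicate step in either part is the verification that the neighbor vertex actually lies in $\GQ$, but this is forced (not assumed) by the nonvanishing of the left-hand side of the additive identity, so no separate appeal to \eqref{eq: cha AR} is needed.
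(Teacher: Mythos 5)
The paper does not prove this lemma itself; it is imported wholesale as Corollary~1.15 from \cite{Oh14A}, so there is no in-paper proof to compare against. Your argument is correct and self-contained. The key mechanism---that $\kp_{i+1}=\tau_Q(\kp_i)$ and $\sigma_{j+1}=\tau_Q^{-1}(\sigma_j)$ because the positions on levels $1$ and $n$ in $\GQ$ form a consecutive block, combined with the additive identity \eqref{eq: additive} degenerating to a single summand at the leaf vertices $1$ and $n$---is exactly right, and your observation that the nonvanishing of $\al+\tau_Q^{\pm1}(\al)$ forces the unique neighbor to lie in $\GQ$ (rather than appealing separately to \eqref{eq: cha AR}) is a clean way to close the loop. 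The application of Theorem~\ref{Thm: sectional A} to identify the shared first component along the $N$-sectional arrow into $(1,p)$ (resp.\ out of $(n,p)$) then pins down $\eta=[a,c]$, and positivity of the difference gives $c>b$. One small remark: in part (b) the appeal to the $S$-sectional arrow $(n-1,p+1)\to(n,p+2)$ is redundant---once you know $\eta'=[a,c]$ shares the first component $a$ with $\sigma_j=[a,b]$, the subtraction $\sigma_{j+1}=\eta'-\sigma_j=[b+1,c]$ and positivity already force $c>b$, exactly as in part (a). The extra observation is true but not needed, so part (b) is literally the mirror image of part (a).
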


\subsection{Type $D_n$} In this subsection, we assume that $Q$ is of type $D_n$. The
involution $^*$ induced by $w_0 \in \W$ is given by $i^*=i$ for $1 \le i \le n-2$ and $(n-1)^*=n-1$, $n^*=n$ if $n$ is even,
$(n-1)^*=n$, $n^*=n-1$ if $n$ is odd. The $\mQ_i$ in~\eqref{eq: Nakayama} is given by $\mQ_i=n-2$ for $1 \le i \le n-2$ and 
\begin{align}\label{eq: mQ D}
\begin{cases}
\mQ_{n-1}=n-3, \ \mQ_n=n-1 & \text{ if } n \equiv 1 \ ({\rm mod} \ 2) \text{ and } \xiQ_{n}=\xiQ_{n-1}+2,
\\
\mQ_{n-1}=n-1, \ \mQ_n=n-3 & \text{ if } n \equiv 1 \ ({\rm mod} \ 2) \text{ and }\xiQ_{n-1}=\xiQ_{n}+2,\\
\mQ_{n-1}=\mQ_n=n-2 & \text{ otherwise. }
\end{cases}
\end{align}

Note that each $\beta \in \PR$ can be written as $\ve_a \pm \ve_b$. We say $\ve_a$ and $\pm \ve_b$ as \defn{summands} of $\be$.

\begin{example} \label{ex: example D}
Let us consider the quiver $Q=\raisebox{1.2em}{\xymatrix@R=0.5ex{
&&*{\circ}<3pt> \ar@{->}[dl]^<{ 3} \\
*{\circ}<3pt> \ar@{<-}[r]_<{1}  &*{\circ}<3pt>
\ar@{->}[dr]_<{2}\\
&&*{\circ}<3pt> \ar@{-}[ul]^<{\ \ \ 4}}}$
of type $D_4$. Then the $\GQ$ can be described as follows:
\[
\GQ = \raisebox{2.2em}{ \GammaD}.
\]
\end{example}

\begin{lemma}~\cite[Lemma 1.12]{Oh14D} \label{lem: n-1,n D}
For a Dynkin quiver $Q$, let $\{\ta,\ta'\} \in \{n-1,n\}$ given as follows:
\begin{align} \label{eq: def t}
\ta \seteq \begin{cases} n-1 \\  n \end{cases} \quad \text{ and } \quad \ta'= \begin{cases} n & \text{ if } \xi_{n-1}-\xi_{n}=\pm 2,  \\
n-1 & \text{ if } \xi_{n-1}-\xi_{n}=0. \end{cases}
\end{align}
 Then every positive root  $\be$ with $\phi^{-1}_{Q,1}(\be) \in \{n-1,n\}$ has $\ve_{\ta }$ or $-\ve_{\ta}$ as its summand. Conversely,
every positive root $\be$ having $\ve_{\ta }$ or $-\ve_{\ta}$ as its summand has its residue with respect to $Q$ as $n-1$ or $n$.
\end{lemma}

\begin{definition} \hfill
\begin{enumerate}
\item[({\rm a})] An $S$-path (resp. $N$-path) is \defn{shallow} if it ends (resp. starts) at level less than $n-1$.
\item[({\rm b})] A connected subquiver $\varrho$ in $\GQ$ is called a \defn{swing} if it consists of vertices and arrows in the following way: There exist roots
$\al,\be \in \PR$ and $r,s \le n-2$ such that
\begin{align} \label{eq: swing def}
\swing \text{ where }
\end{align}
the connected quiver on the left (resp. right) to $\begin{matrix}\al \\ \be \end{matrix}$ in~\eqref{eq: swing def} is $S$-sectional (resp. $N$-sectional) and $\phi^{-1}_{Q,1}(\al)=n$, $\phi^{-1}_{Q,1}(\beta)=n-1$.
\end{enumerate}
\end{definition}

The following lemma tells information on the positions of positive roots $\be$ with $\mul(\be)=2$ in $\GQ$.

\begin{lemma}~\cite[Corollary 1.15]{Oh14D} \label{lem: nfree position D} Set
\begin{align*}
& a=\max( \phi^{-1}_{Q,2}(\be) \mid \ \phi^{-1}_{Q,1}(\be) \in \nmn, \ \het(\be)\ge 2 ), \\
& b=\min( \phi^{-1}_{Q,2}(\be) \mid \ \phi^{-1}_{Q,1}(\be) \in \nmn, \ \het(\be)\ge 2 ).
\end{align*}
Then we have the followings:
\begin{enumerate}
\item[{\rm (a)}] $a-b=2(n-3)$.
\item[{\rm (b)}] $\be$ is \emph{not} multiplicity free if and only if
\begin{align} \label{eq: nfree root condition}
 1 < \ell \seteq \phi^{-1}_{Q,1}(\beta) < n-1 \qquad \text{ and } \qquad b-(n-1-\ell)\le  \phi^{-1}_{Q,2}(\be) \le a-(n-1-\ell).
\end{align}
\end{enumerate}
\end{lemma}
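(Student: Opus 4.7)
The plan is to combine the swing decomposition of $\GQ$ with Lemma \ref{lem: n-1,n D}, the formula \eqref{eq: mQ D} for $\mQ_{n-1}, \mQ_n$, and the additive property \eqref{eq: additive}. First, expanding $\ve_a + \ve_b$ ($1 \le a < b$) in simple roots shows that the coefficient of $\al_c$ is $2$ precisely when $b \le c \le n-2$; hence the multiplicity-non-free roots of $D_n$ are exactly $\ve_a + \ve_b$ with $a < b \le n-2$. By Lemma \ref{lem: n-1,n D}, every root at level $n-1$ or $n$ contains $\pm\ve_\tau$ as a summand and is therefore multiplicity-free, which combined with the analogous combinatorial fact that the roots at level $1$ (those in the $\tau_Q$-orbit of $\al_1$) are also multiplicity-free, yields $\ell \in \{2, \ldots, n-2\}$ for every multiplicity-non-free root, establishing the first assertion of (b).

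For part (a), Lemma \ref{lem: n-1,n D} identifies the level-$\{n-1, n\}$ roots as the $2(n-1)$ positive roots $\ve_a \pm \ve_\tau$ for $1 \le a \le n-1$. Excluding the two simple roots $\al_{n-1}$ and $\al_n$ leaves $2(n-2)$ non-simple roots. The swing structure matches these into $n-2$ pairs, each pair sharing a common column $u$ with one vertex at $(n-1,u)$ and one at $(n,u)$. Successive matched pairs are related by $\tau_Q$-translation, so by \eqref{eq: additive} their columns differ by exactly $2$. Therefore the set of fork columns forms an arithmetic progression of length $n-2$ with common difference $2$, yielding $i - j = 2(n-3)$.

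For part (b), I would locate each multiplicity-non-free root $\be$ of residue $\ell$ on the interior of the unique swing containing it, specifically at the residue-$\ell$ position on the $S$-sectional path. One step along the $S$-sectional path shifts residue by $+1$ and column by $+1$, so descending $n-1-\ell$ steps backward from the fork at column $u$ one arrives at column $u - (n-1-\ell)$. I would verify this by induction on $n-1-\ell$ using \eqref{eq: additive}: the base case $\ell = n-2$ computes $S_{n-2} = \ve_a + \ve_{n-2}$ from $S_{n-2} + \tau_Q(S_{n-2}) = \be + \al = 2\ve_k$, while the inductive step applies \eqref{eq: additive} at a multiplicity-$2$ vertex to show the next inward vertex $S_{\ell}$ remains of the form $\ve_a + \ve_{b'}$ with $b' \le n-2$. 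Combined with $j \le u \le i$ from part (a), this yields $j - (n-1-\ell) \le \phi_{Q,2}^{-1}(\be) \le i - (n-1-\ell)$.

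The main obstacle is the inductive identification of multiplicity-$2$ roots along a swing's sectional path: at each step, one must show that the vertex provided by \eqref{eq: additive} remains multiplicity-$2$ (rather than collapsing to a multiplicity-free root via cancellation), and that the $S$-sectional path extends back exactly as far as $\ell = 2$. A secondary technicality is the asymmetric case $\mQ_{n-1} \ne \mQ_n$ in \eqref{eq: mQ D}, where the two fork-pair candidates at the extremal columns split unevenly; here the Nakayama permutation \eqref{eq: Nakayama} allows one to re-match the extremal pairs across levels $n-1$ and $n$ and re-establish the arithmetic-progression structure needed for (a).
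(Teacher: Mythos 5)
This lemma is quoted in the paper from \cite[Corollary 1.15]{Oh14D} with no proof supplied; there is therefore no in-paper argument to compare against, and your attempt has to be judged on its own terms using the auxiliary results the paper does quote from \cite{Oh14D}.

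Your classification of the multiplicity-non-free roots of $D_n$ as exactly the $\ve_a+\ve_b$ with $a<b\le n-2$ is correct, and the deduction that levels $1$, $n-1$, $n$ contain only multiplicity-free roots (via Lemma~\ref{lem: n-1,n D} and Lemma~\ref{lem: first}) is sound, as is the count argument for part (a): the $n-2$ swing-forks occupy $n-2$ consecutive columns at levels $\{n-1,n\}$, none of the fork roots is simple, and the two simple roots $\al_{n-1},\al_n$ sit at the extremes, which gives $i-j=2(n-3)$ in every case of \eqref{eq: mQ D}.

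There is, however, a genuine gap in part (b). You place $\be$ ``at the residue-$\ell$ position on the $S$-sectional path,'' yielding $\phi^{-1}_{Q,2}(\be)=u-(n-1-\ell)$ where $u$ is the fork column. But Lemma~\ref{lem: reverse uni}(a) locates a multiplicity-$2$ root $\lf a\mid b\rf$ on the \emph{longer} part of the $b$-swing, and Lemma~\ref{lem: reverse uni}(b) shows that this longer part is the $N$-part for roughly half the swings (those whose forks sit at columns left of $\sigma_{\ell}$). On the $N$-sectional path one step away from the fork \emph{increases} the column, so a root of residue $\ell$ there sits at column $u+(n-1-\ell)$, and your one-sided formula no longer applies. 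For the stated bound $\phi^{-1}_{Q,2}(\be)\le i-(n-1-\ell)$ to hold in that case one must show $u\le i-2(n-1-\ell)$, which is a non-trivial constraint; it is true, but only because the unimodal sequence $(i_{\sigma_a})$ of Lemma~\ref{lem: reverse uni}(b) forces exactly those swings with long $N$-parts to have small fork columns. Your write-up never invokes this, and without it the inequality is not established for the $N$-side roots. A related imprecision: you speak of ``the unique swing containing'' $\be$, but $\ve_a+\ve_b$ lies in both the $a$-swing and the $b$-swing; the relevant statement from Lemma~\ref{lem: reverse uni}(a) is specifically about the longer part of the $b$-swing.

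Finally, you correctly flag that the inductive additivity computation (that the vertices $S_\ell$ stay of the form $\ve_a+\ve_{b'}$) and the asymmetric case $\mQ_{n-1}\ne \mQ_n$ are only sketched. Neither of those is fatal to the approach, but together with the $N$-sectional omission they mean the proof as written does not yet close part (b).
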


\begin{theorem} \label{Thm: V-swing}~\cite[Theorem 1.19]{Oh14D}
 For every swing $\varrho$, there exists $1 \le k \le n-2$ such that
all roots in $\varrho$ contain $\ve_k$ as their summand. Moreover, $\varrho$ contains a simple root $\al_k$ and
is one of the following two forms:

\vskip -2em

\begin{align*}
& \thetaswing, \allowdisplaybreaks\\
&\quad \ \  \gammaswing.
\end{align*}
\end{theorem}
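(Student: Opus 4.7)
\medskip

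\noindent
\textbf{Proof proposal.} My plan is to combine the additivity relation \eqref{eq: additive} with Lemma~\ref{lem: n-1,n D} to pin down the $\ve$-structure of the two branching roots $\alpha, \beta$, and then to propagate the common summand $\ve_k$ along the $S$-sectional and $N$-sectional halves of the swing using the combinatorial description of $\GQ$ reviewed in Section~\ref{Sec: Sejin results}. Once every root in $\varrho$ is shown to carry $\ve_k$, the identification of the extremal positive root with $\theta^Q_k$ or $\gamma^Q_k$ falls out of the characterization \eqref{eq: gamma,theta 2}.

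First, I would analyze the branching point. Let $\alpha$ sit at $(n,u)$ and $\beta$ at $(n-1,u)$, so that $S_{n-2}$ and $N_{n-2}$ lie at $(n-2,u-1)$ and $(n-2,u+1)$ with $N_{n-2}=\tau_Q^{-1}(S_{n-2})$. Because both $\alpha$ and $\beta$ have residue in $\{n-1,n\}$, Lemma~\ref{lem: n-1,n D} forces each of them to contain $\pm\ve_{\ta}$ as a summand. Since $\alpha\ne\beta$ but they share the column $u$, they must carry opposite signs of $\ve_{\ta}$; combined with the general shape of positive roots in type $D_n$, this means the two roots are of the form $\ve_k+\ve_{\ta}$ and $\ve_k-\ve_{\ta}$ for a single index $k$. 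Ruling out the degenerate possibility $k\in\{\ta'\}$ then leaves $1\le k\le n-2$, matching the statement.

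Second, I would propagate the summand $\ve_k$ along both sectional paths. Starting from $S_{n-2}$, I would proceed inductively down the $S$-sectional path $S_r\to\cdots\to S_{n-2}$: at each step, applying \eqref{eq: additive} at $S_i$ together with the structural results from \cite{Oh14D} (which govern how $\ve$-summands transform under $\tau_Q^{\pm 1}$ and under the arrows of $\GQ$ in type $D$) shows that $S_i$ retains $\ve_k$ as a summand. A symmetric argument along the $N$-sectional path $N_{n-2}\to\cdots\to N_s$ handles the other half. The roots $\alpha,\beta$ themselves already contain $\ve_k$ by construction, so every vertex of the swing $\varrho$ contains $\ve_k$.

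Finally, I would identify the extremal simple root. By maximality of the swing, neither the $S$-path nor the $N$-path admits a further extension in the ambient $\GQ$. In the $\theta$-type configuration this means the source $S_r$ of the $S$-path has residue $k$ and is extremal among roots of residue $k$ in $\GQ$; via \eqref{eq: gamma,theta 2} it coincides with $\theta^Q_k=\sum_{j\in C(k)}\al_j$, and $\al_k$ appears by the extremality. In the $\gamma$-type configuration the sink $N_s$ of the $N$-path is extremal among roots of residue $k$ in the opposite direction, and the same formula identifies it with $\gamma^Q_k=\sum_{j\in B(k)}\al_j$, with $\al_k$ again sitting at the boundary. The main obstacle is the propagation step: one must carefully track how the $\ve$-summands of positive roots evolve under the arrows of $\GQ$ of type $D$, which requires the refined sectional-path analysis of Section~\ref{Sec: Sejin results} (especially Lemma~\ref{lem: nfree position D} on multiplicity non-free roots) to separate the \textit{generic} propagation from the special behavior near the branching level $n-2$.
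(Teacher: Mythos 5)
This theorem is quoted verbatim from \cite[Theorem 1.19]{Oh14D}; the present paper gives no proof of it, so there is no in-text argument to compare your sketch against, and I will judge the proposal on its own terms. Your outline identifies the right ingredients — Lemma~\ref{lem: n-1,n D}, the additivity relation \eqref{eq: additive}, and the formulas \eqref{eq: gamma,theta 2}--\eqref{eq: Nakayama} — but the two steps that actually carry the content are stated as if automatic when they are not.

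The crucial gap is at the branching point. From Lemma~\ref{lem: n-1,n D} you only know that $\alpha,\beta$ belong to $\{\ve_a\pm\ve_\ta : a\neq\ta\}$. You then assert that being distinct and sharing a column forces opposite signs of $\ve_\ta$ and a common index $k$; neither inference follows from those facts alone (e.g.\ $\alpha=\ve_a+\ve_\ta$, $\beta=\ve_b+\ve_\ta$ with $a\neq b$ are distinct roots at the same column that share neither sign nor first index). Pinning this down genuinely requires \eqref{eq: additive}: one first shows that the sign of $\ve_\ta$ alternates column by column along each of levels $n-1$ and $n$, because $\rho+\tau_Q^{-1}(\rho)$ must be a positive root at level $n-2$ and that is impossible unless the $\ve_\ta$-contributions cancel; one then uses $\alpha+\tau_Q(\alpha)=S_{n-2}=\beta+\tau_Q(\beta)$ together with the bijectivity of $\phi_Q$ to rule out a mismatch of the remaining indices. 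The boundary columns, where $\tau_Q(\alpha)$ or $\tau_Q(\beta)$ is no longer a positive root and additivity does not apply, must then be handled separately by identifying $\alpha$ or $\beta$ with $\gamma^Q_{n-1}$, $\gamma^Q_n$, $\theta^Q_{(n-1)^*}$ or $\theta^Q_{n^*}$ and reading the claim off \eqref{eq: gamma,theta 2}. The propagation step is under-specified in the same way, and citing Lemma~\ref{lem: nfree position D} there is off the mark: that lemma constrains the \emph{positions} of multiplicity-two roots, not how $\ve$-summands evolve along arrows. What you actually need is repeated use of \eqref{eq: additive} marching outward from the branching column, with care at the trivalent vertex of level $n-2$; until that is written out, the statement ``$S_i$ retains $\ve_k$'' is an assertion, not a proof.
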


From the above theorem, for $1 \le a \le n-2$, the swing $\varrho$, containing all positive roots with $\ve_a$ as its summand, is called by the
\defn{$a$-swing}.

\begin{theorem} \label{thm: short path}~\cite[Theorem 1.22]{Oh14D}
Let $\rho$ be a shallow $S$-path $($resp.~$N$-path$)$.
Then there exists $k \le n-2+\delta$ such that all positive roots in $\rho$ contain $-\ve_k$ as their summand and $\rho$ starts $($resp. ends$)$ at level $1$.
Here $\delta=1$ if $\{ n-1,n\}$ are sink or source, $\delta=0$ otherwise.
\end{theorem}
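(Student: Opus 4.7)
The plan is to establish the $S$-sectional case; the $N$-sectional version then follows automatically by passing to $Q^{\rm rev}$ (Remark \ref{rem: coxeter element and sink} (iv)), which interchanges $S$- and $N$-sectional paths while preserving the shallowness condition and the combinatorial structure invoked below. Throughout, I work inside the embedding $\phi_Q\colon \GQ \hookrightarrow \Z Q$ and use the characterization \eqref{eq: cha AR} together with the explicit values of $\mQ_i$ from \eqref{eq: mQ D}.

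First, I would show the ``starts at level $1$'' assertion. Let $\rho$ be a shallow maximal $S$-sectional path with initial vertex $(k_0,p_0)$ and terminal vertex $(k_1,p_1)$, so $k_1 \le n-2$. Maximality on the top end, combined with the shallowness $k_1<n-1$ and the boundary description \eqref{eq: cha AR}, forces $(k_1,p_1)$ to lie on the upper boundary of $\GQ$, i.e.\ $p_1=\xiQ_{k_1}$. Using $\mQ_i=n-2$ for $1\le i\le n-2$ and the constraint that $\xiQ$ differs by $1$ between adjacent vertices of $Q$, the only position on this boundary from which the $S$-path can have been built without first passing through level $n-1$ or $n$ is $(1, p_1-k_1+1)$; any other candidate for $(k_0-1,p_0-1)$ stays inside $\GQ$ and violates maximality. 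Hence $k_0=1$.

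Next, for the claim that every root on $\rho$ contains a common $-\ve_k$ summand, I apply Theorem \ref{Thm: V-swing}: the maximal swings partition off precisely those roots of the form $\ve_k\pm\ve_{\ta}$, and their union is exactly the portion of $\GQ$ that meets the levels $n-1$ and $n$. Since $\rho$ lies strictly below level $n-1$, it is disjoint from every swing, so each of its vertices carries a root of the form $\ve_a-\ve_b$ with $1\le a<b\le n$. Tracing an $S$-arrow $(i,p)\to(i+1,p+1)$ in $\GQ$ and applying the additive property \eqref{eq: additive}, I show that the index $b$ of the negative summand $-\ve_b$ is preserved along such an arrow as long as the path avoids the swings; thus all roots in $\rho$ share the same $-\ve_k$.

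Finally, the bound $k \le n-2+\delta$ comes from Lemma \ref{lem: n-1,n D} and the definition \eqref{eq: def t} of $\ta$: the summand $-\ve_{n-1}$ or $-\ve_n$ can appear only in roots whose residue lies in $\{n-1,n\}$, and Lemma \ref{lem: n-1,n D} restricts this to $\ve_{\ta}$ or $-\ve_{\ta}$. The possibility $k\in\{n-1,n\}$ therefore occurs precisely when $\{n-1,n\}$ are both sinks or both sources (so that the relevant shallow path lies in the ``$-\ve_{\ta}$ part'' of the repetition quiver), which is exactly when $\delta=1$; in all other orientations the permissible index $k$ is capped at $n-2$. The main obstacle I expect is the last step: the case analysis on the orientation at the fork $\{n-1,n\}$ interacts with \eqref{eq: mQ D} in a subtle way, and one has to verify by direct inspection (for $D_4$ via Example \ref{ex: example D}, and in general using the reflection-functor strategy of \eqref{eq: observations reflection}) that no shallow $S$-sectional path accidentally picks up a root containing $-\ve_j$ with $j>n-2+\delta$.
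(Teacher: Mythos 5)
This theorem is quoted verbatim as \cite[Theorem~1.22]{Oh14D}; the present paper does not reprove it, so I can only assess your argument on its own terms against the structure set up elsewhere in the paper.

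Your reduction to the $S$-sectional case via $Q^{\rm rev}$ and your argument that $\rho$ must start at level $1$ are both sound in outline: maximality at the terminal vertex $(k_1,p_1)$ with $k_1\le n-2$ indeed forces $p_1=\xiQ_{k_1}$ (the only way the arrow $(k_1,p_1)\to(k_1+1,p_1+1)$ can leave $\GQ$), and then one checks from $\mQ_i=n-2$ and $|\xiQ_i-\xiQ_{k_1}|\le k_1-i$ that every vertex $(i,\xiQ_{k_1}-(k_1-i))$ for $1\le i\le k_1$ lies inside the window $[\xiQ_i-2\mQ_i,\xiQ_i]$ of \eqref{eq: cha AR}, so maximality pulls $\rho$ back to level $1$.

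The second step, however, has a genuine gap. You assert that because $\rho$ lies strictly below level $n-1$, \emph{it is disjoint from every swing}. That is not true: a swing $\varrho$ has an $S$-part and an $N$-part both supported on levels $\le n-2$, with only the two branch vertices living at levels $n-1,n$. What uniqueness of the maximal $S$-sectional path through a given vertex of level $<n-2$ actually gives you is that a shallow $\rho$ cannot coincide with the $S$-part of any swing (that $S$-part extends up to levels $n-1,n$, so its ambient maximal $S$-path is not shallow). But $\rho$ is free to \emph{cross} the $N$-part of a swing in a single vertex, and at such a crossing the label could a priori be $\ve_a+\ve_k$ rather than $\ve_a-\ve_k$. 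To rule this out you would need an argument controlling the multiplicity-$2$ region, e.g.\ Lemma~\ref{lem: nfree position D}, not the swing theorem alone. So the claim that every root on $\rho$ is of the form $\ve_a-\ve_k$ for a common $k$ is not established by the reasoning you give.

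The third step is also incomplete. Lemma~\ref{lem: n-1,n D} tells you that roots carrying $\pm\ve_{\ta}$ sit only at levels $n-1,n$, so $k\ne\ta$ for the common summand $-\ve_k$ of a shallow path. When $\delta=1$ one has $\ta=n$ and $k\ne n$ gives exactly $k\le n-1=n-2+\delta$. But when $\delta=0$ one has $\ta=n-1$ and $\ta'=n$, so $k\ne n-1$, and the lemma places no obstruction to $k=n=\ta'$ — which would violate $k\le n-2$. To exclude $k=\ta'$ in the $\delta=0$ case one must show that the maximal sectional path whose roots all carry $-\ve_{\ta'}$ is not shallow (it picks up a root at level $n-1$ or $n$); this requires tracking Lemma~\ref{lem: first} and the orientation at the fork, and is precisely the case analysis you flag as an ``obstacle'' but do not carry out. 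As it stands the bound $k\le n-2+\delta$ is asserted rather than proved.
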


Similarly, we can define the notion of \defn{shallow $(N,-a)($resp.$(S,-a))$-path}.

\medskip

For the rest of this subsection, we record lemmas and notations in~\cite{Oh14D} which are essential for our assertions in later section.

\medskip

Let $\sigma$ be a subset of $\PR$ defined as follows:
\begin{equation} \label{eq: sigma}
\sigma \seteq \big\{  \be \in \PR \ | \ \phi^{-1}_{Q,1}(\be)=n-1, \  \be \not \in  \{ \al_{n-1},\al_{n}\} \big   \}.
\end{equation}
Note that $|\sigma|=n-2$. For $1 \le k \le n-3$, We set
\begin{eqnarray} &&
\parbox{95ex}{
\begin{itemize}
\item the positive roots in $\sigma=\{\sigma_1,\ldots,\sigma_{n-2}\}$ as $   \phi^{-1}_{Q,2}(\sigma_{k+1})+2= \phi^{-1}_{Q,2}(\sigma_{k})$,
\item indices $\{ i_{\sigma_1},i_{\sigma_2},\ldots,i_{\sigma_{n-2}}\}=\{ 1,2,\ldots,n-2 \}$ such that $\sigma_{k}$ is contained in $i_{\sigma_{k}}$-swing.
\end{itemize}}\label{eq:sigma enumeration}
\end{eqnarray}

\begin{lemma}~\cite[Corollary 1.25]{Oh14D}\label{lem: reverse uni} 
\begin{itemize}
\item[({\rm a})] A positive root $\be =\lf a , b \rf$ with $\mul(\be)=2$ $($equivalently, $a<b \in \Z_{\ge 1})$ is contained in the longer part of the $b$-swing.
\item[({\rm b})] There exists $1 \le \ell \le n-2$ such that $i_{\sigma_{\ell}}=1$ and
\begin{equation} \label{eq: reverse unimodal}
i_{\sigma_{1}}>i_{\sigma_{2}}>\cdots >i_{\sigma_{\ell}}=1 < i_{\sigma_{\ell-1}} < \cdots < i_{\sigma_{n-2}},
\end{equation}
where the shorter part of $i_{\sigma_{a}}$-swing $(a < \ell)$ is the $N$-part and the shorter part of $i_{\sigma_{b}}$-swing $(b > \ell)$ is the $S$-part.
\end{itemize}
\end{lemma}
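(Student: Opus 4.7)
The plan is to read off both parts of the lemma from the explicit shape of $\GQ$ given by Theorems \ref{Thm: V-swing} and \ref{thm: short path}, combined with the column bound of Lemma \ref{lem: nfree position D} and the additive mesh relation \eqref{eq: additive}.

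For part~(a), fix $1 \le a < b \le n-2$ so that $\lf a|b\rf = \ve_a+\ve_b$ has $\mul=2$; since $\ve_b$ appears as a summand, $\lf a|b\rf$ lies in the $b$-swing $\varrho$. First I would establish the $D_n$-analogue of Theorem \ref{Thm: sectional A}: along each $S$-sectional (resp.\ $N$-sectional) arm of $\varrho$, one of the two components of a root is preserved while the other shifts monotonically. This is immediate by iterating \eqref{eq: additive} at each mesh, combined with Theorem \ref{Thm: V-swing}. With this in hand, exactly one of the two arms of the $b$-swing consists of roots of the form $\ve_\ast + \ve_b$, while the opposite arm carries the $\ve_\ast - \ve_b$ branch forked off at the apex $\ve_b \pm \ve_\ta$; a direct count against the lengths recorded in Theorem \ref{Thm: V-swing} identifies this $+\ve_b$ arm as the one of length $n-3$, namely the longer one. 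Moreover the column bound $2 \le \phi^{-1}_{Q,1}(\ve_a+\ve_b) \le n-2$ from Lemma \ref{lem: nfree position D} independently excludes the shorter arm, whose extremal residue falls outside $\{2,\ldots,n-2\}$.

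For part~(b), the roots $\sigma_1,\ldots,\sigma_{n-2}$ sit at residue $n-1$, are enumerated by increasing column coordinate, and each is the apex of a unique $i_{\sigma_k}$-swing; hence $k \mapsto i_{\sigma_k}$ is a bijection $\{1,\ldots,n-2\} \to \{1,\ldots,n-2\}$. The key step is an \emph{incremental claim}: two consecutive apices $\sigma_k,\sigma_{k+1}$ (whose columns differ by $2$) have swing indices differing by exactly one. I would prove this by applying \eqref{eq: additive} at the mesh immediately to the right of $\sigma_k$ and tracking how one Coxeter step acts on the swing tail. Combined with the bijection property, this forces $(i_{\sigma_k})$ to be strictly unimodal with a unique minimum at some $\ell$. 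Theorem \ref{thm: short path} then pins down the minimum value as $1$, because the shallow sectional path reaching level~$1$ must live in the index-$1$ swing. Finally, whether the shorter part of each swing is $N$ or $S$ is determined by whether it takes the $\gamma$-form (shorter $N$-arm) or the $\theta$-form (shorter $S$-arm) of Theorem \ref{Thm: V-swing}. The transition must occur precisely at the index-$1$ swing, since that is the unique swing in which both arms have equal length $n-3$; and the left-to-right orientation of $\GQ$ forces the $\gamma$-form to dominate for $k<\ell$ and the $\theta$-form for $k>\ell$, matching the shorter-arm prescription in the statement.

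The main obstacle is the incremental claim in part~(b): one must rule out that the tail index jumps by more than one or that two consecutive apices share a swing index. This requires careful bookkeeping of how the Coxeter orbits $\tau_Q^j$ of each $\al_k$ ($1 \le k \le n-2$) pass through the $(n-1,n)$-band of $\GQ$, using the height function $\xi^Q$ together with the values of $\mQ_{n-1},\mQ_n$ from \eqref{eq: mQ D}. Once this is established, reverse unimodality is purely combinatorial, and part~(a) reduces to the sectional-component preservation principle combined with the column bound of Lemma \ref{lem: nfree position D}.
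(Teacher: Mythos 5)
This lemma is quoted from \cite[Corollary~1.25]{Oh14D}; the present paper does not supply a proof of it, so there is no in-text argument to compare against, and I assess your sketch on its own. Part~(a) is serviceable in outline: the counts match (the $n-2$ ``$+$''-roots of the form $\ve_a+\ve_b$ or $\ve_b+\ve_c$ not at the apex fill the long arm, while the $n-1-b$ ``$-$''-roots $\ve_b-\ve_c$ not at the apex fill the short arm), and the column bound from Lemma~\ref{lem: nfree position D} is a sensible cross-check, though you would still need to actually establish the $D_n$ version of the sectional-component-preservation principle from \eqref{eq: additive} together with Theorem~\ref{Thm: V-swing} rather than asserting it.

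Part~(b) has a fatal flaw. Your ``incremental claim'' --- that $i_{\sigma_k}$ and $i_{\sigma_{k+1}}$ always differ by exactly one --- is false and is actually inconsistent with the statement you are trying to prove. Since $k\mapsto i_{\sigma_k}$ is a bijection of $\{1,\ldots,n-2\}$, a sequence whose consecutive differences are all $\pm1$ and which has no repeats is a Hamiltonian walk on the path graph $1\!-\!2\!-\!\cdots\!-\!(n\!-\!2)$, hence strictly monotone; it can never turn around at a minimum without revisiting a value (at $i_{\sigma_\ell}=1$ your claim would force $i_{\sigma_{\ell-1}}=i_{\sigma_{\ell+1}}=2$). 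But \eqref{eq: reverse unimodal} does turn around, and when $1<\ell<n-2$ the jump across the minimum is necessarily larger than $1$. Concretely, in $D_5$ take the quiver with arrows $2\to1$, $2\to3$, $3\to4$, $3\to5$: one finds $\gamma^Q_1=\ve_1-\ve_3$ and $\gamma^Q_2=\ve_2-\ve_3$ (so the $1$- and $2$-swings are $\gamma$-form), while $\gamma^Q_3=\ve_2-\ve_4$ misses $\ve_3$ and $\theta^Q_3=\ve_3+\ve_4$ (so the $3$-swing is $\theta$-form); the $\gamma$/$\theta$ dichotomy built into the lemma then forces the index sequence to be $(2,1,3)$, and the step from $1$ to $3$ is $+2$. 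So the incremental claim fails, the deduction of unimodality from it collapses, and the subsequent identification of $\ell$ via the equal-arms property of the $1$-swing cannot independently rescue the argument, since it also rests on the incremental claim. What is actually required is an argument tracking how the $\gamma$-form and $\theta$-form swings interleave across the residue-$(n-1)$ row so that the decreasing and increasing runs together exhaust $\{1,\ldots,n-2\}$; that is the real content of \cite[Corollary~1.25]{Oh14D}, and your plan does not reach it.
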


Let $\kp$ be a subset of $\PR$ defined as follows:
\begin{equation} \label{eq: kappa}
\kp \seteq \{  \be \in \PR \ | \ \phi^{-1}_{Q,1}(\be)=1  \}.
\end{equation}
We label the positive roots in $\kp=\{\kp_1,\ldots,\kp_{n-1}\}$ in the following way:
$$   \phi^{-1}_{Q,2}(\kp_{i+1})+2=  \phi^{-1}_{Q,2}(\kp_{i}),  \quad  \text{ for } 1 \le i \le n-2.$$
Note that $|\kp|=n-1$ and each element in $\kp$ is contained in
only one shallow path, a sectional path sharing $\ve_{\ta'}$ or a sectional path sharing $-\ve_{\ta'}$.
We set, for $1 \le k \le n-1$,
indices $$\{ j_{\kp_{1}},j_{\kp_{2}},\ldots,j_{\kp_{n-1}} \}=\{ -2,\ldots,-n+2, \ta', -\ta' \}$$
such that $\kp_{s}$ contains $-\ve_{-j_{\kp_s}}$ or $\pm \ve_{\ta'}$ as its summand. 


\begin{lemma}~\cite[Corollary 1.26]{Oh14D} \label{lem: first}
\begin{enumerate}
\item[{\rm (1)}] There exists $\mathtt{l}$ such that
$$ |j_{\kp_{n-1}}|< \ldots < |j_{\kp_{\mathtt{l}}}|=\ta'=|j_{\kp_{\mathtt{l}-1}}| > |j_{\kp_{\mathtt{l}-2}}| > \cdots > |j_{\kp_{1}}|.$$
\item[{\rm (2)}] For  $s \le \mathtt{l}-1$, the sectional path sharing $-\ve_{j_{\kp_s}}$ is the $S$-sectional.
\item[{\rm (3)}] For  $s \ge \mathtt{l}$, the sectional path sharing $-\ve_{j_{\kp_s}}$ is the $N$-sectional.
\end{enumerate}
\end{lemma}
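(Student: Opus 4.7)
The plan is to use the path structure of $\GQ$ provided by Theorems~\ref{thm: short path} and~\ref{Thm: V-swing} to classify the elements of $\kp$ and then to read off their ordering from the geometry of level $1$. First, I would check that each $\kp_s$ lies on a unique maximal sectional path of $\GQ$: there are $n-3$ shallow sectional paths sharing $-\ve_k$ for $k \in \{2,\dots,n-2\}$, together with two non-shallow paths sharing $\ve_{\ta'}$ and $-\ve_{\ta'}$ respectively, which embed into swings through the fork at levels $\{n-1,n\}$ by Lemma~\ref{lem: n-1,n D}. Each such path meets level $1$ exactly once, yielding $n-1 = |\kp|$ hits and hence the assignment $s \mapsto j_{\kp_s}$ with $\{j_{\kp_1},\dots,j_{\kp_{n-1}}\} = \{-2,\dots,-(n-2),\ta',-\ta'\}$.

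Next I would show that the two non-shallow paths are adjacent along level $1$, i.e., that there is some $\mathtt{l}$ with $\{j_{\kp_{\mathtt{l}-1}}, j_{\kp_{\mathtt{l}}}\} = \{\ta',-\ta'\}$. This rests on the observation that the maximal paths sharing $\ve_{\ta'}$ and $-\ve_{\ta'}$ are precisely the two branches at the fork of the $1$-swing continued down to level $1$; thus their level-$1$ representatives are the terminating endpoint of the $N$-side and the starting endpoint of the $S$-side of the $1$-swing, which are consecutive in position by the Form~1/Form~2 shape of Theorem~\ref{Thm: V-swing}. For monotonicity on either side of the peak, I would compare the positions of the shallow sectional paths using the additive relation~\eqref{eq: additive} and the position constraint of Lemma~\ref{lem: nfree position D}: an induction on $|k|$ shows that the level-$1$ vertex of the path sharing $-\ve_k$ lies closer in $p$ to the peak as $|k|$ grows, because any shallow path sharing $-\ve_{k-1}$ must be nested around the longer shallow path sharing $-\ve_k$ in $\GQ$.

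The sectional-type assertion then follows from the observation that a maximal $S$-sectional path meets level $1$ only at its (leftmost, smallest-$p$) starting vertex, while a maximal $N$-sectional path meets level $1$ only at its (rightmost, largest-$p$) terminating vertex; combined with the convention $\phi^{-1}_{Q,2}(\kp_{s+1})+2 = \phi^{-1}_{Q,2}(\kp_s)$, this forces all the $N$-sectional endpoints to be indexed by $s < \mathtt{l}$ and all the $S$-sectional startpoints by $s \ge \mathtt{l}$, after the peak is accounted for. The main obstacle will be the bookkeeping around the fork when $\ta' \in \{n-1,n\}$ is determined by the parity condition~\eqref{eq: mQ D}; I would handle the two subcases uniformly by noting that, regardless of whether $\ta' = n-1$ or $\ta' = n$, the pair of non-shallow paths sharing $\pm\ve_{\ta'}$ always forms the peak and occupies two adjacent positions at level $1$, so the inductive argument on the shallow flanks is the same in both parities.
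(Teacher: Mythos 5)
This lemma is cited directly from \cite[Corollary 1.26]{Oh14D}; the paper does not prove it, so there is no in-paper argument to compare against. Evaluated on its own terms, your proposal has two concrete errors and one structural gap.

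The central error is in your second paragraph, where you claim the two non-shallow paths sharing $\ve_{\ta'}$ and $-\ve_{\ta'}$ are ``precisely the two branches at the fork of the $1$-swing continued down to level $1$,'' so that their level-$1$ representatives are the $N$-side and $S$-side endpoints of the $1$-swing. This is false: the $1$-swing is the set of roots containing $\ve_1$, while the maximal sectional paths sharing $\pm\ve_{\ta'}$ are different paths through different roots. Example~\ref{ex: example D} already refutes it. For that $D_4$ quiver, $\ta'=4$, the $1$-swing's two level-$1$ endpoints are $\kappa_1=\lf 1|-4\rf$ and $\kappa_3=\lf 1|-2\rf$, yet the peak pair of Lemma~\ref{lem: first} is $\kappa_1$ and $\kappa_2=\lf 2|4\rf$; the root $\kappa_2$ is not in the $1$-swing at all (it lies in the $2$-swing and on the $N$-sectional path $\lf 3|4\rf\to\lf 1|4\rf\to\lf 2|4\rf$ sharing $\ve_{\ta'}$). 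The adjacency of the peak pair therefore needs a genuine argument about where the $S$-sectional path sharing $-\ve_{\ta'}$ and the $N$-sectional path sharing $\ve_{\ta'}$ hit level $1$; it does not follow from Theorem~\ref{Thm: V-swing}.

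Your third paragraph reverses the $N/S$ assignment: you conclude that $N$-sectional endpoints carry indices $s<\mathtt{l}$ and $S$-sectional startpoints carry $s\ge\mathtt{l}$, whereas Lemma~\ref{lem: first} states the opposite. In the same $D_4$ example one has $\mathtt{l}=2$, and the distinguished path through $\kappa_1$ is the $S$-sectional path $\lf 1|-4\rf\to\lf 2|-4\rf\to\lf 3|-4\rf$, consistent with the lemma's ``$S$-sectional for $s\le\mathtt{l}-1$'' and not with your claim. Finally, your opening count is also set up loosely: each $\kappa_s$ lies on \emph{two} maximal sectional paths (one $N$-sectional, one $S$-sectional), and part of what the lemma encodes is that exactly one of these is of the distinguished type; asserting each $\kappa_s$ lies on ``a unique maximal sectional path'' glosses over that selection, which is a substantive part of the statement.
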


\section{The sequences of positive roots with respect to new notions} \label{Sec: socle PR}

In this section, we prove the existence of $[\tw]$-socle when $\tw$ is adapted to some Dynkin quiver $Q$ by using the results in Section~\ref{Sec: Sejin results} and
the new statistics in Section~\ref{sec: Positive roots of finite}. Then we investigate the relationship between the new statistics and the system of positive roots corresponding
to the underlying Dynkin diagram $\Delta$ of $Q$. These results will be applied to the representation theories for KLR-algebras and quantum affine algebra related to $\Delta$.


\subsection{Socle of pairs}
\begin{proposition} \label{prop: dir Q cnt}
For a Dynkin quiver $Q$, let us assume that a pair $(\al,\be)$ is sectional in $\GQ$.
\begin{enumerate}
\item[{\rm (a)}] The pair $(\al,\be)$ is $[Q]$-simple.
\item[{\rm (b)}] $\al \cdot \be=1$.
\item[{\rm (c)}] Either $\al-\be$ or $\be-\al \in \PR$.
\item[{\rm (d)}] Either $(\al,\be-\al)$ is a $[Q]$-minimal pair of $\al$ or $(\be,\al-\be)$ is a $[Q]$-minimal pair of $\be$.
\end{enumerate}
\end{proposition}

\begin{proof}
Since $(\al,\be)$ is sectional, Theorem~\ref{thm: Ar-quiver and equ redu} implies that
there exist $w \in \W$ and a sectional path
$$\left\{ \beta=w(\al_{i_1}) \to ws_{i_1}(\al_{i_2}) \to \cdots \to w \left(\prod_{s=1}^{k} s_{i_{s}}\right)(\al_{i_{k+1}})=\alpha \right\} \text{ in $\Gamma_Q$}.$$
Since the pairing $ \cdot $ is invariant under the action $\W$, the first three assertions follow from the observation on the set
$$\left\{ \al_{i_1}, s_{i_1}(\al_{i_2}),s_{i_1}s_{i_2}(\al_{i_3}) \ldots,\left(\prod_{s=1}^{k} s_{i_{s}}\right)(\al_{i_{k+1}}) \right\}.$$
The fourth assertion for type $A_n$ follows from~\cite[Theorem 3.4]{Oh14A}, and the one for type $D_n$ follows from~\cite[Corollary 4.22]{Oh14D}.

The fourth assertion for type $E_{6,7,8}$ can be checked for the Dynkin quivers $Q$ and their AR-quivers $\GQ$ in Appendices. Then one can check for each Dynkin quiver $Q$,
by the arguments in Remark~\ref{eq: observations reflection}, which is obtained by applying reflection functors.
Surely, this argument is also applicable to types $A_n$ and $D_n$.
\end{proof}

The rest of this subsection is devoted to prove the following theorem:

\begin{theorem} \label{thm: socle of pair}
For a reduced expression $\tw$ adapted to some Dynkin quiver $Q$ and a pair $\up=(\al,\be)$,
$$\text{$\soc_{[\tw]}(\up)$ exists uniquely.}$$
\end{theorem}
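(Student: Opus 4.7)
The plan is to perform a case analysis on the pair $\up = (\al,\be)$ inside $\GQ$, and then establish existence and uniqueness separately, relying on the combinatorial descriptions of $\GQ$ from Section \ref{Sec: Sejin results}. Throughout, since $\tw$ is adapted to $Q$, we have $[\tw] = [Q]$ by \eqref{eq: [Q]}, so $\soc_{[\tw]} = \soc_Q$ in the notation of Convention \ref{conv: [Q]}.

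First I would dispose of the easy cases. If $(\al,\be)$ is incomparable with respect to $\prec_Q$, then Lemma \ref{lem: incomp simple} gives $\soc_Q(\al,\be) = (\al,\be)$. If instead $(\al,\be)$ is directly $Q$-connected, then Proposition \ref{prop: dir Q cnt}(a) shows $(\al,\be)$ is $[Q]$-simple, so again $\soc_Q(\al,\be) = (\al,\be)$. Hence I may restrict attention to the situation where $\al \prec_Q \be$ and the pair is not directly $Q$-connected.

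Existence is then nearly automatic: the set of sequences $\um \in \Z^{\ell(w_0)}_{\ge 0}$ with $\wt(\um) = \wt(\up)$ is finite, so there exists a $\prec^\tb_Q$-minimal $\us$ with $\us \preceq^\tb_Q \up$; a substitution argument---replacing any non-$[Q]$-simple subpair of $\us$ with a strict $\prec^\tb_Q$-refinement of it---forces the minimal $\us$ to be $[Q]$-simple. Notice that $\us$ need not itself be a pair, as Example \ref{ex: D4}(a) illustrates, where the socle has length $3$.

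The main obstacle is uniqueness, which I would handle type by type using the combinatorics of $\GQ$. In type $A_n$, the $N$- and $S$-sectional path structure (Theorem \ref{Thm: sectional A}) together with Lemma \ref{lem: first and last A} canonically determine the $[Q]$-simple descent of $(\al,\be)$ via the additive property \eqref{eq: additive}. In type $D_n$, the swing description (Theorem \ref{Thm: V-swing}), the shallow path description (Theorem \ref{thm: short path}), and the positional lemmas for multiplicity-$2$ roots (Lemma \ref{lem: nfree position D}, Lemma \ref{lem: reverse uni}, Lemma \ref{lem: first}) must be combined so that any legal descent is forced into a single canonical decomposition. In type $E_n$, by Convention \ref{conv: strategy E} I would verify uniqueness on one explicit quiver per $E_6, E_7, E_8$ using the AR-quiver data in the Appendix, and propagate the result to all other Dynkin quivers of type $E$ via reflection functors and \eqref{eq: observations reflection}(iv). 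The $D_n$ case is the most delicate, because socles can properly contain more than two roots and the two sheets of a swing interact non-trivially at levels $n-1,n$. Conceptually, the adapted hypothesis is essential: it is precisely this hypothesis that makes $\prec^\tb_Q$ coarse enough to force a unique $[Q]$-simple descent, in contrast with the non-adapted reduced expression of Example \ref{ex: D4}(b) in type $E_6$, where uniqueness of the socle genuinely fails.
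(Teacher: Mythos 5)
Your proposal only proves the special case $w=w_0$, and silently conflates it with the general statement. You write that ``since $\tw$ is adapted to $Q$, we have $[\tw]=[Q]$ by \eqref{eq: [Q]}'' --- but \eqref{eq: [Q]} is a statement about reduced expressions of the \emph{longest element} $w_0$, and $[Q]$ in Convention~\ref{conv: [Q]} denotes the commutation class of reduced expressions of $w_0$ adapted to $Q$. Theorem~\ref{thm: socle of pair}, however, is stated for a reduced expression $\tw$ of an arbitrary $w\in\W$ adapted to $Q$ (compare Definition~\ref{def: Q-family}, which is given immediately after and speaks of $\tw$ of $w\in\W$). If $\ell(w)<\ell(w_0)$ then $[\tw]\neq[Q]$, the sequences live in $\Z_{\ge 0}^{\ell(w)}$ rather than $\Z_{\ge 0}^{\N}$, and $\soc_{[\tw]}$ is \emph{not} the same object as $\soc_{Q}$.

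What you have actually sketched is the content of Proposition~\ref{prop: Q-socle for pair} (well-definedness of $\soc_Q$ for pairs, i.e.\ the $w_0$ case), and there your route --- easy cases via Lemma~\ref{lem: incomp simple} and Proposition~\ref{prop: dir Q cnt}, then a type-by-type case analysis with the sectional-path/swing descriptions for $A_n$ and $D_n$ and a reflection-functor propagation for $E_n$ --- is essentially the paper's own argument for that proposition. What is missing is the passage from $w_0$ to general $w$, which the paper supplies with Lemma~\ref{lem: general convexity}: one extends $\tw$ to a reduced expression $\redez=\tw*\tw'$ of $w_0$ adapted to $Q$, applies Proposition~\ref{prop: Q-socle for pair} to obtain $\soc_Q(\al,\be)$, and then uses the lemma to show that every root appearing with nonzero multiplicity in any $\us\prec^\tb_{[\redez]}(\al,\be)$ already lies in $\PR_w$, so the computation is confined to the $\tw$-piece and yields the (unique) $[\tw]$-socle. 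Without some such argument your proof does not establish the theorem as stated; I would fill this gap by invoking Lemma~\ref{lem: general convexity} explicitly.
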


\begin{proposition} \label{prop: Q-socle of pair with sum in PR}
For any pair $\up=(\al,\be)$ with $\ga=\al+\be \in \PR$ and $[\redez]$, $\soc_{[\redez]}(\al,\be)$ is well-defined by $\ga$. In particular,
$\dist_Q(\up)=1$ implies that $\up$ is a $[\redez]$-minimal pair of $\ga$.
\end{proposition}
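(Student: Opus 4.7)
The plan is to establish the socle equality $\soc_Q(\al,\be)=(\ga)$ (the length-one sequence with a single $1$ at the position of $\ga$) and then derive the $\dist_Q$ statement as a short corollary.

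For existence, the length-one sequence $(\ga)$ is tautologically $[Q]$-simple and satisfies $\wt((\ga))=\ga$. To check $(\ga)\prec^\tb_{[Q]}(\al,\be)$, fix $\redez'\in[Q]$: Papi's convexity theorem applied to $\ga=\al+\be\in\PR$ places $\ga$ strictly between $\al$ and $\be$ in $<_{\redez'}$, and taking $k$ and $s$ in Definition \ref{def: redezletb} to be the positions of $\al$ and $\be$ verifies $(\ga)<^\tb_{\redez'}(\al,\be)$ immediately.

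For uniqueness, suppose $\us$ is $[Q]$-simple with $\wt(\us)=\ga$ and $\us\preceq^\tb_{[Q]}(\al,\be)$. A direct unpacking of Definition \ref{def: redezletb} (matching the zero entries outside the positions of $\al,\be$ and forcing strict inequality at both endpoints) shows that in every $<_{\redez'}$ the support of $\us$ lies strictly between the positions of $\al$ and $\be$; in particular $|\us|=1$ immediately gives $\us=(\ga)$. Suppose for contradiction $|\us|\geq 2$, so $\us$ encodes a decomposition $\ga=\sum_{i=1}^{r}\ga_i$ with $r\geq 2$ and $\ga_i\in\PR$. The key sub-claim I need is that in simply-laced type any such decomposition admits distinct indices $i\ne j$ with $\ga_i+\ga_j\in\PR$. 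I plan to prove this by induction on $r$: the base $r=2$ is immediate since $\ga_1+\ga_2=\ga\in\PR$. For $r\geq 3$, the identity $\ga\cdot\ga=2=\sum_i\ga\cdot\ga_i$ combined with the simply-laced bound $\ga\cdot\ga_i\in\{-1,0,1\}$ (using $\ga_i\ne\ga$, which holds because $\sum_{j\ne i}\ga_j$ is a nonzero nonnegative combination of simple roots) forces at least two indices $i$ with $\ga\cdot\ga_i=1$; for any such $i$, $s_{\ga_i}(\ga)=\ga-\ga_i$ is a root, and being equal to $\sum_{j\ne i}\ga_j$ it has nonnegative simple-root coefficients, hence $\ga-\ga_i\in\PR$; the inductive hypothesis applied to this decomposition of $\ga-\ga_i$ into $r-1$ positive roots finishes the step. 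Armed with the sub-claim, pick such $\ga_i,\ga_j$ (distinct as roots because $2\ga_i\notin\PR$ in simply-laced type). These occupy two distinct positions with positive multiplicities in $\us$, so the extracted pair $(\ga_i,\ga_j)$ must be $[Q]$-simple by hypothesis on $\us$. But applying the existence step above to this new pair with sum $\delta:=\ga_i+\ga_j\in\PR$ yields $(\delta)\prec^\tb_{[Q]}(\ga_i,\ga_j)$, contradicting $[Q]$-simplicity of $(\ga_i,\ga_j)$. Hence $|\us|=1$ and $\us=(\ga)$.

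For the second assertion, assume $\dist_Q(\up)=1$ and suppose for contradiction that $\up$ is not a $[Q]$-minimal pair of $\ga$. Then some $\um$ satisfies $(\ga)\prec^\tb_{[Q]}\um\prec^\tb_{[Q]}\up$ with $\wt(\um)=\ga$; choose $\um$ minimal in this open interval. Then $\um$ is a $[Q]$-minimal sequence of $\ga$, hence a pair by Proposition \ref{pro: BKM minimal}. Applying the existence step to the pair $\um$ gives $(\ga)\prec^\tb_{[Q]}\um$ strictly, so $\um$ is not $[Q]$-simple. Consequently $\um\prec^\tb_{[Q]}\up$ is a chain of two non-$[Q]$-simple pairs of weight $\ga$, forcing $\dist_Q(\up)\geq 2$ and contradicting the hypothesis. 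The principal difficulty in the whole argument is the sub-claim inside the uniqueness step; the inner-product/induction argument is where the simply-laced hypothesis really matters, with the rest amounting to bookkeeping from convexity of the $<_{\redez'}$, the definitions of $[Q]$-simplicity and $\dist_Q$, and Proposition \ref{pro: BKM minimal}.
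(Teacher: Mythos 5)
Your proof is correct and follows the same strategy as the paper: show that the length-one sequence $(\ga)$ is the unique $[Q]$-simple sequence below $(\al,\be)$ of weight $\ga$ (by reducing any candidate $\us$ with $|\us|\ge 2$ to a violating sub-pair via convexity), and then deduce the $\dist_Q$ claim from Proposition~\ref{pro: BKM minimal}. The paper simply asserts without argument that a decomposition $\ga=\sum_k\ga_k$ with $\ga,\ga_k\in\PR$ and at least two summands always contains two summands adding to a root, whereas you actually prove this via the inner-product bound $\ga\cdot\ga_k\in\{-1,0,1\}$ and induction on the number of summands; that inductive lemma is the only genuine piece of content your write-up adds, and it is correct.
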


\begin{proof}
If $(\al,\be)$ is a pair of simple roots, our assertion is trivial. Assume that $(\al,\be)$ is not a pair of simple roots and there exists a $[\redez]$-simple
$\us$ such that $\us \prec_{[\redez]}^\tb (\al,\be)$.
\begin{enumerate}
\item[{\rm (i)}] If $|\us|=1$, then we have $\us=(\gamma)$, then our assertion follows from Proposition~\ref{pro: BKM minimal}.
\item[{\rm (ii)}] If $|\us|=2$, then it must be a pair. Since $\prec_{[\redez]}$ is convex, it yields the contradiction to the assumption that $\us$ is $[\redez]$-simple.
\item[{\rm (iii)}]  Now we can assume that $|\us|>2$. Then there exist indices $i \ne j$ such that
$s_i \ne 0$, $s_j \ne 0$ and $\be_i + \be_j \in \PR$. Thus our first assertion follows from the definition of $[\redez]$-simple sequence and the convexity of $\prec_{[\redez]}$.
\end{enumerate}
The second assertion follows from Proposition~\ref{pro: BKM minimal}.
\end{proof}

\begin{remark} \label{rem: skip E} 
For the rest of this paper, we usually skip the proofs of our assertions for Dynkin quiver $Q$ of types $E_{6,7,8}$.
Note that the combinatorics for $\GQ$ of type $E_{6,7,8}$ is not-well studied, comparing with the types for $A_n$ and $D_n$, and the system of positive roots for the types are quite complicated.
Since many parts of proofs in this paper use the combinatorial properties of $\GQ$ and the system of positive roots,
 we need to develop the $E_{6,7,8}$-analogue theories in Section~\ref{Sec: Sejin results}, to give a uniform proof for $A_n$, $D_n$ and $E_{6,7,8}$. However, as we mentioned in Remark~\ref{rem: strategy E},
there are only finitely many Dynkin quivers of type $E_{6,7,8}$, and the proofs for $E_{6,7,8}$ can be obtained by observing a fixed $\GQ$ and applying the strategy in Remark~\ref{eq: observations reflection} and
Remark~\ref{rem: strategy E} via reflection functors on AR-quivers in Remark~\ref{rem: comb refl}. If we give proofs for types $E_{6,7,8}$ with details, the paper would be long than necessary.
Hence, we sometimes give examples for types $E_{6,7,8}$ instead of giving proofs, by using the Dynkin quivers $Q$ in Appendices.
\end{remark}

\begin{proposition} \label{prop: Q-socle for pair}
For any pair $\up=(\al,\be)$ and any Dynkin quiver $Q$ of type $ADE$, $\soc_Q(\al,\be)$ is well-defined.
\end{proposition}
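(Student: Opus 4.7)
The plan is to establish well-definedness (i.e. existence together with uniqueness) by reducing to the three cases already handled: $(\al,\be)$ is $[Q]$-simple (trivial), $(\al,\be)$ is incomparable for $\prec_{[Q]}$ (Lemma \ref{lem: incomp simple}), or $\al+\be\in\PR$ (Proposition \ref{prop: Q-socle of pair with sum in PR}). Existence of some $[Q]$-simple $\us\prec^{\tb}_{[Q]}(\al,\be)$ with $\wt(\us)=\al+\be$ is free: the set of sequences of weight $\al+\be$ is finite and $\prec^{\tb}_{[Q]}$ is a partial order, so any strictly descending chain starting from $(\al,\be)$ terminates, and the terminal sequence is necessarily $[Q]$-simple. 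The substantive content is therefore uniqueness.

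For uniqueness I would first invoke reflection functors. Fix a sink $i$ of $Q$, so that $\al_i$ is $\prec_{[Q]}$-maximal. If neither entry of the pair equals $\al_i$, the correspondence \eqref{eq: observations reflection}(iv) establishes a bijection between $[Q]$-simple candidates below $(\al,\be)$ and $[\re_i Q]$-simple candidates below $(\re_i\al,\re_i\be)$, so uniqueness on one side is equivalent to uniqueness on the other. Iterating this procedure I reduce to the case where one component, say $\be$, equals the simple root $\al_j$ at a sink of the current quiver and is therefore $\prec_{[Q]}$-maximal. In this reduced configuration, every $[Q]$-simple $\us$ with $\wt(\us)=\al+\al_j$ lying $\prec^{\tb}_{[Q]}$-below $(\al,\al_j)$ must have all constituents strictly $\prec_{[Q]}$-below $\al_j$. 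I would then exploit Proposition \ref{prop: dir Q cnt}(d), which identifies $[Q]$-minimal pairs with directly $Q$-connected summands sitting along a sectional subquiver, together with the structural descriptions of $\GQ$ recalled in Section \ref{Sec: Sejin results}: the sectional-path classification in type $A_n$ (Theorem \ref{Thm: sectional A}, Lemma \ref{lem: first and last A}), and the swing and shallow sectional-path classification in type $D_n$ (Theorems \ref{Thm: V-swing}, \ref{thm: short path} and Lemmas \ref{lem: nfree position D}, \ref{lem: reverse uni}, \ref{lem: first}). These enumerations pin down the possible $[Q]$-simple decompositions of the weight $\al+\al_j$ uniquely. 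For type $E_n$, Convention \ref{conv: skip E} permits me to verify uniqueness on the finitely many Dynkin quivers displayed in the Appendix and then transport the conclusion by reflection via Convention \ref{conv: strategy E}.

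The main obstacle is this last combinatorial step. Existence, the preliminary reductions, and the reflection transfer are soft; the real labor is in ruling out two distinct $[Q]$-simple sequences of common weight $\al+\al_j$ lying below $(\al,\al_j)$ in $\prec^{\tb}_{[Q]}$. This requires tracking, using the convexity of $\prec_{[Q]}$ and the explicit enumerations of $N$- and $S$-sectional paths (and of swings in type $D$) in $\GQ$, how the $\prec_{[Q]}$-predecessors of $\al_j$ can combine with $\al$ (or with a decomposition of $\al$) to produce a competing $[Q]$-simple decomposition. The enumeration lemmas of Section \ref{Sec: Sejin results} reduce this to a bounded, case-based verification along the sectional geometry of $\GQ$, and it is precisely that verification which will take up the bulk of the proof.
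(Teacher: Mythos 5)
Your high-level framing is a clean observation the paper does not spell out: existence of \emph{some} $[Q]$-simple $\us\preceq^\tb_{[Q]}(\al,\be)$ of the right weight is automatic, since a $\prec^\tb_{[Q]}$-minimal sequence of fixed weight below $(\al,\be)$ is forced to be $[Q]$-simple (if a constituent pair $(\be_i,\be_j)$ were not $[Q]$-simple, replacing it by a strictly smaller sequence of the same partial weight would produce a strictly smaller sequence of the full weight, contradicting minimality). The paper never isolates this; it simply computes the socle in every case, getting existence and uniqueness together. So far so good.

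Where the proposal runs into trouble is the uniqueness half. Citing \eqref{eq: observations reflection}(iv) is circular: that item is stated \emph{under the assumption} that $\soc_Q$ and $\soc_{\re_iQ}$ are already well-defined, which is what you are trying to prove. What you actually want, and gesture at, is the order isomorphism $\prec_Q\cong\prec_{\re_iQ}$ on $\PR\setminus\{\al_i\}$ from item (i), together with the remark that every constituent of a candidate $\us\preceq^\tb_{[Q]}(\al,\be)$ avoids the maximal root $\al_i$ once $\be\ne\al_i$; only then do you get an honest, assumption-free bijection of $[Q]$-simple candidates on the two sides. The termination of the iteration (some image of $\be$ eventually equals $\al_j$ at a sink $j$) is also asserted rather than argued, though it is true and easy to see from the fact that $w_0\be\in\NR$. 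More substantively, the reduction to $\be=\al_j$ does not visibly shorten the case analysis: the paper normalizes $\al$ instead (onto the $1$-swing in type $D$, via the sectional-path enumeration in type $A$), and then carries out a detailed position-by-position identification of the socle in $\GQ$ — its cases (a)--(c) for $A_n$ and (i)--(viii) for $D_n$ — which is where essentially all the content of the proof lies. Your sketch stops precisely at this point, labeling it a ``bounded, case-based verification'' to be done along the sectional geometry. That is not a wrong approach, but as written it re-normalizes the pair and then leaves open exactly the computation that constitutes the paper's argument, so the proposal is a plausible plan rather than a proof, and the claimed simplification from the sink reduction is unsubstantiated.
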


By Lemma~\ref{lem: incomp simple}, Proposition~\ref{prop: dir Q cnt} and Proposition~\ref{prop: Q-socle of pair with sum in PR}, it suffices to consider a pair $(\al,\be)$ such that
\begin{align} \label{eq:suffice 1}
\text{ {\rm (i)} it is comparable with respect to $\prec_Q$,  \rm {(ii)}  it is not  sectional,  {\rm (iii)}  $\al+\be \not \in \PR$.}
\end{align}


Assume that
\[
\supp(\al) \cap \supp(\be) = \emptyset \qquad \text{and} \qquad  \al+\be \not\in \PR.
\]
  If $\al$ and $\be$ are simple roots, there is no sequence $\us \ne (\al,\be)$ with $\wt(\us)=\al+\be$.
If  $(\al,\be)$ is not a pair of simple roots and $\us$ is $[Q]$-simple such that $(\al,\be) \ne \us \prec_Q^\tb (\al,\be)$, then
there exists an index $i$ such that
\begin{align*}
s_i \ne 0, \quad \text{ and } \quad \be_i \prec_Q  \al \prec_Q \be \text{ or } \al \prec_Q  \be \prec_Q \be_i,
\end{align*}
by the convexity of $\prec_Q$. Hence, such a $\us$ can not exist and $\soc_Q(\al,\be)=(\al,\be)$.
Now it  suffices to consider that a pair satisfies~\eqref{eq:suffice 1} and
\begin{align} \label{eq:suffice 2}
{\rm (iv)} \  \supp(\al) \cap \supp(\be) \ne \emptyset.
\end{align}

\begin{proof}[Proof of {\rm Proposition~\ref{prop: Q-socle for pair}} for $A_n$-types]
Under the first two assumptions in~\eqref{eq:suffice 1}, paths from $\be$ to $\al$ are one of the followings: Write $\al=[a,b]$ and $\be=[c,d]$.
\begin{equation} \label{eq: A complicated socle}
\scalebox{0.8}{{\xy
(0,0)*{}="DL";(10,-10)*{}="DD";(20,20)*{}="DT";(30,10)*{}="DR";
"DL"; "DD" **\dir{-};"DL"; "DT"+(-4,-4) **\dir{.};
"DT"+(-40,-4); "DT"+(120,-4)**\dir{.};
"DD"+(-30,-6); "DD"+(130,-6) **\dir{.};
"DT"+(4,-4); "DR" **\dir{.};"DR"; "DD" **\dir{-};
"DL"+(-10,0)*{\scriptstyle (i,s)};
"DL"+(3,0)*{\scriptstyle \be};
"DL"+(10,0)*{{\rm (a)}};
"DL"+(-1,0); "DL"+(-6,0) **\dir{.};
"DR"+(10,0)*{\scriptstyle (j,t)};
"DR"+(-3,0)*{\scriptstyle \al};
"DR"+(1,0); "DR"+(6,0) **\dir{.};
"DL"*{\bullet};"DR"*{\bullet};
"DT"+(4,-4)*{\bullet};"DT"+(-4,-4)*{\bullet};"DD"*{\bullet};
"DD"+(0,2)*{\scriptstyle \sigma};
"DD"+(0,-2)*{\scriptstyle (k,u)};
"DT"+(-8,0)*{\scriptstyle (1,s+(i-1))};"DT"+(9,0)*{\scriptstyle (1,t-(j-1))};
"DT"+(-44,-4)*{\scriptstyle 1};
"DT"+(-44,-8)*{\scriptstyle 2};
"DT"+(-44,-12)*{\scriptstyle \vdots};
"DT"+(-44,-16)*{\scriptstyle \vdots};
"DT"+(-44,-36)*{\scriptstyle n};
"DL"+(40,-10); "DD"+(36,-6) **\dir{.};
"DR"+(40,-10); "DD"+(44,-6) **\dir{.};
"DT"+(40,-10); "DR"+(40,-10) **\dir{-};
"DT"+(40,-10); "DL"+(40,-10) **\dir{-};
"DL"+(30,-10)*{\scriptstyle (i,s)};
"DL"+(43,-10)*{\scriptstyle \be};
"DL"+(39,-10); "DL"+(34,-10) **\dir{.};
"DR"+(50,-10)*{\scriptstyle (j,t)};
"DR"+(37,-10)*{\scriptstyle \alpha};
"DR"+(41,-10); "DR"+(46,-10) **\dir{.};
"DL"+(40,-10)*{\bullet};"DR"+(40,-10)*{\bullet};
"DT"+(40,-10)*{\bullet};
"DT"+(40,-12)*{\scriptstyle \sigma};
"DT"+(40,-8)*{\scriptstyle (k,u)};
"DT"+(40,-10)*{\bullet};
"DD"+(44,-6)*{\bullet};"DD"+(36,-6)*{\bullet};
"DL"+(60,0)*{{\rm(b)}};
"DD"+(30,-10)*{\scriptstyle (n,s+(n-i))};
"DD"+(51,-10)*{\scriptstyle (n,t-(n-j))};
"DL"+(100,-3); "DD"+(100,-3) **\dir{-};
"DR"+(96,-7); "DD"+(100,-3) **\dir{-};
"DT"+(96,-7); "DR"+(96,-7) **\dir{-};
"DT"+(96,-7); "DL"+(100,-3) **\dir{-};
"DL"+(90,-3)*{\scriptstyle (i,s)};
"DL"+(105,-3)*{\scriptstyle [c,d]};
"DL"+(99,-3); "DL"+(94,-3) **\dir{.};
"DR"+(106,-7)*{\scriptstyle (j,t)};
"DR"+(92,-7)*{\scriptstyle [a,b]};
"DR"+(97,-7); "DR"+(102,-7) **\dir{.};"DL"+(115,0)*{{\rm(c)}};
"DL"+(100,-3)*{\bullet};"DR"+(96,-7)*{\bullet};
"DT"+(96,-7)*{\bullet}; "DD"+(100,-3)*{\bullet};
"DT"+(96,-5)*{\scriptstyle (k,u)};
"DT"+(96,-10)*{\scriptstyle [c,b]};
"DD"+(100,-5)*{\scriptstyle (k',u')};
"DD"+(100,1)*{\scriptstyle [a,d]};
\endxy}}
\end{equation}
Here $\phi_Q^{-1}(\al,0)=(j,t)$ and $\phi_Q^{-1}(\be,0)=(i,s)$.

\medskip

\noindent
{\rm (a)}  In this case, we have  $\sigma = [a,d]$  by Theorem~\ref{Thm: sectional A} where $\phi_Q^{-1}(\sigma,0)=(k,u)$.
\begin{itemize}
\item[{\rm (a-1)}] If $(t-s)-(i+j)+2 >2$, then Theorem~\ref{Thm: sectional A} implies that
\[
\eta=[x,b] \qquad \text{ and } \qquad \zeta=[c,y],
\]
where $\phi_Q^{-1}(\eta,0)=(1,j-(t-1))$ and $\phi_Q^{-1}(\zeta,0)=(1,s+(i-1))$.
Then Lemma~\ref{lem: first and last A} tells that $c-b>1$. Thus $\supp(\al) \cap \supp(\be)= \emptyset$ and $\al+\be \not \in \PR$, which implies $\soc_Q(\al,\be)=(\al,\be)$.
\item[{\rm (a-2)}] If $(t-s)-(i+j)+2=2$, then we have $c-b=1$ by Lemma~\ref{lem: first and last A} and hence $\al+\be \in \PR$. Thus $\soc_Q(\al,\be)=(\al+\be)$ by Proposition~\ref{prop: Q-socle of pair with sum in PR}
\end{itemize}

By applying the similar strategy of {\rm (a)}, the case {\rm (b)} can be proved.

\medskip

\noindent
{\rm (c)} In this case, Theorem~\ref{Thm: sectional A} tells that
$\phi_Q(k,u)=[c,b]$ and  $\phi_Q(k',u')=[a,b]$ as {\rm (c)} in~\eqref{eq: A complicated socle}.

Note that $\al+\be=\ve_a-\ve_b+\ve_c-\ve_d \not\in \PR$. If $([a,d],[c,b]) \ne \um \prec_Q^\tb (\al,\be)$ exists, $\um$ should contain positive roots in sectional paths in {\rm (c)} of~\eqref{eq: A complicated socle}.
But there is no such $\um$ by Theorem~\ref{Thm: sectional A}. Hence we have
\[ \soc_Q(\al,\be)=([a,d],[c,b]). \qedhere \]
\end{proof}

\begin{proof}[Proof of {\rm Proposition~\ref{prop: Q-socle for pair}} for $D_n$-types] Let $\phi_Q^{-1}(\be,0)=(i,s)$ and $\phi_Q^{-1}(\al,0)=(j,t)$. We assume that $j\le i$.
Recall that we have assumed that $\al \prec_Q \be$ is not sectional.

\noindent
{\rm(The case when $1 \le j \le i < n-1$)}
A path between them can be drawn as one of the following forms:
\begin{align} \label{eq: path D}
\scalebox{0.79}{{\xy
(-20,0)*{}="DL";(-10,-10)*{}="DD";(0,20)*{}="DT";(10,10)*{}="DR";
"DT"+(-30,-4); "DT"+(135,-4)**\dir{.};
"DD"+(-20,-6); "DD"+(145,-6) **\dir{.};
"DD"+(-20,-10); "DD"+(145,-10) **\dir{.};
"DT"+(-32,-4)*{\scriptstyle 1};
"DT"+(-32,-8)*{\scriptstyle 2};
"DT"+(-32,-12)*{\scriptstyle \vdots};
"DT"+(-32,-16)*{\scriptstyle \vdots};
"DT"+(-34,-36)*{\scriptstyle n-1};
"DT"+(-33,-40)*{\scriptstyle n};
"DL"+(-10,0); "DD"+(-10,0) **\dir{-};"DL"+(-10,0); "DT"+(-14,-4) **\dir{.};
"DT"+(-6,-4); "DR"+(-10,0) **\dir{.};"DR"+(-10,0); "DD"+(-10,0) **\dir{-};
"DL"+(-6,0)*{\scriptstyle (i,s)};
"DL"+(0,0)*{{\rm(i)}};
"DR"+(-14,0)*{\scriptstyle (j,t)};
"DL"+(-10,0)*{\bullet};"DR"+(-10,0)*{\bullet};
"DT"+(-6,-4)*{\bullet};"DT"+(-14,-4)*{\bullet};"DD"+(-10,0)*{\bullet};
"DD"+(-6,-2)*{\scriptstyle (k,u), \ k \le n-1};
"DT"+(-18,0)*{\scriptstyle (1,s+(i-1))};"DT"+(-1,0)*{\scriptstyle (1,t-(j-1))};
"DT"+(-14,-6)*{\kappa_s}; "DT"+(-5,-6)*{\kappa_t};
"DL"+(15,-3); "DD"+(15,-3) **\dir{.};
"DR"+(11,-7); "DD"+(15,-3) **\dir{.};
"DT"+(11,-7); "DR"+(11,-7) **\dir{-};
"DT"+(11,-7); "DL"+(15,-3) **\dir{-};
"DL"+(19,-3)*{\scriptstyle (i,s)};
"DR"+(7,-7)*{\scriptstyle (j,t)};
"DL"+(30,0)*{{\rm(ii)}};
"DL"+(15,-3)*{\bullet};"DR"+(11,-7)*{\bullet};
"DT"+(11,-7)*{\bullet}; "DD"+(15,-3)*{\bullet};
"DT"+(11,-5)*{\scriptstyle (k,u)};
"DD"+(19,-5)*{\scriptstyle (k',u'), \ k'\le n-1};
"DD"+(30,4); "DD"+(40,-6) **\dir{-};
"DD"+(46,0); "DD"+(40,-6) **\dir{-};
"DD"+(46,0); "DD"+(52,-6) **\dir{-};
"DD"+(72,16); "DD"+(52,-6) **\dir{-};
"DD"+(30,4); "DD"+(52,26) **\dir{.};
"DD"+(72,16); "DD"+(62,26) **\dir{.};
"DD"+(46,0); "DD"+(67,21) **\dir{.};
"DD"+(46,0); "DD"+(36,10) **\dir{.};
"DD"+(36,10)*{\bullet};
"DD"+(52,26)*{\bullet};
"DD"+(46,28)*{\scriptstyle (1,s+(i-1))};
"DD"+(62,26)*{\bullet};
"DD"+(65,28)*{\scriptstyle (1,t-(j-1))};
"DD"+(30,4)*{\bullet};
"DL"+(57,0)*{{\rm(iii)}};
"DD"+(34,4)*{\scriptstyle (i,s)};
"DD"+(72,16)*{\bullet};
"DD"+(67,16)*{\scriptstyle (j,t)};
"DD"+(68,4); "DD"+(78,-6) **\dir{-};
"DD"+(68,4); "DD"+(90,26) **\dir{.};
"DD"+(88,4); "DD"+(78,-6) **\dir{-};
"DD"+(88,4); "DD"+(110,26) **\dir{.};
"DD"+(122,18); "DD"+(114,26) **\dir{.};
"DD"+(122,18); "DD"+(98,-6) **\dir{-};
"DD"+(88,4); "DD"+(98,-6) **\dir{-};
"DD"+(122,18)*{\bullet};
"DD"+(118,18)*{\scriptstyle (j,t)};
"DD"+(68,4)*{\bullet};
"DD"+(72,4)*{\scriptstyle (i,s)};
"DD"+(90,26)*{\bullet};
"DD"+(86,28)*{\scriptstyle (i,s+(i-1))};
"DD"+(104,28)*{\scriptstyle (1,s+2n-3-i)};
"DD"+(114,26)*{\bullet};
"DD"+(120,28)*{\scriptstyle (1,t-j+1)};
"DD"+(110,26)*{\bullet};
"DD"+(78,14)*{\bullet};
"DD"+(83,14)*{\scriptstyle (k,u)};
"DL"+(99,0)*{{\rm(iv)}};
"DD"+(88,4); "DD"+(78,14) **\dir{.};
"DD"+(115,4); "DD"+(125,-6) **\dir{-};
"DD"+(131,0); "DD"+(125,-6) **\dir{-};
"DD"+(131,0); "DD"+(137,-6) **\dir{-};
"DD"+(149,8); "DD"+(137,-6) **\dir{-};
"DD"+(115,4); "DD"+(134,23) **\dir{.};
"DD"+(149,8); "DD"+(134,23) **\dir{.};
"DD"+(131,0); "DD"+(144,13) **\dir{.};
"DD"+(131,0); "DD"+(121,10) **\dir{.};
"DD"+(121,10)*{\bullet};
"DD"+(126,10)*{\scriptstyle (i',s')};
"DD"+(115,4)*{\bullet};
"DD"+(125,-6)*{\bullet};
"DD"+(134,23)*{\bullet};
"DD"+(144,13)*{\bullet};
"DD"+(139,13)*{\scriptstyle (j',t')};
"DD"+(137,-6)*{\bullet};
"DD"+(131,0)*{\bullet};
"DD"+(131,2)*{\scriptstyle (k',u')};
"DL"+(142,0)*{{\rm(v)}};
"DD"+(119,4)*{\scriptstyle (i,s)};
"DD"+(149,8)*{\bullet};
"DD"+(145,8)*{\scriptstyle (j,t)};
"DD"+(134,25)*{\scriptstyle (k,u)};
\endxy}}
\end{align}

We write $\al=\ve_a \pm \ve_b$, $\be=\ve_c \pm \ve_d$ and assume that $\be$ is in the $c$-swing.
Recall the index $\mathtt{l}$ in Lemma~\ref{lem: first} and the set $\kp=\{\kp_1,\ldots,\kp_{n-1} \}$ in~\eqref{eq: kappa}.

\medskip

Now we shall prove our assertion for $Q$ of type $D_n$ with respect to each shape of the paths in~\eqref{eq: path D}.
For the cases {\rm (i)} and {\rm (ii)}, we can assume that,
\begin{align} \label{eq: assumption 1}
\text{$\al$ is located at the $N$-part of $1$-swing; i.e., $\al=\lf 1, b \rf$,}
\end{align}
by Theorem~\ref{Thm: V-swing} and the fact that $\cox{Q}^k(\al)$ is contained in the $N$-part of $1$-swing for some $k \in \Z$.

\noindent
{\rm (i)} By Lemma~\ref{lem: nfree position D}, the pair $(\al,\be)$ can not be a pair with $\mul(\al)=\mul(\be)=1$.
We write $\kp_t$ for $\phi_Q^{-1}(\kp_t,0)=(1,t-(j-1))$ and $\kp_s$ for $\phi_Q^{-1}(\kp_s,0)=(1,i+(s-1))$.

\medskip

\noindent
{\rm (i-1: $(t-s)-(i+j)+2>2$)} We first assume that $s >  \mathtt{l}$. Since $s >  \mathtt{l}$, Lemma~\ref{lem: nfree position D}
and Lemma~\ref{lem: first} imply that
\begin{itemize}
\item $\mul(\al)=2$, $\mul(\be)=1$ and $\be$ is contained in a shallow sectional path by Theorem~\ref{thm: short path}.
\end{itemize}
Equivalently, $\al=\lf 1, b\rf$, $\be=\lf c, -d \rf$ where $1<b \le n-2$, $d \in I \setminus \{ \ta \}$ with $d>c$ and $\beta$ is located at the $S$-part of $c$-swing.

By Lemma~\ref{lem: reverse uni}, we can conclude that $b \ge c>0$. The multiplicity free root $\kp_s$ $(s>t+1)$ is of the form $\lf y ,-d \rf$. The assumption $(t-s)-(i+j)+2>2$ tells that
 $\kp_s+\kp_t \not \in \PR$ and hence  $d<b$ by~\cite[Corollary 1.15, Corollary 1.26]{Oh14D}.

Thus $\al+\be = \ve_1+\ve_b+\ve_c-\ve_d$, where $d>b>c \ge 1$. If $c>1$,~\cite[Theorem 1.19, Theorem 1.22]{Oh14D}
tells that there exist paths from $\lf 1 , -d \rf$ to $\beta$ and from $\al$ to $\lf b , c \rf$. Thus we have
$$(\al,\be) \prec_Q^\tb (\lf c , b\rf, \lf 1, -d \rf).$$
If $c=1$, then $\al+\be=2\ve_1+\ve_b-\ve_d$.

One can check that there exists no $[Q]$-simple sequence
$\us \ne (\al,\be)$ such that $\us \prec_Q^\tb (\al,\be)$ by the positive root system of $D_n$ and the convexity of $\prec_Q$.
Thus we have $$\soc_Q(\al,\be)=(\al,\be).$$

Applying the similar argument when $s \ge \mathtt{l}$, one can check that $\soc_Q(\al,\be)=(\al,\be)$.

\medskip

\noindent
{\rm (i-2: $(t-s)-(i+j)+2=2$)} Since $\kp_s+\kp_t \in \PR$, we have
\begin{equation}
\al+\be =
\begin{cases}
2\ve_1  \not\in \PR &\text{ if }  c=1, \\
\ve_1+\ve_c \in \PR &\text{ if } c > 1.
\end{cases}
\end{equation}

By Proposition~\ref{prop: Q-socle of pair with sum in PR}, it suffice to assume that $c=1$.
Then the pair $(\lf 1 , \ta\rf,\lf 1 , -\ta\rf)$ is only the $[Q]$-simple pair among the pairs $\us$ such that $\wt(\us)=2\ve_1$ by Theorem~\ref{Thm: V-swing}.
More precisely, if $\us \ne (\lf 1 , \ta\rf,\lf 1 , -\ta\rf)$, then it must be of the form $(\lf 1 , k \rf,\lf 1 , -k \rf)$ for $k \in I \setminus \{ \ta \}$.
But $(\lf 1 , \ta\rf,\lf 1 , -\ta\rf) \prec^\tb_Q (\lf 1 , k \rf,\lf 1 , -k \rf)$ by Theorem~\ref{Thm: V-swing}. If there exists a sequence $\us$ such that
$|\us|>2$ and $\wt(\us)=2\ve_1$, then one can check that $\us$ can not be $[Q]$-simple. Thus we have (see (4) in~\eqref{eq: complacted socle of D} below)
\[
\soc_Q(\al,\be)=(\lf 1 , \ta\rf,\lf 1 , -\ta\rf).
\]


\medskip

\noindent
{\rm (ii)} We write $\zeta$ for $\phi_Q^{-1}(\zeta,0)=(k,u)$. Recall the assumption~\eqref{eq: assumption 1}.

\medskip

{\rm (ii-1: $k'<n-1$)} In this case, Theorem~\ref{Thm: V-swing} and Theorem~\ref{thm: short path} tell that
$$ \al+\be = \ve_1+\ve_b+\ve_c+\ve_d = \eta+\zeta,$$
where $b,c,d$ are distinct, $|b|,|c|,|d|>1$ and $\eta \in \PR$ for  $\phi_Q^{-1}(\eta,0)=(k',u')$. Note that the pair
$(\eta,\zeta)$ is $[Q]$-simple. Then there are three positive roots having $\ve_1$ as
its summand and one of $\ve_b$, $\ve_c$ and $\ve_d$ as its another summand. Thus there are at most three pairs whose weights are the same as
$\al+\be$. Assume that If there exists an another pair $(\eta',\zeta')$ such that $\eta'+\zeta'=\al+\be$,
$\zeta'$ must be located at the intersection of $S$-part of $1$-swing and the $N$-path of $\zeta$. Thus
there exists a path from $\zeta'$ to $\be$. Similarly, there exists a path from $\al$ to $\eta'$. Thus we have
$$(\al,\be) \prec^\tb_Q (\eta',\zeta').$$
Thus we can prove that (see (1) in~\eqref{eq: complacted socle of D} below)
$$\soc_Q(\al,\be)=(\eta,\zeta).$$

{\rm (ii-2: $k'=n-1$)} In this case, we have
$$ \al+\be = 2\ve_1+\ve_b+\ve_d = \eta+\eta'+\zeta,$$
where $b,d$ are distinct, $|b|,|d|>1$, $\eta \in \PR$ for  $\phi_Q^{-1}(\eta,0)=(n-1,u')$ and  $\eta' \in \PR$ for  $\phi_Q^{-1}(\eta',0)=(n,u')$ such that
$\{ \eta,\eta' \} = \{ \lf 1 , \ta \rf, \lf 1 , -\ta \rf \}$. Then one can check that (see (5) in~\eqref{eq: complacted socle of D} below)
$$\soc_Q(\al,\be)=(\eta,\eta',\zeta).$$

\medskip

The other pair $(\al',\be')$ of form  {\rm (i)} or {\rm (ii)} can be obtained by applying $\tau_Q$ or $\tau_Q^{-1}$ proper times
to $(\al,\be)$ we already dealt with. Thus we proved.

\medskip

For the cases {\rm (iii)}, {\rm (iv)} and {\rm (v)}, we can assume that $\be$ is contained in the $S$-part of $c$-swing by
Theorem~\ref{Thm: V-swing} and Lemma~\ref{lem: reverse uni}.

\medskip

\noindent
{\rm (iii)} Write the positive root as $\zeta$ located in the intersection of the swing containing $\al$ and the $N$-path of $\be$, and
the positive root as $\eta$ located in the intersection of $c$-swing and the $S$-path of $\al$. Then
Theorem~\ref{Thm: V-swing} and Theorem~\ref{thm: short path} tell that
$$ \al+\be=\eta+\zeta \text{ and } (\eta,\zeta) \prec_Q^\tb (\al,\be).$$

By {\rm (i)}, we have
$$
\soc_Q(\al,\be)=\begin{cases}
(\eta,\zeta) & \text{ if } (t-s)-(i+j)+2>2, \\
(\al+\be) \in \PR & \text{ if } (t-s)-(i+j)+2=2.
\end{cases}
$$

\noindent
{\rm (iv)} We write $\kp_s$ for $\phi_Q^{-1}(\kp_s,0)=(1,j-(t-1))$, $\kp_{s'}$ for $\phi_Q^{-1}(\kp_{s'},0)=(1,s+2n-3-i)$ and
$\kp_t$ for $\phi_Q^{-1}(\kp_t,0)=(1,i+(s-1))$ $(s<s'<t)$. By Lemma~\ref{lem: first}, we have $$s<s'< \mathtt{l} \le t.$$
More precisely,
$\kp_s=\lf x_1 ,b \rf$, $\kp_s=\lf c, y_2 \rf$ and $\kp_t=\lf x_3 , d \rf$
where $ 0<-b  \le c < |d|$ for $d \in -I \sqcup \{  \ta' \}$.

\medskip

{\rm (iv-1: $s'>s+1$)} Since $\kp_s+\kp_s' \not \in \PR$, we have $ 0<-b  < c < |d|$.
Thus $\supp(\al) \cap \supp(\be) = \emptyset$ and $\al +\be \not \in \PR$. Thus $\soc_Q(\al,\be)=(\al,\be)$.

\medskip

{\rm (iv-2: $s'=s+1$)} In this case, we have $\kp_s+\kp_s' \in \PR$. Thus $-b=c$. Hence $\al+\be$ is a multiplicity free positive root and $\soc_Q(\al,\be)=(\al+\be)$.
Moreover, one can check that $\phi_Q^{-1}(\al+\be,0)=(k,u)$.

\medskip

\noindent
{\rm (v)} We write $\al'$ for $\phi_Q^{-1}(\al',0)=(i',s')$, $\be'$ for $\phi_Q^{-1}(\be',0)=(j',t')$,
$\eta$ for $\phi_Q^{-1}(\eta,0)=(k,u)$ and $\zeta$ for $\phi_Q^{-1}(\zeta,0)=(k',u')$.
Then by Theorem~\ref{Thm: V-swing} and Theorem~\ref{thm: short path}, we have
$$ \al+\be=\al'+\be'=\eta+\zeta.$$

By {\rm (ii)}, we have (see (3) in~\eqref{eq: complacted socle of D} below)
$$\soc_Q(\al,\be)=\soc_Q(\al',\be')=(\eta,\zeta).$$

\medskip

The other pair $(\al',\be')$ of the form {\rm (iii)}, {\rm (iv)} or {\rm (v)} can be obtained by applying $\tau_Q$ or $\tau_Q^{-1}$ proper times
to $(\al,\be)$ we already dealt with. Thus we proved.

\medskip

{\rm (The case when $1 \le j < n-1$ and $i \in \{ n-1, n \}$)} A path between them can be drawn as one of the following forms:
\[
\scalebox{0.87}{{\xy
(-20,0)*{}="DL";(-10,-10)*{}="DD";(0,20)*{}="DT";(10,10)*{}="DR";
"DT"+(-30,-4); "DT"+(75,-4)**\dir{.};
"DD"+(-20,-6); "DD"+(85,-6) **\dir{.};
"DD"+(-20,-10); "DD"+(85,-10) **\dir{.};
"DT"+(-34,-4)*{\scriptstyle 1};
"DT"+(-34,-8)*{\scriptstyle 2};
"DT"+(-34,-12)*{\scriptstyle \vdots};
"DT"+(-34,-16)*{\scriptstyle \vdots};
"DT"+(-36,-36)*{\scriptstyle n-1};
"DT"+(-34,-40)*{\scriptstyle n};
"DD"+(0,4); "DD"+(10,-6) **\dir{-};
"DD"+(0,4); "DD"+(-10,-6) **\dir{-};
"DD"+(0,4); "DD"+(15,19) **\dir{.};
"DD"+(25,9);"DD"+(10,-6) **\dir{-};
"DD"+(25,9);"DD"+(15,19) **\dir{.};
"DD"+(25,9)*{\bullet};
"DD"+(15,19)*{\bullet};
"DD"+(10,-6)*{\bullet};
"DD"+(10,-8)*{\scriptstyle (n-1,s+2l)};
"DD"+(15,21)*{\scriptstyle (k,u)};
"DD"+(29,9)*{\scriptstyle (j,t)};
"DD"+(10,4)*{{\rm (vi)}};
"DD"+(-10,-6)*{\bullet};
"DD"+(-10,-8)*{\scriptstyle (n-1,s)};
"DD"+(40,4); "DD"+(50,-6) **\dir{-};
"DD"+(40,4); "DD"+(30,-6) **\dir{-};
"DD"+(40,4); "DD"+(62,26) **\dir{.};
"DD"+(75,19);"DD"+(50,-6) **\dir{-};
"DD"+(75,19);"DD"+(68,26) **\dir{.};
"DD"+(75,19)*{\bullet};
"DD"+(50,-6)*{\bullet};
"DD"+(50,-8)*{\scriptstyle (n-1,s+2l)};
"DD"+(50,4)*{{\rm (vii)}};
"DD"+(79,19)*{\scriptstyle (j,t)};
"DD"+(30,-6)*{\bullet};
"DD"+(30,-8)*{\scriptstyle (n-1,s)};
"DD"+(68,26)*{\bullet};
"DD"+(74,28)*{\scriptstyle (1,t-(j-1))};
"DD"+(62,26)*{\bullet};
"DD"+(58,28)*{\scriptstyle (1,s-n-2)};
\endxy}}
\]
Write $\phi_Q^{-1}(\be,0)=(n-\delta,s)$ for $\{ \delta, \delta' \} = \{ 0,1 \}$.

{\rm (vi)}  In this case, Lemma~\ref{lem: n-1,n D} and Theorem~\ref{Thm: V-swing} tell that
$\al+\be = \eta+\zeta $ where $\phi_Q^{-1}(\eta)=(k,u)$ and
\begin{align} \label{eq: delta determination} \phi_Q^{-1}(\zeta)=
\begin{cases}
(n-\delta,s+2l) & \text{ if $l$ is even},\\
 (n-\delta',s+2l) & \text{ if $l$ is odd}.
\end{cases}
\end{align}
Note that $(\eta,\zeta)$ is $[Q]$-simple and there is are three pairs $\up$ such that $\wt(\up)=\al+\be$ and two of them are $(\al,\be)$ and $(\eta,\zeta)$.
Moreover, we can check that $(\al,\be)$ and $(\eta',\zeta')$ are incomparable with respect to $\prec_Q^\tb$, where $(\eta',\zeta')$
is the another pair. Then one can check that (see (6) in~\eqref{eq: complacted socle of D} below)
$$\soc_Q(\al,\be)=(\eta,\zeta)$$
by applying the similar arguments of previous cases.

\medskip

{\rm (vii)} Applying the similar argument of {\rm (iv)} and using Lemma~\ref{lem: n-1,n D}, we have
$$
\soc_Q(\al,\be) =
\begin{cases}
(\al,\be) & \text{ if }  s-t-n-3>2, \\
(\al+\be) \in \PR &\text{ if }  s-t-n-3=2.
\end{cases}
$$
Here, if $s-t-n-3=2$, then we have
$$
\phi_Q^{-1}(\al+\be,0)=
 \begin{cases}
 (n-\delta,s+2l) & \text{ if $l$ is even}, \\
 (n-\delta',s+2l)  & \text{ if $l$ is odd}.
 \end{cases}
$$

\noindent
{\rm (The case when $i,j \in \{ n-1,n\}$)}
\[
\scalebox{0.72}{{\xy
(-20,0)*{}="DL";(-10,-10)*{}="DD";(0,20)*{}="DT";(10,10)*{}="DR";
"DT"+(-30,-4); "DT"+(40,-4)**\dir{.};
"DD"+(-20,-6); "DD"+(50,-6) **\dir{.};
"DD"+(-20,-10); "DD"+(50,-10) **\dir{.};
"DT"+(-34,-4)*{\scriptstyle 1};
"DT"+(-34,-8)*{\scriptstyle 2};
"DT"+(-34,-12)*{\scriptstyle \vdots};
"DT"+(-34,-16)*{\scriptstyle \vdots};
"DT"+(-36,-36)*{\scriptstyle n-1};
"DT"+(-34,-40)*{\scriptstyle n};
"DD"+(12,16); "DD"+(34,-6) **\dir{-};
"DD"+(12,16); "DD"+(-10,-6) **\dir{-};
"DD"+(34,-8)*{\scriptstyle (n-1,t)};
"DD"+(-10,-8)*{\scriptstyle (n-1,s)};
"DD"+(12,4)*{ {\rm (viii)}};
"DD"+(34,-6)*{\bullet};
"DD"+(-10,-6)*{\bullet};
"DD"+(12,16)*{\bullet};
"DD"+(12,18)*{\scriptstyle (k,u)};
\endxy}}
\]

{\rm (viii)} Applying~\cite[Proposition 1.14]{Oh14D} and Theorem~\ref{Thm: V-swing}, one can easily check that
$$
\soc_Q(\al,\be) = \begin{cases}
(\al+\be) & \text{ if } |i-j| \equiv n-k-1 ({\rm mod} \ 2),\\
(\al,\be) \in \PR & \text{ otherwise.}
\end{cases}
$$
Here if $|i-j| \equiv n-k-1 ({\rm mod} \ 2)$, then we have $\phi_Q^{-1}(\al+\be,0)=(k,u)$.

Now we record the socle $\us$ of non $[Q]$-simple pairs $(\al,\be)$ with $\al+\be \not\in \PR$ of type $D_n$, for convenience of reader and later use.
\begin{align} \label{eq: complacted socle of D}
\scalebox{0.77}{{\xy
(-20,0)*{}="DL";(-10,-10)*{}="DD";(0,20)*{}="DT";(10,10)*{}="DR";
"DT"+(-30,-4); "DT"+(135,-4)**\dir{.};
"DD"+(-20,-6); "DD"+(145,-6) **\dir{.};
"DD"+(-20,-10); "DD"+(145,-10) **\dir{.};
"DT"+(-32,-4)*{\scriptstyle 1};
"DT"+(-32,-8)*{\scriptstyle 2};
"DT"+(-32,-12)*{\scriptstyle \vdots};
"DT"+(-32,-16)*{\scriptstyle \vdots};
"DT"+(-34,-36)*{\scriptstyle n-1};
"DT"+(-33,-40)*{\scriptstyle n};
"DL"+(-10,-3); "DD"+(-10,-3) **\dir{-};
"DR"+(-14,-7); "DD"+(-10,-3) **\dir{-};
"DT"+(-14,-7); "DR"+(-14,-7) **\dir{-};
"DT"+(-14,-7); "DL"+(-10,-3) **\dir{-};
"DL"+(-6,-3)*{\scriptstyle \be};
"DR"+(-18,-7)*{\scriptstyle \al};
"DL"+(5,0)*{{\rm (1)}};
"DL"+(-10,-3)*{\bullet};"DR"+(-14,-7)*{\bullet};
"DT"+(-14,-7)*{\bullet}; "DD"+(-10,-3)*{\bullet};
"DT"+(-14,-5)*{\scriptstyle s_1};
"DD"+(-10,-5)*{\scriptstyle s_2};
"DD"+(0,4); "DD"+(10,-6) **\dir{-};
"DD"+(16,0); "DD"+(10,-6) **\dir{-};
"DD"+(16,0); "DD"+(22,-6) **\dir{-};
"DD"+(42,16); "DD"+(22,-6) **\dir{-};
"DD"+(0,4); "DD"+(22,26) **\dir{.};
"DD"+(42,16); "DD"+(32,26) **\dir{.};
"DD"+(16,0); "DD"+(37,21) **\dir{.};
"DD"+(16,0); "DD"+(6,10) **\dir{.};
"DD"+(37,21)*{\bullet};
"DD"+(34,21)*{\scriptstyle s_1};
"DD"+(6,10)*{\bullet};
"DD"+(9,10)*{\scriptstyle s_2};
"DD"+(22,26); "DD"+(32,26) **\crv{"DD"+(27,28)};
"DD"+(27,29)*{\scriptstyle 2 >};
"DD"+(0,4)*{\bullet};
"DL"+(27,0)*{{\rm (2)}};
"DD"+(4,4)*{\scriptstyle \be};
"DD"+(42,16)*{\bullet};
"DD"+(38,16)*{\scriptstyle \al};
"DD"+(35,4); "DD"+(45,-6) **\dir{-};
"DD"+(51,0); "DD"+(45,-6) **\dir{-};
"DD"+(51,0); "DD"+(57,-6) **\dir{-};
"DD"+(69,8); "DD"+(57,-6) **\dir{-};
"DD"+(35,4); "DD"+(54,23) **\dir{.};
"DD"+(69,8); "DD"+(54,23) **\dir{.};
"DD"+(51,0); "DD"+(64,13) **\dir{.};
"DD"+(51,0); "DD"+(41,10) **\dir{.};
"DD"+(35,4)*{\bullet};
"DD"+(54,23)*{\bullet};
"DD"+(51,0)*{\bullet};
"DD"+(51,2)*{\scriptstyle s_1};
"DL"+(62,0)*{{\rm(3)}};
"DD"+(39,4)*{\scriptstyle \be};
"DD"+(69,8)*{\bullet};
"DD"+(65,8)*{\scriptstyle \al};
"DD"+(54,25)*{\scriptstyle s_2};
"DD"+(69,4); "DD"+(79,-6) **\dir{-};
"DD"+(85,0); "DD"+(79,-6) **\dir{-};
"DD"+(69,4); "DD"+(91,26) **\dir{-};
"DD"+(104,19); "DD"+(97,26) **\dir{-};
"DD"+(85,0); "DD"+(104,19) **\dir{-};
"DD"+(69,4)*{\bullet};
"DL"+(96,0)*{{\rm (4)}};
"DD"+(73,4)*{\scriptstyle \be};
"DD"+(91,26); "DD"+(97,26) **\crv{"DD"+(94,28)};
"DD"+(104,19)*{\bullet};
"DD"+(104,17)*{\scriptstyle \al};
"DD"+(79,-10)*{\bullet};
"DD"+(79,-8)*{\scriptstyle s_1};
"DD"+(79,-6)*{\bullet};
"DD"+(79,-4)*{\scriptstyle s_2};
"DD"+(94,29)*{\scriptstyle 2};
"DD"+(93,4); "DD"+(103,-6) **\dir{-};
"DD"+(109,0); "DD"+(103,-6) **\dir{-};
"DD"+(93,4); "DD"+(112,23) **\dir{-};
"DD"+(122,13); "DD"+(112,23) **\dir{-};
"DD"+(109,0); "DD"+(122,13) **\dir{-};
"DD"+(93,4)*{\bullet};
"DD"+(112,23)*{\bullet};
"DL"+(120,0)*{{\rm (5)}};
"DD"+(97,4)*{\scriptstyle \be};
"DD"+(112,25)*{\scriptstyle s_3};
"DD"+(122,13)*{\bullet};
"DD"+(122,11)*{\scriptstyle \al};
"DD"+(103,-10)*{\bullet};
"DD"+(103,-8)*{\scriptstyle s_1};
"DD"+(103,-6)*{\bullet};
"DD"+(103,-4)*{\scriptstyle s_2};
"DD"+(120,4); "DD"+(130,-6) **\dir{-};
"DD"+(120,4); "DD"+(110,-6) **\dir{-};
"DD"+(120,4); "DD"+(135,19) **\dir{.};
"DD"+(145,9);"DD"+(130,-6) **\dir{-};
"DD"+(145,9);"DD"+(135,19) **\dir{.};
"DD"+(145,9)*{\bullet};
"DD"+(135,19)*{\bullet};
"DD"+(130,-6)*{\bullet};
"DD"+(130,-8)*{\scriptstyle s_1};
"DD"+(135,21)*{\scriptstyle s_2};
"DD"+(149,9)*{\scriptstyle \al};
"DD"+(130,4)*{{\rm (6)}};
"DD"+(110,-6)*{\circ};
"DD"+(110,-8)*{\scriptstyle \be};
"DD"+(110,-10)*{\circ};
\endxy}}
\end{align}
For the case $(6)$, $\be$ is the one of $\circ$'s which is determined by~\eqref{eq: delta determination}.

\medskip

For the case when $j>i$, we can prove by the similar arguments.
\end{proof}

In the course of proof of the above proposition, one can notice that the following property holds for type $A_n$ and $D_n$:

\begin{corollary} \label{cor: dist1 minimal of soc AD}
For a pair $\up=(\al,\be)$ of type $A_n$ or $D_n$ with $\dist_Q(\al,\be)=1$, $\up$ is a $[Q]$-minimal pair of $\soc_Q(\up)$.
\end{corollary}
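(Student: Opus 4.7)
The plan is to verify directly, from the definition of a $[Q]$-minimal sequence, that $\up=(\al,\be)$ plays this role for $\us:=\soc_Q(\up)$. The first clause, namely $\us\prec_Q^\tb \up$ together with $\wt(\us)=\wt(\up)$, is immediate: since $\dist_Q(\up)=1$, the pair $\up$ is not $[Q]$-simple, so $\us\neq \up$, and the required weight and order relations follow from Proposition \ref{prop: Q-socle for pair} (existence and uniqueness of the $[Q]$-socle for any pair of type $A_n$ or $D_n$) and Definition \ref{def: Q-socle}.

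For the second clause, I argue by contradiction: suppose $\um'\in\Z_{\ge 0}^{\ell(w_0)}$ has $\wt(\um')=\wt(\up)$ and satisfies $\us\prec_Q^\tb\um'\prec_Q^\tb\up$. If $\um'$ is $[Q]$-simple, the uniqueness asserted by Proposition \ref{prop: Q-socle for pair} forces $\um'=\us$, contradicting $\us\prec_Q^\tb\um'$. If $\um'$ is a non-$[Q]$-simple pair, then $\{\um',\up\}$ is a two-element chain of non-$[Q]$-simple pairs of common weight $\wt(\up)$, witnessing $\dist_Q(\up)\ge 2$ and contradicting the hypothesis. In both situations the contradiction is immediate.

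The remaining and main obstacle is the case $|\um'|\ge 3$ with $\um'$ non-$[Q]$-simple. The plan here is to leverage the explicit socle classification achieved in the proof of Proposition \ref{prop: Q-socle for pair}: in type $A_n$ the socle falls into one of the shapes described in \eqref{eq: +2 A}--\eqref{eq: A complicated socle}, and in type $D_n$ into one of the shapes catalogued in \eqref{eq: complacted socle of D}. For each shape one extracts from $\um'$ two indices $i_1<i_2$ with $m'_{i_1},m'_{i_2}>0$ whose component sub-pair is non-$[Q]$-simple; replacing this sub-pair by its own $[Q]$-socle (applying Proposition \ref{prop: Q-socle for pair} recursively) and re-grouping using the convexity of $\prec_Q$ together with the combinatorics of $\GQ$ (sectional paths and Lemma \ref{lem: first and last A} in type $A$; swings and shallow maximal paths by Theorem \ref{Thm: V-swing} and Theorem \ref{thm: short path} in type $D$) produces a non-$[Q]$-simple pair $\up''$ with $\wt(\up'')=\wt(\up)$ and $\up''\prec_Q^\tb\up$, again contradicting $\dist_Q(\up)=1$. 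The hard part will be carrying out this extraction uniformly: it amounts to inspecting each of the socle configurations listed above and tracking, via the geometry of $\GQ$, the admissible decompositions of $\wt(\up)$ into three or more positive roots, verifying that at least one such decomposition always spawns an intermediate non-$[Q]$-simple pair whose presence is incompatible with $\dist_Q(\up)=1$.
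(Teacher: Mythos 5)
Your cases (a) and (b) are fine and are exactly the easy direction. The problem is that you have left the real content of the corollary — your case (c), where $|\um'|\ge 3$ and $\um'$ is not $[Q]$-simple — as a plan rather than a proof, and the plan as described does not obviously close: replacing a non-$[Q]$-simple sub-pair $(\be_{i_1},\be_{i_2})$ of $\um'$ by its $[Q]$-socle and ``re-grouping'' has no reason to terminate at a non-$[Q]$-simple \emph{pair} $\up''$. By Remark \ref{rem: socle} a socle can have up to three nonzero parts, so a single replacement can \emph{lengthen} $\um'$; and the natural terminal object of iterated collapsing is the $[Q]$-socle $\us$ of $\up$ itself, which is $[Q]$-simple, not a competitor pair. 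Nothing in your sketch shows that along the way you must pass through a non-$[Q]$-simple pair of weight $\wt(\up)$ sitting $\prec^\tb_Q\up$, which is the contradiction you need. That case (c) is where the type-dependence lives is illustrated by the paper's own Example \ref{ex: dist1 but not minimal}: in type $E_6$ there is a pair with $\dist_Q(\up)=1$ but with genuine three-part sequences $\um,\um'$ strictly between $\us$ and $\up$, so the corollary fails there — any valid argument in types $A$ and $D$ must use something that breaks in type $E$, and your extraction heuristic, as stated, is type-agnostic.

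The paper's (terse) proof is of a different nature. First, for $\al+\be\in\PR$ the statement is literally the second assertion of Proposition \ref{prop: Q-socle of pair with sum in PR}, proved for all types via Proposition \ref{pro: BKM minimal}: any element covering the singleton $(\ga)$ in the weight-preserving $\prec^\tb_Q$-poset is a \emph{pair}, so a minimal intermediate $\um'$ would be a non-$[Q]$-simple pair below $\up$, forcing $\dist_Q(\up)\ge 2$. You never point out that you may therefore restrict at the outset to $\al+\be\notin\PR$. Second, for $\al+\be\notin\PR$ the corollary is read off from the explicit case analysis in the proof of Proposition \ref{prop: Q-socle for pair}: in types $A_n$ and $D_n$ the configurations (a)--(c) and (1)--(6) exhibit, via the sectional-path and swing combinatorics, that when $\dist_Q(\up)=1$ the only sequence of weight $\wt(\up)$ lying strictly $\prec^\tb_Q$-below $\up$ is $\us$ itself. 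This is exactly the substance of Remark \ref{rem: distQ gdistQ AD} (that $\dist_Q=\gdist_Q$ in types $A$ and $D$), so there is simply nothing to squeeze in between and case (c) is vacuous. That vacuity, not an extraction scheme, is what you need to establish.
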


\begin{example} \label{ex: dist1 but not minimal}
In the $Q$ of type $E_6$ in Appendix~\ref{Sec:Dynkin E6}, one can check that the pair $$\up=(111001,123212)$$ has $\dist_Q(\up)=1$ with its socle $\us=(001001,122101,111111)$. But
$\up$ is not a $[Q]$-minimal sequence of $\us$. Actually, $$\um=(111101,122111,001001) \quad\text{ and }\quad \um'=(011001,112101,111111)$$ are less than $\up$ with respect to
$\prec^\tb_Q$ and $[Q]$-minimal sequences of $\us$.
\end{example}

Now we shall generalize the notion of $[\redez]$-distance by {\it releasing the pair condition} in Remark~\ref{rem:p-dist}:
The \defn{generalized $[\tw]$-distance} of a sequence $\um$ is the largest integer $k \geq 0$ such that
\[
\um^{(0)} \prec^\tb_{[\tw]} \cdots \prec^\tb_{[\tw]} \um^{(k)} = \um
\]
and $\um^{(0)}$ is $[\tw]$-simple. We denoted the integer by $\gdist_{[\tw]}(\um)$.

\begin{remark} \label{rem: distQ gdistQ AD}
From the proof of Proposition~\ref{prop: Q-socle of pair with sum in PR},
we have $$\dist_Q(\al,\be)=\gdist_Q(\al,\be) \text{ for any quiver $Q$ and any pair $(\al,\be)$ of type $A_n$ or $D_n$.}$$
However, the pair $\up$ in Example
\ref{ex: dist1 but not minimal} for $E_6$-case has its $[Q]$-distance $1$ while $\gdist_Q(\up)=2$.
\end{remark}

\begin{lemma} \label{lem: general convexity}
Assume the followings$\colon$ For $w \in \W$, we have
\begin{itemize}
\item a reduced expression $\tw$ of $w \in \W$,
\item a pair $(\alpha,\beta) \in (\PR_w)^2 \subset (\PR)^2$ with $\al \prec_{[\tw]} \be$,
\item a sequence $\us$ such that  $\us \prec^\tb_{[\redez]}  (\alpha,\beta)$ for some $\redez$ and $\tw'=w' \in \W$ such that $ww'=w_0$ and $\redez=\tw * \tw'$.
\end{itemize}
Then, for any $i$ with $s_i \ne 0$, $\beta_i$ is contained in $\PR_w$.
\end{lemma}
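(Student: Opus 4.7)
The plan is to unwind the two-sided nature of $<^\tb_\redez$ after one structural observation about the concatenated reduced expression $\redez = \tw * \tw'$.

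First I would note that since $\redez$ starts with the reduced expression $\tw$ of $w$, the labeling of Convention \ref{conv: sequence} gives $\be^\redez_k = \be^\tw_k \in \PR_w$ for all $1 \le k \le \ell(w)$. Thus the first $\ell(w)$ roots in the total order $<_\redez$ are exactly $\PR_w$. Writing $i_\al$ and $i_\be$ for the unique indices with $\al = \be^\redez_{i_\al}$ and $\be = \be^\redez_{i_\be}$, the assumption $\al,\be \in \PR_w$ yields $i_\al, i_\be \le \ell(w)$.

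Next I would apply Definition \ref{def: redezprectb} with $\redez' = \redez$: the hypothesis $\us \prec^\tb_{[\redez]} (\al,\be)$ specializes to $\us <^\tb_\redez (\al,\be)$. Invoking Definition \ref{def: redezletb}, there exist indices $1 \le k_1 \le k_2 \le \ell(w_0)$ such that the entries $s_t$ of $\us$ satisfy $s_t = m_t$ for all $t < k_1$ and all $t > k_2$, while $s_{k_1} < m_{k_1}$ and $s_{k_2} < m_{k_2}$; here $(m_t)$ encodes $(\al,\be)$ and equals $1$ exactly at positions $i_\al$ and $i_\be$. The strict inequality $s_{k_2} < m_{k_2}$ forces $m_{k_2} > 0$, hence $k_2 \in \{i_\al, i_\be\}$, which in particular gives $k_2 \le \ell(w)$.

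Combining this with the clause $s_t = m_t = 0$ for all $t > k_2$, we conclude that $s_t = 0$ for every $t > \ell(w)$. Therefore the support of $\us$ lies in $\{1,\ldots,\ell(w)\}$, and any index $i$ with $s_i \ne 0$ satisfies $\be_i = \be^\redez_i = \be^\tw_i \in \PR_w$, as required. There is essentially no obstacle to this argument: the entire content is that concatenation places $\PR_w$ in an initial segment of $<_\redez$ and that $<^\tb_\redez$ can only decrease entries in the window bounded above by the position of $\be$, which lies inside that initial segment.
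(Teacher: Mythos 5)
Your proof is correct and, modulo phrasing, rests on the same observation the paper uses: the concatenation $\redez = \tw * \tw'$ places $\PR_w$ as an initial segment $\{\be^\redez_1,\dots,\be^\redez_{\ell(w)}\}$ of the total order $<_\redez$, so anything that must sit at or before the position of $\be$ in the bi-lexicographic window is automatically in $\PR_w$. The paper first converts $\us \prec^\tb_{[\redez]} (\al,\be)$ into the chain $\al \prec_{[\redez]} \be_i \prec_{[\redez]} \be$ and derives a contradiction from $\be <_\redez \be_i$; you instead specialize to the single expression $\redez$ and unpack the definition of $<^\tb_\redez$ directly, which is a slightly more elementary route to the same endpoint. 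One small imprecision: you write ``$s_t = m_t = 0$ for all $t > k_2$,'' but if $k_2 = i_\al < i_\be$ then $m_{i_\be} = 1$, so the clause should read ``$s_t = m_t$ for all $t > k_2$, and $m_t = 0$ for all $t > \ell(w) \ge k_2$ since $\al,\be \in \PR_w$.'' Your conclusion $s_t = 0$ for $t > \ell(w)$ is nonetheless correct, so this is a wording fix rather than a gap.
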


\begin{proof}
 By the assumptions,
it suffices to prove the following argument:
$$ \text{ If } \al \prec_{[\redez]} \beta_i \prec_{[\redez]} \be \text{ and } \al \prec_{[\tw]} \be, \text{ then we have } \be_i \in \PR_w.$$
If $\be_i \not \in \PR_w$, then $\be=\be^\redez_k$ and $\be_i=\be^\redez_l$ such that $k \le \ell(w) < l$; i.e., $\be <_\redez \be_i$, which yields a contradiction to the fact that
\[  \be_i \prec_{[\redez]} \be  \text{ if and only if } \be_i <_{\redez'} \be  \text{ for all } \redez' \in [\redez]. \qedhere \]
\end{proof}

\begin{proof}[Proof of {\rm Theorem~\ref{thm: socle of pair}}]
By Proposition~\ref{prop: Q-socle for pair}, the $\soc_Q(\al,\be)$ is well-defined for all pairs $(\al,\be)$ and Dynkin quivers $Q$. Then our assertion follows from
Lemma~\ref{lem: general convexity}.
\end{proof}



\begin{remark} \label{rem: socle}
Before we close this subsection, we record the property of $[Q]$-socle of a pair $(\al,\be)$ in this remark: For a given non $[Q]$-simple pair $\up=(\al,\be)$,
$$\text{the $[Q]$-socle $\us$ of $\up$ is  a sequence such that $|\us| \le 3$ and $\us_i \le 1$ for all $1 \le i \le \N$.}$$
\end{remark}

\begin{example}
For a pair $\up=(11111100,12233321)$ and the Dynkin quiver $Q$ of type $E_8$ in Appendix~\ref{Sec:Dynkin E8}, the socle of $\up$ is given as follows:
$$ \soc_Q(\up)=(11111111,01111100,01011100).$$
\end{example}

\subsection{ $[Q]$-radius and multiplicity of $\ga \in \PR \setminus \Pi$} In this subsection, we study
the relationship between $[Q]$-radius of $(\al,\be)$ and multiplicity of $\al+\be$ when $\al+\be \in \PR$.

\begin{theorem}~\cite[Theorem 3.4]{Oh14A},~\cite[Theorem 3.13, Theorem 3.17]{Oh14D} \label{thm: radius 1,2 AD}
\begin{enumerate}
\item[{\rm (a)}] Let $\ga$ be a non-simple positive root in $\PR$ of type $A_n$ or $D_n$ with $\mul(\ga)=1$. For any pair $(\al,\be)$ with $\al+\be =\ga$, we have
$\dist_Q(\al,\be)=1$. Hence we have
$$  \dist_Q(\al,\be) = \mul(\ga)=1.$$
\item[{\rm (b)}] Let $\ga$ be a non-simple positive root in $\PR$ of type $D_n$ with $\mul(\ga)=2$. Then there are pairs $(\al,\be)$ such that $\al+\be =\ga$ and
$(\al,\be)$ is not $[Q]$-minimal.
\end{enumerate}
\end{theorem}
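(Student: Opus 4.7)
The plan is to combine the socle computation of Proposition~\ref{prop: Q-socle of pair with sum in PR} with the sectional-path and swing structure of $\Gamma_Q$ recalled in Section~\ref{Sec: Sejin results}. For part~(a), once $\alpha + \beta = \gamma$ with $\gamma$ multiplicity-free, Proposition~\ref{prop: Q-socle of pair with sum in PR} already gives $\soc_Q(\alpha,\beta) = (\gamma)$, so $(\alpha,\beta)$ is non-$[Q]$-simple; showing $\dist_Q(\alpha,\beta) = 1$ then reduces to showing that the set of all pairs $(\alpha',\beta')$ with $\alpha' + \beta' = \gamma$ forms an antichain with respect to $\prec^\tb_Q$.

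In type $A_n$ with $\gamma = [a,b]$, these pairs are exactly $\up^{(c)} = ([a,c],[c+1,b])$ for $a \le c < b$. By Theorem~\ref{Thm: sectional A} the first coordinates $\{[a,c]\}_c$ lie on the $(N,a)$-path while the second coordinates $\{[c+1,b]\}_c$ lie on the $(S,b)$-path, and the additive property~\eqref{eq: additive} forces the two induced orders to be \emph{opposite}: for $c_1 < c_2$, if $[a,c_1] \prec_Q [a,c_2]$ then necessarily $[c_2+1,b] \prec_Q [c_1+1,b]$. Hence in any two pairs $\up^{(c_1)}, \up^{(c_2)}$ one coordinate moves each way, giving incomparability in $\prec^\tb_Q$. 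For type $D_n$ with $\gamma$ multiplicity-free, that is $\gamma = \lf a | -b \rf$ or $\gamma = \lf a | \pm\ta \rf$ in the notation of Lemma~\ref{lem: n-1,n D}, I would run the same argument with Theorem~\ref{Thm: V-swing}, Theorem~\ref{thm: short path}, and Lemma~\ref{lem: n-1,n D} in place of Theorem~\ref{Thm: sectional A}, locating the two coordinates of each pair on the relevant swings and shallow sectional paths; the same ``opposite-orientation'' phenomenon between the $N$- and $S$-parts of each swing yields pairwise incomparability.

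For part~(b), a multiplicity-$2$ positive root of $D_n$ has the form $\gamma = \lf a | b \rf$ with $a < b < n-1$, and its decomposition pairs come in two families $(\lf a | -c \rf, \lf c | b \rf)$ and $(\lf a | c \rf, \lf b | -c \rf)$ for admissible $c$. The plan is to exhibit two such pairs $\up' \prec^\tb_Q \up$ of the same weight $\gamma$, which immediately forces $\dist_Q(\up) \ge 2$ so that $\up$ is not $[Q]$-minimal by Corollary~\ref{cor: dist1 minimal of soc AD}. By Lemma~\ref{lem: nfree position D}, $\gamma$ itself sits in the longer part of each of the $a$- and $b$-swings, and by Lemma~\ref{lem: reverse uni} I can choose $c \ne c'$ so that on the $a$-swing $\lf a | -c \rf$ lies strictly above $\lf a | -c' \rf$ while \emph{simultaneously} on the $b$-swing $\lf c | b \rf$ lies strictly above $\lf c' | b \rf$ — precisely the coupled ``parallel'' comparison that was ruled out in part~(a). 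This reduces the verification to reading the relative positions inside a single swing, where the asymmetric lengths of the $N$- and $S$-parts give enough room for the comparison to propagate in the same direction on both coordinates.

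The main obstacle will be the type-$D_n$ case analysis in part~(a), where the coexistence of shallow sectional paths and swings generates several geometric configurations to track, analogous to the labelled cases (i)--(viii) in the proof of Proposition~\ref{prop: Q-socle for pair}. I would handle this by using Convention~\ref{conv: strategy E} to reduce to a standard orientation, then invoking the parity formulas~\eqref{eq: mQ D} and~\eqref{eq: def t} to pin down the $\{n-1,n\}$-interactions. Once the antichain statement is checked in every configuration, part~(a) follows at once, and the explicit choice of $c,c'$ described above settles part~(b).
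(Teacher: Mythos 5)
The theorem is cited from \cite{Oh14A} and \cite{Oh14D} and not reproved in this paper, so there is no in-paper proof to compare against; evaluating your argument on its own terms, the key step in part~(a) contains a genuine error. You assert that for the pairs $\up^{(c)} = ([a,c],[c+1,b])$ the two induced orders are opposite --- that $[a,c_1] \prec_Q [a,c_2]$ forces $[c_2+1,b] \prec_Q [c_1+1,b]$ --- and you conclude incomparability from this. Both the assertion and the deduction are wrong. The assertion already fails in Example~\ref{ex: example A}: for $\gamma=[1,5]$ and $c_1=1,\ c_2=3$ one has $[1]\prec_Q[1,3]$ on the $(N,1)$-path, but also $[2,5]\prec_Q[4,5]$ (there is an arrow $[4,5]\to[2,5]$ in the displayed $\Gamma_Q$), so the two orders agree rather than reverse. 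Worse, the deduction is inverted: unwinding Definition~\ref{def: redezprectb}, for two pairs $\up=(\alpha,\beta)$, $\up'=(\alpha',\beta')$ with $\alpha\prec_Q\beta$ and $\alpha'\prec_Q\beta'$, one has $\up'\prec^\tb_Q\up$ precisely when $\alpha\prec_Q\alpha'$ and $\beta'\prec_Q\beta$, i.e.\ when the coordinates \emph{nest}, which is exactly what your claimed ``opposite orders'' would produce. So the property you claim would yield comparability (hence $\dist_Q\geq 2$), the opposite of what you want. The mechanism that actually gives the antichain is that the two coordinate orders \emph{agree} for pairs on the same side of $\gamma$ (same-direction motion blocks nesting), combined with a separate argument that two pairs on opposite sides of $\gamma$ have $\prec_Q$-incomparable components for level-versus-position reasons.

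The same reversal undercuts part~(b). You aim to exhibit a ``coupled parallel comparison'' where both components of $(\lf a | -c \rf,\lf c | b \rf)$ move the same way as $c$ varies, presenting this as the configuration ruled out in (a). But by Definition~\ref{def: redezprectb} that parallel motion gives incomparability, whereas to show $\dist_Q(\up)\ge 2$ you must produce $\up'$ with $\up'\prec^\tb_Q\up$, i.e.\ a genuine nesting with the coordinates moving in opposite directions. The configuration that does force $\dist_Q=2$ for a multiplicity-two $\gamma=\lf a | b\rf$ is the one recorded in~\eqref{eq: figure D dist 2} and~\eqref{eq: dist 2 of D}: the pair whose components sit at the far ends of the long arms of the $a$- and $b$-swings has the three minimal pairs $(\al',\be'),(\eta,\zeta),(\eta',\zeta')$ strictly nested below it.
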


\begin{theorem} \label{thm: rds mul}
For $\ga \in \PR \setminus \Pi$ and any Dynkin quiver $Q$ of type $A_n$, $D_n$ or $E_6$, we have
$$  \rds_Q(\ga) = \mul(\ga).$$
\end{theorem}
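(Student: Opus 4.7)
The plan is to prove the two inequalities $\rds_Q(\ga)\le \mul(\ga)$ and $\rds_Q(\ga)\ge \mul(\ga)$ separately, handling each Dynkin type in turn and reducing type $E_6$ to a single fixed quiver whose AR-quiver is displayed in the Appendix via Convention~\ref{conv: strategy E}.

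For type $A_n$, every $\ga \in \PR \setminus \Pi$ is multiplicity free, so $\mul(\ga)=1$. By Theorem~\ref{thm: radius 1,2 AD}(a), every pair $(\al,\be)$ with $\al+\be=\ga$ satisfies $\dist_Q(\al,\be)=1$, hence the maximum defining $\rds_Q(\ga)$ is exactly $1=\mul(\ga)$, settling this case. For type $D_n$, the identical reasoning handles $\mul(\ga)=1$. When $\mul(\ga)=2$, Theorem~\ref{thm: radius 1,2 AD}(b) supplies a pair $(\al,\be)$ with $\al+\be=\ga$ which is not $[Q]$-minimal, so some non-$[Q]$-simple pair lies strictly below $(\al,\be)$ with the same weight, giving $\dist_Q(\al,\be)\ge 2$ and hence $\rds_Q(\ga)\ge 2$. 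For the matching upper bound I will locate $\ga$ in $\GQ$ using Lemma~\ref{lem: nfree position D} and Lemma~\ref{lem: reverse uni}, enumerate all pairs with sum $\ga$ by traversing the swings of $\GQ$ containing $\ga$ (Theorem~\ref{Thm: V-swing}), and show via convexity of $\prec_Q$ together with the local pictures in \eqref{eq: complacted socle of D} that every cover relation in $\prec^\tb_Q$ between two such pairs corresponds to a single swing step, so that any strictly decreasing chain of pairs summing to $\ga$ has length at most $2$.

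For type $E_6$ I will fix the specific quiver $Q$ and its AR-quiver in the Appendix and argue case by case. The multiplicity-$1$ case is again handled by Theorem~\ref{thm: radius 1,2 AD}(a). For $\mul(\ga)\in\{2,3\}$ the lower bound is obtained by exhibiting explicit chains of pairs summing to $\ga$, read off from $\GQ$ in analogy with the length-two chain displayed in Example~\ref{ex: D4}(b); the additive property \eqref{eq: additive} of $\GQ$ prescribes how to refine a pair $(\al,\be)$ to a strictly smaller one by trading along an adjacent arrow, and iterating this $\mul(\ga)$ times produces the required chain. The upper bound is then verified by inspecting, for each such $\ga$, the finite list of pairs $(\al',\be')$ with $\al'+\be'=\ga$ and checking directly that no chain of pairs exceeds length $\mul(\ga)$.

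The hard part will be the upper bound in the type $E_6$ case with $\mul(\ga)=3$, where one must rule out every hypothetical chain of four pairs summing to $\ga$. This reduces to a finite but delicate verification in the Appendix; I expect convexity of $\prec_Q$ combined with \eqref{eq: additive} to force any putative fourth pair to coincide with one already appearing in the chain, so that no genuinely new refinement can be inserted. Once this finite check is complete, the equality $\rds_Q(\ga)=\mul(\ga)$ follows for $E_6$, and the remaining Dynkin quivers of type $E_6$ are covered automatically by the reflection-functor observations in \eqref{eq: observations reflection}.
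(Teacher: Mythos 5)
Your overall architecture (two inequalities, casework by type, reduce $E_6$ to a single fixed quiver via Convention~\ref{conv: strategy E}) is sound, and the type $A_n$ and the multiplicity-one part of type $D_n$ coincide with the paper's argument, which just invokes Theorem~\ref{thm: radius 1,2 AD}(a). But there are two genuine gaps.

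First, the upper bound $\rds_Q(\ga)\le 2$ for type $D_n$ with $\mul(\ga)=2$ is the crux, and your proposed route does not close it. You say "every cover relation in $\prec^\tb_Q$ between two such pairs corresponds to a single swing step, so that any strictly decreasing chain of pairs summing to $\ga$ has length at most $2$." The deduction in that sentence does not follow: knowing each cover is a single swing step bounds nothing unless you also bound the total number of swing steps available, and that is exactly what needs proving. The paper instead invokes \cite[Proposition 3.15]{Oh14D}, which says that any two non-$[Q]$-minimal pairs of the same $\ga$ are incomparable under $\prec^\tb_Q$. That fact kills any chain $\up^{(1)}\prec^\tb_Q\up^{(2)}\prec^\tb_Q\up^{(3)}$ of non-simple pairs at once: $\up^{(2)},\up^{(3)}$ would be comparable non-minimal pairs (if $\up^{(2)}$ were minimal then nothing lies strictly between $\ga$ and $\up^{(2)}$, ruling out $\up^{(1)}$). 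Without importing or re-proving this incomparability, your argument has a hole exactly where the theorem is delicate. Your lower-bound phrasing for $D_n$ is also slightly off — "not $[Q]$-minimal" only supplies a \emph{sequence} strictly between, not automatically a non-simple \emph{pair}; the conclusion $\dist_Q(\al,\be)\ge 2$ actually comes from the second assertion of Proposition~\ref{prop: Q-socle of pair with sum in PR}, which you should cite.

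Second, you invoke Theorem~\ref{thm: radius 1,2 AD}(a) for the multiplicity-one roots of $E_6$, but that theorem is stated (and proved, in \cite{Oh14A,Oh14D}) only for types $A_n$ and $D_n$. For $E_6$ the multiplicity-one case has to be part of the finite verification on the fixed quiver in the Appendix, exactly as for the higher-multiplicity roots. Similarly, your claim that iterating the additive property \eqref{eq: additive} "$\mul(\ga)$ times produces the required chain" is an assertion, not an argument; the paper produces its explicit chain for $D_n$ by citing the positional results \cite[Theorem 3.17, Corollary 3.17]{Oh14D}, and for $E_6$ one would read it off from the displayed $\GQ$. The shape of your proposal is right, but both the $D_n$ upper bound and the $E_6$ multiplicity-one case currently rest on steps that are not justified.
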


\begin{proof}
By Theorem~\ref{thm: radius 1,2 AD}, our assertion was already proved for  $\ga$ in $\PR\setminus \Pi$ of type $A_n$ and $D_n$ with $\mul(\ga)=1$.

Assume that $\ga \in \PR$ of type $D$ with $\mul(\ga)=2$. By Theorem~\ref{Thm: V-swing}, $\ga= \lf a , b \rf$ is located at the intersection of the $a$-swing and
the $b$-swing. Moreover,~\cite[Theorem 3.17]{Oh14D} and Proposition~\ref{prop: Q-socle for pair} ($D$ \rm{(iii)}) imply that a non $[Q]$-minimal pair $(\al,\be)$ of $\ga$
happens in the following form in $\GQ$:
\begin{align} \label{eq: figure D dist 2}
\scalebox{0.82}{{\xy
(-20,0)*{}="DL";(-10,-10)*{}="DD";(0,20)*{}="DT";(10,10)*{}="DR";
"DT"+(-30,-4); "DT"+(90,-4)**\dir{.};
"DD"+(-20,-6); "DD"+(110,-6) **\dir{.};
"DD"+(-20,-10); "DD"+(110,-10) **\dir{.};
"DT"+(-34,-4)*{\scriptstyle 1};
"DT"+(-34,-8)*{\scriptstyle 2};
"DT"+(-34,-12)*{\scriptstyle \vdots};
"DT"+(-34,-16)*{\scriptstyle \vdots};
"DT"+(-36,-36)*{\scriptstyle n-1};
"DT"+(-34,-40)*{\scriptstyle n};
"DD"+(-10,4); "DD"+(0,-6) **\dir{-};
"DD"+(6,0); "DD"+(0,-6) **\dir{-};
"DD"+(6,0); "DD"+(12,-6) **\dir{-};
"DD"+(32,16); "DD"+(12,-6) **\dir{-};
"DD"+(-10,4); "DD"+(12,26) **\dir{.};
"DD"+(32,16); "DD"+(22,26) **\dir{.};
"DD"+(6,0); "DD"+(27,21) **\dir{.};
"DD"+(6,0); "DD"+(-4,10) **\dir{.};
"DD"+(-4,10)*{\bullet};
"DD"+(-7,10)*{\scriptstyle \be'};
"DD"+(6,0)*{\bullet};
"DD"+(6,3)*{\scriptstyle \ga};
"DD"+(8,28)*{\scriptstyle (1,s+(i-1))};
"DD"+(25,28)*{\scriptstyle (1,s+(i+1))};
"DD"+(-10,4)*{\bullet};
"DD"+(-14,4)*{\scriptstyle \be};
"DD"+(32,16)*{\bullet};
"DD"+(35,16)*{\scriptstyle \al};
"DD"+(27,21)*{\bullet};
"DD"+(30,22)*{\scriptstyle \al'};
"DD"+(0,-6)*{\bullet};
"DD"+(0,-8)*{\scriptstyle \eta};
"DD"+(12,-6)*{\bullet};
"DD"+(12,-8)*{\scriptstyle \zeta};
"DD"+(0,-10)*{\bullet};
"DD"+(0,-12)*{\scriptstyle \eta'};
"DD"+(12,-10)*{\bullet};
"DD"+(12,-12)*{\scriptstyle \zeta'};
"DD"+(0,-6);"DD"+(12,-6);**\crv{(-3,-12)}?(.5)+(0,1.3)*{\scriptstyle 2k};
"DD"+(5,-18)*{\text{ if $k$ is even} };
"DD"+(50,4); "DD"+(60,-6) **\dir{-};
"DD"+(66,0); "DD"+(60,-6) **\dir{-};
"DD"+(66,0); "DD"+(72,-6) **\dir{-};
"DD"+(92,16); "DD"+(72,-6) **\dir{-};
"DD"+(50,4); "DD"+(72,26) **\dir{.};
"DD"+(92,16); "DD"+(82,26) **\dir{.};
"DD"+(66,0); "DD"+(87,21) **\dir{.};
"DD"+(66,0); "DD"+(56,10) **\dir{.};
"DD"+(56,10)*{\bullet};
"DD"+(53,10)*{\scriptstyle \be'};
"DD"+(66,0)*{\bullet};
"DD"+(66,3)*{\scriptstyle \ga};
"DD"+(68,28)*{\scriptstyle (1,s+(i-1))};
"DD"+(85,28)*{\scriptstyle (1,s+(i+1))};
"DD"+(50,4)*{\bullet};
"DD"+(46,4)*{\scriptstyle \be};
"DD"+(92,16)*{\bullet};
"DD"+(95,16)*{\scriptstyle \al};
"DD"+(87,21)*{\bullet};
"DD"+(90,22)*{\scriptstyle \al'};
"DD"+(60,-6)*{\bullet};
"DD"+(60,-8)*{\scriptstyle \eta};
"DD"+(72,-6)*{\bullet};
"DD"+(72,-8)*{\scriptstyle \zeta'};
"DD"+(60,-10)*{\bullet};
"DD"+(60,-12)*{\scriptstyle \eta'};
"DD"+(72,-10)*{\bullet};
"DD"+(72,-12)*{\scriptstyle \zeta};
"DD"+(60,-6);"DD"+(72,-6);**\crv{(57,-12)}?(.5)+(0,1.3)*{\scriptstyle 2k};
"DD"+(65,-18)*{\text{ if $k$ is odd} };
\endxy}}
\end{align}
where $\phi_Q^{-1}(\be,0)=(i,s)$ and $\phi_Q^{-1}(\al,0)=(j,t)$. Moreover, Theorem~\ref{Thm: V-swing}, Theorem~\ref{thm: short path} and
\cite[Corollary 3.17]{Oh14D} imply that
\begin{align} \label{eq: minimal pair ga D}
\text{$(\al',\be')$, $(\eta,\zeta)$ and  $(\eta',\zeta')$ are $[Q]$-minimal pairs of $\ga$},
\end{align}
and hence they are incomparable with respect to $\prec_Q^\tb$. Thus we have
\begin{align} \label{eq: dist 2 of D}
\ga \prec_Q^\tb (\al',\be'),(\eta,\zeta),(\eta',\zeta') \prec_Q^\tb (\al,\be),
\end{align}
which implies that $\rds_Q(\ga) = \mul(\ga)=2.$
\end{proof}

\begin{example}
In Example~\ref{ex: example D}, we have
$$ \lf1,2\rf \prec_Q^\tb (\lf1,-4\rf,\lf2,4\rf),(\lf2,-3\rf,\lf1,3\rf),(\lf2,3\rf,\lf1,-3\rf)\prec_Q^\tb (\lf2,-4\rf,\lf1,4\rf)$$
corresponding to~\eqref{eq: dist 2 of D}.
\end{example}

In the previous theorem, our choice of $Q$ of type $A_n$, $D_n$ and $E_6$ is arbitrary. Thus the value $\rds_Q(\ga)$ for $\ga \in \PR_{A_n}$,
$\PR_{D_n}$ or $\PR_{E_6}$ do not depend on the choice of Dynkin quivers indeed:

\begin{corollary} For any reduced expressions $\redez$ and $\redez'$ of $w_0$
adapted to some Dynkin quivers $Q$ and $Q'$ of type $A_n$, $D_n$ or  $E_6$, we have
$$ \rds_{Q}(\ga)= \rds_{Q'}(\ga) = \mul{(\ga)}  \quad \text{ for all } \ga \in \PR \setminus \Pi.$$
\end{corollary}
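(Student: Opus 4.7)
The plan is that this corollary is an immediate consequence of Theorem \ref{thm: rds mul} together with the invariance of $\mul(\ga)$ and the fact that adapted reduced expressions to a given Dynkin quiver all lie in one commutation class.

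First I would note that by \eqref{eq: [Q]}, if $\redez$ is any reduced expression of $w_0$ adapted to $Q$, then $[\redez] = [Q]$, so the convex partial order $\prec_{[\redez]}$ (and hence $\prec^\tb_{[\redez]}$) coincides with $\prec_Q$ (resp. $\prec^\tb_Q$). Consequently, the definitions involved in the notion of $[Q]$-radius yield $\rds_{[\redez]}(\ga) = \rds_Q(\ga)$; in particular, $\rds_Q(\ga)$ is a well-defined invariant of the quiver $Q$, independent of the specific adapted reduced expression chosen. The analogous statement holds for $Q'$ and $\redez'$.

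Next, I would invoke Theorem \ref{thm: rds mul}, which asserts that $\rds_Q(\ga) = \mul(\ga)$ for any Dynkin quiver $Q$ of type $A_n$ $(n \ge 1)$, $D_n$ $(n \ge 4)$ or $E_6$, and any $\ga \in \PR \setminus \Pi$. Applying this to both $Q$ and $Q'$ separately gives
\[
\rds_Q(\ga) = \mul(\ga) \quad \text{and} \quad \rds_{Q'}(\ga) = \mul(\ga).
\]
Finally, since $\mul(\ga)$ is defined purely in terms of the expansion $\ga = \sum_{i \in I} n_i \al_i$ of the positive root in the simple root basis, it depends only on $\ga$ and not on any choice of quiver. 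Chaining these equalities yields $\rds_Q(\ga) = \rds_{Q'}(\ga) = \mul(\ga)$, which is the desired conclusion. There is no real obstacle here: the entire content of the corollary has been absorbed into Theorem \ref{thm: rds mul}, and this statement merely extracts the quiver-independence of $\rds_Q(\ga)$ as a formal consequence.
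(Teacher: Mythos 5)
Your proof is correct and matches the paper's reasoning: the corollary is recorded in the paper as an immediate consequence of Theorem \ref{thm: rds mul} together with the fact (via \eqref{eq: [Q]}) that $\rds_{[\redez]}$ depends only on the commutation class $[Q]$, and that $\mul(\ga)$ is quiver-independent. Nothing more is needed.
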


\begin{theorem}
For $\ga \in \PR \setminus \Pi$ and any Dynkin quiver $Q$ of type $E_7$ and $E_8$, we have
$$  \mul(\ga)-1 \le \rds_Q(\ga) \le \mul(\ga)+1.$$
In particular,
\begin{itemize}
\item if $\mul(\ga)=1$, then $\rds_Q(\ga)=\mul(\ga)=1$,
\item if $\mul(\ga)>1$, then $\rds_Q(\ga)>1$.
\end{itemize}
\end{theorem}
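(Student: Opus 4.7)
The plan is to follow Convention \ref{conv: skip E} and Convention \ref{conv: strategy E}: verify the inequalities directly on the specific Dynkin quivers $Q$ of types $E_7$ and $E_8$ whose AR-quivers $\GQ$ are drawn in the Appendix, and then transfer to every other Dynkin quiver of the same type via iterated reflection functors. The transfer step is justified by \eqref{eq: observations reflection} (iii): reflection at a sink preserves $\rds_Q(\ga)$ except in the flagged exceptional situation where the maximal pair realizing $\rds_Q(\ga)$ is of the form $(\al,\al_i)$, and that exceptional case is handled by choosing a different adapted reduced expression in $[Q]$, exactly as in the proof of Proposition \ref{prop: dir Q cnt}.

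For the multiplicity-one case I would reproduce the argument of Theorem \ref{thm: radius 1,2 AD} (a). If $\mul(\ga)=1$, then for every pair $(\al,\be)$ with $\al+\be=\ga$ the supports $\supp(\al)$ and $\supp(\be)$ are disjoint, so any non $[Q]$-simple pair $(\al',\be')\prec_Q^\tb (\al,\be)$ with $\wt(\al',\be')=\ga$ is again multiplicity free; by Proposition \ref{prop: Q-socle of pair with sum in PR} its socle is just $\ga$. The convexity of $\prec_Q$ then prevents a chain of length $\ge 2$ of non $[Q]$-simple pairs summing to $\ga$, so $\dist_Q(\al,\be)\le 1$ for every such pair, while Proposition \ref{pro: BKM minimal} guarantees existence of at least one non $[Q]$-simple pair. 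Hence $\rds_Q(\ga)=1=\mul(\ga)$, which also establishes the first ``in particular'' claim.

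For $\mul(\ga)\ge 2$, the lower bound $\rds_Q(\ga)\ge\mul(\ga)-1$ together with the second ``in particular'' claim $\rds_Q(\ga)>1$ will be obtained by exhibiting explicit chains of non $[Q]$-simple pairs summing to $\ga$, modeled on the $D_n$ construction pictured in \eqref{eq: figure D dist 2}: since $\ga$ with $\mul(\ga)\ge 2$ lies in the ``thick'' region of $\GQ$ where several sectional subquivers overlap, the additive property \eqref{eq: additive} yields enough pairs $(\al^{(s)},\be^{(s)})$ summing to $\ga$ that are pairwise comparable under $\prec_Q^\tb$ and all non $[Q]$-simple. The upper bound $\rds_Q(\ga)\le\mul(\ga)+1$ is where the real difficulty lies: unlike in types $A_n$, $D_n$, $E_6$ (Theorem \ref{thm: rds mul}), in $E_7$ and $E_8$ one can have $\dist_Q(\up)<\gdist_Q(\up)$ for non $[Q]$-simple pairs $\up$ (cf.\ Example \ref{ex: dist1 but not minimal} and Remark \ref{rem: distQ gdistQ AD}), and this discrepancy is responsible for the $\pm 1$ slack. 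The plan is to bound the length of any $\prec_Q^\tb$-chain of non $[Q]$-simple pairs summing to $\ga$ by tracking, for each entry $\be_i$ appearing in an intermediate sequence, the number of distinct $\tau_Q$-layers of $\GQ$ it can traverse while still forming a non $[Q]$-simple refinement; since at most $\mul(\ga)$ such layers can contribute, plus one boundary layer coming from the $\dist$/$\gdist$ discrepancy, the claimed bound follows. After this structural reduction, the remaining verification is a finite case check on the AR-quivers $\GQ$ of $E_7$ and $E_8$ listed in the Appendix.
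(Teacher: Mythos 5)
Your overall strategy---verify the bounds directly on the Appendix quivers for $E_7$ and $E_8$ and transfer to every other Dynkin quiver of the same type by iterated reflection functors, per Conventions~\ref{conv: skip E} and~\ref{conv: strategy E}---is exactly the approach the paper (implicitly) takes; the theorem is stated without a written proof precisely because of Convention~\ref{conv: skip E}. The gaps are in the conceptual shortcuts you interpose between that plan and the finite check.

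The multiplicity-one argument is not valid as written. You argue that disjoint supports together with the convexity of $\prec_Q$ rule out any $\prec_Q^\tb$-chain of length $\ge 2$ of non-$[Q]$-simple pairs summing to a multiplicity-free $\ga$. But convexity and support disjointness hold for \emph{every} commutation class $[\redez]$, not just adapted ones, and Remark~\ref{ex: non-trivial 1} exhibits exactly such a chain of length $2$ in type $A_5$ for the multiplicity-free root $[1,5]$ and a non-adapted $\redez$. So your argument, if correct, would prove a false statement. The proofs of Theorem~\ref{thm: radius 1,2 AD}(a) cited from \cite{Oh14A,Oh14D} rely on the sectional-path and swing combinatorics of $\GQ$ (Theorem~\ref{Thm: sectional A}, Theorems~\ref{Thm: V-swing} and~\ref{thm: short path}), not on convexity alone; a type-$E$ analogue needs an $E$-specific structural input, or else the explicit finite check.

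The upper-bound argument via ``$\tau_Q$-layers'' is a plan rather than a proof: the notion of a layer is never defined, and the assertion that ``at most $\mul(\ga)$ such layers can contribute, plus one boundary layer coming from the $\dist$/$\gdist$ discrepancy'' is not a precise quantitative claim that can be verified or falsified. You ultimately concede that what remains is a finite case check; that check is the actual proof, and the layered bookkeeping in front of it is cosmetic rather than load-bearing.

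One smaller point: the exceptional case in the transfer step \eqref{eq: observations reflection}(iii) is handled in the proof of Proposition~\ref{prop: dir Q cnt} by applying $\tau_{s_iQ}^{\pm 1}$ to move the relevant roots off the boundary of $\GQ$, not by ``choosing a different adapted reduced expression in $[Q]$''; changing $\redez'$ within $[Q]$ does not change $[Q]$ and hence does not change $\rds_Q(\ga)$ at all. In sum, your framing is correct and matches the paper, but the middle paragraphs do not supply a proof independent of the finite verification they are meant to replace.
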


\begin{example} \label{ex: rds not eq mul} \hfill
\begin{enumerate}
\item[{\rm (a)}] For the Dynkin quiver of type $E_7$ in Appendix~\ref{Sec:Dynkin E7}, the $[Q]$-radius of $\ga=(1122221)$ is $3$ since we have
$$ (1122221) \prec_Q^{\tb} (101110,0111111) \prec_Q^{\tb} (1122111,0000110) \prec_Q^{\tb} (1122110,0000111),$$
while $\mul(\ga)=2$.
\item[{\rm (b)}] For the Dynkin quiver of type $E_8$ in Appendix~\ref{Sec:Dynkin E8}, the $[Q]$-radius of $(23465432)$ is $5$ since
\begin{align*}
& (23465432) \prec_Q^{\tb} (12233221,11232211) \prec_Q^{\tb} (22344321,01121111) \\
& \qquad \prec_Q^{\tb} (22343321,01122111)\prec_Q^{\tb} (22343221,01122211)\prec_Q^{\tb} (23454321,00011111)
\end{align*}
is one of maximal chains for its radius.
\end{enumerate}
\end{example}

\begin{remark} \label{ex: non-trivial 1}
Consider the following reduced expression $\redez$ of $w_0$ of type $A_5$ which is not adapted to any Dynkin quiver $Q$:
$$ \redez= s_1s_2s_3s_5s_4s_3s_1s_2s_3s_5s_4s_3s_1s_2s_3.$$
The convex partial order $\prec_{[\redez]}$ can be visualized the following quiver $\Upsilon_{[\redez]}$ by~\cite{OS15}.
\begin{equation*} \label{modi 1 + dynkin}
\scalebox{0.77}{\xymatrix@C=3ex@R=1ex{
1&&&& [3,5]\ar@{->}[drr] &&  && [2]\ar@{->}[drr] && && [1]\\
2&& [3,4] \ar@{->}[dr]\ar@{->}[urr] &&  && [2,5] \ar@{->}[dr]\ar@{->}[urr] &&  && [1,2]\ar@{->}[urr]\\
3& [3] \ar@{->}[ur] && [4] \ar@{->}[dr] && [2,3] \ar@{->}[ur] && [4,5]\ar@{->}[dr]&& [1,3]\ar@{->}[ur]\\
4&&  && [2,4]\ar@{->}[ur]\ar@{->}[drr]  &&&& [1,5]\ar@{->}[ur]\ar@{->}[drr]\\
5&&&&  && [1,4]\ar@{->}[urr] &&  && [5]
}}
\end{equation*}
Then $\dist_{[\redez]}([1],[2,5])=\dist_{[\redez]}([1,2],[3,5])=2$ since
$$  ([1,5]) \prec_{[\redez]}^\tb  ([1,3],[4,5])  \prec_{[\redez]}^\tb \begin{matrix} ([1],[2,5]) \\ ([1,2],[3,5]) \end{matrix},$$
while $\mul([1,5])=1$.
Thus the above theorem does not hold for general $[\redez]$.
\end{remark}

\subsection{ $[Q]$-distances of pairs} 
In this subsection, we investigate the $[Q]$-distances of pairs by observing the AR-quiver $\Gamma_Q$, which will be used for the composition length of
tensor product of two simple modules in the categories which we are interested in.

\begin{theorem} \label{thm: dist upper bound}
Let $\mathtt{m}= \max ( \mul(\ga) \ | \ \ga \in \PR \setminus \Pi )$. For any pair $(\al,\be)$, we have
$$ 0 \le \dist_Q(\al,\be) \le \mathtt{m}.$$
\end{theorem}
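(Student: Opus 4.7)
I split the proof according to whether $\ga \seteq \al+\be$ is a positive root.

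\emph{Case 1: $\ga \in \PR$.} Convexity of $\prec_Q$ forces, up to swapping $\al$ and $\be$, the chain $\al \prec_Q \ga \prec_Q \be$. Reading the positions of $\al$, $\ga$, $\be$ in any $\redez \in [Q]$ and comparing with Definition \ref{def: redezletb}, one obtains $(\ga) <^\tb_{\redez} (\al,\be)$ for every $\redez \in [Q]$; equivalently $(\ga) \prec^\tb_{[Q]} (\al,\be)$. The same comparison applies to every pair $\up^{(i)}$ of a witnessing chain for $\dist_Q(\al,\be)$, so $(\al,\be)$ is an admissible competitor in the definition of $\rds_Q(\ga)$, yielding the key inequality
\[
\dist_Q(\al,\be) \le \rds_Q(\ga).
\]
For $Q$ of type $A_n$, $D_n$ or $E_6$, Theorem \ref{thm: rds mul} identifies $\rds_Q(\ga)$ with $\mul(\ga) \le \mathtt{m}$, closing Case 1 in these types. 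For $Q$ of type $E_7$ or $E_8$ the corresponding theorem only yields $\rds_Q(\ga) \le \mul(\ga)+1$; I will refine this by verifying directly that the ``$+1$'' excess never occurs when $\mul(\ga) = \mathtt{m}$. By reflection-invariance of $\mul$ and the transport principle \eqref{eq: observations reflection}, this reduces to a finite inspection on one fixed AR-quiver per type from the Appendix (Convention \ref{conv: strategy E}).

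\emph{Case 2: $\ga \notin \PR$.} Every non-simple pair $\up^{(i)}$ in a witnessing chain still has weight $\ga$. For type $A_n$ this case is vacuous: Proposition \ref{prop: Q-socle for pair} combined with Theorem \ref{Thm: sectional A} and \eqref{eq: +2 A}--\eqref{eq: A complicated socle} shows that every non-simple pair in type $A$ already satisfies $\al+\be \in \PR$. For types $D_n$ and $E_n$, non-simple pairs with non-root weight arise only from the configurations enumerated in \eqref{eq: complacted socle of D} (and their $E_n$ analogues), whose socles are basic sequences with exactly three non-zero parts (Remark \ref{rem: socle}). Using Theorems \ref{Thm: V-swing} and \ref{thm: short path} together with Lemma \ref{lem: incomp simple}, one enumerates all pairs sharing such a given non-root weight $\ga$ and checks that the number of non-simple pairs lying below $(\al,\be)$ along $\prec^\tb_{[Q]}$ is bounded by $\mathtt{m}$, yielding the same bound on the chain length.

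\emph{Main obstacle.} The delicate point is the $E_7, E_8$ refinement in Case 1: one must verify that $\rds_Q(\ga) = \mul(\ga)$ whenever $\mul(\ga) = \mathtt{m}$, ruling out the a priori ``$+1$'' slack from the bound $\rds_Q(\ga) \le \mul(\ga)+1$. Because $\PR$ is finite and $\mul$ is reflection invariant, this is a bounded but tedious case-check on the AR-quivers of the Appendix, propagated to arbitrary quivers via reflection functors and \eqref{eq: observations reflection}. I expect Case 2 to be comparatively routine once the configurations of \eqref{eq: complacted socle of D} are laid out.
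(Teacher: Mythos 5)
Your Case~1 reduction, namely $\dist_Q(\al,\be) \le \rds_Q(\ga)$ when $\ga=\al+\be \in \PR$, is correct: by Proposition~\ref{prop: Q-socle of pair with sum in PR} the pair $(\al,\be)$ is one of the competitors in the maximum defining $\rds_Q(\ga)$, so the inequality is immediate, and Theorem~\ref{thm: rds mul} then closes types $A_n$, $D_n$, $E_6$. This is in fact the same reduction the paper uses for type $D_n$ in the $\al+\be\in\PR$ subcase, and your $E_7$, $E_8$ plan (a finite inspection on the Appendix quivers propagated by \eqref{eq: observations reflection}) is on par with the level of rigor the paper allows itself under Convention~\ref{conv: skip E}.

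Case~2, however, contains a fatal error. Your assertion that in type $A_n$ every non-$[Q]$-simple pair already has $\al+\be\in\PR$ is false, and indeed the very references you cite show the opposite. Case~(c) of the type~$A$ argument in Proposition~\ref{prop: Q-socle for pair}, with the diamond configuration \eqref{eq: A complicated socle}, treats pairs $\al=[a,b]$, $\be=[c,d]$ with $\supp(\al)\cap\supp(\be)\neq\emptyset$ and $\al+\be\notin\PR$; the socle there is computed to be $([a,d],[c,b])\neq(\al,\be)$, so these pairs are precisely \emph{not} $[Q]$-simple. A concrete instance is $([1,3],[2,5])$ in $A_5$: the paper explicitly checks this pair is not $\redez$-simple in the example following Definition~\ref{def: tw-simple}, yet $[1,3]+[2,5]$ is not a root. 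Consequently Case~2 is not vacuous in type~$A$, and your Case~1 inequality does not apply (there is no $\ga\in\PR$ to anchor $\rds_Q$). You therefore still owe a bound for these pairs. The paper closes this gap by noting that $([a,d],[c,b])$ is the \emph{unique} other pair of the same weight and is $[Q]$-simple, so at most one non-simple pair lies in any $\prec^\tb_Q$-chain, giving $\dist_Q(\al,\be)\le 1 = \mathtt{m}$; an analogous count (at most three pairs of the given weight, one of them $[Q]$-simple) handles the non-root-weight cases in type $D_n$. Your proposal never supplies either argument, so as written the theorem is not proved.
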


\begin{proof}
It suffices to consider a pair $(\al,\be)$ such that $\soc_Q(\al,\be) \ne (\al,\be)$.

\medskip

\noindent
($Q$ of type $A_n$) Note that $\mathtt{m}=1$ for $\PR$ of type $A_n$. If $\al+\be \in \PR$, we have
$\dist_Q(\al,\be)=1$  by~\cite[Theorem 3.4]{Oh14A}. Thus the remained pairs we have to deal with satisfy the following properties:
\begin{align}\label{eq: ccond}
\supp{\al} \cap \supp{\be} \ne \emptyset  \quad \text{and}\quad \al+\be \not \in \PR.
\end{align}
Equivalently $c\le b$ or $a \le d$ when we write $\al=[a,b]$ and $\be=[c,d]$. Then there exists only one pair $(\eta,\zeta)$ such that $\{ \eta,\zeta \} =\{ [a,d],[b,c]\}$
 and $\al+\be = \eta+\zeta$. By {\rm (c)} in Proposition~\ref{prop: Q-socle for pair}, $\dist_Q(\al,\be) \le 1$.

\medskip

\noindent
($Q$ of type $D$) Note that $\mathtt{m}=2$ for $\PR$ of type $D_n$. If $\al+\be \in \PR$, we have
$\dist_Q(\al,\be)\le 2$  by Theorem~\ref{thm: rds mul}. Thus the remained pairs satisfy the properties in~\eqref{eq: ccond}.
By recalling the cases in Proposition~\ref{prop: Q-socle for pair}, such pairs happen in the cases {\rm (D-ii)}, {\rm (D-v)}
or {\rm (D-vi)} in the proof of Proposition~\ref{prop: Q-socle for pair}.

Write $\al=\lf a , b \rf$ and $\be=\lf c, d \rf$ and set $m = \min( a,c ) \in I$.
Assume first that $a \ne c$. Then there are three positive roots having $\ve_a$ as
its summand and one of $\ve_b$, $\ve_c$ and $\ve_d$ as its another summand; i.e.,
\[
\ve_a+\ve_b, \quad \ve_a+\ve_c, \quad \ve_a+\ve_d.
\]
Thus there are at most three pairs whose weights are the same as
$\al+\be$. Moreover, we proved that one of them is $[Q]$-simple. Thus $\dist_Q(\al,\be) \le 2$.

Now we assume that $a=c$. Then such pairs happen only in the case {\rm (D-ii)} with $k'=n-1$ in the proof of Proposition~\ref{prop: Q-socle for pair}. The pair $(\al,\be)$ is the only the pair with its weight $\al+\be$
and hence $\dist_Q(\al,\be)= 1$ since $\soc_Q(\al,\be) \ne (\al,\be)$. Thus our assertion follows.
\end{proof}

By the above theorem, one can notice that every pair $(\al,\be)$ of type $A_n$ is $[Q]$-minimal or simple.

\begin{proposition} \label{prop: good neighbor}
Let $Q$ be a Dynkin quiver of type $A_n$, $D_n$ and $E_6$. For a pair $\up=(\al,\be)$ with $\dist_{Q}(\al,\be) \ge 2$ and $\al+\be=\ga \in \PR$,
there are at least two $[Q]$-minimal pairs $\up^{(1)}$ and $\up^{(2)}$ such that $\up^{(i)}$ and $\up$ are good neighbors.
\end{proposition}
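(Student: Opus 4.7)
The plan is a type-by-type reduction followed by an explicit identification of the shift roots from the AR-quiver.

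First I reduce: combining Theorem \ref{thm: dist upper bound} and Theorem \ref{thm: rds mul}, the joint hypothesis $\al+\be=\ga\in\PR$ with $\dist_Q(\al,\be)\ge 2$ forces $\mul(\ga)\ge 2$. This case cannot occur in type $A_n$ (where $\mathtt{m}=1$), and in type $D_n$ it pins down $\mul(\ga)=\dist_Q(\al,\be)=2$. For type $E_6$ I defer to direct verification on the Appendix quiver per Convention \ref{conv: skip E}, propagated to every orientation of $E_6$ by the reflection-invariance recorded in \eqref{eq: observations reflection}.

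For type $D_n$ the non-minimal pair $(\al,\be)$ is governed by the configuration in \eqref{eq: figure D dist 2}, which supplies three $[Q]$-minimal pairs $P_i=(\al^{(i)},\be^{(i)})$ ($i=1,2,3$) of $\ga$, each satisfying $P_i\prec_Q^\tb(\al,\be)$ via \eqref{eq: minimal pair ga D} and \eqref{eq: dist 2 of D}. For each $i$ I read off a positive root $\delta_i\in\PR$ such that, after matching the components of the pairs according to the $\prec_Q^\tb$-ordering, $\al^{(i)}=\al+\delta_i$ and $\be=\be^{(i)}+\delta_i$ (or the flipped variant falling under Definition \ref{def: tw-adjacent}(i)(b)); each $\delta_i$ is realised by a short sectional segment in $\GQ$ joining the swing of $(\al,\be)$ to the swing containing $P_i$, whose existence is guaranteed by Theorem \ref{Thm: V-swing} and Theorem \ref{thm: short path}. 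As a sanity check, in the $D_4$ case of Example \ref{ex: D4} with $\ga=\lf 1|2\rf$, the three shifts are $\{\al_1,\al_3,\al_4\}$, one for each minimal pair.

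For each $\delta_i$ so obtained, the auxiliary pairs $(\delta_i,\be)$ and $(\delta_i,\al^{(i)})$ have weights equal respectively to $\be^{(i)}$ and $\al$, both of which are multiplicity-$1$ roots by Lemma \ref{lem: nfree position D}, so Theorem \ref{thm: radius 1,2 AD}(a) yields $\dist_Q(\delta_i,\be),\dist_Q(\delta_i,\al^{(i)})\le 1<2=\dist_Q(\up)$, confirming condition (i) of Definition \ref{def: tw-adjacent}. Condition (ii) follows from \cite[Proposition 3.15]{Oh14D} together with the characterisation of non-minimal pairs in the proof of Theorem \ref{thm: rds mul}: any pair $\up''$ of weight $\ga$ lying strictly $\prec_Q^\tb$-between $P_i$ and $(\al,\be)$ would itself be non-minimal, hence $\prec_Q^\tb$-incomparable with every other non-minimal pair of $\ga$, forcing $\up''=(\al,\be)$, a contradiction. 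Consequently each $P_i$ is a good adjacent neighbor of $(\al,\be)$, hence in particular a good neighbor, and all three (so at least the required two) are produced.

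The main technical obstacle is the identification of $\delta_2$ and $\delta_3$ for the two "branch" minimal pairs $P_2=(\eta,\zeta)$ and $P_3=(\eta',\zeta')$ whose components involve the fork nodes $n-1$ and $n$, since \eqref{eq: figure D dist 2} splits by the parity of the integer $k$ governing which of $\eta,\eta'$ sits in the $n-1$ row versus the $n$ row; in each parity case the required positive root is nonetheless read off directly from the figure. For type $E_6$ the finitely many pairs $(\al,\be)$ with $\al+\be\in\PR$ and $\dist_Q(\al,\be)\ge 2$ are enumerated by hand on the Appendix quiver, and in each case two explicit good adjacent neighbors among the minimal pairs of $\ga$ are exhibited.
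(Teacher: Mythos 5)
Your reduction (type $A_n$ vacuous, type $D_n$ pinned to $\mul(\ga)=\dist_Q(\up)=2$, type $E_6$ by inspection) and the identification of the three minimal pairs from \eqref{eq: figure D dist 2} match the paper's route. The step where you diverge, and where the argument has a real gap, is your justification of the distance bound in Definition~\ref{def: tw-adjacent}(i). You argue that the two auxiliary pairs $(\delta_i,\cdot)$ have multiplicity-one weights and then invoke Theorem~\ref{thm: radius 1,2 AD}(a) to get distance~$\le 1$. But the weights in question are (up to your labeling slip) $\be$ and $\al^{(i)}$, or in the other case of Definition~\ref{def: tw-adjacent} $\al$ and $\be^{(i)}$; in either case one of the relevant weights is a component $\al$ or $\be$ of the original non-minimal pair, and there is no reason such a component must be multiplicity free. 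Since $\ga=\lf a|b\rf$ is itself multiplicity~$2$, a component of a pair of $\ga$ may well satisfy $\mul=2$ (for instance $\ve_1+\ve_3$ as a component of a pair of $\ve_1+\ve_2$ in $D_5$), and Lemma~\ref{lem: nfree position D} does not exclude this. The paper instead observes that the pairs $(\be^{(1)},\be)$ and $(\al,\al^{(1)})$ are directly $Q$-connected -- a fact you do implicitly gesture at via ``short sectional segments'' -- and then applies Proposition~\ref{prop: dir Q cnt}(d), which delivers $\dist_Q(\be^{(1)},\eta)=\dist_Q(\al,\eta)=1$ regardless of multiplicities. You should swap Theorem~\ref{thm: radius 1,2 AD}(a) for Proposition~\ref{prop: dir Q cnt}(d) at this point; your sectional-segment observation already gives the hypotheses of that proposition, so the fix is small.

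Two smaller remarks. First, your description of the auxiliary pairs and their weights conflates the two cases (a) and (b) of Definition~\ref{def: tw-adjacent}: with $\al^{(i)}=\al+\delta_i$ and $\be=\be^{(i)}+\delta_i$ you are in case (b), and the auxiliary pairs are $(\be^{(i)},\delta_i)$ of weight $\be$ and $(\al,\delta_i)$ of weight $\al^{(i)}$, not $(\delta_i,\be)$ and $(\delta_i,\al^{(i)})$ of weights $\be^{(i)}$ and $\al$. Second, your verification of condition~(ii) via \cite[Proposition 3.15]{Oh14D} is more elaborate than needed and leaves unexplained why the intermediate $\up''$ must be non-minimal; the cleaner argument is simply that $\up''$ (having weight $\ga\in\PR$) is automatically non-$[Q]$-simple, so a chain $P_i\prec_Q^\tb\up''\prec_Q^\tb(\al,\be)$ of non-simple pairs would force $\dist_Q(\al,\be)\ge 3$, contradicting $\dist_Q(\al,\be)=2$. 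This is what the paper's one-line conclusion ``our assertion follows since $\dist_Q(\al,\be)=2$'' is recording.
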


\begin{proof} ($Q$ of type $D$) By Theorem~\ref{thm: rds mul}, we can assume that $(\al,\be)$ is a pair such that $\dist_Q(\al,\be)=2$.
Then we know that $\ga$ is not multiplicity free  and the relative positions of $\al,\be$ and $\ga$ in $\GQ$ can be described as in~\eqref{eq: figure D dist 2}.
Take any pair among the minimal pairs $(\al',\be'),(\eta,\zeta),(\eta',\zeta')$ of $\ga$ in~\eqref{eq: dist 2 of D} and write it as $(\al^{(1)},\be^{(1)})$.
By Proposition~\ref{prop: dir Q cnt} and~\eqref{eq: minimal pair ga D}, we have either
\begin{itemize}
\item[{\rm (i)}]  $\be-\be^{(1)} \in \PR$ and $\al^{(1)}-\al \in \PR$ or
\item[{\rm (ii)}]  $\al-\al^{(1)} \in \PR$ and $\be^{(1)}-\be \in \PR$.
\end{itemize}
Assume the case {\rm (i)} first. Write $\eta=\be-\be^{(1)} \in \PR$. Then we have $\be \prec_Q \eta$, $\al^{(1)} \prec_Q \eta$ and $\al^{(1)}+\eta=\al$
by the convexity of $\prec_Q$ and~\eqref{eq: minimal pair ga D}. Since the pairs $(\be^{(1)},\be)$ and $(\al,\al^{(1)})$ are sectional,
Proposition~\ref{prop: dir Q cnt} tells that the pairs $(\beta^{(1)},\eta)$ and $(\al,\eta)$ are minimal pairs for $\be$ and $\al^{(1)}$ respectively; i.e.,
$$ \dist_Q(\be^{(1)},\eta)=\dist_Q(\al,\eta)=1.$$
Thus our assertion follows since $\dist_Q(\al,\be)=2$. The case ${\rm (ii)}$ can be proved in the similar way.
\end{proof}

Now we record the following observation on $E_{6,7,8}$-types which can be obtained by applying the strategy in Remark~\ref{eq: observations reflection}:

\begin{proposition} \label{prop: good neighbor2}
For any pairs $\up=(\al,\be)$ of $\ga=\al+\be \in \PR$ and $\dist_Q(\up) \ge 3$, there are at least two $[Q]$-minimal pairs $\up^{(1)}$ and $\up^{(2)}$ of $\ga$
such that $\up^{(i)}$ and $\up$ are good neighbors $(i=1,2)$.
\end{proposition}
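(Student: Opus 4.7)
The plan is to mirror the structure of the proof of Proposition \ref{prop: good neighbor} while accommodating the larger multiplicities and longer distance chains that appear in types $E_n$. By Convention \ref{conv: skip E} and Convention \ref{conv: strategy E}, it suffices to establish the statement for one fixed Dynkin quiver $Q$ of each type $E_n$ listed in the Appendix, because the observations in \eqref{eq: observations reflection} show that the reflection functors $\mathbf{r}^\pm_i$ preserve $[Q]$-distances, the good adjacent neighbor relation of Definition \ref{def: tw-adjacent}, hence the good neighbor relation of Definition \ref{def: good neighbor}, and send $[Q]$-minimal pairs of $\ga$ to $[\re_iQ]$-minimal pairs of $\re_i\ga$.

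With $Q$ fixed, I would start from a pair $\up=(\al,\be)$ with $\ga=\al+\be\in\PR$ and $\dist_Q(\up)\ge 3$, read off the $[Q]$-minimal pairs of $\ga$ from $\GQ$, and select any such minimal pair $(\al^{(1)},\be^{(1)})$. Following the template of Proposition \ref{prop: good neighbor}, Proposition \ref{prop: dir Q cnt} combined with the convexity of $\prec_Q$ yields an element $\eta\in\PR$ such that either $\be=\be^{(1)}+\eta$ with $\al^{(1)}=\al+\eta$, or the symmetric version holds; moreover, the intermediate pairs built from this $\eta$ have strictly smaller $[Q]$-distance than $\up$, essentially because $(\be^{(1)},\eta)$ and $(\al,\eta)$ are directly $Q$-connected. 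Iterating the descent produces a chain of good adjacent neighbors joining $\up$ down to $(\al^{(1)},\be^{(1)})$ of length at most $\dist_Q(\up)$, which is precisely what Definition \ref{def: good neighbor} requires for $\up$ and $(\al^{(1)},\be^{(1)})$ to be good neighbors.

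To produce two distinct such minimal pairs, I would exploit the local geometry of $\GQ$ around $\ga$: since $\dist_Q(\up)\le \rds_Q(\ga)$, and both Theorem \ref{thm: rds mul} (for $E_6$) and its $E_7,E_8$ counterpart force $\mul(\ga)\ge 2$, the root $\ga$ admits a configuration of minimal decompositions generalizing \eqref{eq: figure D dist 2}. This configuration visibly splits into at least two branches corresponding to the two long arms of the swing-like structure at $\ga$; running the descent along each branch yields two distinct minimal pairs of $\ga$, each of which is a good neighbor of $\up$.

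The main obstacle will be the borderline case $\dist_Q(\up)=3$, where the chain of good adjacent neighbors is short and one must verify both that the pair produced after a single descent step is still non $[Q]$-simple and that two distinct branches of minimal decompositions of $\ga$ remain simultaneously accessible. This reduces to a finite case analysis over the roots $\ga\in\PR$ with $\mul(\ga)\ge 2$ and the pairs $\up$ attaining the distance values permitted by Theorem \ref{thm: dist upper bound}, to be carried out directly against the specific quivers $Q$ in the Appendix; the $E_8$-root $(23465431)$ discussed just before this proposition is representative of the extremal behavior.
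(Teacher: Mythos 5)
Your overall reduction — verify the claim on the fixed quivers $Q$ in the Appendix and propagate to all Dynkin quivers of type $E_n$ via the reflection functors of \eqref{eq: observations reflection} — is exactly the strategy the paper invokes (the proposition is introduced as an ``observation on $E_n$-types which can be obtained by applying the strategy in \eqref{eq: observations reflection}'', with no written proof). On that level the proposal matches the paper.

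However, the ``descent'' mechanism you put forward in your second paragraph is not sound as a general argument, and the paper's own Example \ref{ex: bad length2} is the counterexample: for the $E_7$ quiver in Appendix \ref{Sec:Dynkin E7}, the pair $\up=(0112100,0000011)$ of $\ga=(0112111)$ has $\dist_Q(\up)=2$, its unique $[Q]$-minimal pair $\up'=(0111111,0001000)$ below it is \emph{not} a good neighbor of $\up$, and there is no other pair $(\al,\be)$ of $\ga$ with $(\al,\be)\prec^\tb_Q\up$. So the claim that Proposition \ref{prop: dir Q cnt} together with convexity always supplies an $\eta\in\PR$ producing a chain of good adjacent neighbors from $\up$ down to any chosen minimal pair is false. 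In the proof of Proposition \ref{prop: good neighbor} for type $D_n$ that mechanism is underwritten by the swing geometry of $\GQ$ (Theorem \ref{Thm: V-swing}, Lemma \ref{lem: reverse uni}, and the fact \eqref{eq: minimal pair ga D} that $(\al',\be')$, $(\eta,\zeta)$, $(\eta',\zeta')$ are all minimal), none of which has an analogue in $E_7,E_8$. The restriction to $\dist_Q(\up)\ge 3$ is precisely there to exclude configurations like Example \ref{ex: bad length2}, and whether every $\dist_Q\ge 3$ pair escapes this pathology is something you must \emph{check} on the Appendix quivers rather than deduce from a descent heuristic. You gesture at this in your last paragraph, but the body of the proposal presents the descent as if it were established; it should instead be demoted to motivation, with the actual content being the finite verification. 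With that correction, your route coincides with the paper's.
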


\begin{example} \label{ex: good neighbor}
For a pair $\up=(1111100,0112221)$ and the Dynkin quiver $Q$ of type $E_7$ in Appendix~\ref{Sec:Dynkin E7}, the minimal pairs $\up^{(1)}=(0111111,1112210)$ and
$\up^{(2)}=(1122211,0101110)$ are good neighbors of $\up$ since we have sequences of pairs
\begin{align*}
& \up^{(1)} \prec_Q^{\tb} (1111110,0112211) \prec_Q^{\tb} \up \ \ \text{ and } \ \  \up^{(2)} \prec_Q^{\tb} (0111100,1112221) \prec_Q^{\tb} \up,
\end{align*}
which satisfy the conditions in Definition~\ref{def: good neighbor}.
\end{example}




\section{Application to KLR algebras} \label{Sec: KLR}

In this section, we shall apply our results in the previous sections to the representation theory of KLR-algebras. In particular, we will deal with
the category consisting of finite dimensional module over the KLR-algebras.

\subsection{KLR algebras and categorifications} In this subsection, we review the representation theories on KLR algebras which were introduced
in~\cite{KL09,KL11,R08}.

\medskip

Let $\ko$ be a field and we assume that a symmetrizable Cartan datum
$(\cmA,\wl,\Pi,\wl^\vee,\Pi^\vee)$ is given.
The symmetric group $\mathfrak{S}_m= \langle \mathfrak{s}_1,\mathfrak{s}_2,\ldots,\mathfrak{s}_{m-1} \rangle$ acts on $I^m$ by place permutations.
For $n \in \Z_{\ge 0}$ and $\mathsf{b} \in \rl^+$ such that $\het(\mathsf{b}) = n$, we set
$$I^{\mathsf{b}} = \{\nu = (\nu_1, \ldots, \nu_n) \in I^{n} \ | \ \alpha_{\nu_1} + \cdots + \alpha_{\nu_n} = \beta \}.$$

For $\mathsf{b} \in \rl^+$, we denote by $R(\mathsf{b})$ \defn{the KLR algebra} at $\mathsf{b}$ associated
with the symmetrizable Cartan datum and a suitable family of
$(\Q_{i,j})_{i,j \in I} \in \ko[u,v]$. It is a $\Z$-graded $\ko$-algebra generated by
the generators
$$\{ e(\nu) \}_{\nu \in  I^{\mathsf{b}}}, \ \  \{x_k \}_{1 \le
k \le \het(\mathsf{b})} \  \text{ and } \  \{ \tau_m \}_{1 \le m < \het(\mathsf{b})}$$ with the certain defining relations (see~\cite[Definition 2.7]{Oh14A} for the relations).

Let $\Rep(R(\mathsf{b}))$ be the category consisting of finite dimensional graded $R(\mathsf{b})$-modules and $[\Rep(R(\mathsf{b}))]$ be the Grothendieck group of $\Rep(R(\mathsf{b}))$.
Then $[\Rep(R(\mathsf{b}))]$ has a natural $\Z[q^{\pm 1}]$-module structure induced by the grading shift. In this paper,
we usually ignore grading shifts.

\medskip

For $M \in \Rep(R(\mathsf{a}))$ and $N \in \Rep(R(\mathsf{b}))$, we denote by $M \conv N$ the
\defn{convolution product} of $M$ and $N$. Then $[\Rep(R)] \seteq \soplus_{\mathsf{b} \in \rl^+} [\Rep(R(\mathsf{b}))]$ has a natural $\Z[q^{\pm 1}]$-algebra structure
induced by the convolution product $\conv$ and the \defn{grading shift functor} $q$.

For $M \in \Rep(\mathsf{b})$ and $M_k \in \Rep(\mathsf{b}_k)$ $(1 \le k \le n)$, we denote by
$$ M^{\conv 0} \seteq \ko, \quad M^{\conv r} = \overbrace{ M \conv \cdots \conv M }^r, \quad \dct{k=1}{n} M_k = M_1 \conv \cdots \conv M_n.$$

For $M \in \Rep(R(\mathsf{a}))$, the dual space
$$M^\psi \seteq {\rm Hom}_{\ko}(M,\ko)$$
admits an $R(\mathsf{a})$-module structure via
$$ (r \cdot f)(u) \seteq f(\psi(r)u) \quad (r \in R(\mathsf{a}), \ u \in M) $$
where $\psi$ denotes the $\ko$-algebra anti-involution on $ R(\mathsf{a})$ fixing generators
$e(\nu)$, $\tau_m$ $(1 \le m < \het(\mathsf{a}))$ and $x_k$ $(1 \le m \le \het(\mathsf{a}))$. A simple module $M$ is \defn{self dual} if $M^\psi \simeq M$. Every simple module is isomorphic to a grading shift of a self-dual simple module (\cite[\S 3.2]{KL09}).

\begin{theorem}[{\cite{KL09, R08}}] \label{Thm:categorification}
For a given symmetrizable Cartan datum $\mathsf{D}$, Let $U^-_{\A}(\mathsf{g})^\vee$ $(\A=\Z[q^{\pm 1}])$ the dual of the integral form
of the negative part of quantum group $U_q(\mathsf{g})$ related to $\mathsf{D}$ and $R$ be the KLR algebra corresponding to $\mathsf{D}$
and $(\Q_{ij}(u,v))_{i,j \in I}$. Then we have
\begin{align}
\Psi : [\Rep(R)]  \overset{\simeq}{\Lto} U^-_{\A}(\mathsf{g})^{\vee}.
\label{eq:KLRU}
\end{align}
\end{theorem}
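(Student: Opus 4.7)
The plan is to follow the Khovanov--Lauda--Rouquier strategy: realize both sides as twisted $\A$-bialgebras and appeal to Lusztig's characterization of $U^-_\A(\g)^{\vee}$. First I would equip $[\Rep(R)]$ with a coproduct. For $\mathsf{a},\mathsf{b}\in\rl^+$, the idempotent $e(\mathsf{a},\mathsf{b})\seteq\sum_{\nu\in I^{\mathsf{a}},\,\nu'\in I^{\mathsf{b}}}e(\nu * \nu')$ gives a nonunital algebra embedding $R(\mathsf{a})\otimes R(\mathsf{b})\hookrightarrow e(\mathsf{a},\mathsf{b})R(\mathsf{a}+\mathsf{b})e(\mathsf{a},\mathsf{b})$. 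Restriction along this embedding defines functors $r_{\mathsf{a},\mathsf{b}}$, and the total restriction $r=\bigoplus_{\mathsf{a}+\mathsf{b}=\cdot}r_{\mathsf{a},\mathsf{b}}$ is the candidate coproduct. The product remains the convolution $\conv$.

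Next I would prove a Mackey-type filtration: for $M\in\Rep(R(\mathsf{a}))$ and $N\in\Rep(R(\mathsf{b}))$, the module $r_{\mathsf{c},\mathsf{d}}(M\conv N)$ carries a filtration indexed by minimal-length double coset representatives of $\bigl(\mathfrak{S}_{\het(\mathsf{a})}\times\mathfrak{S}_{\het(\mathsf{b})},\mathfrak{S}_{\het(\mathsf{c})}\times\mathfrak{S}_{\het(\mathsf{d})}\bigr)$ inside $\mathfrak{S}_{\het(\mathsf{a}+\mathsf{b})}$, whose associated graded pieces are convolution products of restrictions of $M$ and $N$. This yields the twisted bialgebra compatibility $r([M]\conv[N])=r([M])\conv r([N])$ with the sign/$q$-twist encoded by the pairing $\al\cdot\be$ on $\rl$. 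In parallel I would define the Hopf form $\langle-,-\rangle$ on $[\Rep(R)]$ through the graded $\mathrm{Hom}$-spaces with a suitable duality twist and verify $\langle [M]\conv[M'],[N]\rangle=\langle [M]\otimes[M'],\,r([N])\rangle$ using Frobenius reciprocity together with the Mackey filtration.

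Third I would define the comparison map $\Xi:U^-_\A(\g)^{\vee}\to[\Rep(R)]$ by sending the dual Chevalley generator $f_i^{\vee}$ to the class of the unique one-dimensional simple module $L(i)=R(\al_i)\mathbf{1}_i$. The relations to verify reduce, via an explicit computation inside $R(p\al_i+\al_j)$ using the shape of $\Q_{ij}(u,v)$ in \eqref{eq:Q} together with the nilHecke relations governing $R(n\al_i)$, to the quantum Serre identity $\sum_{p+p'=1-a_{ij}}(-1)^{p}[L(i)]^{\conv p}\conv[L(j)]\conv[L(i)]^{\conv p'}=0$ in $[\Rep(R)]$. To upgrade $\Xi$ to an isomorphism I would invoke Lusztig's universal property: $U^-_\A(\g)^{\vee}$ is characterized as the $\A$-bialgebra generated by $\{f_i^{\vee}\}_{i\in I}$ on which the Lusztig form is nondegenerate with prescribed values on generators. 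By Step~2 the Hopf form on $[\Rep(R)]$ matches the Lusztig form under $\Xi$ on generators, so $\Xi$ is an isometric bialgebra embedding. Surjectivity follows by induction on $\het(\mathsf{b})$: for any simple $M\in\Rep(R(\mathsf{b}))$ there exists $\nu=(i_1,\ldots,i_{\het(\mathsf{b})})\in I^{\mathsf{b}}$ with $e(\nu)M\neq 0$, so $M$ appears in the Jordan--H\"older series of $L(i_1)\conv\cdots\conv L(i_{\het(\mathsf{b})})$; injectivity is then automatic from nondegeneracy combined with the isometry property.

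The main obstacle will be Step~2: carrying out the Mackey filtration with all the $q$-grading shifts tracked correctly, so that the $q$-twist in the bialgebra axiom matches Lusztig's convention on $\mathbf{f}$. Once that bookkeeping is in place, the remaining steps reduce to explicit relation-checking in small-rank KLR algebras and an appeal to Lusztig's uniqueness theorem.
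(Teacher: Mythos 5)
The paper does not prove this theorem; it is quoted directly from \cite{KL09,R08} as background. Your outline faithfully reconstructs the Khovanov--Lauda argument from those sources (twisted bialgebra structure on $[\Rep(R)]$ via induction/restriction and the Mackey filtration, comparison map on generators, matching with Lusztig's nondegenerate form), so it follows essentially the same route as the cited references; the only caveat is the bookkeeping you already flag, namely tracking $q$-shifts and inserting the divided-power/$q$-binomial normalizations in the Serre identity.
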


We say that the KLR-algebra $R$ is \defn{symmetric} if $\cmA$ is symmetric and
$\Q_{ij}(u,v)$ is a polynomial in $u-v$ for all $i,j\in I$.

\begin{theorem}~\cite{R11,VV09} \label{thm:categorification 2}
Assume that the KLR-algebra $R$ is symmetric and
$\ko$ is of characteristic zero.
 Then under the isomorphism~\eqref{eq:KLRU},
the dual canonical/upper global basis $\mathbf{B}^{{\rm up}}$ of $U^-_{\A}(\mathsf{g})^{\vee}$ corresponds to
the set of the isomorphism classes of self-dual simple $R$-modules.
\end{theorem}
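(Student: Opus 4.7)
The plan is to follow the geometric strategy of Varagnolo--Vasserot \cite{VV09}, which realizes the symmetric KLR algebra $R(\mathsf{b})$ as an $\mathrm{Ext}$-algebra of Lusztig's semisimple complexes on the moduli stack of representations of the quiver attached to $\mathsf{CD}$. Concretely, for $\mathsf{b} \in \rl^+$ with $\het(\mathsf{b}) = n$, let $E_{\mathsf{b}}$ be the space of representations of the doubled quiver of dimension vector $\mathsf{b}$, acted on by $G_{\mathsf{b}} = \prod_i \mathrm{GL}(\C^{b_i})$, and let $\mathcal{L}_{\mathsf{b}} = \bigoplus_{\nu \in I^{\mathsf{b}}} \pi_{\nu,!} \underline{\C}_{F_\nu}[\dim F_\nu]$ be the Lusztig sheaf associated with the full collection of composition series flag varieties $F_\nu$. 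The first step is to establish a graded algebra isomorphism
\begin{equation*}
R(\mathsf{b}) \;\cong\; \mathrm{Ext}^{\bullet}_{G_{\mathsf{b}}}\!\bigl(\mathcal{L}_{\mathsf{b}},\mathcal{L}_{\mathsf{b}}\bigr),
\end{equation*}
matching the generators $e(\nu), x_k, \tau_m$ with explicit cohomological operations (Euler class, convolution with the correspondence supported on pairs of flags differing in one step).

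Once this identification is in place, the second step is to invoke the decomposition theorem of Beilinson--Bernstein--Deligne (which requires characteristic zero, whence the hypothesis on $\ko$). This shows that $\mathcal{L}_{\mathsf{b}}$ decomposes as a direct sum of shifts of simple perverse sheaves $\mathrm{IC}(b)$ ($b \in \mathcal{B}(\mathsf{b})$), and Lusztig's theorem identifies the $\mathrm{IC}(b)$ with the canonical basis elements of $U^-_q(\g)_{-\mathsf{b}}$ after passing to the Grothendieck group of the category of semisimple $G_{\mathsf{b}}$-equivariant complexes. The third step is then Morita-theoretic: the functor $\mathrm{Ext}^{\bullet}(\mathcal{L}_{\mathsf{b}}, -)$ sets up an equivalence (or, more precisely, an equivalence of graded abelian categories after restricting to finitely generated objects) between the category of semisimple perverse summands of $\mathcal{L}_{\mathsf{b}}$ and the category of finitely generated projective graded $R(\mathsf{b})$-modules, sending each $\mathrm{IC}(b)$ to a projective cover of a unique self-dual simple $R(\mathsf{b})$-module $L(b)$. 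Dualizing via Theorem \ref{Thm:categorification} and matching the bar-involutions on both sides then gives the desired bijection between the dual canonical basis of $U^-_{\A}(\g)^{\vee}_{-\mathsf{b}}$ and the set of isomorphism classes of self-dual simples of $R(\mathsf{b})$.

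The main obstacle is the compatibility of all three structures at once: the $\Z$-grading on the KLR side must match the cohomological grading on the geometric side; the bar-involution induced by Verdier duality on $\mathcal{L}_{\mathsf{b}}$ must correspond to the duality used to define self-dual modules in $\Rep(R(\mathsf{b}))$; and the multiplicative structure (convolution product $\conv$ versus induction of perverse sheaves) must be compatible with Lusztig's bilinear form. Establishing the isomorphism $R(\mathsf{b}) \cong \mathrm{Ext}^{\bullet}(\mathcal{L}_{\mathsf{b}},\mathcal{L}_{\mathsf{b}})$ at the level of generators and relations, while carefully tracking the polynomial $\Q_{ij}$ against the Euler classes coming from the quiver, is the technical heart of the argument.

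An alternative route, taken by Rouquier \cite{R11}, avoids geometry by working entirely inside the $2$-Kac--Moody category: one constructs an action of the quantum group on a suitable highest-weight cover of $\Rep(R)$, checks that self-dual simples are sent to canonical basis elements by exploiting the fact that both satisfy an axiomatic characterization (triangularity with respect to the PBW-filtration together with bar-invariance), and then uses unicity of the canonical basis. I would expect the bar-invariance and positivity checks on the KLR side to be the delicate point in this approach, since positivity of structure constants is essentially what forces characteristic zero and symmetry of $\cmA$.
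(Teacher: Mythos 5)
The paper does not prove this statement; it is cited directly from \cite{VV09} and \cite{R11}, so there is no in-paper argument to compare against. Your reconstruction of the Varagnolo--Vasserot route is the right skeleton: the $\mathrm{Ext}$-algebra realization of the symmetric KLR algebra $R(\mathsf{b})$, the decomposition theorem (the genuine source of the characteristic-zero hypothesis), and the Morita-style bijection sending the $\mathrm{IC}$-summands of Lusztig's complex to projective covers of self-dual graded simples. One substantive slip: Lusztig's sheaves geometrizing the canonical basis are supported on $E_{\mathsf{b}}$ for a \emph{fixed orientation} of the Dynkin graph, i.e.\ an ordinary quiver, not the doubled quiver. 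The doubled quiver (equivalently, representations of the preprojective algebra) is the setting for Lusztig's \emph{semicanonical} basis and for Nakajima quiver varieties, which categorify different objects; running your argument on the doubled quiver would not land you on the canonical basis. Your description of Rouquier's argument as a geometry-free 2-categorical route should also be treated with caution: \cite{R11} likewise identifies the quiver Hecke algebra with the $\mathrm{Ext}$-algebra of Lusztig's semisimple complexes and deduces the canonical-basis statement from that identification, so the essential geometric input is the same; the axiomatic characterization by bar-invariance and PBW-triangularity that you sketch is closer to how one \emph{verifies} the answer than to how either reference actually produces it.
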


\subsection{Simple heads and socles} In this subsection, we collect the results on the convolution product of
the real simple module and simple module over symmetric KLR algebra $R$
for our purpose (\cite{KKKO14S}).
For the rest of this paper, we assume that all KLR algebras are symmetric and
$\ko$ is of characteristic zero unless stated otherwise.

\medskip
We say that a simple $R(\mathsf{b})$-module $M$ is \defn{real} if $M\conv M$ is simple.

\begin{theorem} \label{thm: socle head} $($\cite[Theorem 3.2, Corollary 3.9]{KKKO14S}$)$
Let $M$ be a simple $R(\mathsf{a})$-module and $N$ be a simple $R(\mathsf{b})$-module, one of which is real. Then there exist unique non-zero homomorphisms $($up to constant multiple$)$
$$ \rmat{M,N} \colon M \conv N \Lto N \conv M \quad \text{and} \quad \rmat{N,M} \colon N \conv M \Lto M \conv N,$$
satisfying the following properties:
\begin{enumerate}
\item[{\rm (i)}] $M \conv N$ has a simple socle and a simple head.
\item[{\rm (ii)}] The socle and head of $M \conv N$ are distinct and appear once in the composition series of $M \conv N$ unless $M \conv N \simeq N \conv M$ is simple.
\item[{\rm (ii)}] If the head of $M\conv N$ and the head of $N\conv M$ are isomorphic, then $M\conv N$ is simple and $M\conv N \simeq  N\conv M$.

\end{enumerate}
\end{theorem}



\begin{proposition}~\cite[Corollary 3.11]{KKKO14S} \label{prop: not vanishing}
Let $M_k$ be a finite dimensional graded $R$-module $(k=1,2,3)$, and let $\varphi_1:L \to M_1 \conv M_2$ and
$\varphi_2:M_2 \conv M_3 \to L'$ be non-zero homomorphisms. Assume further that $M_2$ is simple. Then the composition
\[
L\conv M_3\To[\varphi_1\conv M_3] M_1\conv M_2\conv M_3\To[M_1\conv\varphi_2]
M_1\conv L'
\]
does not vanish. Similarly, for non-zero homomorphisms $\varphi_1:L \to M_2 \conv M_3$,
$\varphi_2: M_1 \conv M_2 \to L'$ and an assumption that $M_2$ is simple, we have a non-zero composition
\[
M_1 \conv L \To[M_1\conv \varphi_1] M_1\conv M_2\conv M_3 \To[\varphi_2 \conv M_3]
L '\conv M_3.
\]
\end{proposition}

Let $\mathsf{a},\mathsf{b}\in\rl^+$. For
a simple $R(\mathsf{a})$-module $M$ and a simple $R(\mathsf{b})$-module $N$,
let us denote by $M\hconv N$ the \defn{head} of $M\conv N$ and $M\sconv N$ the \defn{socle} of $M\conv N$

\subsection{$\Rep(R)$ and $\prec_{[\redez]}^\tb$.} In this subsection, we first review the results on $\Rep(R)$
associated to a simple Lie algebra $\g_0$ which were investigated in~\cite{BKM12,Kato12,KR09,Mc12}, briefly. Then we shall refine the results by using the order
$\prec_{[\redez]}^\tb$ which is far coarser than $<_\redez^\tb$. We would like to emphasize that the results in
this subsection work for {\it all} reduced expressions $\redez$ of {\it all} finite types.

\begin{theorem}~\cite{BKM12,Mc12} $($see also~\cite{Kato12,KR09}$)$ \label{thm: BkMc}
For a finite simple Lie algebra $\g_0$, we fix a convex total order $\le_{\redez}$ on $\PR$
induced from the reduced expression $\redez$.

Let $R$ be the KLR algebra corresponding to $\g_0$. For each positive root $\beta \in \PR$, there exists a self-dual simple module $S_{\redez}(\beta)$
satisfying the following properties$:$
\begin{itemize}
\item[({\rm a})] $S_{\widetilde{w}_0}(\beta)^{\conv m}$ is a real simple $R(m\be)$-module.
\item[({\rm b})] For every $\um_{\redez} \in \Z_{\ge 0}^{\N}$, there exists a unique non-zero $R$-module homomorphism (up to constant)
\begin{equation} \label{eq: def of r}
\begin{aligned}
&  \Stom \seteq S_{\redez}(\beta_1)^{\conv m_1}\conv\cdots \conv S_{\redez}(\beta_\N)^{\conv m_\N} \overset{\rmat{\um}}{\Lto} \Sgetsm \seteq  S_{\redez}(\beta_\N)^{\conv m_\N}\conv\cdots \conv S_{\redez}(\beta_1)^{\conv m_1}
\end{aligned}
\end{equation}
such that  ${\rm Im}(\rmat{\um}) \simeq \hd\left(\Stom\right) \simeq \soc\left(\Sgetsm\right)$ is simple.
\item[({\rm c})] For any sequence $\um_{\redez} \in \Z_{\ge 0}^{\ell(w_0)}$, we have
\begin{align} \label{eq: not good filter}
[\Stom] \in [{\rm Im}(\rmat{\um})] + \displaystyle\sum_{ \um' <^{\tb}_\redez \um } \Z_{\ge 0}[{\rm Im}(\rmat{\um'})].
\end{align}
\item[({\rm d})] For distinct $\um, \um' \in \Z_{\ge 0}^{\ell(w_0)}$, the simple modules ${\rm Im}(\rmat{\um})$ and ${\rm Im}(\rmat{\um'})$ are distinct.
\item[({\rm e})] For every simple $R$-module $M$, there exists a unique sequence $\um \in \Z_{\ge 0}^{\ell(w_0)}$ such that
$M \simeq {\rm Im}(\rmat{\um}) \simeq {\rm hd}\big(\Stom \big).$
\item[({\rm f})] For any $\redez$-minimal pair $(\beta^\redez_k,\beta^\redez_l)$ of $\beta^\redez_j=\beta^\redez_k+\beta^\redez_l$, there exists an exact sequence
$$ \qquad 0 \to S_{\redez}(\beta_j)
\to S_{\redez}(\beta_k) \conv S_{\redez}(\beta_l) \overset{{\mathbf
r}_{e_k+e_l}}{\Lto} S_{\redez}(\beta_l) \conv S_{\redez}(\beta_k) \to
S_{\redez}(\beta_j) \to 0.$$
\end{itemize}
Moreover,
\begin{eqnarray} &&
\parbox{90ex}{
under the isomorphism $\Psi$ in~\eqref{eq:KLRU}, each isomorphism class $[S_{\redez}(\beta)]$ of $S_{\redez}(\beta)$ $(\beta \in \PR)$
is mapped onto the \defn{dual root vector} $\mathbf{F}_\redez^{{\rm up}}(\beta)$
of the dual PBW basis $P_{\redez}$ of $U^-_\A(\g_0)^\vee$,
 which is associated to the reduced expression $\redez$ of $w_0$ $($see~\cite[\S 4]{KKK13B} for more details$)$.
}\label{eq: map dual}
\end{eqnarray}

\end{theorem}

\begin{remark}
The property in {\rm (f)} of Theorem~\ref{thm: BkMc} is called a \defn{length two property of minimal pair} (see \cite[Section 4.3]{BKM12}), since the composition
series of $S_{\redez}(\beta_l) \conv S_{\redez}(\beta_k)$ has its length as $2$ and consists of its distinct head and socle. The
length two property sometimes holds for some $\redez$ and its non $[\redez]$-minimal pair $(\al,\be)$ of $\ga$. For the $\redez$
in Remark~\ref{ex: non-trivial 1} and the non $[\redez]$-minimal pair $([1],[2,5])$, one can compute that
$\overset{\to}{S}_{\redez}([1],[2,5])$ has its composition length $2$ where its composition series consists of
${S}_{\redez}([1]) \hconv {S}_{\redez}([2,5])$ as its head and ${S}_{\redez}([1,3]) \hconv {S}_{\redez}([4,5]) \simeq {S}_{\redez}([2,5]) \hconv {S}_{\redez}([1])$
as its socle.
\end{remark}

For any $\redez ,\redez' \in [\redez]$, we have
$$ S_{\redez}(\be) \simeq  S_{\redez'}(\be)  \quad \text{ for all } \be \in \PR,$$
by Remark~\ref{rmk:coarset order property} and Theorem~\ref{thm: BkMc} $({\rm f})$. Thus we denote by
$S_{[\redez]}(\be)$ the simple module $S_{\redez'}(\be)$ for any $\redez' \in [\redez]$. However, at this moment,
the definition of $\Stom$ and~\eqref{eq: not good filter} {\it does} depend on $\redez$.

\begin{proposition} \label{prop: incompa simple} 
Let $(\alpha,\beta)$ be an incomparable pair with respect to the
order $\prec_{[\redez]}$. Then we have
$$  S_{[\redez]}(\al) \conv S_{[\redez]}(\be) \simeq  S_{[\redez]}(\be) \conv S_{[\redez]}(\al) \text{ is simple.} $$
\end{proposition}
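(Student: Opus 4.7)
The plan is to apply Remark~\ref{rmk:coarset order property}(ii) to obtain two reduced expressions $\tw^{(1)},\tw^{(2)}\in[\redez]$ with $\al<_{\tw^{(1)}}\be$ and $\be<_{\tw^{(2)}}\al$, and then to invoke Theorem~\ref{thm: socle head}(iii), which (since $S_{[\redez]}(\al)$ is real by Theorem~\ref{thm: BkMc}(a)) reduces the proposition to showing that the simple head and the simple socle of $M\seteq S_{[\redez]}(\al)\conv S_{[\redez]}(\be)$ are isomorphic.

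To identify these, I would use Theorem~\ref{thm: BkMc}(d),(e) in both orderings. Applied to $\tw^{(1)}$, the module $M=\overset{\to}{S}_{\tw^{(1)}}((\al,\be))$ has a unique simple head $H$, which is parametrized in the $\tw^{(1)}$-bijection of Theorem~\ref{thm: BkMc}(e) by the sequence $(\al,\be)_{\tw^{(1)}}$ encoding the multiset $\{\al,\be\}$. Applied to $\tw^{(2)}$, the module $S_{[\redez]}(\be)\conv S_{[\redez]}(\al)=\overset{\to}{S}_{\tw^{(2)}}((\be,\al))$ has a unique simple head which, by Theorem~\ref{thm: socle head}, is exactly $S\seteq\soc(M)$; thus $S$ is parametrized in the $\tw^{(2)}$-bijection by $(\be,\al)_{\tw^{(2)}}$, again encoding $\{\al,\be\}$.

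The pivotal step is then a compatibility subclaim: for any simple $R$-module and any two reduced expressions $\redez',\redez''\in[\redez]$, the parametrizing sequences of Theorem~\ref{thm: BkMc}(e) encode the same multiset of positive roots. The plan is to prove this by induction on the elementary commutations connecting $\redez'$ to $\redez''$; for a single swap at adjacent positions $k,k+1$ (with commuting $s_{i_k},s_{i_{k+1}}$), the positive roots $\be^{\redez'}_k,\be^{\redez'}_{k+1}$ get exchanged, and by $W$-invariance of the bilinear form $\be^{\redez'}_k\cdot\be^{\redez'}_{k+1}=0$. For such orthogonal positive roots one has $S_{[\redez]}(\be^{\redez'}_k)^{\conv a}\conv S_{[\redez]}(\be^{\redez'}_{k+1})^{\conv b}\simeq S_{[\redez]}(\be^{\redez'}_{k+1})^{\conv b}\conv S_{[\redez]}(\be^{\redez'}_k)^{\conv a}$ as $R$-modules, so the corresponding proper standard modules are isomorphic and their simple heads agree.

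Granting the subclaim, $S$ must be parametrized in the $\tw^{(1)}$-bijection by a sequence encoding $\{\al,\be\}$; since $\al<_{\tw^{(1)}}\be$ the only such sequence is $(\al,\be)_{\tw^{(1)}}$, which already parametrizes $H$. Hence $S\simeq H$ by Theorem~\ref{thm: BkMc}(e), and Theorem~\ref{thm: socle head}(iii) then yields that $M\simeq S_{[\redez]}(\be)\conv S_{[\redez]}(\al)$ is simple. The principal obstacle is proving the compatibility subclaim, which is the categorical incarnation of the fact (announced by Theorem~\ref{thm: [redez] works}) that the dual PBW basis depends only on the commutation class $[\redez]$, and ultimately rests on the commutativity of convolution of KLR modules at orthogonal positive roots.
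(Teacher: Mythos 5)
Your reduction is sound up to the point where everything is made to rest on the ``compatibility subclaim.'' The trouble is that this subclaim is, in effect, a restatement of Theorem~\ref{thm: [redez] works}, which the paper derives \emph{as a consequence} of the present proposition, and your proposed proof of it is circular. In the base case of your induction (one elementary commutation swapping positions $k,k+1$ of $\redez'$), the roots $\be^{\redez'}_k,\be^{\redez'}_{k+1}$ occupy adjacent positions that can be interchanged inside $[\redez]$; that is precisely the statement that the pair is $\prec_{[\redez]}$-incomparable. So the asserted isomorphism $S_{[\redez]}(\be_k)^{\conv a}\conv S_{[\redez]}(\be_{k+1})^{\conv b}\simeq S_{[\redez]}(\be_{k+1})^{\conv b}\conv S_{[\redez]}(\be_k)^{\conv a}$ \emph{is} Proposition~\ref{prop: incompa simple} for that pair (bootstrapped to powers). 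You cannot use it to prove the proposition.

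Moreover, the auxiliary fact you invoke to justify the base case---``commutativity of convolution of KLR modules at orthogonal positive roots''---is false as stated. In type $A_3$ with $Q\colon 1\to 2\to 3$, the roots $[1,2]=\al_1+\al_2$ and $[2,3]=\al_2+\al_3$ satisfy $[1,2]\cdot[2,3]=0$, yet they are comparable in $\prec_Q$ (one has $[2,3]\to[2]\to[1,2]$ in $\GQ$), and by Proposition~\ref{prop: Q-socle for pair} together with Theorem~\ref{thm: socle of Q-pair}, the module $S_Q([1,2])\conv S_Q([2,3])$ has socle $S_Q([1,3])\conv S_Q([2])$ and a distinct head, hence is not simple and does not commute with $S_Q([2,3])\conv S_Q([1,2])$. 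Orthogonality of the roots is a necessary but not sufficient condition; the proposition's hypothesis of $\prec_{[\redez]}$-incomparability is exactly what is needed and cannot be bypassed.

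The paper's argument takes a different route that avoids this circularity: it works directly in the Grothendieck group. Applying Theorem~\ref{thm: BkMc}(c) for both $\redez^{(1)}$ and $\redez^{(2)}$ expresses the common class $[S_{[\redez]}(\al)\conv S_{[\redez]}(\be)]=[S_{[\redez]}(\be)\conv S_{[\redez]}(\al)]$ (equality in $K_0$ because the Grothendieck ring is commutative) as the respective head $\mathrm{Im}(\rmat{\up})$ plus strictly $<^\tb$-lower terms in two ways, and then concludes via Theorem~\ref{thm: BkMc}(e) that the two heads $\mathrm{Im}(\rmat{(\al,\be)})$ and $\mathrm{Im}(\rmat{(\be,\al)})$ must coincide, for otherwise $\mathrm{Im}(\rmat{(\al,\be)})$ would be parametrized by some $\um'_{\redez^{(2)}}<^\tb_{\redez^{(2)}}(\be,\al)$, which is impossible. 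This uses only the length-two triangularity of the transition matrix, never a standalone commutativity fact for orthogonal roots.
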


\begin{proof}
By the assumption and Remark~\ref{rmk:coarset order property}, there exist reduced expressions $\redez^{(1)},\redez^{(2)} \in [\redez]$ such that
$$\text{{\rm (i)} $\al <_{\redez^{(1)}} \be$} \quad \text{ and } \quad \text{{\rm (ii)} $\be <_{\redez^{(2)}} \al$.}$$

By Theorem~\ref{thm: BkMc}, we have
\begin{itemize}
\item[{\rm (i)}] $\big[\overset{\to}{S}_{\redez^{(1)}}(\al,\be)\big] \seteq \big[S_{[\redez]}(\al) \conv S_{[\redez]}(\be)\big] \ \in \big[{\rm Im}(\rmat{(\al,\be)})\big] +
\hspace{-3ex} \displaystyle\sum_{\quad \um'_{\redez^{(1)}} <^{\tb}_{\redez^{(1)}} (\al,\be) } \hspace{-3ex} \Z_{\ge 0} \big[{\rm
Im}(\rmat{\um'_{\redez^{(1)}}})\big],$ where \\
$\rmat{(\al,\be)}$ is the unique non-zero homomorphism (up to constant).
\item[{\rm (ii)}] $\big[\overset{\to}{S}_{\redez^{(2)}}(\be,\al)\big] \seteq \big[S_{[\redez]}(\be) \conv S_{[\redez]}(\al)\big] \ \in \big[{\rm Im}(\rmat{(\be,\al)})\big] +
\hspace{-3ex}\displaystyle\sum_{\quad \um'_{\redez^{(2)}} <^{\tb}_{\redez^{(2)}} (\be,\al) } \hspace{-3ex} \Z_{\ge 0} \big[{\rm Im}(\rmat{\um'_{\redez^{(2)}}})\big],$ where \\
$\rmat{(\be,\al)}$ is the unique non-zero homomorphism (up to constant).
\end{itemize}
Note that the Grothendieck ring $[\Rep(R)]$ is commutative up to $q^\Z$ and hence
$$\big[S_{[\redez]}(\al) \conv S_{[\redez]}(\be)\big]=\big[S_{[\redez]}(\al)\big]\big[ S_{[\redez]}(\be)\big]=q^a\big[ S_{[\redez]}(\be)\big]\big[S_{[\redez]}(\al)\big]= q^a\big[S_{[\redez]}(\be) \conv S_{[\redez]}(\al)\big]$$
for some $a \in \Z$. Assume that ${\rm Im}(\rmat{(\al,\be)}) \not \simeq {\rm Im}(\rmat{(\be,\al)})$.
By Theorem~\ref{thm: BkMc},
$\rmat{(\al,\be)}$ must be one of $\rmat{\um'_{\redez^{(2)}}}$.
However, it can not happen. Thus we have
\[
[{\rm Im}(\rmat{(\be,\al)})] = [{\rm Im}(\rmat{(\al,\be)})] \iff {\rm Im}(\rmat{(\be,\al)}) \simeq q^a \ {\rm Im}(\rmat{(\al,\be)}),
\]
for some $a \in \Z$. Now our assertion follows from Proposition~\ref{thm: socle head} {\rm (iii)}.
\end{proof}

By the above proposition,
the isomorphism class of the module $\overset{\to}{S}_{\redez'}(\um)$ and the homomorphism $\rmat{\um_{\redez'}}$ do not depend
on the reduced expression $\redez' \in [\redez]$ but on $[\redez]$. Thus we can denote it by
$\overset{\to}{S}_{[\redez]}(\um)$ even though $\um = \um_{\redez'}$ for $\redez' \in [\redez]$. Furthermore,~\eqref{eq: not good filter} can be refined:

\begin{theorem} \label{thm: [redez] works} \hfill
\begin{enumerate}
\item[{\rm (a)}] $\overset{\to}{S}_{[\redez]}(\um)$ is well-defined; that is,
$$\overset{\to}{S}_\redez(\um_\redez)  \simeq \overset{\to}{S}_{\redez'}(\um_{\redez'}) \quad \text{ for all } \redez,\redez' \in [\redez].$$
\item[{\rm (b)}] For any $\redez$ of $w_0$ and any sequence $\um$, we have
$$[\overset{\to}{S}_{[\redez]}(\um)] \in [{\rm Im}(\rmat{\um})] +
\sum_{\um' \prec^{\tb}_{[\redez]} \um} \Z_{\ge 0} [{\rm Im}(\rmat{\um'})].$$
\end{enumerate}
\end{theorem}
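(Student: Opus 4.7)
The plan is to establish (a) via reduction to elementary commutation moves and Proposition~\ref{prop: incompa simple}, and then to deduce (b) by intersecting the expansion of Theorem~\ref{thm: BkMc}(c) over all representatives $\redez' \in [\redez]$.

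For (a), I would first reduce to the case of a single commutation move, using that any two reduced expressions in $[\redez]$ are connected by a chain of elementary moves $\re_i \re_j \to \re_j \re_i$ (with $a_{ij}=0$). Such a move transposes positions $k$ and $k+1$ of the total order $<_\redez$, and since $s_{i_k}(\al_{i_{k+1}})=\al_{i_{k+1}}$ when $a_{i_k i_{k+1}}=0$, one checks directly that $\beta^\redez_k = \beta^{\redez'}_{k+1}$ and $\beta^\redez_{k+1} = \beta^{\redez'}_k$. Setting $\al=\beta^\redez_k$ and $\be=\beta^\redez_{k+1}$, the fact that $\al <_\redez \be$ but $\be <_{\redez'} \al$ shows that $(\al,\be)$ is incomparable with respect to $\prec_{[\redez]}$, so Proposition~\ref{prop: incompa simple} yields the simple isomorphism $S_{[\redez]}(\al)\conv S_{[\redez]}(\be) \simeq S_{[\redez]}(\be)\conv S_{[\redez]}(\al)$. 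Iterating this one factor at a time gives
$$S_{[\redez]}(\al)^{\conv m_k}\conv S_{[\redez]}(\be)^{\conv m_{k+1}} \simeq S_{[\redez]}(\be)^{\conv m_{k+1}}\conv S_{[\redez]}(\al)^{\conv m_k},$$
and sandwiching this between the unchanged factors of $\overset{\to}{S}_\redez(\um_\redez)$ produces the required isomorphism $\overset{\to}{S}_\redez(\um_\redez) \simeq \overset{\to}{S}_{\redez'}(\um_{\redez'})$.

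For (b), I would exploit that part (a), applied both to $\um$ and to every candidate sequence $\us$ with $\wt(\us)=\wt(\um)$, makes both the module $\overset{\to}{S}_{[\redez]}(\um)$ and each simple module $\hd(\overset{\to}{S}_{[\redez]}(\us))\simeq {\rm Im}(\rmat{\us})$ invariants of the commutation class. Hence the nonnegative multiplicities $c_\us$ in the Grothendieck-group expansion
$$[\overset{\to}{S}_{[\redez]}(\um)] = [{\rm Im}(\rmat{\um})] + \sum_\us c_\us\, [{\rm Im}(\rmat{\us})],$$
where the classes $[{\rm Im}(\rmat{\us})]$ form part of a basis by Theorem~\ref{thm: BkMc}(d)--(e), do not depend on the chosen representative $\redez' \in [\redez]$. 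Applying Theorem~\ref{thm: BkMc}(c) separately to each $\redez'$ then forces every $\us$ with $c_\us>0$ to satisfy $\us_{\redez'} <^\tb_{\redez'} \um_{\redez'}$ simultaneously for all $\redez' \in [\redez]$, which by Definition~\ref{def: redezprectb} is precisely $\us \prec^\tb_{[\redez]} \um$.

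The main delicate point will be the canonical identification of the simple classes $[{\rm Im}(\rmat{\us_{\redez'}})]$ as $\redez'$ varies over $[\redez]$; this is the reason (b) must be deduced \emph{after} (a) has been established not only for $\um$ but for every $\us$. Once that identification is in place, the intersection of the individual conditions from Theorem~\ref{thm: BkMc}(c) collapses cleanly onto $\prec^\tb_{[\redez]}$, and I do not expect any further obstacle.
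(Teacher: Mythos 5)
Your argument is correct and takes the same route the paper takes: part (a) is the observation, stated just before the theorem, that Proposition~\ref{prop: incompa simple} lets one swap any two adjacent factors coming from a single commutation move, and part (b) then follows by applying Theorem~\ref{thm: BkMc}(c) to each representative of $[\redez]$ and intersecting the resulting $<^\tb_{\redez'}$-conditions, which is precisely Definition~\ref{def: redezprectb}. The paper leaves this implicit in the paragraph preceding the theorem; your write-up just makes the commutation-move induction and the basis-invariance of the coefficients explicit.
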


\begin{remark}
By~\eqref{eq: map dual} in Theorem~\ref{thm: BkMc}, Theorem~\ref{thm: [redez] works} implies that the dual PBW-basis $P_{\redez}$ for a reduced expression $\redez$ of $w_0$
{\it does} depend on its commutation class $[\redez]$ up to $q^\Z$ indeed. Thus we can write $P_{[\redez]}$ instead of $P_{\redez}$. It was known in $A_n$, $D_n$ and $E_{6,7,8}$ (see~\cite{Kato12}) but seems to be new result
for other types, to the best knowledge of the author.
\end{remark}


\begin{corollary} \label{cor: simple simple module}
For any $[\redez]$-simple sequence $\us$, then $\overset{\to}{S}_{[\redez]}(\us)\simeq \overset{\gets}{S}_{[\redez]}(\us)$ is real simple.
\end{corollary}

\begin{proof}
By the definition of $[\redez]$-simple sequence, the $[\redez]$-simple sequence $\us$ is a minimal element with respect to $\prec_{[\redez]}^\tb$.
Thus our assertion follows from Theorem~\ref{thm: [redez] works}.
\end{proof}



For the simplicity of notations, we denote by $S_{[\redez]}(\us)$ instead of $\overset{\to}{S}_{[\redez]}(\us) \simeq \overset{\gets}{S}_{[\redez]}(\us)$
when 
$\us$ is a $[\redez]$-simple sequence.

\subsection{$[Q]$-socle} \label{subsec: Q-socle}
In this subsection, we only deal with the commutation class $[Q]$ for some Dynkin quiver $Q$. We shall study
the representation theoretical meanings of $[Q]$-socle and $[Q]$-length in the category $\Rep(R)$, where $R$ is the symmetric KLR algebra
associated to the underlying Dynkin diagram $\Delta$ of $Q$.

\medskip
For the simplicity of notation, we denote by $\overset{\to}{S}_Q(\um)$ instead of $\overset{\to}{S}_{[Q]}(\um)$.

\begin{proposition} \label{prop: socle mathod 1}
Let $M_i$ and $N_i$ $(i=1,2)$ be simple $R$-modules such that one of them is real and $M_i \conv N_i$ has composition length $2$. For a real simple $R$-modules $M$ and
a simple module $N$, assume
\begin{enumerate}
\item[{\rm (a)}] there exists a non-zero $R$-homomorphism $\varphi_i \colon M_i \conv N_i \to M \conv N$ $(i=1,2)$,
\item[{\rm (b)}] $M_1 \sconv N_1 \simeq  M_2 \sconv N_2$ and the pairs $(M_i,N_i)$ are distinct.
\end{enumerate}
Then we have
$$M_1 \sconv N_1 \simeq M_2 \sconv N_2  \simeq  M \sconv N  \quad \text{ and } \quad \varphi_i \colon M_i \conv N_i \hooklongrightarrow M \conv N.$$
\end{proposition}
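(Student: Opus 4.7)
The plan is to analyze the homomorphisms $\varphi_i$ via their kernels and images, combining the length-$2$ filtration of each $M_i\conv N_i$ with the simple-socle structure of $M\conv N$ provided by Theorem~\ref{thm: socle head}.

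First I would set up the basic structure. By Theorem~\ref{thm: socle head}, since one of $M_i,N_i$ is real and $M_i\conv N_i$ has composition length $2$ (hence is not simple), it carries a simple socle $S_i:=\soc(M_i\conv N_i)$ and a simple head $H_i:=\hd(M_i\conv N_i)$ with $S_i\not\simeq H_i$; consequently the only proper nonzero submodule of $M_i\conv N_i$ is $S_i$, so $\ker\varphi_i\in\{0,S_i\}$. In parallel, Theorem~\ref{thm: socle head} applied to $M\conv N$ (with $M$ real) yields a simple socle $L:=\soc(M,N)$, so every nonzero submodule of $M\conv N$ contains $L$ and every simple submodule is isomorphic to $L$.

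Next I would identify ${\rm Im}\,\varphi_i$ up to isomorphism. Since ${\rm Im}\,\varphi_i\subseteq M\conv N$ is nonzero, one has $\soc({\rm Im}\,\varphi_i)\simeq L$; on the other hand ${\rm Im}\,\varphi_i$ is a quotient of $M_i\conv N_i$, so it is either all of $M_i\conv N_i$ (when $\varphi_i$ is injective) with socle $S_i$, or the simple head $H_i$ (when $\varphi_i$ is not injective). Thus $\varphi_i$ injective forces $S_i\simeq L$, while $\varphi_i$ non-injective forces $H_i\simeq L$. Combined with $S_1\simeq S_2$ from hypothesis~(b), this immediately rules out the mixed case: if $\varphi_{i_0}$ is injective and $\varphi_{i_1}$ is not with $\{i_0,i_1\}=\{1,2\}$, then $S_{i_1}\simeq S_{i_0}\simeq L\simeq H_{i_1}$, contradicting $S_{i_1}\not\simeq H_{i_1}$.

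The hard part will be ruling out the remaining possibility that both $\varphi_i$ are non-injective. In that case $H_1\simeq L\simeq H_2$ and $S_1\simeq S_2$, so both $M_i\conv N_i$ share the same pair of composition factors and each $\varphi_i$ factors as a surjection $M_i\conv N_i\twoheadrightarrow H_i\simeq L$ followed by the essentially unique inclusion $L\hookrightarrow M\conv N$. To derive a contradiction from the distinctness of the pairs $(M_i,N_i)$ in hypothesis~(b), I would invoke the uniqueness clause for the $r$-matrices $\rmat{M_i,N_i}:M_i\conv N_i\to N_i\conv M_i$ from Theorem~\ref{thm: socle head}(ii) together with a non-vanishing composition argument from Proposition~\ref{prop: not vanishing}: playing the two head-projections $M_i\conv N_i\twoheadrightarrow L$ off against the $r$-matrices, and using the identification $\soc(N_i\conv M_i)\simeq H_i\simeq L$ of Theorem~\ref{thm: socle head}, should force a nontrivial identification between $(M_1,N_1)$ and $(M_2,N_2)$, contradicting~(b). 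Once this case is excluded, both $\varphi_i$ are injective, which gives simultaneously the three-way isomorphism $\soc(M_1,N_1)\simeq\soc(M_2,N_2)\simeq\soc(M,N)$ and the embeddings $\varphi_i:M_i\conv N_i\rightarrowtail M\conv N$.
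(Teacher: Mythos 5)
Your case analysis is sound through the first two branches: you correctly use the composition-length-$2$ hypothesis to pin $\ker\varphi_i\in\{0,S_i\}$, correctly deduce $S_i\simeq L$ when $\varphi_i$ is injective and $H_i\simeq L$ when it is not, and correctly kill the mixed case via $S_{i_1}\simeq L\simeq H_{i_1}$ contradicting Theorem~\ref{thm: socle head}(iii). Up to this point you are tracking the paper's argument almost exactly: the paper likewise starts from ``$L=\soc(M\conv N)\subseteq{\rm Im}\,\varphi_1\cap{\rm Im}\,\varphi_2$'' and splits on injectivity.

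The gap is in the ``both $\varphi_i$ non-injective'' branch, which you flag as the hard part but never actually close. The proposed route through the $r$-matrix uniqueness in Theorem~\ref{thm: socle head}(ii) and the non-vanishing composition of Proposition~\ref{prop: not vanishing} is not a workable argument here: those statements control a single ${\rm Hom}$-space or a single three-fold composition, and nothing in them lets you ``play the two head-projections off against each other'' to force an identification of $(M_1,N_1)$ with $(M_2,N_2)$. In particular, the data you have in this branch is only $H_1\simeq L\simeq H_2$ and $S_1\simeq S_2$, i.e.\ the two products share both composition factors, and there is no general module-theoretic obstruction to two distinct pairs of simples producing length-$2$ convolutions with isomorphic socle and head. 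The paper closes this branch by a fact that is stated in the proof but not derived from the abstract hypotheses: $\hd(M_1\conv N_1)\not\simeq\hd(M_2\conv N_2)$. In the intended application (Corollary~\ref{cor: socle mathod 1}), $M_i\conv N_i=S_Q(\alpha_i)\conv S_Q(\beta_i)$ for distinct pairs $(\alpha_i,\beta_i)$, and Theorem~\ref{thm: BkMc}(d)--(e) then gives that distinct sequences have non-isomorphic simple heads, so ``distinct pairs'' really does entail ``distinct heads.'' With that fact in hand, $H_1\simeq L\simeq H_2$ is an immediate contradiction and the branch dies; your proof needs to invoke this, not Proposition~\ref{prop: not vanishing}.
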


\begin{proof} By the assumptions and Theorem~\ref{thm: socle head}, we have $M \hconv N \subset {\rm Im}(\varphi_1) \cap {\rm Im}(\varphi_2)$.
However, if one of $\varphi_1$ and $\varphi_2$ is not injective,
${\rm Im}(\varphi_1) \cap {\rm Im}(\varphi_2)$ becomes empty since
\[
M_1 \hconv N_1  \not\simeq  M_2\hconv N_2 
\]
Thus our assertion follows.
\end{proof}

Since $\overset{\to}{S}_Q(\al,\be)$ has composition length $2$ for a $[Q]$-minimal pair $(\al,\be)$ of $\ga$, we have the followings:

\begin{corollary} \label{cor: socle mathod 1}
 Let $(\al_i,\be_i)$ $(i=1,2,3)$ be distinct pairs for $\gamma =\al_i+\be_i \in \PR$ such that $(\al_j,\be_j)$ $(j=1,2)$ is $[Q]$-minimal and
$(\al_3,\be_3)$ is not. If there exists a non-zero $R(\gamma)$-homomorphism
$$\overset{\to}{S}_Q(\al_j,\be_j)  \longrightarrow \overset{\to}{S}(\al_3,\be_3) \quad \ (j=1,2),$$ then we have
$$ \soc\big(\overset{\to}{S}_{Q}(\al_3,\be_3)\big) \simeq S_Q(\gamma) \quad \text{ and } \quad
\overset{\to}{S}_Q(\al_j,\be_j)  \hooklongrightarrow \overset{\to}{S}(\al_3,\be_3) \quad \ (j=1,2).$$
\end{corollary}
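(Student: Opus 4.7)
The plan is to reduce the statement directly to Proposition \ref{prop: socle mathod 1} by taking $M_j \seteq S_Q(\al_j)$, $N_j \seteq S_Q(\be_j)$ for $j=1,2$, and $M \seteq S_Q(\al_3)$, $N \seteq S_Q(\be_3)$. All four hypotheses of the proposition must then be checked in our setting.

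First I would verify the structural hypotheses on these modules. By Theorem \ref{thm: BkMc}~(a) every PBW-generator $S_Q(\beta)$ is real simple, so in particular one (in fact both) of $M_j,N_j$ is real, and $M$ is real. The crucial input is that $M_j\conv N_j$ has composition length exactly $2$: this is precisely the length two property supplied by Theorem \ref{thm: BkMc}~(f) for the $[Q]$-minimal pairs $(\al_j,\be_j)$ of $\gamma$, which yields an exact sequence
\[
0 \to S_Q(\gamma) \to S_Q(\al_j)\conv S_Q(\be_j) \overset{\rmat{}}{\Lto} S_Q(\be_j)\conv S_Q(\al_j) \to S_Q(\gamma) \to 0.
\]
From this sequence I read off $\soc\!\big(S_Q(\al_j)\conv S_Q(\be_j)\big) \simeq S_Q(\gamma)$ for $j=1,2$, giving the required coincidence of socles $\soc(M_1,N_1) \simeq \soc(M_2,N_2) \simeq S_Q(\gamma)$. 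The pairs $(M_j,N_j)$ are distinct since the pairs $(\al_j,\be_j)$ are, and the non-vanishing homomorphisms $\varphi_j$ are furnished by hypothesis.

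With all hypotheses in place, Proposition \ref{prop: socle mathod 1} applies and delivers simultaneously the two desired conclusions: that $\soc\!\big(\overset{\to}{S}_Q(\al_3,\be_3)\big) \simeq S_Q(\gamma)$, and that each $\varphi_j$ is injective, i.e.\ $\overset{\to}{S}_Q(\al_j,\be_j) \rightarrowtail \overset{\to}{S}_Q(\al_3,\be_3)$. There is no real obstacle here beyond the bookkeeping, since the only non-formal ingredient, the length two property of $[Q]$-minimal pairs, has been imported ready-made from Theorem \ref{thm: BkMc}~(f); the role of the corollary is essentially to package Proposition \ref{prop: socle mathod 1} in the form that will be useful later when testing whether a non-minimal pair has socle $S_Q(\gamma)$ by probing it against any two distinct minimal pairs.
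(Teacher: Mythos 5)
Your proposal is correct and follows exactly the same route the paper intends: the paper introduces the corollary with the remark "Since $\overset{\to}{S}_Q(\al,\be)$ has a composition length $2$ for a $[Q]$-minimal pair $(\al,\be)$ of $\ga$...," i.e.\ it reads the corollary off Proposition \ref{prop: socle mathod 1} once Theorem \ref{thm: BkMc}~(f) supplies the length-two property and identifies the common socle as $S_Q(\gamma)$. Your verification of the remaining hypotheses (realness from Theorem \ref{thm: BkMc}~(a), distinctness of the pairs, and the given non-zero maps) is the straightforward bookkeeping the paper leaves implicit.
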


\begin{proposition} \label{prop: socle mathod 2}
We assume that
\begin{enumerate}
\item[{\rm (a)}] $M_i$ and $N_i$ $(i=1,2)$ be simple $R$-modules such that one of them is real,
\item[{\rm (b)}] there exists an injective $R$-homomorphism $M_1 \conv N_1 \hooklongrightarrow M_2 \conv N_2$.
\end{enumerate}
Then we have
\[
M_1 \sconv N_1  \simeq M_2 \sconv N_2.
\]
\end{proposition}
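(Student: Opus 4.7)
The plan is an immediate application of Theorem~\ref{thm: socle head}, together with the elementary observation that an injection sends simple submodules to simple submodules. There is no substantive obstacle; the whole statement is essentially a consequence of the uniqueness of the simple socle guaranteed when one of the tensor factors is real.

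\medskip

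First, I would apply Theorem~\ref{thm: socle head} to each of the pairs $(M_1,N_1)$ and $(M_2,N_2)$. By hypothesis, for each $i$ one of $M_i, N_i$ is real and the other is simple, so regardless of which factor is real the theorem (in its original form for $M\conv N$, or in the version for $N\conv M$ stated immediately after) ensures that $M_i\conv N_i$ has a simple socle. Write $L_i\seteq \soc(M_i\conv N_i)$ for $i=1,2$.

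\medskip

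Next, let $\iota\colon M_1\conv N_1\hookrightarrow M_2\conv N_2$ be the given injective homomorphism. The restriction $\iota|_{L_1}\colon L_1\to M_2\conv N_2$ is still injective, and since $L_1$ is simple, its image $\iota(L_1)$ is a nonzero simple submodule of $M_2\conv N_2$. By the definition of the socle, $\iota(L_1)\subseteq L_2$. Since $L_2$ is itself simple, the inclusion $\iota(L_1)\subseteq L_2$ must be an equality, whence $L_1\simeq\iota(L_1)=L_2$. This is precisely the claimed isomorphism $\soc(M_1\conv N_1)\simeq \soc(M_2\conv N_2)$.

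\medskip

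As the above indicates, the real hypothesis on one of $M_i,N_i$ is doing all the work: it is needed exactly so that both socles are simple, which is what upgrades the trivial embedding $\soc(M_1\conv N_1)\hookrightarrow\soc(M_2\conv N_2)$ coming from any injection into an actual isomorphism. No use of Proposition~\ref{prop: not vanishing} or of the order $\prec^\tb_{[\redez]}$ is required.
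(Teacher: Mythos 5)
Your proof is correct and is exactly the argument the paper intends: the paper simply states that the claim "follows from the definition of socle and Theorem~\ref{thm: socle head}," and your write-up supplies the expected details — both socles are simple by that theorem, the injection carries the simple socle of $M_1\conv N_1$ into a simple submodule of $M_2\conv N_2$, and simplicity of $\soc(M_2\conv N_2)$ forces equality.
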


\begin{proof} The proof follows from the definition of socle and Theorem~\ref{thm: socle head}.
\end{proof}

\begin{corollary} \label{cor: socle mathod 2}
 Let $(\al_i,\be_i)$ $(i=1,2)$ be pairs for $\gamma =\al_i+\be_i \in \PR$ such that $ \soc\big(\overset{\to}{S}_{Q}(\al_1,\be_1)\big) \simeq S_Q(\gamma)$.
Assume there exists an injective $R$-homomorphism
$$\overset{\to}{S}_Q(\al_i,\be_i)  \hooklongrightarrow M \conv N \quad (i=1,2)$$
for simple $R$-modules $M$ and $N$ such that one of them is real. Then we have
$$ \soc\big(\overset{\to}{S}_{Q}(\al_2,\be_2)\big) \simeq S_Q(\gamma).$$
\end{corollary}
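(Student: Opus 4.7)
The proof is essentially a direct double application of Proposition \ref{prop: socle mathod 2}. The plan is to use the injection into $M\conv N$ as a bridge that equates the socles of $\overset{\to}{S}_Q(\al_1,\be_1)$ and $\overset{\to}{S}_Q(\al_2,\be_2)$ via the common socle of $M\conv N$.

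First I would verify that the hypotheses of Proposition \ref{prop: socle mathod 2} are in force for each of the two injections $\overset{\to}{S}_Q(\al_i,\be_i)=S_Q(\al_i)\conv S_Q(\be_i)\hookrightarrow M\conv N$. On the right, one of $M,N$ is real by assumption. On the left, Theorem \ref{thm: BkMc}{\rm (a)} says that $S_Q(\be)^{\conv m}$ is a real simple module for every $\be\in\PR$ and $m\ge 1$, so in particular both factors $S_Q(\al_i)$ and $S_Q(\be_i)$ are themselves real simple. Hence Proposition \ref{prop: socle mathod 2} applies to each of the two given injective homomorphisms.

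Next, applying Proposition \ref{prop: socle mathod 2} to the case $i=1$ yields
\[
\soc\bigl(\overset{\to}{S}_Q(\al_1,\be_1)\bigr)\simeq \soc(M\conv N),
\]
and by hypothesis the left-hand side is isomorphic to $S_Q(\gamma)$, so $\soc(M\conv N)\simeq S_Q(\gamma)$. Applying Proposition \ref{prop: socle mathod 2} to the case $i=2$ then gives
\[
\soc\bigl(\overset{\to}{S}_Q(\al_2,\be_2)\bigr)\simeq \soc(M\conv N)\simeq S_Q(\gamma),
\]
which is the desired conclusion.

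There is no real obstacle here: the statement is a formal consequence of Proposition \ref{prop: socle mathod 2}, together with the fact from Theorem \ref{thm: BkMc}{\rm (a)} that the PBW-type modules $S_Q(\be)$ are always real, so that the convolutions $\overset{\to}{S}_Q(\al_i,\be_i)$ satisfy the reality hypothesis needed to invoke the simplicity of $\soc(M\conv N)$ from Theorem \ref{thm: socle head}. The only thing to be a little careful about is making clear why the socle of $M\conv N$ is simple (so that the two socles pulled back from it are forced to coincide), and this is exactly Theorem \ref{thm: socle head}{\rm (i)} applied under the standing assumption that one of $M,N$ is real.
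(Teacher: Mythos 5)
Your proof is correct and is the intended one: since the statement is presented as a corollary of Proposition \ref{prop: socle mathod 2} with no proof given, the double application of that proposition (using the common target $M\conv N$ as a bridge, together with the reality of the $S_Q(\al_i)$, $S_Q(\be_i)$ from Theorem \ref{thm: BkMc}(a) and the reality assumption on $M$ or $N$ to make the hypotheses of Proposition \ref{prop: socle mathod 2} apply to each injection) is exactly what is meant.
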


\begin{example} \label{ex: bad length2}
Consider the positive root $\ga=(0112111)$ of $\PR_{E_7}$. From the quiver $Q$ in Appendix~\ref{Sec:Dynkin E7}, one can check
that the pair $(0112100,0000011)$ of $\ga$ has $[Q]$-distance
$2$ since $$\up'\seteq (0111111,0001000) \prec^{\tb}_{Q} \up \seteq (0112100,0000011).$$
Note that $\up'$, $\up$ are {\it not} good neighbors. Furthermore, there is no more pair $(\al,\be)$ of $\ga$ such that
$ (\al,\be) \prec^{\tb}_{Q} \up$. Then we have
$$
\scalebox{0.9}{\xymatrix@C=3ex@R=3ex{
S_Q(0112100) \conv S_Q(0000011) \ar@{^{(}->}[rr] && S_Q(0111100) \conv S_Q(0001000) \conv S_Q(0000011)  \ar[dd]_{\simeq}  \\
& S_Q(0112111) \ar@{^{(}.>}[ul]\ar@{^{(}->}[dl] \\
 S_Q(0111111) \conv S_Q(0001000) \ar@{^{(}->}[rr] && S_Q(0111100) \conv S_Q(0000011) \conv  S_Q(0001000)  }}
$$
by {\rm (f)} of Theorem~\ref{thm: BkMc} and the $[Q]$-minimalities of
\[
\text{$(0111100,0001000)$, $(0111100,0000011)$ and $(0111111,0001000)$.}
\]
Note that the isomorphism in the second column follows from the fact that the pair $(0001000,0000011)$ is $[Q]$-simple. Then
Corollary~\ref{cor: simple simple module} implies that $S_Q(0001000) \conv S_Q(0000011)$ is real simple.
By taking $M=S_Q(0111100)$ and $N=S_Q(0001000) \conv S_Q(0000011)$, we can apply the above corollary.
\end{example}

The following theorem is already proved for types $A_n$ and $D_n$ in~\cite{Oh14A,Oh14D} by using Dorey's rule and KLR-type Schur-Weyl duality functor. 
Now, we can extend the result to finite $E$-types:

\begin{theorem} \label{thm: socle of Q-pair with al+be in PR}
For a pair $\up=(\al,\be)$ with $\al+\be = \ga \in \PR$ of type $A_n$, $D_n$ and $E_{6,7,8}$, we have
$$ \soc\big(\overset{\to}{S}_{Q}(\al,\be)\big) \simeq S_{Q}(\ga).$$
\end{theorem}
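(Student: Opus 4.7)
The plan is to proceed by induction on the $[Q]$-distance $k \seteq \dist_Q(\al,\be) \ge 1$. For the base case $k = 1$, Proposition \ref{prop: Q-socle of pair with sum in PR} forces $(\al,\be)$ to be a $[Q]$-minimal pair of $\ga$, so Theorem \ref{thm: BkMc}~(f) immediately produces a four-term exact sequence whose kernel is $S_Q(\ga)$, identifying it as the socle of $\overset{\to}{S}_Q(\al,\be)$.

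For the inductive step when $k \ge 2$ and $Q$ is of type $A_n$, $D_n$, or $E_6$, I would invoke Proposition \ref{prop: good neighbor} to obtain two distinct $[Q]$-minimal pairs $\up^{(1)}, \up^{(2)}$ of $\ga$ that are good neighbors of $\up = (\al,\be)$; when $Q$ is of type $E_7$ or $E_8$ and $k \ge 3$, Proposition \ref{prop: good neighbor2} does the same job. By Definition \ref{def: good neighbor}, each $\up^{(j)}$ is connected to $\up$ by a chain $\up^{(j)} = \up^{(j,0)} \prec^\tb_Q \up^{(j,1)} \prec^\tb_Q \cdots \prec^\tb_Q \up^{(j,\ell_j)} = \up$ of good adjacent neighbors, where at each step the auxiliary pair $(\eta,\cdot)$ that mediates the shift has strictly smaller $[Q]$-distance. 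Applying the induction hypothesis to these auxiliary pairs and translating socle statements into non-zero homomorphisms between convolution products via Theorem \ref{thm: socle head} (using the head/socle identification $\soc(X \conv Y) \simeq \hd(Y \conv X)$), I would splice the individual links by iterated application of Proposition \ref{prop: not vanishing} to produce non-zero $R$-module maps $\overset{\to}{S}_Q(\up^{(j)}) \to \overset{\to}{S}_Q(\up)$ for $j = 1, 2$. Since $\up^{(1)}, \up^{(2)}$ are distinct $[Q]$-minimal pairs of $\ga$ whereas $\up$ is not, Corollary \ref{cor: socle mathod 1} then yields $\soc \overset{\to}{S}_Q(\al,\be) \simeq S_Q(\ga)$. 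The realness assumptions in Proposition \ref{prop: not vanishing} and Corollary \ref{cor: socle mathod 1} are automatic because each $S_Q(\beta)$ for $\beta \in \PR$ is a real simple module by Theorem \ref{thm: BkMc}~(a).

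The only residual case is $k = 2$ for $Q$ of type $E_7$ or $E_8$, which is not covered by either good-neighbor proposition; Example \ref{ex: bad length2} shows this case is genuinely different, as a distance-$2$ pair there need not admit a good adjacent neighbor. Following the template of that example, my strategy is to verify case by case, using the AR-quivers in Appendix together with the reflection functor reduction of Convention \ref{conv: strategy E}, that every such pair $\up$ embeds into some $M \conv N$ with $M$ and $N$ simple (one of them real) into which a $[Q]$-minimal pair of $\ga$ also embeds; Corollary \ref{cor: socle mathod 2} then transfers the socle. The main obstacle will be the bookkeeping in the good-neighbor chain argument of the inductive step: the definition of good adjacent neighbor only guarantees that the auxiliary pairs $(\eta,\cdot)$ have smaller distance, and one must keep track, as the chain is traversed, of which of the two orientations (a) or (b) in Definition \ref{def: tw-adjacent} applies at each link in order to apply Proposition \ref{prop: not vanishing} in the correct form without losing non-vanishing of the composed homomorphism.
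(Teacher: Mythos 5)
Your proposal follows the same strategy as the paper: induction on $\dist_Q(\al,\be)$ with the minimal-pair base case handled by Theorem~\ref{thm: BkMc}~(f), the general step carried out by splicing homomorphisms coming from good-neighbor minimal pairs (Propositions~\ref{prop: good neighbor}, \ref{prop: good neighbor2} and Proposition~\ref{prop: not vanishing}) and concluding with Corollary~\ref{cor: socle mathod 1}, and the exceptional $E_7$/$E_8$ distance-$2$ pairs handled by finding a common overmodule and transferring the socle via Corollary~\ref{cor: socle mathod 2}, exactly as in Example~\ref{ex: bad length2}. The bookkeeping concern you flag about the (a)/(b) orientations in Definition~\ref{def: tw-adjacent} is precisely what the paper resolves by the explicit case split (a1)--(a3) versus (b1)--(b3).
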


\begin{proof}
For the case when $(\al,\be)$ is a $[Q]$-minimal pair of $\gamma$, i.e., $\dist_Q(\al,\be)=1$, our assertion follows from
{\rm (f)} of Theorem~\ref{thm: BkMc}. Since each pair $(\al,\be)$ of $\ga$ of type $A_n$ is $[Q]$-minimal (\cite[Theorem 3.4]{Oh14A}),
we can first assume that $\dist_Q(\al,\be)=2$ and $Q$ is of type $D_n$ or $E_6$. From the Proposition~\ref{prop: good neighbor},
there exist good {\it adjacent} neighbors $\up^{(1)}$ and $\up^{(2)}$ of $\up$ which are also minimal pairs of $\ga$; that is; there exists $\eta^{(i)} \in \PR$ satisfying either
\begin{enumerate}
\item[{\rm (a1)}] $\eta^{(i)}+\beta=\beta^{(i)}, \ \eta+\al^{(i)}=\al$ and $1=\dist_{Q}(\eta,\beta)=\dist_{Q}(\eta,\al^{(i)})<\dist_{Q}(\up)=2$,
\item[{\rm (b1)}] $\beta^{(i)}+\eta^{(i)}=\beta, \ \al+\eta^{(i)}=\al^{(i)}$ and $1=\dist_{Q}(\beta^{(i)},\eta)=\dist_{Q}(\al,\eta)<\dist_{Q}(\up)=2$.
\end{enumerate}
In both cases, we have homomorphisms
\begin{enumerate}
\item[{\rm (a2)}] $S_Q(\beta^{(i)}) \hooklongrightarrow S_Q(\eta^{(i)}) \conv S_Q(\beta), \ S_Q(\al^{(i)}) \conv S_Q(\eta^{(i)}) \longtwoheadrightarrow  S_Q(\al)$,
\item[{\rm (b2)}] $S_Q(\eta^{(i)}) \conv S_Q(\beta^{(i)})\longtwoheadrightarrow S_Q(\be) , \ S_Q(\al^{(i)})\hooklongrightarrow S_Q(\al) \conv S_Q(\eta^{(i)})$.
\end{enumerate}
Thus we have non-zero compositions
\begin{enumerate}
\item[{\rm (a3)}] $S_Q(\al^{(i)}) \conv S_Q(\beta^{(i)})\hooklongrightarrow S_Q(\al^{(i)}) \conv S_Q(\eta^{(i)}) \conv S_Q(\beta) \longtwoheadrightarrow S_Q(\al) \conv S_Q(\be)$,
\item[{\rm (b3)}] $S_Q(\al^{(i)}) \conv S_Q(\beta^{(i)})\hooklongrightarrow S_Q(\al) \conv S_Q(\eta^{(i)}) \conv S_Q(\beta^{(i)})\longtwoheadrightarrow  S_Q(\al) \conv S_Q(\be)$,
\end{enumerate}
by Proposition~\ref{prop: not vanishing}. Then our assertion follows from Corollary~\ref{cor: socle mathod 1}.

For the cases when $Q$ is of type $E_7$ or $E_8$, there are several $\ga \in \PR$ which has a pair $\up=(\al,\be)$ of $\ga$  (see Example~\ref{ex: bad length2})
such that $\dist_Q(\al,\be)=2$ and
\begin{itemize}
\item $\up$ does not have good neighbor at all,
\item there exists a unique minimal pair $\up'=(\al',\be')$ of $\ga$ such that $\up' \prec_Q^\tb \up$.
\end{itemize}
However, we can apply the same argument of Example~\ref{ex: bad length2} in those cases. More precisely, there exists $\eta \in \PR$ such that
one of the following conditions holds:
\begin{enumerate}
\item[{\rm (i)}] $(\eta,\be')$ is a $[Q]$-minimal pair of $\al$, $(\eta,\be)$ is a $[Q]$-minimal pair of $\al'$ and $(\be,\be')$ are $[Q]$-simple,
\item[{\rm (ii)}] $(\al',\eta)$ is a $[Q]$-minimal pair of $\be$, $(\al,\eta)$ is a $[Q]$-minimal pair of $\be'$ and $(\al,\al')$ are $[Q]$-simple.
\end{enumerate}

Thus we have
\[
\scalebox{0.9}{\xymatrix@C=3ex@R=3ex{
S_Q(\al) \conv S_Q(\be) \ar@{^{(}->}[rr] && S_Q(\eta) \conv S_Q(\be') \conv S_Q(\be)  \ar[dd]_{\simeq} & \text{(resp. } S_Q(\al) \conv S_Q(\al') \conv S_Q(\eta) \text{) \ar[dd]_{\simeq}}  \\
& S_Q(\ga) \ar@{^{(}.>}[ul]\ar@{^{(}->}[dl] \\
 S_Q(\al') \conv S_Q(\be') \ar@{^{(}->}[rr] && S_Q(\eta) \conv S_Q(\be) \conv  S_Q(\be') & \text{(resp. } S_Q(\al') \conv S_Q(\al) \conv  S_Q(\eta) \text{)}  }}
\]
Thus our assertion holds for the pairs $(\al,\be)$ with $\dist_Q(\al,\be) \le 2$.

For the pairs $(\al,\be)$ with $\dist_Q(\al,\be) \ge 3$, we can apply the induction. More precisely, by Proposition~\ref{prop: good neighbor}
and Proposition~\ref{prop: good neighbor2}, there are good neighbors $\up^{(i)}=(\al^{(i)},\be^{(i)})$ $(i=1,2)$ of $\up$, which are also $[Q]$-minimal pair of $\ga$. By the induction hypothesis, we have non-zero
compositions
\begin{enumerate}
\item[{\rm (a)}] $S_Q(\al^{(i)}) \conv S_Q(\beta^{(i)})\hooklongrightarrow S_Q(\al^{(i)}) \conv S_Q(\eta^{(i)}) \conv S_Q(\beta) \longtwoheadrightarrow S_Q(\al) \conv S_Q(\be)$ or
\item[{\rm (b)}] $S_Q(\al^{(i)}) \conv S_Q(\beta^{(i)})\hooklongrightarrow S_Q(\al) \conv S_Q(\eta^{(i)}) \conv S_Q(\beta^{(i)})\longtwoheadrightarrow  S_Q(\al) \conv S_Q(\be)$,
\end{enumerate}
since
\[
\dist_{Q}(\eta,\beta),\dist_{Q}(\eta,\al^{(i)})<\dist_{Q}(\up) \quad \text{ or } \quad  \dist_{Q}(\beta^{(i)},\eta),\dist_{Q}(\al,\eta)<\dist_{Q}(\up).
\]
Thus our assertion follows from Corollary~\ref{cor: socle mathod 1}.
\end{proof}

\begin{example}
In Remark~\ref{ex: non-trivial 1}, one can check that $\soc_{[\redez]}([5],[2,4])=([2,5])$ but
$S_{[\redez]}([2,5])$ do not appear as a composition factor of $S_{[\redez]}([5])\conv S_{[\redez]}([2,4])$. Thus the above theorem
does not holds for $[\redez] \ne [Q]$ in general.
\end{example}

Now we shall generalize Theorem~\ref{thm: socle of Q-pair with al+be in PR} to all pairs $(\al,\be)$ by using the assumptions and the arguments of
Proposition~\ref{prop: Q-socle for pair}.

\begin{theorem} \label{thm: socle of Q-pair}
For a pair $(\al,\be)$ of type $A_n$, $D_n$ and $E_{6,7,8}$, we have
\[
 S_{Q}\big(\soc_Q(\al,\be) \big)    \simeq  \soc\big(\overset{\to}{S}_{Q}(\al,\be)\big).
\]
\end{theorem}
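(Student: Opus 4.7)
The plan is to parallel the combinatorial case analysis of Proposition~\ref{prop: Q-socle for pair} in the categorical setting: for each configuration of $(\al,\be)$ I would produce a non-zero $R$-homomorphism
\[
S_Q(\soc_Q(\al,\be)) \ \Lto\ S_Q(\al)\conv S_Q(\be).
\]
Since $\soc_Q(\al,\be)$ is $[Q]$-simple, Corollary~\ref{cor simple if simple} guarantees that $S_Q(\soc_Q(\al,\be))$ is a real simple module, so any non-zero map with this source is automatically injective. Combining this with Theorem~\ref{thm: socle head}, which says that $\overset{\to}{S}_Q(\al,\be)$ has a simple socle, forces $\soc(\overset{\to}{S}_Q(\al,\be))\simeq S_Q(\soc_Q(\al,\be))$.

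First I would clear the easy cases. If $(\al,\be)$ is $[Q]$-simple---covering incomparable pairs by Lemma~\ref{lem: incomp simple} and directly $Q$-connected pairs by Proposition~\ref{prop: dir Q cnt}---then $\soc_Q(\al,\be)=(\al,\be)$ and $\overset{\to}{S}_Q(\al,\be)$ is itself simple by Corollary~\ref{cor: simple simple module}, so the claim is trivial. If $\al+\be\in\PR$, the claim is exactly Theorem~\ref{thm: socle of Q-pair with al+be in PR}. Thus I may assume $\al\prec_Q\be$, $\supp(\al)\cap\supp(\be)\ne\emptyset$, $\al+\be\notin\PR$, and $(\al,\be)$ not directly $Q$-connected, so that the configuration falls into one of the cases explicitly enumerated in the proof of Proposition~\ref{prop: Q-socle for pair} (the type $A$ case (c), the type $D$ cases (iii)--(viii), and their type $E$ analogues).

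In each such configuration I would use the auxiliary roots supplied by the combinatorial analysis to factor $\us:=\soc_Q(\al,\be)$ through $(\al,\be)$. Concretely, write $\us=(\be_{i_1},\ldots,\be_{i_r})$ with $r\le 3$ (Remark~\ref{rem: socle}). The combinatorial proof constructs intermediate pairs $(\eta,\zeta)$ with $\eta+\zeta\in\PR$ and $\wt(\eta,\zeta)$ realising the weight of a subpair one step closer to $\us$. I would apply Theorem~\ref{thm: socle of Q-pair with al+be in PR} to each such subpair to obtain either an injection $S_Q(\eta+\zeta)\hookrightarrow S_Q(\eta)\conv S_Q(\zeta)$ or a surjection $S_Q(\eta)\conv S_Q(\zeta)\twoheadrightarrow S_Q(\eta+\zeta)$, with the orientation dictated by $\prec_Q$. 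Splicing these at real simple middle factors and invoking Proposition~\ref{prop: not vanishing} at each splicing point yields a non-zero composite
\[
S_Q(\be_{i_1})\conv\cdots\conv S_Q(\be_{i_r})\ \Lto\ S_Q(\al)\conv S_Q(\be),
\]
which is the required map. For the prototypical type $A$ case (c), for instance, with $\us=([a,d],[c,b])$, I would use the auxiliary root $[b+1,d]$ to form the composition
$S_Q([a,d])\conv S_Q([c,b])\hookrightarrow S_Q([a,b])\conv S_Q([b+1,d])\conv S_Q([c,b])\twoheadrightarrow S_Q([a,b])\conv S_Q([c,d])$,
non-vanishing by Proposition~\ref{prop: not vanishing} since $S_Q([b+1,d])$ is simple.

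The main obstacle will be the cases in which $\us$ has three components---types $D$ (ii-2), (v), and similar $E$ configurations exemplified by Example~\ref{ex: bad length2}---since they require iterating the splicing with two auxiliary roots while verifying that each intermediate middle factor remains simple. For $E_n$ I would, following Convention~\ref{conv: skip E}, check the construction on the fixed quiver in Appendix and then transport to all other Dynkin quivers via the observations in \eqref{eq: observations reflection}, which guarantee that reflection functors preserve the $[Q]$-socle and convolution modules up to the relabelling $\re_i$. The delicate point throughout is to arrange the order of the auxiliary factors so that each local map produced by Theorem~\ref{thm: socle of Q-pair with al+be in PR} is oriented consistently with $\prec_Q$, so that Proposition~\ref{prop: not vanishing} (rather than its dual) actually applies.
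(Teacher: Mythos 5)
Your proposal is correct and takes essentially the same approach as the paper: reduce to the case analysis of Proposition~\ref{prop: Q-socle for pair}, construct a non-zero homomorphism from $S_Q(\soc_Q(\al,\be))$ into $\overset{\to}{S}_Q(\al,\be)$ by splicing the minimal-pair injections and surjections through Proposition~\ref{prop: not vanishing}, and conclude via simplicity of the source together with Theorem~\ref{thm: socle head}; your type~$A$ factorization through $[b+1,d]$ is exactly the intended construction, and the transport to type~$E$ via reflection functors matches Convention~\ref{conv: strategy E}. The only cosmetic divergence is in the three-component type~$D$ case, where the paper builds a surjection $\overset{\gets}{S}_Q(\up)\twoheadrightarrow S_Q(\us)$ and dualizes rather than constructing the injection directly, but the two orientations are interchangeable.
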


\begin{proof}
When $\al+\be \in \PR$, it is already covered in Theorem~\ref{thm: socle of Q-pair with al+be in PR}. Thus it is enough to consider when
\begin{align} \label{cond: socle}
\text{$\al+\be \not \in \PR$ and $\up=(\al,\be)$ is not $[Q]$-simple.}
\end{align}

\noindent
($Q$ of type $A_n$) In this case,~\eqref{cond: socle} can happen only in {\rm (c)} of~\eqref{eq: A complicated socle}.
Then $\us$ and $\up$ are good adjacent neighbor such that $\us \prec^\tb_Q \up$ by Proposition~\ref{prop: dir Q cnt} and Theorem~\ref{thm: dist upper bound}.
Thus our assertion holds from the existence of non-zero homomorphism $S_{Q}(\us) \to \overset{\to}{S}_{Q}(\up)$,
the simplicity of $S_{Q}(\us)$ and Theorem~\ref{thm: socle head}.

\medskip

\noindent
($Q$ of type $D$) In this case,~\eqref{cond: socle} can happen only in~\eqref{eq: complacted socle of D} which we have recorded; i.e.,
\begin{align*}
\scalebox{0.79}{{\xy
(-20,0)*{}="DL";(-10,-10)*{}="DD";(0,20)*{}="DT";(10,10)*{}="DR";
"DT"+(-30,-4); "DT"+(135,-4)**\dir{.};
"DD"+(-20,-6); "DD"+(145,-6) **\dir{.};
"DD"+(-20,-10); "DD"+(145,-10) **\dir{.};
"DT"+(-32,-4)*{\scriptstyle 1};
"DT"+(-32,-8)*{\scriptstyle 2};
"DT"+(-32,-12)*{\scriptstyle \vdots};
"DT"+(-32,-16)*{\scriptstyle \vdots};
"DT"+(-34,-36)*{\scriptstyle n-1};
"DT"+(-33,-40)*{\scriptstyle n};
"DL"+(-10,-3); "DD"+(-10,-3) **\dir{-};
"DR"+(-14,-7); "DD"+(-10,-3) **\dir{-};
"DT"+(-14,-7); "DR"+(-14,-7) **\dir{-};
"DT"+(-14,-7); "DL"+(-10,-3) **\dir{-};
"DL"+(-6,-3)*{\scriptstyle \be};
"DR"+(-18,-7)*{\scriptstyle \al};
"DL"+(5,0)*{{\rm (1)}};
"DL"+(-10,-3)*{\bullet};"DR"+(-14,-7)*{\bullet};
"DT"+(-14,-7)*{\bullet}; "DD"+(-10,-3)*{\bullet};
"DT"+(-14,-5)*{\scriptstyle s_1};
"DD"+(-10,-5)*{\scriptstyle s_2};
"DD"+(0,4); "DD"+(10,-6) **\dir{-};
"DD"+(16,0); "DD"+(10,-6) **\dir{-};
"DD"+(16,0); "DD"+(22,-6) **\dir{-};
"DD"+(42,16); "DD"+(22,-6) **\dir{-};
"DD"+(0,4); "DD"+(22,26) **\dir{.};
"DD"+(42,16); "DD"+(32,26) **\dir{.};
"DD"+(16,0); "DD"+(37,21) **\dir{.};
"DD"+(16,0); "DD"+(6,10) **\dir{.};
"DD"+(37,21)*{\bullet};
"DD"+(34,21)*{\scriptstyle s_1};
"DD"+(6,10)*{\bullet};
"DD"+(9,10)*{\scriptstyle s_2};
"DD"+(22,26); "DD"+(32,26) **\crv{"DD"+(27,28)};
"DD"+(27,29)*{\scriptstyle 2 >};
"DD"+(0,4)*{\bullet};
"DL"+(27,0)*{{\rm (2)}};
"DD"+(4,4)*{\scriptstyle \be};
"DD"+(42,16)*{\bullet};
"DD"+(37,16)*{\scriptstyle \al};
"DD"+(40,4); "DD"+(50,-6) **\dir{-};
"DD"+(56,0); "DD"+(50,-6) **\dir{-};
"DD"+(56,0); "DD"+(62,-6) **\dir{-};
"DD"+(74,8); "DD"+(62,-6) **\dir{-};
"DD"+(40,4); "DD"+(59,23) **\dir{.};
"DD"+(74,8); "DD"+(59,23) **\dir{.};
"DD"+(56,0); "DD"+(69,13) **\dir{.};
"DD"+(56,0); "DD"+(46,10) **\dir{.};
"DD"+(40,4)*{\bullet};
"DD"+(59,23)*{\bullet};
"DD"+(46,10)*{\bullet};
"DD"+(50,10)*{\scriptstyle \be'};
"DD"+(69,13)*{\bullet};
"DD"+(66,13)*{\scriptstyle \al'};
"DD"+(56,0)*{\bullet};
"DD"+(56,2)*{\scriptstyle s_1};
"DL"+(67,0)*{{\rm(3)}};
"DD"+(44,4)*{\scriptstyle \be};
"DD"+(74,8)*{\bullet};
"DD"+(70,8)*{\scriptstyle \al};
"DD"+(59,25)*{\scriptstyle s_2};
"DD"+(75,4); "DD"+(85,-6) **\dir{-};
"DD"+(91,0); "DD"+(85,-6) **\dir{-};
"DD"+(75,4); "DD"+(97,26) **\dir{-};
"DD"+(110,19); "DD"+(103,26) **\dir{-};
"DD"+(91,0); "DD"+(110,19) **\dir{-};
"DD"+(75,4)*{\bullet};
"DL"+(102,0)*{{\rm (4)}};
"DD"+(79,4)*{\scriptstyle \be};
"DD"+(97,26); "DD"+(103,26) **\crv{"DD"+(100,28)};
"DD"+(110,19)*{\bullet};
"DD"+(110,17)*{\scriptstyle \al};
"DD"+(85,-10)*{\bullet};
"DD"+(85,-8)*{\scriptstyle s_1};
"DD"+(85,-6)*{\bullet};
"DD"+(85,-4)*{\scriptstyle s_2};
"DD"+(100,29)*{\scriptstyle 2};
"DD"+(100,4); "DD"+(110,-6) **\dir{-};
"DD"+(116,0); "DD"+(110,-6) **\dir{-};
"DD"+(100,4); "DD"+(119,23) **\dir{-};
"DD"+(122,26); "DD"+(119,23) **\dir{.};
"DD"+(128,26); "DD"+(135,19) **\dir{.};
"DD"+(129,13); "DD"+(135,19) **\dir{.};
"DD"+(122,26); "DD"+(128,26) **\crv{"DD"+(125,28)};
"DD"+(125,29)*{\scriptstyle 2};
"DD"+(135,19)*{\bullet};
"DD"+(133,19)*{\scriptstyle \al'};
"DD"+(129,13); "DD"+(119,23) **\dir{-};
"DD"+(116,0); "DD"+(129,13) **\dir{-};
"DD"+(100,4)*{\bullet};
"DD"+(119,23)*{\bullet};
"DL"+(127,0)*{{\rm (5)}};
"DD"+(104,4)*{\scriptstyle \be};
"DD"+(119,25)*{\scriptstyle s_3};
"DD"+(129,13)*{\bullet};
"DD"+(126,13)*{\scriptstyle \al};
"DD"+(110,-10)*{\bullet};
"DD"+(110,-8)*{\scriptstyle s_1};
"DD"+(110,-6)*{\bullet};
"DD"+(110,-4)*{\scriptstyle s_2};
"DD"+(125,4); "DD"+(135,-6) **\dir{-};
"DD"+(125,4); "DD"+(115,-6) **\dir{-};
"DD"+(125,4); "DD"+(140,19) **\dir{.};
"DD"+(150,9);"DD"+(135,-6) **\dir{-};
"DD"+(150,9);"DD"+(140,19) **\dir{.};
"DD"+(150,9)*{\bullet};
"DD"+(140,19)*{\bullet};
"DD"+(135,-6)*{\bullet};
"DD"+(135,-8)*{\scriptstyle s_1};
"DD"+(140,21)*{\scriptstyle s_2};
"DD"+(154,9)*{\scriptstyle \al};
"DD"+(135,4)*{{\rm (6)}};
"DD"+(115,-6)*{\circ};
"DD"+(115,-8)*{\scriptstyle \be};
"DD"+(115,-10)*{\circ};
\endxy}}
\end{align*}
(Recall that we have assumed $j \le i$ where $\phi_Q^{-1}(\al,0)=(j,t)$ and $\phi_Q^{-1}(\be,0)=(i,s)$.)

In the cases {\rm (1)}, {\rm (2)}, {\rm (4)} and {\rm (6)}, one can easily check that $\us$ and $\up$ form good adjacent neighbors by Proposition~\ref{prop: Q-socle for pair}.
Hence our assertion follows from the same argument of type $A_n$. In the case $(3)$, $\us \prec^{\tb}_Q \up'=(\al',\be') \prec^{\tb}_Q \up$ is
a sequence of good neighbors such that ($\us$, $\up'$) and ($\up'$, $\up$) are pairs of good adjacent neighbors. Thus our assertion follows.

In the case $(5)$, the positive roots should be of the following forms:
\[
 \al=\ve_a-\ve_c, \ \be=\ve_a+\ve_b, \ s_3=\ve_b-\ve_c, \ \{s_1,s_2\}=\{\ve_a-\ve_{\ta},\ve_a+\ve_{\ta} \}.
\]
Thus there exists $\al'=\ve_a-\ve_b$ such that $\al' \prec_Q \al$ and the pair $(\al',\be)$ is of the case {\rm (4)} and has its socle as $(s_1,s_2)$. Thus we
have a non-zero composition
\begin{equation}\label{eq: dual}
\begin{aligned}
& S_Q(\be) \conv S_Q(\al) \hooklongrightarrow S_Q(\be) \conv S_Q(s_3) \conv S_Q(\al') \simeq  S_Q(s_3) \conv S_Q(\be) \conv S_Q(\al') \\
& \hspace{45ex} \longtwoheadrightarrow S_Q(s_3)\conv S_Q(s_1) \conv S_Q(s_2) \seteq S_Q(\us)
\end{aligned}
\end{equation}
by Proposition~\ref{prop: not vanishing}. Thus our assertion can be obtained by taking dual on~\eqref{eq: dual}.

\medskip

Recall Remark~\ref{rem: socle}. For $Q$ of type $E_{6,7,8}$, we can apply the same strategy of type $D_n$ and we shall show a non-trivial example below.
\end{proof}

\begin{example} \label{ex: dist1 but not minimal 2}
Consider the pair $\up=(111001,123212)$ and its socle $\us=(111111,122101,001001)$ in Example~\ref{ex: dist1 but not minimal}.
Since $(110000,001001)$ is a pair for $(111001)$, we have
\begin{equation} \label{eq: ex E proof}
\begin{aligned}
S_Q(123212)\conv S_Q(111001) & \hooklongrightarrow S_Q(123212)\conv S_Q(110000) \conv S_Q(001001) \\
& \hspace{8ex} \longtwoheadrightarrow S_Q(122101) \conv S_Q(111111) \conv S_Q(001001).
\end{aligned}
\end{equation}

Here the second surjection can be obtained by the following way: Since
\[
\text{(i) $(122101,001111)$ is a pair for $(123212)$}\quad \text{and} \quad  \text{(ii) $(110000,001111)$ is a pair for $(111111)$.}
\]
we have a non-zero composition
\begin{align*} S_Q(123212)\conv S_Q(110000) & \hooklongrightarrow S_Q(122101)\conv S_Q(001111)\conv S_Q(110000) \longtwoheadrightarrow S_Q(122101)\conv S_Q(111111).
\end{align*}
Since $S_Q(122101)\conv S_Q(111111)$ is $[Q]$-simple, the non-zero composition is surjective.

By taking dual on~\eqref{eq: ex E proof}, we have
\[
S_Q(\us) \hooklongrightarrow \overset{\to}{S}_Q(\up).
\]
\end{example}

\begin{corollary} \label{cor: simple iff simple}
For $\um \in \Z^{\ell(w_0)}_{\ge 0}$,
$ \overset{\to}{S}_{Q}(\um) \simeq \overset{\gets}{S}_{Q}(\um)$ is simple if and only if
$\um$ is $[Q]$-simple.
\end{corollary}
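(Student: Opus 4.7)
The plan is to establish the equivalence by proving the two implications separately; the nontrivial content lies in the forward direction.

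For the reverse implication, I would simply invoke Corollary~\ref{cor simple if simple}: if $\um$ is $[Q]$-simple then $\overset{\to}{S}_{Q}(\um) \simeq \overset{\gets}{S}_{Q}(\um)$ is already asserted there to be real simple, so no additional argument is needed.

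For the forward implication, suppose $\overset{\to}{S}_Q(\um)$ is simple. Write $\overset{\to}{S}_Q(\um) \simeq N_1 \conv N_2 \conv \cdots \conv N_n$, where the $N_a$ are the modules $S_Q(\be_i)$, each appearing with multiplicity $m_i$ (for every $i$ with $m_i>0$) and listed in the order coming from some $\redez \in [Q]$. Each $N_a$ is real simple by Theorem~\ref{thm: BkMc}(a). The first step of the plan is to show that any reordering $N_{\sigma(1)}\conv\cdots\conv N_{\sigma(n)}$ is isomorphic to $N_1\conv\cdots\conv N_n$, and hence simple. For an adjacent transposition, Theorem~\ref{thm: socle head} supplies the unique nonzero $r$-matrix $\rmat{N_a,N_{a+1}}\colon N_a\conv N_{a+1}\to N_{a+1}\conv N_a$; convolving this with the identities on the remaining factors (using exactness of convolution, as in Proposition~\ref{prop: not vanishing}) produces a nonzero morphism between the two orderings. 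Since the source is simple and both modules have the same total dimension (the dimension of a convolution is symmetric in the order of factors), the morphism is an isomorphism. Iterating adjacent transpositions handles the general permutation.

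With this reordering principle in hand, for any two distinct positions $i_1,i_2$ with $m_{i_1},m_{i_2}>0$, I choose $N_a=S_Q(\be_{i_1})$ and $N_b=S_Q(\be_{i_2})$ and rearrange so that they appear as the first two factors; the resulting product $N_a\conv N_b\conv R$ (with $R$ the remaining convolution) is simple. A proper nonzero submodule $K\subsetneq N_a\conv N_b$ would, by exactness of convolution, yield a proper nonzero submodule $K\conv R$ of $N_a\conv N_b\conv R$, contradicting simplicity. Hence $S_Q(\be_{i_1})\conv S_Q(\be_{i_2})$ is simple. Theorem~\ref{thm: socle of Q-pair} then identifies its socle as $S_Q(\soc_Q(\be_{i_1},\be_{i_2}))$; if $(\be_{i_1},\be_{i_2})$ were not $[Q]$-simple, this simple socle would differ from the simple head (which corresponds to the sequence $(\be_{i_1},\be_{i_2})$ itself by Theorem~\ref{thm: BkMc}(e)), contradicting simplicity of the convolution. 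Hence every such pair is $[Q]$-simple, so $\um$ is $[Q]$-simple by Definition~\ref{def: [tw]-simple}.

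The main technical obstacle is verifying that the $r$-matrix swap, once convolved with identities on the other factors, remains nonzero; this follows from the exactness of KLR-induction together with the non-vanishing of a convolution of nonzero modules, and can also be extracted from Proposition~\ref{prop: not vanishing}. Everything else is bookkeeping with exactness of convolution and the socle identification of Theorem~\ref{thm: socle of Q-pair}.
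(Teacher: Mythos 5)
Your proof is correct and follows the paper's approach: the reverse direction is the cited Corollary~\ref{cor simple if simple}, and the forward direction reduces to a pair and invokes Theorem~\ref{thm: socle of Q-pair}, which are precisely the two results the paper's one-line proof appeals to. The reordering-via-$r$-matrices step you spell out is exactly the routine mechanism (mirroring the commutation argument already used inside the proof of Corollary~\ref{cor simple if simple}) hidden behind the paper's declaration that the corollary is an ``immediate consequence'' of those two results.
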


\begin{proof}
Our assertion is an immediate consequence of Corollary~\ref{cor: simple simple module} and Theorem~\ref{thm: socle of Q-pair}.
\end{proof}

\begin{theorem} \label{thm: num of distinct simples}
For non $[Q]$-simple pairs $\up=(\al,\be)$ and $\up'=(\al',\be')$ such that
\[
\text{$\up' \prec^\tb_Q \up $ and they are good neighbors,}
\]
we have an injective homomorphism
\[
\overset{\to}{S}_{Q}(\up') \hooklongrightarrow \overset{\to}{S}_{Q}(\up).
\]
Hence the composition length of $\overset{\to}{S}_{Q}(\up)$ for $\dist_Q(\up) \ge 1$ is lager than or equal to $\len_Q(\up)+2$.
\end{theorem}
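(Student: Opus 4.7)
My plan is to reduce to good adjacent neighbors by Definition \ref{def: good neighbor}, construct a non-zero map at each step via the recipe behind the proof of Theorem \ref{thm: socle of Q-pair with al+be in PR}, argue injectivity from the common simple socle, and finally count composition factors. By Definition \ref{def: good neighbor} there is a chain $\up'=\up^{(0)}\prec^\tb_Q\up^{(1)}\prec^\tb_Q\cdots\prec^\tb_Q\up^{(k)}=\up$ of good adjacent neighbors, so it suffices to construct an injection $\overset{\to}{S}_Q(\up^{(i)})\hookrightarrow\overset{\to}{S}_Q(\up^{(i+1)})$ for each $i$ and compose. Observe first that all intermediate $\up^{(j)}$ share the same $[Q]$-socle: from $\wt(\up^{(j)})=\wt(\up)$ and $\up^{(j)}\preceq^\tb_Q\up$ one gets $\soc_Q(\up^{(j)})=\soc_Q(\up)$ by the uniqueness in Definition \ref{def: Q-socle}, and Theorem \ref{thm: socle of Q-pair} then identifies $\soc\bigl(\overset{\to}{S}_Q(\up^{(j)})\bigr)$ with the common simple module $L\seteq S_Q(\soc_Q(\up))$.

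For a good adjacent pair $\up^{(i)}\prec^\tb_Q\up^{(i+1)}$, apply Definition \ref{def: tw-adjacent}; I treat case (a), since case (b) is symmetric. There exists $\eta\in\PR$ with $\eta+\be^{(i+1)}=\be^{(i)}$ and $\eta+\al^{(i)}=\al^{(i+1)}$. Unwinding $\up^{(i)}\prec^\tb_Q\up^{(i+1)}$ yields the nested order $\al^{(i+1)}\prec_Q\al^{(i)}\prec_Q\be^{(i)}\prec_Q\be^{(i+1)}$, after which the convexity of $\prec_Q$ places $\eta$ below both $\al^{(i+1)}$ and $\be^{(i+1)}$. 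Theorem \ref{thm: socle of Q-pair with al+be in PR} then gives a socle embedding $S_Q(\be^{(i)})\hookrightarrow S_Q(\eta)\conv S_Q(\be^{(i+1)})$ and, dually, a head surjection $S_Q(\al^{(i)})\conv S_Q(\eta)\twoheadrightarrow S_Q(\al^{(i+1)})$. Since convolution is exact, tensoring and composing produces
$$
\varphi_i\colon \overset{\to}{S}_Q(\up^{(i)})\hookrightarrow S_Q(\al^{(i)})\conv S_Q(\eta)\conv S_Q(\be^{(i+1)})\twoheadrightarrow \overset{\to}{S}_Q(\up^{(i+1)}),
$$
which is non-zero by the second form of Proposition \ref{prop: not vanishing} applied with the simple middle factor $M_2=S_Q(\eta)$.

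The hard part will be showing each $\varphi_i$ is injective. Since both domain and codomain carry $L$ as their simple socle, $\varphi_i\ne 0$ is injective iff $\varphi_i|_L\ne 0$. I would verify this by re-running Proposition \ref{prop: not vanishing} on the socle inclusion $L\hookrightarrow\overset{\to}{S}_Q(\up^{(i)})$: inside the middle module, $L$ sits as the socle of the image $S_Q(\al^{(i)})\conv\iota(S_Q(\be^{(i)}))\subset S_Q(\al^{(i)})\conv S_Q(\eta)\conv S_Q(\be^{(i+1)})$, and the image of this copy of $L$ under $\pi\conv\mathrm{id}$ should be the canonical socle copy of $L$ in $\overset{\to}{S}_Q(\up^{(i+1)})$. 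Pinning down this compatibility cleanly is the main obstacle; once done, composing the $\varphi_i$'s yields the desired injection $\overset{\to}{S}_Q(\up')\hookrightarrow\overset{\to}{S}_Q(\up)$.

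For the composition length bound, I apply the injection to each of the $\len_Q(\up)$ non-$[Q]$-simple good neighbors $\up^{(j)}$ of $\up$. Each gives a composition factor $\hd\bigl(\overset{\to}{S}_Q(\up^{(j)})\bigr)=S_Q(\al^{(j)})\hconv S_Q(\be^{(j)})={\rm Im}(\rmat{\up^{(j)}})$ of $\overset{\to}{S}_Q(\up)$; these $\len_Q(\up)$ simples are mutually distinct by Theorem \ref{thm: BkMc}(d),(e), and differ both from $\hd\bigl(\overset{\to}{S}_Q(\up)\bigr)=S_Q(\al)\hconv S_Q(\be)$ (a distinct non-$[Q]$-simple pair) and from $L=S_Q(\soc_Q(\up))$ (corresponding to the $[Q]$-simple sequence $\soc_Q(\up)$). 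Finally, $\dist_Q(\up)\ge 1$ means $\up$ is not $[Q]$-simple, so by Corollary \ref{cor: simple iff simple} and Theorem \ref{thm: socle head}(iii) the head and socle of $\overset{\to}{S}_Q(\up)$ are themselves distinct, and the count delivers at least $\len_Q(\up)+2$ composition factors.
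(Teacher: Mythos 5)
Your overall structure matches the paper's: reduce to good adjacent neighbors via the chain in Definition \ref{def: good neighbor}, construct a nonzero map $\varphi_i$ at each step from the recipe behind Theorem \ref{thm: socle of Q-pair with al+be in PR} together with Proposition \ref{prop: not vanishing}, prove each $\varphi_i$ is injective, compose, and count composition factors. The construction of the nonzero map and the final counting via Theorem \ref{thm: BkMc}(d),(e) are essentially the paper's.

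The one genuine gap — and you flag it yourself — is the injectivity of each $\varphi_i$. Your plan is to show $\varphi_i|_L \ne 0$ by tracking the image of the socle copy $L$ through the chain $\overset{\to}{S}_Q(\up^{(i)}) \hookrightarrow S_Q(\al^{(i)}) \conv S_Q(\eta) \conv S_Q(\be^{(i+1)}) \twoheadrightarrow \overset{\to}{S}_Q(\up^{(i+1)})$, but as you note this compatibility is hard to pin down, and this is exactly the step you never complete. The paper bypasses the whole issue with a multiplicity-one argument: since $\up^{(i)}$ is not $[Q]$-simple, $\overset{\to}{S}_Q(\up^{(i)})$ is not simple (Corollary \ref{cor: simple iff simple}), so by Theorem \ref{thm: socle head}(iii) its simple socle $L$ appears \emph{exactly once} in its composition series. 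Now if $\varphi_i \ne 0$ had nonzero kernel, that kernel would contain $L$, so $\mathrm{Im}\,\varphi_i$ would be a nonzero quotient of $\overset{\to}{S}_Q(\up^{(i)})/L$; but $\mathrm{Im}\,\varphi_i$ is also a nonzero submodule of $\overset{\to}{S}_Q(\up^{(i+1)})$, hence contains $L = \soc\bigl(\overset{\to}{S}_Q(\up^{(i+1)})\bigr)$, forcing $L$ to occur at least twice in $\overset{\to}{S}_Q(\up^{(i)})$ — a contradiction. Replace your "re-run Proposition \ref{prop: not vanishing} on the socle" step with this observation and the proof closes.
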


\begin{proof}
If $\up$ and $\up'$ are good adjacent neighbors, we have a non-zero homomorphism $ \overset{\to}{S}_{Q}(\up') \to \overset{\to}{S}_{Q}(\up)$
by the same argument of the preceding theorem. Since their socles are isomorphic to each other and the socle appears once in their composition series,
the homomorphism should be injective. Then our first assertion follows from the definition of good neighbors.
More precisely, we have a sequences of injective homomorphisms (see Definition~\ref{def: good neighbor})
\[
\overset{\to}{S}_{Q}(\up')\hooklongrightarrow \overset{\to}{S}_Q(\up^{(1)}) \hooklongrightarrow \cdots \hooklongrightarrow \overset{\to}{S}_{Q}(\up).
\]
The second assertion follows from the first assertion and {\rm (d)} in Theorem~\ref{thm: BkMc}.
\end{proof}

\begin{corollary} \label{cor: l2}
For a pair $\up=(\al,\be)$, $\gdist_Q(\up)=1$ if and only if the module $\overset{\to}{S}_Q(\up)$ has a composition length $2$ such that
\[
[\overset{\to}{S}_Q(\up)] = [S_Q(\al) \hconv S_Q(\be)]+[S_Q(\al) \sconv S_Q(\be)].
\]
\end{corollary}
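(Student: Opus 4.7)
The plan is to combine the refined filtration from Theorem \ref{thm: [redez] works}(b) with the head/socle analysis of $\overset{\to}{S}_Q(\up)$ afforded by Theorem \ref{thm: socle head} and Theorem \ref{thm: socle of Q-pair}. The whole point of assuming $\gdist_Q(\up)=1$ is that it shrinks the set of sequences that can contribute to the filtration down to just $\up$ itself and one $[Q]$-simple competitor, after which the head/socle mechanism forces multiplicities.

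First I would pin down the sequences competing in the filtration of $\overset{\to}{S}_Q(\up)$. The hypothesis $\gdist_Q(\up)=1$ forces every sequence $\um'$ with $\um' \prec^\tb_{[Q]} \up$ and $\wt(\um')=\wt(\up)$ to be $[Q]$-simple, and by Theorem \ref{thm: socle of pair} there is a unique such $[Q]$-simple sequence, namely $\us \seteq \soc_Q(\up)$. Hence the set of competitors is exactly $\{\up, \us\}$.

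Next, Theorem \ref{thm: [redez] works}(b), together with the identification ${\rm Im}(\rmat{\us}) \simeq S_Q(\us)$ coming from Corollary \ref{cor simple if simple} (since $\us$ is $[Q]$-simple), gives
$$ [\overset{\to}{S}_Q(\up)] \;=\; [S_Q(\al) \hconv S_Q(\be)] \;+\; c \cdot [S_Q(\us)] $$
for some $c \in \Z_{\ge 0}$, where I used Theorem \ref{thm: BkMc}(b) to identify $ {\rm Im}(\rmat{\up})$ with the head $S_Q(\al) \hconv S_Q(\be)$. Now Theorem \ref{thm: socle of Q-pair} says $\soc(\overset{\to}{S}_Q(\up)) \simeq S_Q(\us)$, and Theorem \ref{thm: socle head} identifies this socle with $\hd(S_Q(\be)\conv S_Q(\al)) = S_Q(\be) \hconv S_Q(\al)$, which is legitimate since $S_Q(\al)$ and $S_Q(\be)$ are real simple by Theorem \ref{thm: BkMc}(a). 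This both produces the right-hand term of the desired identity and shows $c \ge 1$.

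To close, since $\up$ is not $[Q]$-simple, Corollary \ref{cor: simple iff simple} says $\overset{\to}{S}_Q(\up)$ is not simple, so Theorem \ref{thm: socle head}(iii) guarantees that its distinct head and socle each occur exactly once in the composition series. This upgrades $c \ge 1$ to $c = 1$, delivers composition length $2$, and matches the stated identity. The main obstacle I foresee is really just the bookkeeping of matching the upper bound from the filtration against the lower bound of one from the socle; once uniqueness of $\soc_Q(\up)$ (Theorem \ref{thm: socle of pair}) has been invoked to cut the list of competitors down to a single $[Q]$-simple sequence, the head/socle dichotomy of Theorem \ref{thm: socle head}(iii) pins down all multiplicities without further work.
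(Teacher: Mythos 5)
Your proof is correct and follows the same route as the paper: the filtration formula for $[\overset{\to}{S}_Q(\up)]$, the socle identification via Theorem \ref{thm: socle of Q-pair}, and the once-each appearance of head and socle from Theorem \ref{thm: socle head}(iii). The only small (and helpful) difference is that you invoke the refined filtration of Theorem \ref{thm: [redez] works}(b) instead of the coarser Theorem \ref{thm: BkMc}(c) cited in the paper; since $\gdist_Q$ is defined via $\prec^\tb_{[Q]}$ rather than $<^\tb_\redez$, this is in fact the cleaner choice, and your explicit appeal to the uniqueness of $\soc_Q(\up)$ (Theorem \ref{thm: socle of pair}) to cut the competitor list to a single $[Q]$-simple sequence fills in a step the paper leaves implicit.
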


\begin{proof}
Our assertion follows from {\rm (iii)} of Theorem~\ref{thm: socle head}, {\rm (c)} of Theorem~\ref{thm: BkMc},
Theorem~\ref{thm: socle of Q-pair} and the previous theorem.
\end{proof}

\begin{example} \label{ex: dist1 but not minimal 3}
Recall that for the pair $\up=(111001,123212)$ in Example~\ref{ex: dist1 but not minimal}, we have $\dist_Q(\up)=1$ and $\gdist_Q(\up)=2$.
The module $\overset{\to}{S}_Q(\up)$ has its composition length larger than or equal to $4$ by the following argument: For sequences
$$\um=(111101,122111,001001) \quad \text{ and } \quad \um'=(011001,112101,111111),$$ we already observed that
\begin{itemize}
\item[{\rm (i)}] $\us=(001001, 122101, 111111) \prec_Q^{\tb} \begin{matrix} \um \\ \um' \end{matrix}  \prec_Q^{\tb} \up$
\item[{\rm (ii)}] $\um$ and $\um'$ are incomparable with respect to $\prec_Q^{\tb}$.
\end{itemize}
Furthermore $(122111,001001)$ and $(112101,111111)$ are $[Q]$-simple pairs. Thus one can check that $\overset{\to}{S}_Q(\um)$ and $\overset{\to}{S}_Q(\um')$
have their socles as $\overset{\to}{S}_Q(\us)$ and there exist non-zero homomorphisms $\overset{\to}{S}_Q(\um) \to \overset{\to}{S}_Q(\up)$
and $\overset{\to}{S}_Q(\um') \to \overset{\to}{S}_Q(\up)$. Since the socles of $\overset{\to}{S}_Q(\um)$, $\overset{\to}{S}_Q(\um')$ and $\overset{\to}{S}_Q(\up)$
are the same as $\overset{\to}{S}_Q(\us)$, the non-zero homomorphisms are injective indeed. Thus (i) the
composition length of $\overset{\to}{S}_Q(\up)$ is larger than or equal to $4$, (ii)
$\overset{\to}{S}_Q(\um)$ and $\overset{\to}{S}_Q(\um')$ have their composition length as $2$.
\end{example}
\section{Application to Quantum affine algebras} \label{Sec: Qunatum affine}
In this section, we first prove that our results in Section~\ref{Sec: KLR} hold also for the representation theory for quantum affine algebras through the KLR-type Schur-Weyl duality functor. Then we shall prove that the denominator formulas for $U_q'(A_n^{(1)})$ and $U_q'(D_n^{(1)})$
can be read from $\GQ $ for any Dynkin quiver $Q$ of type $A_n$ and $D_n$.
As an application, we can obtain Dorey's rule for $U_q'(E_{6,7,8}^{(1)})$ and partial information
of the denominator formulas for $U_q'(E_{6,7,8}^{(1)})$, which were not known to the best knowledge of the author. In the last part of
section, we shall propose a conjecture on {\it complete} denominator formulas for $U_q'(E_{6,7,8}^{(1)})$. In this section, we follow \cite{Kas02}
for notions on quantum affine algebras.

\subsection{Quantum affine algebras and categorifications}
Let $\cmA$ be a generalized Cartan matrix of affine type. 
We choose $0 \in I\seteq \{ 0,1,\ldots,n\}$ as the leftmost vertices in the tables in~\cite[pages 54, 55]{Kac} except $A^{(2)}_{2n}$-case in which we take
the longest simple root as $\al_0$. We set $I_0 \seteq I \setminus \{ 0 \}$.
The weight
lattice $\wl$ associated to $\cmA$ is given as follows:
$$ \wl = \big( \soplus_{i \in I} \Z \Lambda_i \big) \oplus \Z \delta, $$
where $\delta \seteq \sum_{i \in I} d_i \al_i$ denotes \defn{the imaginary root}. We also denote by $c \seteq \sum_{i \in I}c_i h_i$ \defn{the center}.

Set $\mathfrak{h} = \mathbb{Q} \tens_\Z \wl^\vee$.
Then there exists a symmetric bilinear form $( \ , \ )$ on $\mathfrak{h}^*$ satisfying
$$ \langle h_i,\lambda \rangle = \dfrac{  2(\alpha_i,\lambda)}{(\alpha_i,\alpha_i)} \quad \text{ for any } i \in I \text{ and } \lambda \in \mathfrak{h}^*.$$
We normalize the bilinear form by
$$ \langle c,\lambda \rangle = (\delta,\lambda) \quad \text{ for any } \lambda \in \mathfrak{h}^*.$$

Let $\gamma$ be the smallest positive integer such that $\gamma \dfrac{(\al_i,\al_i)}{2} \in \Z$ for all $i \in I$. For each $i \in I$, set $q_i = q ^{\frac{(\al_i,\al_i)}{2}} \in \Q(q^{1/\gamma})$.
For $m,n \in \Z_{\ge 0}$ and $i \in I$, we define
$$ [n]_i= \dfrac{q_i^n-q_i^{-n}}{q_i-q_i^{-1}}, \ \  [n]_i!=\prod_{k=1}^n [k]_i, \ \  \left[ \begin{matrix} m \\ n \end{matrix} \right] = \dfrac{[m]_i!}{[m-n]_i![n]_i!}.$$

\begin{definition}
The \defn{quantum group $U_q(\g)$} associated to an affine Cartan matrix $\mathsf{A}$ 
is the algebra over $\Q(q^{1/\gamma})$ generated by $e_i, \ f_i$ $(i \in I)$ and $q^h$ $(h \in \gamma^{-1}P)$ subject to
the following relations:
\begin{enumerate}[(i)]
  \item  $q^0=1, q^{h} q^{h'}=q^{h+h'} $ for $ h,h' \in \gamma^{-1}P^{\vee},$
  \item  $q^{h}e_i q^{-h}= q^{\langle h, \alpha_i \rangle} e_i,
          \ q^{h}f_i q^{-h} = q^{-\langle h, \alpha_i \rangle }f_i$ for $h \in \gamma^{-1}P^{\vee}, i \in I$,
  \item  $e_if_j - f_je_i =  \delta_{ij} \dfrac{K_i -K^{-1}_i}{q_i- q^{-1}_i }, \ \ \text{ where } K_i=q_i^{ h_i},$
  \item  $\displaystyle \sum^{1-a_{ij}}_{k=0}
  (-1)^ke^{(1-a_{ij}-k)}_i e_j e^{(k)}_i =  \sum^{1-a_{ij}}_{k=0} (-1)^k
  f^{(1-a_{ij}-k)}_i f_jf^{(k)}_i=0 \quad \text{ for }  i \ne j, $
\end{enumerate}
where $e_i^{(k)}=e_i^k/[k]_i!$ and $f_i^{(k)}=f_i^k/[k]_i!$.
\end{definition}

We denote by $U_q^+(\g)$ (resp. $U_q^-(\g)$) by the subalgebra of $U_q(\g)$ generated by $e_i$ (resp. $f_i$) $(i \in I)$.
We also denote by $U_\A^+(\g)$ (resp. $U_\A^-(\mathsf{g})$) by the $\A\seteq\Z[q,q^{-1}]$-subalgebra of $U_q(\g)$ generated by $e^{(n)}_i$ (resp. $f^{(n)}_i$) for all $i \in I$ and $n\in \Z$.

We also denote by
\begin{itemize}
\item $\g$ the affine Kac-Moody algebra associated to $\mathsf{A}$,
\item $\g_0$ the subalgebra of $\g$ generated by $e_i,f_i$ and $h_i$ for $i \in I_0$ and
\item $U_q'(\g)$, called the \defn{quantum affine algebra}, the subalgebra of $U_q(\g)$ generated by $e_i$, $f_i$ and $K_i^{\pm 1}$ for all $i \in I$.
\end{itemize}
In this paper, we mainly deal with the quantum affine algebras $U_q'(\g)$.

For the rest of this paper, we take the field $\ko$,
the algebraic closure of $\C(q)$ in $\cup_{m >0} \C((q^{1/m}))$, as the base field of $U_q'(\g)$-modules.


Set $\wl_\cl \seteq \wl / \Z\delta$.
%
%
We denote by $\mathcal{C}_\g$ the category of finite-dimensional integrable $U_q'(\g)$-modules.
A simple module $M$ in $\mathcal{C}_\g$ contains a non-zero vector $u$ of weight $\lambda\in \wl_\cl$ such that
\begin{itemize}
\item $\langle c,\lambda \rangle =0$ and $\langle h_i,\lambda \rangle \ge 0$ for all $i \in I_0$,
\item all the weights of $M$ are contained in $\lambda - \sum_{i \in I_0} \Z_{\ge 0} \cl(\alpha_i)$.
\end{itemize}
Such a $\lambda$ is unique and $u$ is unique up to a constant multiple.
We call $\lambda$ the \defn{dominant extremal weight} of $M$ and $u$ the \defn{dominant extremal weight vector} of $M$.

For $M \in \mathcal{C}_\g$ and $x$,
let $M_x$ be the $U_q'(\g)$-module with the actions of $e_i$, $f_i$, $K_i$ replaced with $x^{\delta_{i0}}e_i$, $x^{-\delta_{i0}}f_i$, $K_i$, respectively.
Then $M_x$ is contained in $\mathcal{C}_\g$ when $x$ in $\ko$ and isomorphic to $\ko[x,x^{-1}] \otimes M$ when $x$ is an indeterminate.

For each $i \in I_0$, we set
\begin{align} \label{eq: funda}
\varpi_i \seteq {\rm gcd}(c_0,c_i)^{-1}\cl(c_0\Lambda_i-c_i\Lambda_0) \in P^0_\cl
\end{align}
which is called a \defn{level $0$ fundamental weight}. Then $\{ \varpi_i \}_{i \in I_0}$
forms a basis of $$P_\cl^0\seteq \{ \lambda \in P_\cl \ | \  \langle c, \lambda \rangle =0 \} \subset P_\cl.$$

Then for each $i \in I_0$, there exists a unique simple $U_q'(\g)$-module $V(\varpi_i) \in \mathcal{C}_\g$ whose dominant extremal weight is $\varpi_i$ and which satisfies the following properties:
\begin{eqnarray} &&
\parbox{90ex}{
We can take $u_\mu \in u_\mu$ for each $\mu \in W\varpi_i \subset P_{{\rm cl}}$ such that
\begin{enumerate}
\item[{\rm (a)}] for $j \in I$ and $\mu \in W\varpi_i$ such that $\langle h_i,\mu \rangle \ge 0$, $e_j u_\mu =0$ and $f_j^{(\langle h_i,\mu \rangle)}u_\mu=u_{s_j\mu}$,
\item[{\rm (b)}] for $j \in I$ and $\mu \in W\varpi_i$ such that $\langle h_i,\mu \rangle < 0$, $f_j u_\mu =0$ and $e_j^{(-\langle h_i,\mu \rangle)}u_\mu=u_{s_j\mu}$,
\item[{\rm (c)}] $V(\varpi)$ is generated by $u_{\varpi_i}$,
\end{enumerate}
}\label{eq: strcuture funda}
\end{eqnarray}
where $W$ denotes the Weyl group generated by $s_i \in {\rm Aut}(P)$ $(i \in I)$
(see~\cite[\S 1.3]{AK} for more detail). We call the $V(\varpi_i)$ \defn{fundamental representation}.

Since $\mathcal{C}_\g$ is \defn{rigid},
there exist the left dual $V(\varpi_i)^*$ and the right dual ${}^*V(\varpi_i)$ of $V(\varpi_i)$ with the following
$U_q'(\g)$-homomorphisms
\begin{equation} \label{eq: dual affine}
 V(\varpi_i)^* \otimes V(\varpi_i)  \overset{{\rm tr}}{\longrightarrow} \ko \quad \text{ and } \quad
V(\varpi_i) \otimes {}^*V(\varpi_i)  \overset{{\rm
tr}}{\longrightarrow} \ko
\end{equation}
where
\begin{equation} \label{eq: p star}
V(\varpi_i)^*\seteq  V(\varpi_{i^*})_{ p^{-1}}, \
{}^*V(\varpi_i)\seteq  V(\varpi_{i^*})_{p} \ \  \text{ and } \ \
p \seteq (-1)^{\langle \rho^\vee ,\delta
\rangle}q^{(\rho,\delta)}.
\end{equation}
Here $\rho$ is defined by  $\langle h_i,\rho \rangle=1$,
$\rho^\vee$ is defined by $\langle \rho^\vee,\alpha_i  \rangle=1$ for all $i \in I$ and
$^*$ is the involution on $I_0$ in~\eqref{eq: * involution}.

We say that a $U_q'(\g)$-module $M$ is \defn{good} if it has a \defn{bar involution}, a crystal basis with \defn{simple crystal graph},
and a \defn{global basis} (see~\cite{Kas02} for precise definitions).
For instance, $V(\varpi_i)_x$ is a good module for every $i \in I$ and $x \in \ko^\times$. Note that
every good module is a simple $U_q'(\g)$-module and \defn{real}; i.e $M \otimes M$ is simple again.

\begin{definition}~\cite{HL11,KKKOIV} \label{def: Q module category}
Let $Q$ be a Dynkin quiver of type $A_n$, $D_n$ or $E_{6,7,8}$, and
$U_q'(\g)$ be the quantum affine algebra of type $A^{(t)}_{n}$
, $D^{(t)}_{n}$ or $E_{6,7,8}^{(1)}$, respectively ($t=1,2$). For any positive root $\beta \in \PR$ associated to $Q$, we define the $U_q'(\g)$-module
$V_Q^{(t)}(\beta)$ in $\mathcal{C}_\g$ as follows: For $\phi_Q^{-1}(\beta,0)=(i,p)$, define
\begin{align} \label{V_Q(beta)}
V_Q^{(t)}(\beta) \seteq \begin{cases}
V(\varpi_i)_{(-q)^{p}} & \text{ if } t=1, \\
 V(\varpi_{i^\star})_{((-q)^{p})^\star} & \text{ if } t=2,
 \end{cases}
\end{align}
where $i^\star$ and $((-q)^{p})^\star$ are given as follows:
\begin{align*}
(i^\star,((-q)^{p})^\star) \seteq
\begin{cases}
(i,(-q)^{p}) & \text{ if } \g=A^{(2)}_n \text{ and } 1 \le i \le \left\lfloor \dfrac{n+1}{2} \right\rfloor, \\
(n+1-i,(-1)^n(-q)^{p}) & \text{ if } \g=A^{(2)}_n \text{ and } \left\lfloor \dfrac{n+1}{2} \right\rfloor \le i \le n , \\
(i,(\sqrt{-1})^{n-i}(-q)^{p}) & \text{ if } \g=D^{(2)}_n \text{ and } 1 \le i \le n-2, \\
(n-1,(-1)^i(-q)^{p})& \text{ if } \g=D^{(2)}_n \text{ and } n-1 \le i \le n.
\end{cases}
\end{align*}

We define the smallest abelian full subcategory
$\mathcal{C}^{(t)}_Q$ inside $\mathcal{C}_\g$ such that
\begin{itemize}
\item[({\rm a})] it is stable by taking 
subquotient, tensor product and extension,
\item[({\rm b})] it contains $V_Q^{(t)}(\beta)$ for all $\beta \in \PR$.
\end{itemize}
\end{definition}

\begin{theorem} \label{thm: KOU}~\cite{HL11,KKKOIV} $($see also~\cite[Theorem 4.2.1]{KKK13B}$)$ Keeping the notations in Definition~\ref{def: Q module category},
there exist surjective homomorphism isomorphisms
\begin{align}
\Uppsi^{(t)}: U^-_{\A}(\g_0)^{\vee} \overset{\simeq}{\Lto} \left[\mathcal{C}^{(t)}_Q \right]
\label{eq:KOHLU}
\end{align}
whose kernels coincide with each other as $(q-1)U^-_{\A}(\g_0)^{\vee}$ . Moreover,
\begin{enumerate}
\item[{\rm (1)}] They send the dual canonical/upper global basis $\mathbf{B}^{{\rm up}}$ of $U^-_{\A}(\g_0)^{\vee}$ to
the set of the isomorphism classes in $\mathcal{C}^{(t)}_Q$ bijectively.
\item[{\rm (2)}] They send the dual root vector $\mathbf{F}_Q^{{\rm up}}(\beta)$ $(\beta \in \PR)$ of the PBW basis
$P_Q$ associated to $[Q]$ to $[V_Q^{(t)}(\beta)]$ of
$V_Q^{(t)}(\beta)$ in $\mathcal{C}^{(t)}_Q:$ 
\begin{align} \label{eq: PBW 2}
\mathbf{F}_Q^{{\rm up}}(\beta) \longmapsto [V_Q^{(t)}(\beta)].
\end{align}
\end{enumerate}
\end{theorem}

\begin{remark}
The above theorem was proved in~\cite{HL11} for untwisted types. Then the author and his collaborators proved the above theorem when $\g$
is of a twisted type in~\cite{KKKOIV}.
\end{remark}


\subsection{Denominators and KLR-type Schur-Weyl duality}
For a good module $M$ and $N$, there exist a $U_q'(\g)$-homomorphism
$$ \Rnorm_{M,N}: M_{z_M} \otimes M_{z_N} \to \ko(z_M,z_N)  \otimes_{\ko[z_M^{\pm 1},z_N^{\pm 1}]} N_{z_N} \otimes M_{z_M} $$
such that
$$ \Rnorm_{M,N} \circ z_M = z_M \circ \Rnorm_{M,N}, \ \Rnorm_{M,N} \circ z_N = z_N \circ \Rnorm_{M,N} \text{ and }
\Rnorm_{M,N} (u_M \otimes u_N) = u_N \otimes u_M,$$
where $u_M$ (resp. $u_N$) is the dominant extremal weight vector of $M$ (resp, $N$).

The \defn{denominator} $d_{M,N}$ of $\Rnorm_{M,N}$ is the unique non-zero monic polynomial $d(u) \in \ko[u]$ of smallest degree such that
\begin{equation}\label{definition: dm,n}
d_{M,N}(z_N/z_M)\Rnorm_{M,N}(M_{z_M} \otimes N_{z_N}) \subset (N_{z_N} \otimes M_{z_M}).
\end{equation}

Then,
\begin{equation*}
\Rren_{M_{z_M},N_{z_N}}\seteq d_{M,N}(z_N/z_M)\Rnorm_{M_{z_M},N_{z_N}}
\colon M_{z_M} \otimes N_{z_N} \Lto
N_{z_N} \otimes M_{z_M}
\end{equation*}
is called the \defn{renormalized $R$-matrix}, and
$$\rmat{M,N}\seteq \Rren\vert_{z_M=z_N=1}: M \otimes N \Lto N \otimes M$$
is called the \defn{R-matrix}.

\begin{theorem}~\cite{AK,Chari,Kas02} $($see also~\cite[Theorem 2.2.1]{KKK13A}$)$  \label{Thm: basic properties} 
\begin{enumerate}
\item[{\rm (1)}] For good modules $M,N$, the zeroes of $d_{M,N}(z)$ belong to
$\cqm$ for some $m\in\Z_{>0}$.
\item[{\rm (2)}] Let $M_k$ be a good module
with a dominant extremal vector $u_k$ of weight $\lambda_k$, and
$a_k\in\ko^\times$ for $k=1,\ldots, t$.
Assume that $a_j/a_i$ is not a zero of $d_{M_i, M_j}(z) $ for any
$1\le i<j\le t$. Then the following statements hold.
\begin{enumerate}
\item[{\rm (i)}]
 $(M_1)_{a_1}\otimes\cdots\otimes (M_t)_{a_t}$ is generated by
$u_1\otimes\cdots \otimes u_t$.
\item[{\rm (ii)}] The head of
$(M_1)_{a_1}\otimes\cdots\otimes (M_t)_{a_t}$ is simple.
\item[{\rm (iii)}] Any non-zero
submodule of $(M_t)_{a_t}\otimes\cdots\otimes (M_1)_{a_1}$ contains the vector $u_t\otimes\cdots\otimes u_1$.
\item[{\rm (iv)}] The socle of $(M_t)_{a_t}\otimes\cdots\otimes (M_1)_{a_1}$
is simple.
\item[{\rm (v)}]
 Let
$$\rmat{}: (M_1)_{a_1}\otimes\cdots\otimes (M_t)_{a_t}
\to (M_t)_{a_t}\otimes\cdots\otimes (M_1)_{a_1}$$
 be  the specialization of $R^{{\rm norm}}_{M_1,\ldots, M_t}$
at $z_k=a_k$.  Then the image of $\rmat{}$ is simple and
it coincides with the head of
$(M_1)_{a_1}\otimes\cdots\otimes (M_t)_{a_t}$
and also with the socle of $(M_t)_{a_t}\otimes\cdots\otimes (M_1)_{a_1}$.
\end{enumerate}
\item[{\rm (3)}]
For a simple integrable $U_q'(\g)$-module M, there exists
a finite sequence $$\big((i_1,a_1),\ldots, (i_t,a_t)\big)$$
in $I_0\times \ko^\times$
such that
$d_{V(\varpi_{i_k}),V(\varpi_{ i_{k'} })}(a_{k'}/a_k) \seteq d_{i_k,i_{k'}}(a_{k'}/a_k)\not=0$ for $1\le k<k'\le t$ and
$M$ is isomorphic to the head of
$V(\varpi_{i_1})_{a_1}\otimes\cdots\otimes V(\varpi_{i_t})_{a_t}$.
Moreover, such a sequence $\big((i_1,a_1),\ldots, (i_t,a_t)\big)$
is unique up to permutation.
\item[{\rm (4)}] $d_{k,l}(z)=d_{l,k}(z)=d_{k^*,l^*}(z)=d_{l^*,k^*}(z)$ for $k,l \in I_0$.
\end{enumerate}
\end{theorem}

From the above theorem, one can notice that the denominator formulas between fundamental representations
provides crucial information of the representation theory on $\mathcal{C}_\g$. The denominator formulas between fundamental representations over classical quantum affine algebras were calculated in~\cite{AK,DO94,KKK13B,Oh14R}.

\medskip

Now we shall shortly review the KLR-type Schur-Weyl duality which was constructed in~\cite{KKK13A} and does depend only on
the denominator formulas.

\medskip

Let $\mathcal{S}$ be an index set. A \defn{Schur-Weyl datum} $\Xi$ is a quintuple 
\[
\Xi \seteq (U_q'(\g),J,X,s,\{V_s\}_{s \in \mathcal{S}})
\] consisting of
\begin{enumerate}
\item[{\rm (a)}] a quantum affine algebra $U_q'(\g)$,
\item[{\rm (b)}] an index set $J$,
\item[{\rm (c)}] two maps $X:J \to \ko^\times$ and $s: J \to \mathcal{S}$,
\item[{\rm (d)}] a family of good $U'_q(\g)$-modules $\{V_s\}$ indexed by $\mathcal{S}$.
\end{enumerate}

For a given $\Xi$, we define a quiver $\Gamma^{\Xi}=(\Gamma^{\Xi}_0,\Gamma^{\Xi}_1)$ in the following way:
\begin{enumerate}
\item $\Gamma^{\Xi}_0= J$.
\item For $i,j \in J$, we assign $\mathtt{d}_{ij}$ many arrows from $i$ to $j$, where $\mathtt{d}_{ij}$ is the order of the zero of
$d_{V_{s(i)},V_{s(j)}}(z_2/z_1)$ at $X(j)/X(i)$.
\end{enumerate}
We call $\Gamma^{\Xi}$ the \defn{Schur-Weyl quiver associated to $\Xi$}.

For a Schur-Weyl quiver $\Gamma^{\Xi}$, we have
\begin{itemize}
\item a symmetric Cartan matrix $\cmA^{\Xi}=(a^{\Xi}_{ij})_{i,j \in J}$ by
\begin{align} \label{eq: sym Cartan mtx}
a^{\Xi}_{ij} =  2 \quad \text{ if } i =j, \quad a^{\Xi}_{ij} = -\mathtt{d}_{ij}-\mathtt{d}_{ji} \quad \text{ if } i \ne j,
\end{align}
\item the set of polynomials $(\mathcal{Q}^{\Xi}_{i,j}(u,v))_{i,j \in J}$ such that
$\mathcal{Q}^{\Xi}_{i,j}(u,v) = (u-v)^{\mathtt{d}_{ij}}(v-u)^{\mathtt{d}_{ji}}$  if $i \ne j$.
\end{itemize}

We denote by $R^{\Xi}$ the symmetric KLR-algebra associated with $(\mathcal{Q}^{\Xi}_{i,j}(u,v))$.

\begin{theorem}~\cite{KKK13A} \label{thm:gQASW duality} For a given ${\Xi}$, there exists a functor
$$\F : \Rep(R^{\Xi}) \rightarrow \mathcal{C}_\g.$$
Moreover, $\F$ satisfies the following properties:
\begin{enumerate}
\item[{\rm (a)}] $\F$ is a tensor functor; that is, for any $M_1, M_2 \in \Rep(R^{\Xi})$, we have
$$\F(R^{\Xi}(0)) \simeq \ko \quad \text{ and } \quad \F(M_1 \circ M_2) \simeq \F(M_1) \tens \F(M_2).$$
\item[{\rm (b)}] If the Schur-Weyl quiver $\Gamma^{\Xi}$ is a Dynkin quiver of type $A_n$, $D_n$ or $E_{6,7,8}$, then $\F$ is exact.
\end{enumerate}
\end{theorem}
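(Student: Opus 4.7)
The plan is to realize $R^{\mathsf{SWD}}(n)$ as an algebra of operators on suitable tensor products of affinized good modules, turning such a tensor product into a $(U_q'(\g), R^{\mathsf{SWD}}(n))$-bimodule, and then to define $\F$ by tensoring on the right. For each $s \in \mathcal{S}$, let $V_s^{\mathrm{aff}} = \ko[z_s^{\pm 1}] \otimes V_s$ be the affinization of $V_s$ with spectral parameter $z_s$. For $\nu = (\nu_1, \ldots, \nu_n) \in J^n$, set $\widehat{V}(\nu) \seteq V_{s(\nu_1)}^{\mathrm{aff}} \otimes \cdots \otimes V_{s(\nu_n)}^{\mathrm{aff}}$, and assemble $\widehat{V}_n \seteq \bigoplus_\nu \widehat{V}(\nu)$. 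The functor is defined by $\F(M) \seteq M \otimes_{R^{\mathsf{SWD}}(n)} \widehat{V}_n$ once a compatible right action of $R^{\mathsf{SWD}}(n)$ on $\widehat{V}_n$ is constructed.

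The main step is to prescribe and verify the right $R^{\mathsf{SWD}}(n)$-action on $\widehat{V}_n$. I would let $e(\nu)$ act as the projection onto $\widehat{V}(\nu)$; the generator $x_k$ act on $\widehat{V}(\nu)$ as multiplication by $z_{\nu_k} - X(\nu_k)$; and the braiding generator $\tau_k$ act via a renormalization of the R-matrix $\Rnorm_{V_{s(\nu_k)}, V_{s(\nu_{k+1})}}$ applied to the $k$-th and $(k+1)$-st factors, multiplied by a correcting rational factor chosen to cancel the pole of $\Rnorm$ along the diagonal $z_{\nu_k} = z_{\nu_{k+1}}$. Under this prescription the quadratic relation $\tau_k^2 e(\nu) = \mathcal{Q}^{\mathsf{SWD}}_{\nu_k,\nu_{k+1}}(x_k, x_{k+1})\, e(\nu)$ reduces to the identity expressing $\Rnorm_{M,N}\circ\Rnorm_{N,M}$ in terms of the denominators $d_{V_{s(i)},V_{s(j)}}(z)$, which by the definition \eqref{eq: sym Cartan mtx} of $\cmA^{\mathsf{SWD}}$ and the choice of $\mathcal{Q}^{\mathsf{SWD}}_{i,j}$ evaluates to the required polynomial in $x_k, x_{k+1}$. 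The braid relation for the $\tau_k$ follows from the Yang-Baxter equation for normalized R-matrices (which one first proves up to scalar and then upgrades using the dominant weight vector normalization), while the mixed commutation relations between $x$'s and $\tau$'s follow from a direct computation using that conjugation by $\tau_k$ exchanges $z_{\nu_k}$ and $z_{\nu_{k+1}}$ up to the regularizing factor.

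Property (a) is then essentially built into the construction: $\F(R^{\mathsf{SWD}}(0)) \simeq \ko$ is immediate, and the tensor-convolution isomorphism $\F(M_1 \conv M_2) \simeq \F(M_1) \tens \F(M_2)$ follows from the factorization $\widehat{V}_{n+m} \simeq \widehat{V}_n \otimes \widehat{V}_m$ as $R^{\mathsf{SWD}}(n) \otimes R^{\mathsf{SWD}}(m)$-modules, combined with the associativity of tensor product over KLR subalgebras. The main anticipated obstacle is property (b): exactness of $\F$ in the Dynkin ADE case is equivalent to flatness of $\widehat{V}_n$ as a right $R^{\mathsf{SWD}}(n)$-module. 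My approach would be to exhibit, when $\Gamma^{\mathsf{SWD}}$ is of finite type $A_n$, $D_n$, or $E_n$, an explicit $R^{\mathsf{SWD}}(n)$-linear filtration of $\widehat{V}_n$ whose associated graded is free over $R^{\mathsf{SWD}}(n)$. Existence of such a filtration uses PBW-type bases of $R^{\mathsf{SWD}}(n)$ together with the fact that in the ADE case the denominators $d_{ij}(z)$ have only simple zeros, so that all the renormalizations of $\Rnorm$ used to define $\tau_k$ are regular and the dimensions on both sides match via a direct character calculation. Establishing this flatness rigorously — equivalently, showing that no unexpected relations are imposed on $\widehat{V}_n$ by the $R^{\mathsf{SWD}}(n)$-action — is the technical heart of the proof and is where essentially all of the work beyond the relation-checking in the previous paragraph resides.
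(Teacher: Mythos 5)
This theorem is cited from \cite{KKK13A} and not proved in the present paper, so the comparison is against Kang--Kashiwara--Kim's own construction. Your outline is the right one and matches theirs: build a $(U_q'(\g), R^{\mathsf{SWD}})$-bimodule out of affinizations of the good modules $V_s$, let the $x_k$ act by shifted spectral parameters and the $\tau_k$ by renormalized $R$-matrices, check the KLR relations via the unitarity of $\Rnorm$ and the Yang--Baxter equation, and define $\F$ by a relative tensor product over $R^{\mathsf{SWD}}$.

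There are, however, two gaps. The technical one: $\widehat{V}(\nu)$ cannot be built over $\ko[z_s^{\pm 1}]$. The renormalization factor by which you multiply $\Rnorm$ to kill its pole along $z_{\nu_{k+1}}/z_{\nu_k}=X(\nu_{k+1})/X(\nu_k)$, and several other rational functions arising in the verification of the quadratic and braid relations, need to be invertible, and they are invertible only after completing the coefficient ring at the point $z_{\nu_k}=X(\nu_k)$ for all $k$; this is exactly why \cite{KKK13A} work with a completed tensor product rather than a Laurent polynomial one. (A smaller slip: $\widehat{V}_n$ carries a \emph{right} $R^{\mathsf{SWD}}(n)$-action while $M$ is a left module, so the functor must be $\F(M)=\widehat{V}_n\tens_{R^{\mathsf{SWD}}(n)}M$, not $M\tens_{R^{\mathsf{SWD}}(n)}\widehat{V}_n$.) The substantive gap is that you do not prove~(b). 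You correctly reduce exactness to flatness of $\widehat{V}_n$ over $R^{\mathsf{SWD}}(n)$, and you correctly flag this as the hard part, but ``exhibit an explicit $R^{\mathsf{SWD}}(n)$-linear filtration with free graded pieces'' is the desired conclusion, not an argument. The mechanism in \cite{KKK13A} is to prove $\widehat{V}_n$ is projective over $R^{\mathsf{SWD}}(n)$ whenever $\Gamma^{\mathsf{SWD}}$ has no loops and no multiple arrows --- hence in particular for Dynkin $A$, $D$, $E$ --- via a rank computation for $\widehat{V}_n e(\nu)$ over the completed polynomial subalgebra together with structural facts about finite-type KLR algebras. Your observation that ``the $d_{ij}$ have only simple zeros'' is a correct restatement of the no-multiple-arrows condition $\mathtt{d}_{ij}+\mathtt{d}_{ji}\le 1$, but it does not supply the missing argument, and the Dynkin hypothesis is used for more than a degree match.
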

We call the functor $\F$ the \defn{KLR-type Schur-Weyl duality functor}.


\begin{theorem}~\cite{KKK13B,KKKOIV} \label{thm:gQASW duality 2}
Let $U_q'(\g)$ be a quantum affine algebra of type $A^{(t)}_{n}$  $($resp. $D^{(t)}_{n})$
and $Q$ be a Dynkin quiver of type $A_n$  $($resp. $D_n)$ for $t=1,2$.
Take $J$ and $\mathcal{S}$ as the set of simple roots $\Pi$ associated to $Q$.
Set $J$ as the set of simple roots $\Pi$ associated to $Q$.
We define two maps $$ s: \Pi \to \{ V(\varpi_i) \ | \ i \in I_0 \} \quad \text{ and } \quad X: \Pi \to  \ko^\times $$ as follows $\colon$
For $\al \in \Pi$ with $\phi_Q^{-1}(\al,0)=(i,p)$, we define
$$ s(\al)=\begin{cases} V(\varpi_i) & \text{ if } \g=A^{(1)}_{n} \text{ or } D^{(1)}_{n}, \\
V(\varpi_{i^\star}) & \text{ otherwise,} \end{cases} \ X(\al)= \begin{cases} (-q)^p  & \text{ if } \g=A^{(1)}_{n} \text{ or } D^{(1)}_{n}, \\
((-q)^p)^\star & \text{ otherwise,} \end{cases}$$
which are induced by the maps $\phi_Q$ and $\star$.
\begin{itemize}
\item[({\rm a})] The quiver $\Gamma^{\Xi}$ is isomorphic to $Q^{{\rm rev}}$. Hence the functor
$$\F^{(t)}_Q: \Rep(R^{\Xi}) \rightarrow \mathcal{C}^{(t)}_Q \quad (t =1,2) \text{ in {\rm Theorem~\ref{thm:gQASW duality}} is exact.}$$
\item[({\rm b})] The functor $\F^{(t)}_Q$ sends simples to simples, bijectively. In particular, $\F^{(t)}_Q$ sends $S_Q(\beta)$ to $V^{(t)}_Q(\beta)$.
\item[({\rm c})] The functors $\F^{(1)}_Q$ and $\F^{(2)}_Q$ induce the ring isomorphisms between $[\mathcal{C}^{(1)}_Q]$ and $[\mathcal{C}^{(2)}_Q]$.
Moreover, the induced functor between $\mathcal{C}^{(1)}_Q$ and $\mathcal{C}^{(2)}_Q$ preserves dimensions and sends simples to simples, bijectively.
\end{itemize}
\end{theorem}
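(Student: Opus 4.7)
The plan is to verify the three parts sequentially, ultimately reducing everything to the general framework of Theorem~\ref{thm:gQASW duality} together with the categorifications Theorem~\ref{Thm:categorification} and Theorem~\ref{thm:categorification 2}. For part (a), I would first invoke the explicit denominator formulas $d_{k,\ell}(z)$ between fundamental representations for $U_q'(\g)$ of the relevant types (available in the literature such as \cite{AK, DO94, KKK13B, Oh14R}). Given two simple roots $\alpha,\beta \in \Pi$ with $\phi_Q^{-1}(\alpha,0)=(i,p)$ and $\phi_Q^{-1}(\beta,0)=(j,q)$, the task is to compute the order of the zero of $d_{V_{s(\alpha)},V_{s(\beta)}}(z_2/z_1)$ at $z_2/z_1 = X(\beta)/X(\alpha)$ and show that this order equals the number of arrows from $\beta$ to $\alpha$ in $Q$. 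The verification uses the characterization \eqref{eq: cha AR} of $\phi_Q^{-1}(\PR,0)$ together with the fact that the maps $s$, $X$ (and the twist $\star$ in the twisted cases) are dictated by $\phi_Q$. Consequently $\Gamma^{\mathsf{SWD}} \simeq Q^{\rm rev}$, and the exactness assertion is immediate from Theorem~\ref{thm:gQASW duality}(b) applied to this Dynkin Schur-Weyl quiver.

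For part (b), the identification in (a) guarantees that $R^{\mathsf{SWD}}$ coincides with the symmetric KLR algebra attached to the simply-laced Cartan matrix of the finite type of $Q$. The strategy is then to prove by induction along the convex total order $<_{\redez}$ (for any $\redez \in [Q]$) that $\F^{(i)}_Q(S_Q(\beta)) \simeq V_Q^{(i)}(\beta)$ for every $\beta \in \PR$. The base case concerns the images of the PBW-generators corresponding to vertices of the AR-quiver, which by design are $V(\varpi_j)_x$ (with appropriate $j,x$) and coincide with $V_Q^{(i)}(\alpha_j)$. The inductive step uses that $\F^{(i)}_Q$ is a tensor functor (Theorem~\ref{thm:gQASW duality}(a)), together with Theorem~\ref{thm: BkMc}(b),(d) expressing $S_Q(\beta)$ as the head of an ordered convolution product of PBW-generators, and the parallel description of $V_Q^{(i)}(\beta)$ as the head of a tensor product of fundamentals via Theorem~\ref{Thm: basic properties}(3). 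Bijectivity on the set of simples is then read off from the parametrization on both sides: on the KLR side via Theorem~\ref{thm: BkMc}(e), and on the quantum affine side via Theorem~\ref{Thm: basic properties}(3).

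Part (c) is a formal consequence of (a) and (b) together with Theorem~\ref{Thm:categorification}, Theorem~\ref{thm:categorification 2}, Theorem~\ref{thm: HLU} and Theorem~\ref{thm: KOU}: the ring isomorphisms of \eqref{eq:KOHLU} factor through $[\Rep(R^{\mathsf{SWD}})]$, and the fact that both $\F^{(1)}_Q$ and $\F^{(2)}_Q$ send simples to simples bijectively induces the claimed bijection between the simples of $\mathcal{C}^{(1)}_Q$ and $\mathcal{C}^{(2)}_Q$ preserving dimensions. The main technical obstacle is the case-by-case verification of the denominators in part (a), especially for the twisted types $A_{2n-1}^{(2)}$, $A_{2n}^{(2)}$ and $D_{n+1}^{(2)}$, where the twist $\star$ must be reconciled carefully with the AR-quiver combinatorics; this computation is essentially the core technical content of \cite{KKK13B} and \cite{KKKOIV}, and once it is in hand the remaining inductive and formal arguments go through uniformly.
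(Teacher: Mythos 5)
This statement is a cited theorem imported from \cite{KKK13B} and \cite{KKKOIV}; the paper you are reading does not supply a proof, so there is no internal argument to compare against. That said, your reconstruction does track the shape of the argument in those references: part (a) is indeed a case-by-case verification that the denominator formulas place zeros exactly where the AR-quiver combinatorics (i.e., $\phi_Q$ and, in the twisted cases, $\star$) predict, and part (c) is formal once (a) and (b) are in hand.

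The weak point is your treatment of part (b). Saying that bijectivity of $\F^{(i)}_Q$ on simples "is read off from the parametrization on both sides" undersells what must be shown. The induction along $<_{\redez}$ and the tensor-functor property let you identify $\F^{(i)}_Q(S_Q(\beta))$ with $V^{(i)}_Q(\beta)$ for each positive root $\beta$ — here you implicitly need the Dorey-type surjections on both sides and the fact that $\F^{(i)}_Q$ sends nonzero R-matrices to nonzero R-matrices, which is a structural property of the Schur-Weyl functor (it is $R$-matrix compatible by construction) and not a consequence of tensor-exactness alone. To pass from the single-root case to arbitrary simples, you must argue that $\F^{(i)}_Q$ takes the simple head of $\Stom$ to the simple head of the corresponding ordered tensor product of $V^{(i)}_Q(\beta_k)^{\otimes m_k}$, and that distinct sequences $\um$ yield nonisomorphic images. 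This requires comparing the parametrization by $\um$ with the parametrization by Drinfeld polynomials (Theorem~\ref{Thm: basic properties}(3)) and verifying that $\F^{(i)}_Q$ does not collapse distinct simples; none of this is automatic from Theorem~\ref{thm: BkMc}(e). The references carry out exactly this comparison, and any complete write-up of (b) needs to address it explicitly rather than wave at "parametrization."
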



Now we shall focus on $U'_q(E_{6,7,8}^{(1)})$ for next two propositions, which are also applicable for $U'_q(A_n^{(t)})$ and $U'_q(D_n^{(t)})$ $(t=1,2)$:



\begin{proposition} \label{eq: Dorey's step1}
For a fixed $\redez \in [Q]$ and a $[Q]$-minimal pair $(\al,\be)$ of a simple sequence $\us$, we have an injective $U_q'(E_{6,7,8}^{(1)})$-homomorphism$:$
$$  V^{(1)}_Q(\be^\redez_1)^{\tens s_1} \tens \cdots \tens
V^{(1)}_Q(\be^\redez_{\N})^{\tens s_{\N}} \hooklongrightarrow V^{(1)}_Q(\al) \otimes V^{(1)}_Q(\beta).$$
\end{proposition}

\begin{proof}
Since the composition length of
$S_Q(\al) \conv S_Q(\beta)$ is $2$, we have
$$ [S_Q(\al) \conv S_Q(\beta)]= q^x[S_Q(\al) \hconv S_Q(\beta)] + q^y[\overset{\Lto}{S_Q}(\us) ] \text{ for some } x,y \in \Z.$$
Thus, under the isomorphism $\Uppsi^{(1)}$ in~\eqref{eq:KLRU}, we have
\[
\FQup(\al)\FQup(\be)= q^{x'} \mathbf{b} + q^{y'} \prod_{i=1}^\N \FQup(\be^\redez_i)^{\us_i} \quad \text{ for some } x',y' \in \Z  \text{ and } b \in \mathbf{B}^{{\rm up}}.
\]
Note that
\begin{itemize}
\item[{\rm (i)}] $\prod_{i=1}^\N \FQup(\be^\redez_i)^{\us_i}$ is contained in $\mathbf{B}^{{\rm up}}_\Z \seteq\bigsqcup_{n\in\Z}q^n\mathbf{B}^{{\rm up}}$,
\item[{\rm (ii)}] $\FQup(\al)\FQup(\be)$ and $\prod_{i=1}^\N \FQup(\be^\redez_i)^{\us_i}$
correspond to $V^{(1)}_Q(\al) \otimes V^{(1)}_Q(\beta)$ and $\tens_{i=1}^{\N}  V^{(1)}_Q(\be^\redez_i)^{\tens \us_i}$ under the surjection $\Uppsi^{(1)}$ in~\eqref{eq:KOHLU}, respectively.
\end{itemize}
Thus, we can conclude that
\begin{itemize}
\item $V^{(1)}_Q(\al) \otimes V^{(1)}_Q(\beta)$ has composition length $2$,
\item $\tens_{i=1}^{\N}  V^{(1)}_Q(\be^\redez_i)^{\tens \us_i}$ is the simple socle of $V^{(1)}_Q(\al) \otimes V^{(1)}_Q(\beta)$, by the quantum affine algebra version of Theorem~\ref{thm: socle head}
(see also the argument in the proof of \cite[Theorem 4.3.4]{KKK13B}). \qedhere
\end{itemize}
\end{proof}

\begin{theorem} \label{thm: Dorey's step2}
For any pair $\up=(\al,\be)$ and a Dynkin quiver $Q$ of type $E_{6,7,8}$, we have a $U_q'(E_{6,7,8}^{(1)})$-module isomorphism:
$$ \soc\left( V^{(1)}_{Q}(\al) \otimes V^{(1)}_{Q}(\be) \right)\simeq V^{(1)}_Q(\soc_Q(\up)).$$
In particular, if $\al+\be=\ga \in \PR$, we have an injective  $U_q'(E_{6,7,8}^{(1)})$-module homomorphism:
$$ V^{(1)}_{Q}(\ga) \hooklongrightarrow  V^{(1)}_{Q}(\al) \otimes V^{(1)}_{Q}(\be).$$
\end{theorem}

\begin{proof}
By Proposition~\ref{eq: Dorey's step1} and the quantum affine algebra version of Proposition~\ref{prop: not vanishing}, we can apply the same argument of Theorem~\ref{thm: socle of Q-pair with al+be in PR}, by replacing
$S_Q(\eta)$ in the proof with $V^{(1)}_{Q}(\eta)$ $(\eta\in \PR)$.
\end{proof}

The morphisms in
\begin{align}\label{eq: Dorey}
{\rm Hom}_{U_q'(\g)}\big( V(\varpi_k)_c, V(\varpi_i)_a \tens V(\varpi_j)_b\big) \quad \text{ for } i,j,k\in I_0 \text{ and } a,b,c \in \ko^\times
\end{align}
are studied by~\cite{CP96,FH15,KKK13B,Oh14R,YZ11} and called \defn{Dorey's type} morphisms.
For classical untwisted types $A_n^{(1)}$,
$B_n^{(1)}$, $C_n^{(1)}$ and $D_n^{(1)}$, such morphisms are studied by Chari-Pressley using (twisted) Coxeter elements.
By the quantum affine algebra version of Theorem~\ref{thm: socle head}, Equation~\eqref{eq: Dorey} can be interpreted as follows:
\begin{enumerate}
\item The dimension of Hom-space is $1$ if it is not trivial,
\item $V(\varpi_k)_c$ is the simple socle of $V(\varpi_i)_a \tens V(\varpi_j)_b$.
\end{enumerate}
Thus Theorem~\ref{thm: Dorey's step2} implies the Dorey's type morphisms for $U_q(E_{6,7,8}^{(1)})$.
In Appendix~\ref{Sec:Dynkin E6} and Appendix~\ref{Sec:Dynkin E7}, we list several Dorey's type morphisms for $U_q(E_6^{(1)})$ and $U_q(E_7^{(1)})$, respectively.

\begin{remark} \hfill
\begin{enumerate}
\item In \cite{YZ11}, Young and Zeger studied the necessary condition (\cite[Theorem 3.1]{YZ11}) of Dorey's rule
by using $q$-character.  Theorem \ref{thm: Dorey's step2} implies that
the condition  is also sufficient for  $E_{6,7,8}^{(1)}$ as it is the necessary and sufficient condition for $A_{n}^{(1)}$ and $D_{n}^{(1)}$
(see also \cite[Theorem 6.1, Theorem 7.1]{CP96}, \cite[Corollary 3.5]{Oh14A} and
\cite[Proposition 4.4]{Oh14D}). For instance, their formula given in \cite[Eq.~(3.10)]{YZ11} can be refined as
\[
 V(\varpi_4)_{q^0} \hooklongrightarrow  V(\varpi_1)_{-q^{3}} \otimes V(\varpi_6)_{q^{-2}},
\]
by Theorem \ref{thm: Dorey's step2} and the morphism
$$
S_Q\left({\scriptstyle\prt{111}{101}}\right) \hooklongrightarrow  S_Q\left({\scriptstyle\prt{100}{000}}\right) \conv S_Q\left({\scriptstyle\prt{011}{101}}\right).
$$
Here, $Q$ is of the Dynkin quiver in Appendix~\ref{Sec:Dynkin E6}. Note that $q$ in \cite{YZ11} corresponds to $(-q)$ in this paper.
\item The Dorey's type morphisms for $E_{6,7,8}^{(1)}$ were also studied in \cite{FH15} also.
In the paper, Frenkel and Hernandez used $q$-characters and $T$-systems to show that
all fundamental representation appears as a composition factor of tensor product of fundamental representations $V(\varpi_i)$'s corresponding extremal vertices $i$.
More precisely, they computed several Dorey's type morphisms related to $V(\varpi_i)$ when $i$ is extremal. For instance, they showed that, for $U_q'(E_6^{(1)})$, (see \cite[page 2452]{FH15})
\[
V(\varpi_6)_{q^0} \hooklongrightarrow  V(\varpi_1)_{-q^{3}} \otimes V(\varpi_5)_{q^{-3}},
\]
which corresponds to
\[
S_Q\left({\scriptstyle\prt{111}{000}}\right) \hooklongrightarrow  S_Q\left({\scriptstyle\prt{100}{000}}\right) \conv S_Q\left({\scriptstyle\prt{011}{000}}\right)
\]
where $Q$ is of the Dynkin quiver in Appendix~\ref{Sec:Dynkin E6}.
\end{enumerate}
\end{remark}

\begin{example} For the AR-quiver $\Gamma_Q$ in Appendix~\ref{Sec:Dynkin E6}, one can check that
\[
\left( \prt{010}{000},\prt{111}{111},\prt{111}{100} \right) = \soc_Q\left(\prt{110}{000},\prt{122}{211}\right).
\]
Thus we have
\[
V(\varpi_1)_{-q^{1}} \otimes V(\varpi_1)_{-q^{5}} \otimes V(\varpi_5)_{-q^{3}} \hookrightarrow  V(\varpi_2)_{q^{6}} \otimes V(\varpi_2).
\]
via the KLR-type Schur-Weyl functor.
\end{example}

Now, the results in previous section also hold for $\mathcal{C}_Q^{(t)}$ of affine types $A^{(t)}_n$, $D^{(t)}_n$ $(t=1,2)$ and $E_{6,7,8}^{(1)}$.

\begin{theorem} \label{thm: for C_Q} Let $Q$ be a Dynkin quiver of type $A_n$, $D_n$ $(t = 1,2)$ and $E_{6,7,8}$ $(t=1)$, and $\um_{\redez} \in \Z^{\N}_{\ge 0}$ for a fixed $\redez \in [Q]$.
\begin{enumerate}
\item[{\rm (1)}] $\overset{\Lto}{V^{(t)}_Q}(\um) \seteq V^{(t)}_Q(\be^\redez_1)^{\tens m_1} \tens \cdots \tens
V^{(t)}_Q(\be^\redez_{\N})^{\tens m_{\N}}$ is well-defined.
\item[{\rm (2)}] For any pair $\up=(\al,\be)$, $\soc\left(\overset{\Lto}{V^{(t)}_{Q}}(\up)\right)\simeq V^{(t)}_Q(\soc_Q(\up))$.
\item[{\rm (3)}] $\overset{\Lto}{V^{(t)}_Q}(\um) \simeq \overset{\Lgets}{V^{(t)}_Q}(\um)$ is simple if and only if $\um$ is $[Q]$-simple.
\item[{\rm (4)}] For any pair $\up=(\al,\be)$ with $\dist_Q(\up) \ge 1$, the composition length of $\overset{\Lto}{V^{(t)}_{Q}}(\up)$ is larger than or equal to $\len_Q(\up)+2$.
In particular, if $\gdist_Q(\up)=1$, $\overset{\Lto}{V^{(t)}_{Q}}(\up)$ has equal composition length $2$.
\end{enumerate}
Furthermore, if $Q$ is of type $A_n$ or $D_n$, we have
\begin{enumerate}
\item[{\rm (5)}] $\big[\overset{\Lto}{V^{(t)}_{Q}}(\um)\big]
\in [\F^{(t)}_Q\big({\rm Im}(\rmat{\um})\big)] + \displaystyle \sum_{\um' \prec^{\tb}_{Q} \um} \Z_{\ge 0} [\F^{(t)}_Q\big({\rm Im}(\rmat{\um'})\big)],$
\end{enumerate}
by applying the exact functors in Theorem~\ref{thm:gQASW duality 2}.
\end{theorem}

\begin{proof}
For $A^{(t)}_n$, $D^{(t)}_n$ $(t=1,2)$, our assertion follows from the exact functors in Theorem~\ref{thm:gQASW duality 2}.
For $E^{(1)}_{6,7,8}$, our assertion follows from Theorem~\ref{thm: Dorey's step2}.
\end{proof}

\begin{example} \label{ex: non-trival}
For the AR-quiver $\Gamma_Q$ in Section~\ref{Sec:Dynkin E6} of type $E_6$, there exists a homomorphism
$$  S_Q\left({\scriptstyle\prt{123}{211}}\right) \hooklongrightarrow  S_Q\left({\scriptstyle\prt{122}{101}}\right) \conv S_Q\left({\scriptstyle\prt{001}{110}}\right)$$
where $\left( {\scriptstyle\prt{122}{101}},{\scriptstyle\prt{001}{110}} \right)$ is a pair for ${\scriptstyle\prt{123}{211}}$. Note that this pair is {\it not} a minimal pair of $\prt{123}{211}$.
By Theorem~\ref{thm: for C_Q},  we have a $U_q'(E_6^{(1)})$-module homomorphism
$$  V(\varpi_3)_{q^0} \hooklongrightarrow  V(\varpi_3)_{q^{4}} \otimes V(\varpi_3)_{q^{-4}},$$
by considering the coordinates for ${\scriptstyle\prt{123}{211}}$, ${\scriptstyle\prt{122}{101}}$ and
${\scriptstyle\prt{001}{110}}$ in $\Gamma_Q$. 
\end{example}

\begin{remark} \label{rem: important observation}
From Theorem~\ref{Thm: basic properties} and Theorem~\ref{thm: for C_Q}, we can observe the following:

For a Dynkin quiver $Q$ of type $A_n$, $D_n$ $(t \in \{ 1,2\})$ and $E_{6,7,8}$ $(t=1)$,
\begin{equation*} \label{eq: important observation}
\begin{aligned}
& \text{ the pair } (\al,\be) \text{ is {\it not} $[Q]$-simple} \longleftrightarrow  S_Q(\al) \conv S_Q(\be) \text{ is reducible }    \longleftrightarrow V^{(t)}_Q(\al) \tens V^{(t)}_Q(\be) \text{ is reducible}  \\
& \longleftrightarrow \begin{cases} d_{k,l}(z)  \\
d_{k^\star,l^\star}(z) \end{cases} \hspace{-4ex} \text{ has a root
at one of } \begin{cases}
(-q)^a/(-q)^b \text{ and } (-q)^b/(-q)^a & \hspace{-2ex} \text{ if } t=1, \\
((-q)^a)^\star/((-q)^b)^\star \text{ and } ((-q)^b)^\star/((-q)^a)^\star & \hspace{-2ex} \text{ if } t=2, \end{cases}
\end{aligned}
\end{equation*}
where $\phi_Q^{-1}(\al,0)=(k,a)$ and $\phi_Q^{-1}(\be,0)=(l,b)$. Thus we can obtain the {\it partial} information of
the denominator formulas for $E_{6,7,8}^{(1)}$ by Theorem~\ref{thm: for C_Q}. More precisely,
we can get the roots of $d_{k,l}(z)$ by reading the position of non $[Q]$-simple pairs $(\al,\be)$ in $\GQ$ whose
residues are $k,l$. But we do {\it not} know the order of roots (see Corollary~\ref{cor: dist denom} and the conjectural formulas in Section~\ref{subsec: E conjectures} below).
\end{remark}

Note that the Schur-Weyl datum in Theorem~\ref{thm:gQASW duality 2} and the functors $\mathcal{F}_Q^{(1)}$ (resp. $\mathcal{F}_Q^{(2)}$)  are determined by
$(U_q'(\g),\phi^{-1}_Q)$ (resp. $(U_q'(\g),\phi^{-1}_Q,^\star)$) indeed.

\begin{conjecture} \label{conj: KKK E}~\cite[Conjecture 4.3.2]{KKK13B} Let $Q$ be a Dynkin quiver of type $E_{6,7,8}$. Then
the functor $\mathcal{F}_Q^{(1)}$ determined by $(U_q'(E^{(1)}_{6,7,8}),\phi^{-1}_Q)$ enjoys the same properties in {\rm Theorem~\ref{thm:gQASW duality 2}}.
\end{conjecture}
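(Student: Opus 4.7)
\medskip

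\textbf{Proof proposal.} The plan is to reduce the conjecture to an explicit computation of the denominators $d_{k,l}(z)$ between fundamental representations of $U_q'(E_n^{(1)})$, and then to feed these into the machinery of Theorem~\ref{thm:gQASW duality} exactly as was done in \cite{KKK13B} for types $A$ and $D$. First I would pin down $d_{k,l}(z)$: since the arguments of \cite{AK,Chari,Kas02} guarantee that the zeros lie in $q^{1/m}\C[[q^{1/m}]]$, one can localize to fundamental pairs and verify the conjectural formulas for $E_6$ listed by the author by a case-by-case analysis of $R$-matrices on fundamental crystals, and extend the analysis to $E_7$ and $E_8$ using the embeddings among the affine types. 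Once the $d_{k,l}(z)$ are known, the computation of the Schur--Weyl quiver $\Gamma^{\mathsf{SWD}}$ attached to the datum $(U_q'(E_n^{(1)}),\Pi,X,s,\{V(\varpi_i)\})$ with $s(\al)=V(\varpi_i)$ and $X(\al)=(-q)^p$ for $\phi_Q^{-1}(\al,0)=(i,p)$ becomes a purely combinatorial check: the multiplicities $\mathtt{d}_{ij}$ are encoded in the positions of the arrows in $\Gamma_Q$, so $\Gamma^{\mathsf{SWD}}\simeq Q^{\mathrm{rev}}$ should fall out from the additive property \eqref{eq: additive} and the characterization \eqref{eq: cha AR} of $\Gamma_Q$ inside $\Z Q$, in parallel with the finite types $A$ and $D$.

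With $\Gamma^{\mathsf{SWD}}\simeq Q^{\mathrm{rev}}$ established, part~(b) of Theorem~\ref{thm:gQASW duality} immediately supplies the exactness of $\F_Q^{(1)}$, proving assertion~(a) of Theorem~\ref{thm:gQASW duality 2} in type $E$. For assertion~(b), I would first check the base case $\F_Q^{(1)}(S_Q(\al_i))\simeq V_Q^{(1)}(\al_i)=V(\varpi_i)_{(-q)^{\xiQ_i}}$ (which is essentially built into the construction of $\F_Q^{(1)}$), and then propagate along the convex order $\prec_Q$ by induction: using Theorem~\ref{thm: BkMc}~(f), the image of $S_Q(\be^\redez_j)$ appears as the head of a convolution of images of $S_Q(\be^\redez_k)$ and $S_Q(\be^\redez_l)$ for a $[Q]$-minimal pair, and via the tensor functoriality in Theorem~\ref{thm:gQASW duality}~(a) this transfers to the claim $\F_Q^{(1)}(S_Q(\be^\redez_j))\simeq V_Q^{(1)}(\be^\redez_j)$. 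Combined with Corollary~\ref{cor simple if simple}, the $[Q]$-simple sequences then land on the expected head modules in $\mathcal{C}^{(1)}_Q$, and a dimension/character count via Theorem~\ref{thm: HLU} gives bijectivity on simples, yielding (b) and~(c) simultaneously.

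The main obstacle I anticipate is the explicit determination of $d_{k,l}(z)$ for $U_q'(E_n^{(1)})$, and in particular ruling out \emph{extra} zeros not visible in $\Gamma_Q$. Here the distance polynomials $D_{k,l}(z,-q)$ introduced in the paper should do most of the work: one shows inductively that $d_{k,l}(z)\mid D_{k,l}(z,-q)$ by propagating from the $\al_i$-case via the normalized $R$-matrices and the exactness half of the argument above, and conversely that $D_{k,l}(z,-q)\mid d_{k,l}(z)$ by using that each zero of $D_{k,l}$ records a non-$[Q]$-simple pair, which (after applying $\F_Q^{(1)}$ and Theorem~\ref{Thm: basic properties}~(2)) forces a reducibility of the corresponding tensor product of fundamentals. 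A secondary but related difficulty will be to ensure that the identifications are independent of the choice of $Q$; the strategy in Convention~\ref{conv: strategy E}, together with the observations \eqref{eq: observations reflection} on reflection functors, should allow one to reduce to a single $Q$ per type, after which the above steps apply verbatim.
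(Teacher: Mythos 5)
The statement you are addressing is Conjecture~\ref{conj: KKK E}, and the paper does not prove it: immediately after stating it the author writes that ``by the lack of denominator formulas for the quantum affine algebra of type $E^{(1)}_n$, we do not know whether Conjecture~\ref{conj: KKK E} holds or not.'' There is therefore no proof in the paper to compare against, and your task was either to supply a genuine proof or to recognize the statement as open.

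Your proposal does not close the gap. The entire argument is conditional on first determining the denominators $d_{k,l}(z)$ for $U_q'(E_n^{(1)})$, and you defer that to ``a case-by-case analysis of $R$-matrices on fundamental crystals'' extended by ``embeddings among the affine types.'' But that is precisely the unsolved problem. The formulas for $E_6^{(1)}$ given in \S\ref{subsec: E conjectures} are themselves conjectural (they are the content of Conjecture~\ref{conj: dist E}), the paper offers no candidate formulas at all for $E_7^{(1)}$ or $E_8^{(1)}$, and nothing in the paper or in \cite{KKK13B} carries out the computation. You identify this as ``the main obstacle I anticipate,'' but it is not an anticipated obstacle --- it is the reason the statement is a conjecture rather than a theorem.

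There is also a circularity in your final paragraph. You propose to show $d_{k,l}(z)\mid D_{k,l}(z,-q)$ ``via the normalized $R$-matrices and the exactness half of the argument above,'' but exactness of $\mathcal{F}_Q^{(1)}$ (Theorem~\ref{thm:gQASW duality}~(b)) is available only once $\Gamma^{\mathsf{SWD}}\simeq Q^{\rev}$ has been established, and establishing that isomorphism requires knowing the zero orders of $d_{k,l}$ at the relevant points --- exactly the data you are trying to derive. The converse divisibility, argued from reducibility of tensor products via Theorem~\ref{Thm: basic properties}~(2), has the same problem: it presupposes $\mathcal{F}_Q^{(1)}(S_Q(\beta))\simeq V_Q^{(1)}(\beta)$, which is property~(b) of Theorem~\ref{thm:gQASW duality 2}, i.e.\ one of the things to be proved. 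The downstream reasoning (quiver identification, propagating simples along $\prec_Q$ using Theorem~\ref{thm: BkMc}~(f), reducing to a single $Q$ by reflection functors) is a reasonable sketch of how the proof \emph{would} go once the denominators are known, and it does parallel the strategy of \cite{KKK13B}. But since the content of the conjecture is precisely the missing denominator computation, what you have written is a conditional strategy, not a proof.
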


By the lack of denominator formulas for the quantum affine algebra of type $E^{(1)}_{6,7,8}$, we do not know whether Conjecture~\ref{conj: KKK E} holds or not.

\subsection{Distance polynomials on AR-quivers} In this subsection, we denote by $U'_q(\g)$ the quantum affine algebra of untwisted type
$A^{(1)}_n$, $D^{(1)}_n$ or $E^{(1)}_{6,7,8}$, and $Q$ the Dynkin quiver of type $A_n$, $D_n$ or $E_{6,7,8}$, respectively. Thus we take index sets
$I=\{ 0,1,\ldots,n\}$ and $I_0 = I \setminus \{ 0\}$, and the set of positive roots $\PR$ associated to $Q$.

\medskip

Now we shall develop the observation in Remark~\ref{eq: important observation}; that is, under the situation in Remark~\ref{eq: important observation}, we shall
interpret the meaning of $\gdist_Q(\al,\be)$ in the denominator formula $d_{k,l}(z)$.


For a Dynkin quiver, indices $k,l \in I_0$ and an integer $t \in \Z_{\ge 1}$,
we define the subset \defn{$\Phi_{Q}(k,l)[t]$} as the pairs $(\alpha,\beta) \in \PR \times \PR$ such that $\alpha$ and $\beta$ are {\it comparable} under $\prec_{Q}$ and
\[
 \{ \phi_Q^{-1}(\al,0),\phi_Q^{-1}(\be,0) \}=\{ (k,a), (l,b)\} \quad \text{ such that } \quad |a-b| = t.
\]

\begin{lemma}
For any  $(\alpha^{(1)},\beta^{(1)})$ and $(\alpha^{(2)},\beta^{(2)})$ in $\Phi_{Q}(k,l)[t]$, we have
$$\gdist_Q(\alpha^{(1)},\beta^{(1)})=\gdist_{Q}(\alpha^{(2)},\beta^{(2)}). $$
\end{lemma}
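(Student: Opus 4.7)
\medskip

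\noindent
\textbf{Proof plan for the lemma.} I would prove the lemma by showing that the generalized $[Q]$-distance is invariant under simultaneous Coxeter translation of the pair by $\tau_Q$, and that $\Phi_Q(k,l)[t]$ is essentially a single $\tau_Q$-orbit. Both facts reduce the statement to a finite chain of elementary moves.

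First I would establish the key invariance: \emph{if $(\al,\be)$ is a pair with both $\tau_Q\al$ and $\tau_Q\be$ in $\PR$, then $\gdist_Q(\al,\be)=\gdist_Q(\tau_Q\al,\tau_Q\be)$.} By Remark~\ref{rem: coxeter element and sink}~(ii)--(iii), one can realize $\tau_Q$ as the composition of a cyclic sequence of sink reflections $s_{i_1},s_{i_2},\ldots,s_{i_n}$ (each $i_j$ being a sink of the quiver obtained after the preceding reflections), so that after $n$ steps one returns to $Q$. The corresponding composition of reflection functors walks $\GQ$ through the cycle $\GQ\to\AR{s_{i_1}Q}\to\cdots\to\GQ$ while relabeling positive roots by successive simple reflections, and by the strategy of equation~\eqref{eq: observations reflection}~(ii)--(iv) each reflection preserves $[Q]$-distance (and, by exactly the same argument, generalized $[Q]$-distance, since $\gdist_Q$ is defined purely in terms of $\prec_Q^\tb$ on sequences of positive roots) for pairs not containing the reflected simple root. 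The hypothesis $\tau_Q\al,\tau_Q\be\in\PR$ is precisely the condition that no intermediate pair contains the simple root being reflected at any step, so the invariance propagates through all $n$ steps, and the net effect on $\PR$ is $\al\mapsto \tau_Q\al$, $\be\mapsto\tau_Q\be$.

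Next I would iterate this invariance. By \eqref{eq: Nakayama} the positive roots of residue $k$ form the single $\tau_Q$-orbit $\{\gamma_k^Q,\tau_Q\gamma_k^Q,\ldots,\tau_Q^{m_k^Q}\gamma_k^Q=\theta_{k^*}^Q\}$, and similarly for residue $l$. Thus, writing $\phi_Q^{-1}(\al^{(j)},0)=(k,a_j)$ and $\phi_Q^{-1}(\be^{(j)},0)=(l,b_j)$ with $|a_j-b_j|=t$, the two pairs in $\Phi_Q(k,l)[t]$ with $a_j-b_j$ of the same sign are related by some power $\tau_Q^r$ acting simultaneously on both components; since both endpoints lie in $\PR$ and the coordinate jumps by $\pm 2$ at each step, every intermediate pair automatically lies in $\PR$, and the preceding step applied repeatedly gives $\gdist_Q(\al^{(1)},\be^{(1)})=\gdist_Q(\al^{(2)},\be^{(2)})$. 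For the remaining case of two pairs with $a_j-b_j$ of opposite sign (which can only occur when $k=l$ or when the orbits of the two components ``cross''), I would invoke the symmetry $Q\leftrightsquigarrow Q^{\rm rev}$ of Remark~\ref{rem: coxeter element and sink}~(iv): reversing the time-axis exchanges such pairs, maps $\prec_Q$ to $\prec_{Q^{\rm rev}}$, and preserves the definitions of $[\cdot]$-simplicity and $\gdist$, reducing this case to the same-sign case already handled.

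\textbf{Main obstacle.} The technical heart of the argument is verifying that the $\tau_Q$-invariance in Step~1 extends from $\dist_Q$ (as stated in \eqref{eq: observations reflection}) to the generalized distance $\gdist_Q$ introduced in Definition~\ref{def: gdist}, because $\gdist_Q$ is defined via chains of arbitrary sequences rather than only pairs. The extension should be formal since $\gdist_Q$ is intrinsic to the partial order $\prec^\tb_Q$, but one must carefully check that the bijection induced on chains of comparable non-$[Q]$-simple sequences by the cyclic reflection-functor composition is order-preserving and sends simple sequences to simple sequences — which in turn requires ensuring no intermediate sequence in such a chain picks up the simple root being reflected at any step, a point where the hypothesis $\tau_Q\al,\tau_Q\be\in\PR$ on the endpoints alone may not be obviously sufficient without an additional convexity argument on the sequences themselves.
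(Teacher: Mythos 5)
Your core idea — that the generalized distance is invariant under simultaneous $\tau_Q$-translation and that the set $\Phi_Q(k,l)[t]$ is a single $\tau_Q$-orbit — is indeed the idea behind the paper's proof, but you arrive at the translation-invariance by a much more roundabout route than necessary, and that detour is exactly what creates the ``main obstacle'' you flag at the end. The paper's proof skips reflection functors entirely. It rests on a single geometric observation about the AR-quiver: for $\al\prec_Q\ga\prec_Q\be$, if $\tau_Q(\be)\in\PR$ then $\tau_Q(\ga)\in\PR$, and dually if $\tau^{-1}_Q(\al)\in\PR$ then $\tau^{-1}_Q(\ga)\in\PR$. This at once makes the $\gdist_Q$-invariance transparent: by convexity, every positive root appearing in a chain of sequences witnessing $\gdist_Q(\al,\be)$ lies $\prec_Q$-between $\al$ and $\be$, so under $\tau_Q^r$ the entire chain is transported to a chain of sequences in $\PR$ with the same comparability relations, $[Q]$-simplicity is preserved, and the two generalized distances agree. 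In particular there is no need to decompose $\tau_Q$ into sink reflections, to track which simple root is being reflected at each step, or to separately worry whether the $\dist_Q$-invariance of equation~\eqref{eq: observations reflection} extends to $\gdist_Q$ --- the observation handles all intermediate sequences simultaneously, which is precisely the ``additional convexity argument'' you anticipated might be needed.

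Your concern about the opposite-sign case, however, is a real point and not one the paper's proof text addresses explicitly; the assertion that there exists $r$ with $\tau_Q^r(\al^{(1)})=\al^{(2)}$ and $\tau_Q^r(\be^{(1)})=\be^{(2)}$ only covers pairs whose coordinate differences $a-b$ have the same sign. But the $Q^{\rm rev}$ reduction you propose does not close that case: reversing the quiver relates $\gdist_Q$ of one pair to $\gdist_{Q^{\rm rev}}$ of another, whereas the lemma asserts equality of $\gdist_Q$ with itself. If this case genuinely occurs for your $(k,l,t)$ (and one can check in small examples that pairs of both signs can coexist in $\Phi_Q(k,l)[t]$), you would instead need an involution on $\GQ$ itself that swaps the two orientations while preserving $\prec^\tb_Q$; the natural candidate is $\be\mapsto -w_0(\be)$, which appears implicitly in the lemma immediately following this one (showing $o^Q_t(k,l)=o^{Q^{*\rm rev}}_t(l,k)$), but this still changes the quiver unless $Q\simeq Q^{*\rm rev}$. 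So this step of your argument as written has a genuine gap: either you must show the two signs never coexist for fixed $(k,l,t)$, or you must supply an argument that does not change $Q$.
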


\begin{proof}
Note that for $\al \prec_Q \ga \prec_Q \be$,
$$\text{$\tau_Q(\be) \in \PR$ implies $\tau_Q(\ga) \in \PR$ and $\tau^{-1}_Q(\al) \in \PR$ implies $\tau^{-1}_Q(\ga) \in \PR$.}$$

Since, for $(\alpha^{(1)},\beta^{(1)}),(\alpha^{(2)},\beta^{(2)}) \in \Phi_{Q}(k,\ell)[t]$, there exists
$s \in \Z$ such that $\tau_Q^{s}(\al^{(1)}) =\al^{(2)}$ and $\tau_Q^{s}(\be^{(1)}) =\be^{(2)}$. Thus our assertion follows.
\end{proof}
Thus the integer
\[
o^{Q}_t(k,l) \seteq \gdist_Q(\alpha,\beta)  \quad \text{ for any $(\alpha,\beta) \in \Phi_{Q}(k,l)[t]$}
\]
is well-defined.

By using $Q^*,Q^\rev$ and $Q^{*\rev}$ of a Dynkin quiver $Q$ (Remark~\ref{rem: coxeter element and sink}), one can check the following lemma:

\begin{lemma} \label{lem: property of o}
For $k,\ell \in I_0$ and $t \in \Z$, we have
\begin{enumerate}
\item[{\rm (a)}] $o^{Q}_t(k,l)=o^{Q^\rev}_t(l^*,k^*)$ and $o^{Q^*}_t(k,l)=o^{Q^{*\rev}}_t(l^*,k^*)$.
\item[{\rm (b)}] $o^{Q}_t(k,l)=o^{Q^{*\rev}}_t(l,k)$ and $o^{Q^*}_t(k,l)=o^{Q^{\rev}}_t(l,k)$.
\item[{\rm (c)}] $ o^{Q}_t(k,l) = o^{Q}_t(l,k)$ and $o^{Q}_t(k,l)=o^{Q^*}_t(k^*,l^*)$.
\end{enumerate}
\end{lemma}

\begin{definition} \label{def: Dist poly Q}
For $k,l \in I_0$ and a Dynkin quiver $Q$, we define a polynomial $D^Q_{k,l}(z;-q) \in \ko[z]$
\[
D^Q_{k,l}(z;-q) \seteq  \prod_{ t \in \Z_{\ge 0} } (z-(-q)^{t} )^{\mathtt{o}^{\overline{Q}}_t(k,l)}
\]
where
\[
\mathtt{o}^{\overline{Q}}_t(k,l) \seteq  \max( o^{Q}_t(k,l),o^{Q^\rev}_t(k,l) ).
\]
\end{definition}

Note that
\begin{eqnarray} &&
\parbox{90ex}{
\begin{enumerate}
\item[{\rm (a)}] $\mathtt{o}^{\overline{Q}}_t(k,l)=0$ for $0 \le t < \Delta(k,l)+2$,
\item[{\rm (b)}] $D^Q_{k,l}(z;-q)=D^Q_{l,k}(z;-q)=D^Q_{k^*,l^*}(z;-q)=D^Q_{l^*,k^*}(z;-q)$,
\end{enumerate}
}\label{eq: observation of multi}
\end{eqnarray}
by {\rm (a)} of Proposition~\ref{prop: dir Q cnt} and Lemma~\ref{lem: property of o}.

\begin{proposition} \label{prop: DQ DQ'}
For $k,l \in I_0$ and any Dynkin quivers $Q$ and $Q'$, we have
$$D^Q_{k,l}(z;-q)=D^{Q'}_{k,l}(z;-q).$$
\end{proposition}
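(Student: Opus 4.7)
The plan is to reduce the statement to invariance under a single elementary reflection at a sink. By Convention \ref{conv: strategy E}, any two Dynkin quivers of the same Dynkin type are connected by a finite chain of reflections $Q \leftrightarrow \re_iQ$ at sinks or sources. Thus it suffices to fix a sink $i$ of $Q$ and prove that $\mathtt{o}^{\overline{Q}}_t(k,l) = \mathtt{o}^{\overline{\re_iQ}}_t(k,l)$ for every $k,l \in I_0$ and $t \in \Z_{\ge 0}$; since $D^Q_{k,l}(z;-q)$ is determined by these integers via Definition \ref{def: Dist poly Q}, this single-step invariance yields the proposition.

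For the easy pairs, I would first upgrade observation \eqref{eq: observations reflection}(ii) from $\dist_Q$ to $\gdist_Q$: if $\al, \be \in \PR \setminus \{\al_i\}$, then $\gdist_Q(\al,\be) = \gdist_{\re_iQ}(\re_i\al, \re_i\be)$. The proof is identical in spirit to the one for $\dist_Q$, since the bijection $\um \leftrightarrow \re_i(\um)$ on sequences avoiding $\al_i$ is compatible with $\prec^\tb_{[Q]}$ and $\prec^\tb_{[\re_iQ]}$ (recall that $\al_i$ is a maximal element with respect to $\prec_Q$ and a minimal element with respect to $\prec_{\re_iQ}$). Moreover, by Remark \ref{rem: comb refl}, the coordinates of $\al$ and $\re_i\al$ in $\Z Q$ coincide. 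Hence pairs $(\al,\be) \in \Phi_Q(k,l)[t]$ with $\al,\be \ne \al_i$ transport bijectively to pairs in $\Phi_{\re_iQ}(k,l)[t]$ with matching $\gdist$, so their contributions to $o^Q_t(k,l)$ and $o^{\re_iQ}_t(k,l)$ agree.

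The main obstacle is pairs involving $\al_i$. In $\GQ$ the root $\al_i$ occupies the maximal slot $(i^*, \xiQ_{i^*})$, whereas in $\Gamma_{\re_iQ}$ it is relocated to $(i, \xiQ_{i^*} + \mathsf{h}^\vee)$. Consequently a pair $(\al_i, \be) \in \Phi_Q(i^*, l)[t]$ has no direct image in $\Phi_{\re_iQ}(i^*, l)[t]$. This is precisely where the symmetrization with $Q^{\rev}$ enters: since $i$ is a source of $Q^{\rev}$, and $\re_i(Q^{\rev}) = (\re_iQ)^{\rev}$, the boundary pair lost on the $Q$-side is captured as a non-boundary pair on the $Q^{\rev}$-side, and vice versa. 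Using the identities
\[
o^Q_t(k,l) = o^{Q^{\rev}}_t(l^*, k^*) = o^{Q^{*\rev}}_t(l,k) = o^{Q^*}_t(k^*,l^*)
\]
established in the preceding lemmas, together with the easy-pair comparison applied to $Q^{\rev}$ and its reflection, I would conclude that $\max(o^Q_t(k,l), o^{Q^{\rev}}_t(k,l))$ is invariant under $Q \to \re_iQ$.

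The hardest step will be this last one: one has to track carefully how the teleportation of $\al_i$ from $(i^*, \xiQ_{i^*})$ to $(i, \xiQ_{i^*} + \mathsf{h}^\vee)$ interacts with the triples $(k,l,t)$, and then verify case by case that the max over $\{Q, Q^{\rev}\}$ captures exactly the pairs missed in each individual AR-quiver. Once the equality $\mathtt{o}^{\overline{Q}}_t(k,l) = \mathtt{o}^{\overline{\re_iQ}}_t(k,l)$ is established, induction on the length of a reflection sequence connecting $Q$ to $Q'$ finishes the proof.
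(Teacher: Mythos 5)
Your overall strategy matches the paper's: reduce to a single elementary reflection, transport pairs avoiding the boundary root via \eqref{eq: observations reflection}, and handle the lone boundary pair by symmetrizing over $\{Q,Q^{\rev}\}$. The transport argument for non-boundary pairs is correct and is essentially how the paper begins. However, the proposal has a genuine gap precisely where you label it ``the hardest step.'' After narrowing to the edge case in which the only pair realizing $\Phi_Q(k,l)[t]$ involves the simple root being moved, you do not actually show that $\max\bigl(o^{Q}_t(k,l),\,o^{Q^{\rev}}_t(k,l)\bigr)$ is preserved under the reflection; you defer this to an unspecified ``verify case by case.'' That case-by-case verification is the real content of the paper's argument: it pins down the exceptional configuration (the quiver must be one of the forms in \eqref{eq: unique}, forcing $\al_k+\theta^Q_{l^*}\in\PR$ with $\mul=1$), locates $\al_k$ and $\theta^Q_{l^*}$ in the shape \eqref{eq: ext form}, and then exhibits a compensating pair in $\Gamma_{s_k Q}$ or $\Gamma_{(s_k Q)^{\rev}}$ — separately for the sub-cases $k\ne l^*$, $k=l^*$ in type $A_n$, the sub-cases $k,l\le n-2$, exactly one of $k,l$ in $\{n-1,n\}$, and both in $\{n-1,n\}$ for type $D_n$ (using swings and Lemma \ref{lem: nfree position D}), with type $E_n$ finished by Convention \ref{conv: strategy E}. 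Without exhibiting these compensating pairs, one cannot conclude that the lost boundary pair is recovered. A smaller slip: for a sink $i$ of $Q$ the root $\al_i=\theta^Q_i$ sits at position $(i^*,\xiQ_{i^*}-2\mQ_{i^*})$, not $(i^*,\xiQ_{i^*})$; this also matters because the residue of $\al_i$ changes from $i^*$ to $i$ under the reflection, so the ``teleported'' pair lands in $\Phi_{s_iQ}(i,l)[t]$ rather than $\Phi_{s_iQ}(i^*,l)[t]$, and you need the identity $D^Q_{k,l}=D^Q_{k^*,l^*}$ from \eqref{eq: observation of multi} to reconcile the residues — another step the proposal gestures at but does not carry out.
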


\begin{proof}
By~\eqref{eq: adapted} and~\eqref{eq: additive}, for Dynkin quivers $Q$ and $Q'$, we have
$$o^{Q}_t(k,l)=o^{Q'}_t(k,l) \quad \text{ if } \Phi_{Q}(k,l)[t],\Phi_{Q'}(k,l)[t] > 0.$$

To prove our assertion, it suffices to prove that
$$ \mathtt{o}^{\overline{Q}}_t(k,l)=\mathtt{o}^{\overline{s_i(Q)}}_t(k,l) \quad \text{ for every source $i$ of $Q$}, $$
since every Dynkin quiver $Q'$ can be obtained by applying sequence of the reflection functors $\mathbf{r}^-_i$ on $Q$.

Assume that $a=\mathtt{o}^{\overline{Q}}_t(k,l)>0$; that is, there exists a $(\al,\be) \in \Phi_{Q}(k,l)[t]$ or $\Phi_{Q^\rev}(k,l)[t]$ such that $\gdist_Q(\al,\be)=a$
or $\gdist_{Q^\rev}(\al,\be)=a$. Set
$$\text{$(\al,\be) \in \Phi_{Q}(k,l)[t]$, $\phi^{-1}_{Q,1}(\al)=k$, $\phi^{-1}_{Q,1}(\be)=l$ and $\al \prec_Q \be$.}$$

If $\al$ is not a simple root, then $s_i(\al),s_i(\be) \in \PR$ for every source $i$ of $Q$ and
hence $\gdist_Q(\al,\be)=\gdist_{\mathbf{r}^-_iQ}(s_i(\al),s_i(\be))=a$.
Thus $\mathtt{o}^{\overline{Q}}_t(k,l)=\mathtt{o}^{\overline{s_i(Q)}}_t(k,l)$ for every source $i$ of $Q$.

If $|\Phi_{Q}(k,l)[t]|>1$, then we have $\mathtt{o}^{\overline{Q}}_t(k,l)=\mathtt{o}^{\overline{s_i(Q)}}_t(k,l)$
for every source $i$ of $Q$ by the same reason.

\medskip

Now assume that
\begin{align} \label{eq: assu0}
\text{(1) $k$ is a source of $Q$ and $\al = \al_{k}$},  \quad  \text{(2) $\Phi_{Q}(k,l)[t]=\{ (\al_s,\be) \}$ and $\mathtt{o}^{Q}_t(k,l)>0$}.
\end{align}

Since $(\al,\be)$ is not sectional,~\eqref{eq: assu0} implies that
\begin{eqnarray} &&
\parbox{85ex}{
\begin{itemize}
\item $\tau_Q(\al_k) \in \PR$ and $\be=\theta^Q_{l^*}$ (since $|\Phi_{Q}(k,l)[t]|=1$),
\item $\phi^{-1}_{Q,1}(\al)-\phi^{-1}_{Q,1}(\be)=|k-l|+2s=t$ for some $1 \le s \le \mQ_l$ (since $\mathtt{o}^{Q}_t(k,l)>0$).
\end{itemize}
}\label{eq: assu}
\end{eqnarray}

($Q$ is of type $A_n$, $k \ne l^*$) In this case, the Dynkin quiver $Q$ is the one of the following forms:
\begin{align} \label{eq: unique}
Q: \ (a) \ \xymatrix@R=3ex{*{ }<3pt>
\ar@{<.}[rr]  &&*{\circ}<3pt>
\ar@{-}[r]_<{k} &*{\circ}<3pt>
\ar@{<-}[l]^<{\ \ k+1} \ar@{<.}[rr] &&*{\circ}<3pt>
\ar@{}[l]^<{\ \ l^*} } \text{ or } \quad  \ (b) \
\xymatrix@R=3ex{*{\circ}<3pt>
\ar@{}[r]_<{l^*}  &&*{\circ}<3pt>
\ar@{<.}[ll]^<{k-1}\ar@{<-}[r] &*{\circ}<3pt>
\ar@{->}[r]_<{k} &*{\circ}<3pt>
\ar@{}[l]^<{\ \ k+1} }.
\end{align}
by~\eqref{eq: gamma,theta 2} and~\eqref{eq: Nakayama}. Hence we have $\ga=\al_k+\theta^Q_{l^*} \in \PR$, $\mul(\ga)=1$ and $\dist_Q(\al_k,\be)=1$.

Otherwise,
$$\supp(\theta^Q_{l^*}) \cap \{k\} =  \emptyset  \quad\text{ and }\quad \theta^Q_{l^*}+\al_k  \not\in \PR $$
and we have a contradiction to the assumption that $\gdist_Q(\al,\be) >0$.

By Proposition~\ref{prop: Q-socle for pair}, $\al$ and $\be$ are located at one of the following form:

\vskip -2em

\begin{align} \label{eq: ext form}
\scalebox{0.8}{{\xy
(0,0)*{}="DL";(10,-10)*{}="DD";(20,20)*{}="DT";(30,10)*{}="DR";
"DL"; "DD" **\dir{-};"DL"; "DT"+(-4,-4) **\dir{.};
"DT"+(-40,-4); "DT"+(70,-4)**\dir{.};
"DD"+(-30,-6); "DD"+(80,-6) **\dir{.};
"DT"+(4,-4); "DR" **\dir{.};"DR"; "DD" **\dir{-};
"DL"+(-15,0)*{\scriptstyle (l,\xiQ_l-2\mQ_{l})};
"DL"+(-1,0); "DL"+(-6,0) **\dir{.};
"DR"+(10,0)*{\scriptstyle (k,\xiQ_k)};
"DR"+(1,0); "DR"+(6,0) **\dir{.};
"DL"*{\bullet};"DR"*{\bullet};
"DT"+(-44,-4)*{\scriptstyle 1};
"DT"+(-44,-8)*{\scriptstyle 2};
"DT"+(-44,-12)*{\scriptstyle \vdots};
"DT"+(-44,-16)*{\scriptstyle \vdots};
"DT"+(-44,-36)*{\scriptstyle n};
"DL"+(40,-10); "DD"+(36,-6) **\dir{.};
"DR"+(40,-10); "DD"+(44,-6) **\dir{.};
"DT"+(40,-10); "DR"+(40,-10) **\dir{-};
"DT"+(40,-10); "DL"+(40,-10) **\dir{-};
"DL"+(25,-10)*{\scriptstyle (l,\xiQ_l-2\mQ_l)};
"DL"+(39,-10); "DL"+(34,-10) **\dir{.};
"DR"+(50,-10)*{\scriptstyle (k,\xiQ_k)};
"DR"+(41,-10); "DR"+(46,-10) **\dir{.};
"DL"+(40,-10)*{\bullet};"DR"+(40,-10)*{\bullet};
%
\endxy}}
\end{align}

Then the Dynkin quivers $s_k(Q)$ and $(s_k(Q))^\rev$ are give as follows:
\begin{equation*}
\begin{aligned}
s_k(Q): &  \ (a) \ \xymatrix@R=3ex{*{\circ}<3pt> \ar@{<.}[r] & *{\circ}<3pt> \ar@{->}[r]_<{k-1} & *{\circ}<3pt>
\ar@{-}[r]_<{k} &*{\circ}<3pt>
\ar@{->}[l]^<{\ \ k+1} \ar@{<.}[rr] &&*{\circ}<3pt>
\ar@{}[l]^<{\ \ l^*} } \text{ or } \ (b) \
\xymatrix@R=3ex{*{\circ}<3pt>
\ar@{}[r]_<{l^*}  &&*{\circ}<3pt>
\ar@{<.}[ll]^<{k-1}\ar@{->}[r] &*{\circ}<3pt>
\ar@{-}[l]^<{k} &*{\circ}<3pt>
\ar@{->}[l]^<{\ \ k+1}  } \\
(s_k(Q))^\rev: &  \ (a) \ \xymatrix@R=3ex{*{\circ}<3pt> \ar@{.>}[r] & *{\circ}<3pt> \ar@{<-}[r]_<{k-1} & *{\circ}<3pt>
\ar@{-}[r]_<{k} &*{\circ}<3pt>
\ar@{<-}[l]^<{\ \ k+1} \ar@{.>}[rr] &&*{\circ}<3pt>
\ar@{}[l]^<{\ \ l^*} } \text{ or }  \ (b) \
\xymatrix@R=3ex{*{\circ}<3pt>
\ar@{}[r]_<{l^*}  &&*{\circ}<3pt>
\ar@{.>}[ll]^<{k-1}\ar@{<-}[r] &*{\circ}<3pt>
\ar@{-}[l]^<{k} &*{\circ}<3pt>
\ar@{<-}[l]^<{\ \ k+1}  }.
\end{aligned}
\end{equation*}
Hence there exist no intersection between
$$\begin{cases}
\text{$N$-path of $\theta^{s_k(Q^\rev)}_{l^*}$ and the $S$-path of $\al_k$  if $Q$ is of the form (a),} \\
\text{$S$-path of $\theta^{s_k(Q^\rev)}_{l^*}$ and the $N$-path of $\al_k$  if $Q$ is of the form (b).} \\
\end{cases}
$$

Thus there exists $x \in \Z_{\ge 0}$ such that $\tau_{s_k(Q^\rev)}^{-x}(\theta^{s_k(Q^\rev)}_{l^*})$ and $ \al_k$
are of the form~\eqref{eq: ext form} in $\Gamma_{s_k(Q)}$ or $\Gamma_{(s_k(Q))^\rev}$.

($Q$ is of type $A_n$, $k = l^*$) In this case, by~\eqref{eq: assu}, we have
$$ (\gamma_k^{s_k(Q)},\al_k) \in \Phi_{s_k(Q)}(k,l)[t] $$
and hence our assertion follows.

($Q$ is of type $D_n$, $1 \le k,l \le n-2$) In this case,~\eqref{eq: assu} implies that the Dynkin diagram $Q$ is one of the forms in~\eqref{eq: unique}, $k^*=k$ and $l^*=l$.
The case when $k=l$ is trivial. Thus we assume that $k \ne l$.
Moreover, $\al=\ve_k-\ve_{k+1}$
$\be=\theta_l^{Q}$, $\al+\be \in \PR$ and $\mul(\al+\be)=1$. Thus $\mathtt{o}^{Q}_t(k,l)=1$ and $\GQ$ can be depicted as follows:

\vskip -2em

\begin{align} \label{eq: D path}
\scalebox{0.79}{{\xy
(-20,0)*{}="DL";(-10,-10)*{}="DD";(0,20)*{}="DT";(10,10)*{}="DR";
"DT"+(-30,-4); "DT"+(65,-4)**\dir{.};
"DD"+(-20,-6); "DD"+(75,-6) **\dir{.};
"DD"+(-20,-10); "DD"+(75,-10) **\dir{.};
"DT"+(-32,-4)*{\scriptstyle 1};
"DT"+(-32,-8)*{\scriptstyle 2};
"DT"+(-32,-12)*{\scriptstyle \vdots};
"DT"+(-32,-16)*{\scriptstyle \vdots};
"DT"+(-34,-36)*{\scriptstyle n-1};
"DT"+(-33,-40)*{\scriptstyle n};
"DD"+(-2,4); "DD"+(8,-6) **\dir{-};
"DD"+(-2,4); "DD"+(20,26) **\dir{.};
"DD"+(18,4); "DD"+(8,-6) **\dir{-};
"DD"+(18,4); "DD"+(40,26) **\dir{.};
"DD"+(52,18); "DD"+(44,26) **\dir{.};
"DD"+(52,18); "DD"+(28,-6) **\dir{-};
"DD"+(18,4); "DD"+(28,-6) **\dir{-};
"DD"+(52,18)*{\bullet};
"DD"+(42,18)*{\scriptstyle \al=\ve_k-\ve_{k+1}};
"DD"+(-2,4)*{\bullet}; "DD"+(-4,4); "DD"+(-10,4) **\dir{.}; "DD"+(-13,4)*{\scriptstyle l};
"DD"+(2,4)*{\scriptstyle \be};
"DD"+(8,-7)*{\scriptstyle k+1-\text{swing}};
"DD"+(40,26); "DD"+(44,26) **\crv{"DD"+(42,28)};
"DD"+(42,29)*{\scriptstyle 2};
"DD"+(8,14)*{\bullet};
"DD"+(13,14)*{\scriptstyle \al+\be};
"DD"+(18,4); "DD"+(8,14) **\dir{.};
\endxy}}
\end{align}
By Theorem~\ref{thm: short path}, all positive roots in the shallow $N$-path of $\al$ shares $-\ve_{k+1}$ as their summand,
and all positive roots in the shallow $S$-path of $\be$ are contained in $\GQ$ indeed. As in the $A_n$-case, $k$ becomes a sink of $s_k(Q)$ and
$\Gamma_{(s_k(Q))}$ can be depicted as follows:

\vskip -1.5em

\begin{align} \label{eq: D path 2}
\scalebox{0.79}{{\xy
(-20,0)*{}="DL";(-10,-10)*{}="DD";(0,20)*{}="DT";(10,10)*{}="DR";
"DT"+(-30,-4); "DT"+(65,-4)**\dir{.};
"DD"+(-20,-6); "DD"+(75,-6) **\dir{.};
"DD"+(-20,-10); "DD"+(75,-10) **\dir{.};
"DT"+(-32,-4)*{\scriptstyle 1};
"DT"+(-32,-8)*{\scriptstyle 2};
"DT"+(-32,-12)*{\scriptstyle \vdots};
"DT"+(-32,-16)*{\scriptstyle \vdots};
"DT"+(-34,-36)*{\scriptstyle n-1};
"DT"+(-33,-40)*{\scriptstyle n};
"DD"+(62,4); "DD"+(52,-6) **\dir{-};
"DD"+(62,4); "DD"+(40,26) **\dir{.};
"DD"+(42,4); "DD"+(52,-6) **\dir{-};
"DD"+(42,4); "DD"+(20,26) **\dir{.};
"DD"+(8,18); "DD"+(16,26) **\dir{.};
"DD"+(8,18); "DD"+(32,-6) **\dir{-};
"DD"+(42,4); "DD"+(32,-6) **\dir{-};
"DD"+(8,18)*{\bullet};
"DD"+(18,18)*{\scriptstyle \al=\ve_k-\ve_{k+1}};
"DD"+(62,4)*{\bullet};
"DD"+(58,4)*{\scriptstyle \be'};
"DD"+(20,26); "DD"+(16,26) **\crv{"DD"+(18,28)};
"DD"+(18,29)*{\scriptstyle 2};
"DD"+(52,14)*{\bullet};
"DD"+(52,-7)*{\scriptstyle k+1-\text{swing}};
"DD"+(47,14)*{\scriptstyle \al+\be'};
"DD"+(42,4); "DD"+(52,14) **\dir{.};
\endxy}}
\end{align}
by Theorem~\ref{Thm: V-swing}. Thus there exist $\be' \in \PR$ such that
$(\be',\al) \in \Phi_{s_k(Q)}(k,l)[t]$.

($Q$ is of type $D_n$, $|\{ k,l \} \cap \{n-1,n\}|=1$) We shall prove only for $1 \le k \le n-2$ and $l \in \{n-1,n\}$.
For the case when $1 \le l \le n-2$ and $k \in \{n-1,n\}$, we can apply the similar strategy.
In this case, the Dynkin diagram $Q$ is of the following form:
$$\xymatrix@R=0.1ex{
&&&&&*{\circ}<3pt> \ar@{-}[dl]  \\
*{\circ}<3pt>\ar@{<-}[r]_<{k-1 \ }&*{\circ}<3pt>\ar@{->}[r]_<{ k \ } &*{\circ}<3pt>\ar@{<.}[rr]_<{ k+1 \ }  &&*{\circ}<3pt>
\ar@{<-}[dr]_<{n-2} \\  &&&&&*{\circ}<3pt>
\ar@{-}[ul]^<{\ \ l}}$$

Furthermore, $\GQ$ and $\Gamma_{(s_k(Q))}$ can be depicted as follows:

\vskip -1.5em

$$
\DpathOne \ \ \DpathTwo
$$
where $\be$ is one of the $\circ$'s. Thus there exist $\be' \in \PR$ such that
$(\be',\al) \in \Phi_{s_k(Q)}(k,l)[t]$ where $\be'$ is one of the $\odot$'s.

($Q$ is of type $D_n$, $\{ k,l \} \subset \{n-1,n\}$) Note that $k$ (resp. $l$) is a sink or a source. Thus the Dynkin quiver is of the following form:
$\raisebox{1em}{\xymatrix@R=0.1ex{
&&*{\circ}<3pt> \ar@{<-}[dl]^<{\ \ k}  \\
 \ar@{.}[r] &*{\circ}<3pt> \ar@{->}[dr]_<{n-2} \\  &&*{\circ}<3pt>
\ar@{-}[ul]^<{\ \ l}}}$

Then $t=\mathsf{h}^\vee-2$ and $|\Phi_Q(k,l)[t]|=2$ by~\eqref{eq: mQ D}. Then it contradicts our assumption~\eqref{eq: assu0}.
\end{proof}

From the above proposition, we can define $D_{k,l}(z)$ in a natural way and call it \defn{the distance polynomial} at $k$ and $l$. We emphasize that $D_{k,l}(z)$ for $E$-types are also well-defined.

\begin{theorem}~\cite{AK,KKK13B} \label{thm: denom 1}
\begin{enumerate}
\item[{\rm (a)}] $d^{A^{(1)}_{n}}_{k,l}(z) = \displaystyle\prod_{x=1}^{\min(k,l,n+1-k,n+1-l)} (z-(-q)^{|k-l|+2x}).$
\item[{\rm (b)}] $ d^{D^{(1)}_{n}}_{k,l}(z)  =
\begin{cases}
\ \displaystyle \prod_{x=1}^{\min(k,l)} (z - (-q)^{|k-l|+2x})(z -(-q)^{2n-2-k-l+2x})  & \text{if } 1 \le k,l \le n-2, \\
\ \displaystyle \prod_{x=1}^{k}(z-(-q)^{n-k-1+2x}) &  \text{if } 1 \le k \le n-2,  l \in \{ n-1, n\}, \\
\ \displaystyle \prod_{x=1}^{\lfloor \frac{n-1}{2} \rfloor} (z-(-q)^{4x}) &  \text{if } \{k,l\}=\{n,n-1\},   \\
\ \displaystyle \prod_{x=1}^{\lfloor \frac{n}{2} \rfloor} (z-(-q)^{4x-2}) &   \text{if }  k=l \in \{ n-1, n\}.
 \end{cases}$

\noindent
In particular, $d^{D^{(1)}_{n}}_{k,l}(z)$ has a zero of
multiplicity $2$ at $z=(-q)^s$ when
\begin{equation} \label{eq: dpole 1}
2 \le k,l \le n-2, \ k+l > n-1, \ 2n-k-l \le s \le k+l \text{ and } s \equiv k+l \ {\rm mod} \ 2.
\end{equation}
\end{enumerate}
\end{theorem}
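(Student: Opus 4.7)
The plan is to derive both formulas by exhibiting $d_{k,l}(z) = D_{k,l}(z)$, where $D_{k,l}(z)$ is the $Q$-independent distance polynomial of Definition~\ref{def: Dist poly Q}, and then to compute $D_{k,l}(z)$ combinatorially from $\Gamma_Q$ for one conveniently chosen adapted $Q$ (which is legitimate by Proposition~\ref{prop: DQ DQ'}).

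For the translation step, fix a Dynkin quiver $Q$ and invoke the exact tensor functor $\F_Q^{(1)}\colon \Rep(R) \to \mathcal{C}^{(1)}_Q$ of Theorem~\ref{thm:gQASW duality 2}. Given a pair $(\alpha,\beta)$ with $\phi_Q^{-1}(\alpha,0)=(k,a)$, $\phi_Q^{-1}(\beta,0)=(l,b)$, the module $V_Q^{(1)}(\alpha) \tens V_Q^{(1)}(\beta)$ is the $\F_Q^{(1)}$-image of $S_Q(\alpha) \conv S_Q(\beta)$, so by Theorem~\ref{thm: for C_Q}(4) this tensor product is reducible iff $(\alpha,\beta)$ is not $[Q]$-simple. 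On the other hand, by \eqref{definition: dm,n} and Theorem~\ref{Thm: basic properties}(2), reducibility is governed by zeros of $d_{k,l}(z)$ at $(-q)^{\pm(a-b)}$. This identifies the zero locus of $d_{k,l}(z)$ with that of $D_{k,l}(z)$; matching orders requires the finer input of Corollary~\ref{cor: l2} (for simple zeros) and Theorem~\ref{thm: num of distinct simples} (for multiple ones).

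For type $A_n$, choose any adapted $Q$. Theorem~\ref{thm: radius 1,2 AD}(a) gives $\mathtt{m}=1$, so by Theorem~\ref{thm: dist upper bound} every $\gdist_Q(\alpha,\beta) \le 1$, hence $D_{k,l}(z)$ has only simple zeros. These occur precisely at $t=|a-b|$ for pairs with $\alpha+\beta \in \Phi^+$, and the sectional path description of $\Gamma_Q$ (Theorem~\ref{Thm: sectional A} and Lemma~\ref{lem: first and last A}) produces a bijection between such pairs and integers $x$ with $1 \le x \le \min(k,l,n+1-k,n+1-l)$, where $t = |k-l| + 2x$; this yields (a). For type $D_n$, run the analogous count: Theorem~\ref{thm: dist upper bound} bounds orders by $\mathtt{m}=2$, and Theorem~\ref{thm: rds mul} combined with Theorem~\ref{thm: socle of Q-pair with al+be in PR} shows that order-$2$ zeros arise precisely when $\alpha+\beta \in \Phi^+$ has $\mul(\alpha+\beta)=2$. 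The swing geometry of Theorem~\ref{Thm: V-swing} together with Lemma~\ref{lem: nfree position D} parametrizes exactly the locus \eqref{eq: dpole 1}, and the remaining simple zeros (coming from multiplicity-free sums, short-path positive roots, and the cases involving $\{n-1,n\}$) are enumerated by Theorem~\ref{thm: short path} and Lemma~\ref{lem: reverse uni}, matching the three remaining factors in (b).

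The main obstacle is matching the \emph{orders} rather than merely the zero loci. For simple zeros this is immediate from Corollary~\ref{cor: l2}, which gives the tensor product composition length exactly $2$. For the order-$2$ factors in type $D$ one must show $V_Q^{(1)}(\alpha) \tens V_Q^{(1)}(\beta)$ has composition length exactly $3$: the lower bound $\ge 3$ follows from Theorem~\ref{thm: num of distinct simples} applied to the two good-neighbor minimal pairs of $\alpha+\beta$ produced in Proposition~\ref{prop: good neighbor} (whose existence relies on \eqref{eq: figure D dist 2}), while the upper bound comes from $\gdist_Q(\alpha,\beta) \le \mathtt{m}=2$. Together these pin down the order of vanishing and complete the identification $d_{k,l}(z) = D_{k,l}(z)$, from which the explicit product formulas (a) and (b) are read off directly from $\Gamma_Q$.
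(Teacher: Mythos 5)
Theorem~\ref{thm: denom 1} is not proved in this paper at all: it is cited from \cite{AK} (type $A$) and \cite{DO94,KKK13B} (type $D$), where the denominators are computed directly from explicit analysis of the normalized $R$-matrices. Your proposal, by contrast, tries to derive these formulas from the distance-polynomial machinery, and that is circular. The entire KLR-type Schur--Weyl apparatus you invoke depends on Theorem~\ref{thm: denom 1} as an input: the Schur--Weyl quiver $\Gamma^{\mathsf{SWD}}$ is defined by assigning $\mathtt{d}_{ij}$ arrows where $\mathtt{d}_{ij}$ is the order of the zero of $d_{V_{s(i)},V_{s(j)}}$, so to know $\Gamma^{\mathsf{SWD}}\simeq Q^{\rm rev}$ (and hence get exactness of $\mathcal{F}_Q^{(1)}$ via Theorem~\ref{thm:gQASW duality 2}) one must already have the denominator formulas. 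Likewise, Theorem~\ref{eq: dist denom}, which asserts $d_{k,l}(z)=D_{k,l}(z;-q)\cdot(z-(-q)^{\mathtt{h}^\vee})^{\delta_{l,k^*}}$, is proved in the paper by computing $D_{k,l}$ combinatorially for a canonical quiver and then \emph{comparing against the known formulas of Theorem~\ref{thm: denom 1}}; it cannot be used to re-derive them.

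There is also a second, independent gap: your order-matching step asserts that the order of the zero of $d_{k,l}(z)$ at $(-q)^t$ is controlled by composition lengths via Corollary~\ref{cor: l2} and Theorem~\ref{thm: num of distinct simples}. But the paper gives no theorem of this kind. Theorem~\ref{Thm: basic properties}(2) only says $V(\varpi_i)_{a_i}\otimes V(\varpi_j)_{a_j}$ is simple iff $d_{i,j}$ has no zero at $a_j/a_i$ or $a_i/a_j$ -- a binary statement saying nothing about orders of vanishing. The passage from ``composition length is $\ell$'' to ``$d_{k,l}$ has a zero of order $\ell-1$'' is precisely the kind of fine structure that is \emph{not} available and would itself require a substantial proof. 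Corollary~\ref{cor: l2} and Theorem~\ref{thm: num of distinct simples} live entirely on the KLR side and say nothing about $d_{k,l}$.

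If you want a non-circular path here, the honest options are either to cite \cite{AK,KKK13B,DO94} as the paper does, or to redo the explicit $R$-matrix computations there. The distance-polynomial framework of Section~\ref{Sec: Qunatum affine} is designed to \emph{reinterpret} the known denominators from $\Gamma_Q$, not to reprove them.
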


\begin{theorem} \label{eq: dist denom}
For any Dynkin quiver $Q$ of type $A_n$ $($resp. $D_n)$, the denominator formulas for $U'_q(A_n^{(1)})$ $($resp. $U'_q(D_n^{(1)}))$ can be read
from $\GQ$ and $\Gamma_{Q^{\rev}}$ as follows:
\begin{align*}
d_{k,l}(z) & = D_{k,l}(z;-q) \times (z-(-q)^{\mathsf{h}^\vee})^{\delta_{l,k^*}}
\end{align*}
where $\mathsf{h}^\vee$ is the dual Coxeter number of $A_n$ $($resp. $D_n)$.
\end{theorem}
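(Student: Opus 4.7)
The plan is to compare the explicit formulas for $d_{k,l}(z)$ given in Theorem~\ref{thm: denom 1} with the distance polynomial $D_{k,l}(z;-q)$ root-by-root, using the KLR-type Schur-Weyl duality functor $\mathcal{F}^{(1)}_Q$ of Theorem~\ref{thm:gQASW duality 2} as the bridge between the representation theory of $U_q'(\g)$ and the combinatorics of $\GQ$. The key translation (highlighted in the Remark following Theorem~\ref{thm: for C_Q}) is that for positive roots $\alpha,\beta$ with $\phi_Q^{-1}(\alpha,0)=(k,a)$ and $\phi_Q^{-1}(\beta,0)=(l,b)$, the module $V_Q^{(1)}(\alpha)\otimes V_Q^{(1)}(\beta)$ is reducible if and only if $(\alpha,\beta)$ is not $[Q]$-simple, if and only if $d_{k,l}(z)$ vanishes at $(-q)^{a-b}$ or $(-q)^{b-a}$.

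For each $t$ with $0\le t<\mathtt{h}^\vee$, I would first verify that $\mathtt{o}^{\overline{Q}}_t(k,l)>0$ if and only if $(-q)^t$ is a root of $d_{k,l}(z)$. The forward direction is immediate from the translation above. For the converse, given any factor $(z-(-q)^t)$ appearing in the explicit formula of Theorem~\ref{thm: denom 1}, I would exhibit a non-$[Q]$-simple pair in $\Phi_{Q}(k,l)[t]\cup\Phi_{Q^\rev}(k,l)[t]$ using the combinatorial descriptions of $\GQ$: maximal sectional paths (Theorem~\ref{Thm: sectional A}) in type $A$, and swings together with shallow sectional paths (Theorems~\ref{Thm: V-swing} and~\ref{thm: short path}) in type $D$. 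Each factor $(z-(-q)^{|k-l|+2x})$ in $d^{A^{(1)}_n}_{k,l}(z)$, or the pair of factors $(z-(-q)^{|k-l|+2x})$ and $(z-(-q)^{2n-2-k-l+2x})$ in $d^{D^{(1)}_n}_{k,l}(z)$, corresponds geometrically to pairs produced by the explicit subquiver geometry analyzed in the proof of Proposition~\ref{prop: Q-socle for pair}.

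Next, one needs to match multiplicities. In type $A_n$, Theorem~\ref{thm: dist upper bound} combined with Remark~\ref{rem: distQ gdistQ AD} forces $\gdist_Q\le\mathtt{m}=1$, so $\mathtt{o}^{\overline{Q}}_t(k,l)\in\{0,1\}$, matching the fact that $d^{A^{(1)}_n}_{k,l}(z)$ is squarefree. In type $D_n$, a double zero of $d^{D^{(1)}_n}_{k,l}(z)$ at $(-q)^s$ occurs exactly in the range~\eqref{eq: dpole 1}, and such an $s$ corresponds to some $\gamma\in\PR$ with $\mul(\gamma)=2$ admitting a decomposition $\alpha+\beta=\gamma$ at residues $(k,l)$; Theorem~\ref{thm: rds mul} then gives $\rds_Q(\gamma)=2$, and the explicit description of non-$[Q]$-minimal pairs in~\eqref{eq: figure D dist 2} (from the proof of Theorem~\ref{thm: rds mul}) furnishes a pair with $\gdist_Q=\dist_Q=2$ in $\Phi_Q(k,l)[s]$ or $\Phi_{Q^\rev}(k,l)[s]$, while Theorem~\ref{thm: dist upper bound} gives the matching upper bound $\mathtt{o}^{\overline{Q}}_s(k,l)\le 2$.

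Finally, the Kronecker-delta correction $(z-(-q)^{\mathtt{h}^\vee})^{\delta_{l,k^*}}$ is handled separately. Using the identity $\mQ_i+\mQ_{i^*}+2=\mathtt{h}^\vee$ (verified by inspection of AR-quivers) together with the characterization~\eqref{eq: cha AR} of $\phi_Q^{-1}(\PR,0)$, one checks that no two vertices of $\GQ$ or $\Gamma_{Q^\rev}$ at residues $(k,l)$ can have $p$-difference equal to $\mathtt{h}^\vee$, so $\mathtt{o}^{\overline{Q}}_{\mathtt{h}^\vee}(k,l)=0$ automatically; the extra root of $d_{k,k^*}(z)$ at $(-q)^{\mathtt{h}^\vee}$ is the ``duality zero'' arising from the evaluation morphism in~\eqref{eq: dual affine} with $p^*=(-q)^{\mathtt{h}^\vee}$, and is present exactly when $l=k^*$. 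The hard part will be the multiplicity matching in type $D_n$, namely ensuring that each double zero predicted by~\eqref{eq: dpole 1} is realized by a pair of the correct residues and $p$-spread in $\GQ$ or $\Gamma_{Q^\rev}$; this requires combining the quiver-independence statement (Proposition~\ref{prop: DQ DQ'}) with a close reading of the swing structure of Lemmas~\ref{lem: reverse uni} and~\ref{lem: first}.
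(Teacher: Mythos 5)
Your proposal captures the main ideas but takes a somewhat different organizational route from the paper's proof, which is purely combinatorial. The paper invokes Proposition~\ref{prop: DQ DQ'} to reduce to a single canonically-oriented quiver $Q$, draws $\GQ$ and $\Gamma_{Q^\rev}$ explicitly, and then matches the enumerated subquiver configurations with $\gdist_Q\in\{1,2\}$ against the factors of $d_{k,l}(z)$ from Theorem~\ref{thm: denom 1} root-by-root. You instead use the Schur--Weyl observation (the remark following Theorem~\ref{thm: for C_Q}) as a bridge: $(\alpha,\beta)$ is non-$[Q]$-simple iff $d_{k,l}$ vanishes at $(-q)^{|a-b|}$. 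This is a legitimate reorganization and not circular, since that remark only needs the \emph{existence} of zeros (to set up $\F^{(1)}_Q$), not their multiplicities; it gives you $\mathtt{o}^{\overline{Q}}_t(k,l)>0\Rightarrow (-q)^t$ is a root for free, which the paper instead re-derives by direct inspection.

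However, the real content of the theorem --- which your proposal leaves as a sketch --- is precisely what the paper handles by explicit enumeration. You still need (a) for every factor $(z-(-q)^t)$ in the explicit formula, an actual pair of residues $(k,l)$ and $p$-spread $t$ sitting in $\GQ$ or $\Gamma_{Q^\rev}$, and (b) in type $D$, the equivalence $\mathtt{o}^{\overline{Q}}_s(k,l)=2 \iff$ the conditions in~\eqref{eq: dpole 1} hold. Your argument only establishes the implication ``double zero at $(-q)^s$ $\Rightarrow$ $\mathtt{o}^{\overline{Q}}_s(k,l)=2$''; the converse (or a degree count showing both sides have the same total degree) is not addressed, so you have not ruled out a $\gdist_Q=2$ pair living at a residue and spread where $d_{k,l}$ has only a simple zero. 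These are genuine but fillable gaps, and the paper fills them via the explicit canonical-quiver figures. Your handling of the $\delta_{l,k^*}$ factor through the identity $\mQ_k+\mQ_{k^*}+2=\mathtt{h}^\vee$ is correct and matches the exceptional case $k=l^*$, $|a-b|=\mathtt{h}^\vee$ that the paper flags in its proof.
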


\begin{proof}
By Proposition~\ref{prop: DQ DQ'}, It is enough to show our assertion for a fixed Dynkin quiver $Q$. Among $2^{n-1}$-many Dynkin quivers, we choose a canonical one
whose height function $\xiQ$ is given by $\xiQ_i=n-\Delta(1,i)$:
\begin{equation}
\begin{aligned}
& Q \ : \ \xymatrix@R=3ex{ *{ \circ }<3pt> \ar@{->}[r]_<{1}  &*{\circ}<3pt>
\ar@{->}[r]_<{2} &\cdots\ar@{->}[r] &*{\circ}<3pt>
\ar@{->}[r]_<{n-1} &*{\circ}<3pt>
\ar@{-}[l]^<{\ \ n}}  \quad {and} \quad
&   \raisebox{1.3em}{\xymatrix@R=0.1ex{
&&&&*{\circ}<3pt> \ar@{<-}[dl]^<{n-1}  \\
*{\circ}<3pt>\ar@{->}[r]_<{1 \ }&*{\circ}<3pt>\ar@{->}[r]_<{2 \ } &\cdots\ar@{->}[r]  &*{\circ}<3pt>
\ar@{->}[dr]_<{n-2} \\  &&&&*{\circ}<3pt>
\ar@{-}[ul]^<{\ \ n}}}.
\end{aligned}
\end{equation}

($Q$ of type $A_n$) The Auslander-Reiten quivers $\GQ$ and $\Gamma_{Q^\rev}$ are given as follows:
\begin{align*}
\GQ = \hspace{-5ex} \raisebox{3.5em}{\scalebox{0.74}{\xymatrix@R=1ex@C=0.1ex{
[n]\ar[dr] && [n\hspace{-0.5ex}-\hspace{-0.5ex}1]\ar[dr] && \cdots \ar[dr]  && [2]\ar[dr] && [1] \\
& [n\hspace{-0.5ex}-\hspace{-0.5ex}1,n] \ar[dr]\ar[ur] && \cdots \ar[dr]\ar[ur] &  &  [2,3] \ar[dr]\ar[ur] && [1,2] \ar[ur]  \\
&& \ddots \ar[dr]\ar[ur] && \cdots \ar[dr]\ar[ur] &&   \iddots \ar[ur]  \\
&&&[2,n]  \ar[dr]\ar[ur] && [1,n\hspace{-0.5ex}-\hspace{-0.5ex}1]  \ar[ur]\\
&&&& [1,n] \ar[ur]
}}}
\ \
\Gamma_{Q^\rev} = \hspace{-5ex}  \raisebox{3.5em}{\scalebox{0.74}{\xymatrix@R=1ex@C=0.1ex{
&&&& [1,n] \ar[dr]\\
&&& [1,n\hspace{-0.5ex}-\hspace{-0.5ex}1] \ar[ur]\ar[dr] & & [2,n] \ar[dr]\\
&& \ \ \iddots \ \ \ar[ur]\ar[dr] && \cdots \ar[ur]\ar[dr]  && \ddots \ar[dr]\\
& [1,2]\ar[ur]\ar[dr]  && \cdots\ar[ur]\ar[dr]  &&  [n\hspace{-0.5ex}-\hspace{-0.5ex}2,n\hspace{-0.5ex}-\hspace{-0.5ex}1] \ar[ur]\ar[dr] && [n\hspace{-0.5ex}-\hspace{-0.5ex}1,n] \ar[dr]\\
[1]\ar[ur]&&[2]\ar[ur]&& \cdots \ar[ur]&& [n\hspace{-0.5ex}-\hspace{-0.5ex}1]\ar[ur] && [n]
}}}
\end{align*}
Thus $\mQ_k= n-k$ and $m^{Q^\rev}_k= k-1$ for $1 \le k \le n$.

Recall that, for any Dynkin quiver $Q'$ of type $A_n$, we have $0 \le \gdist_{Q'}(\al,\be) \le 1$. In particular, if  $\gdist_{Q'}(\al,\be) = 1$,
$\al,\be$ is located in $Q'$ satisfying one of the following forms:

\vskip -1.5em

\begin{align} \label{eq: Dist poly A}
\scalebox{0.8}{{\xy
(0,0)*{}="DL";(10,-10)*{}="DD";(20,20)*{}="DT";(30,10)*{}="DR";
"DL"; "DD" **\dir{-};"DL"; "DT"+(-4,-4) **\dir{.};
"DT"+(-40,-4); "DT"+(120,-4)**\dir{.};
"DD"+(-30,-6); "DD"+(130,-6) **\dir{.};
"DT"+(4,-4); "DR" **\dir{.};"DR"; "DD" **\dir{-};
"DL"+(-10,0)*{\scriptstyle (l,q)};
"DL"+(10,0)*{{\rm (a)}};
"DL"+(-1,0); "DL"+(-6,0) **\dir{.};
"DR"+(10,0)*{\scriptstyle (k,p)};
"DR"+(1,0); "DR"+(6,0) **\dir{.};
"DL"*{\bullet};"DR"*{\bullet};
"DT"+(4,-4); "DT"+(-4,-4) **\crv{"DT"+(0,-2)};
"DT"+(0,0)*{\scriptstyle 2};
"DD"+(0,-2)*{\scriptstyle \al+\be};
"DT"+(-44,-4)*{\scriptstyle 1};
"DT"+(-44,-8)*{\scriptstyle 2};
"DT"+(-44,-12)*{\scriptstyle \vdots};
"DT"+(-44,-16)*{\scriptstyle \vdots};
"DT"+(-44,-36)*{\scriptstyle n};
"DL"+(40,-10); "DD"+(36,-6) **\dir{.};
"DR"+(40,-10); "DD"+(44,-6) **\dir{.};
"DT"+(40,-10); "DR"+(40,-10) **\dir{-};
"DT"+(40,-10); "DL"+(40,-10) **\dir{-};
"DL"+(30,-10)*{\scriptstyle (l,q)};
"DL"+(39,-10); "DL"+(34,-10) **\dir{.};
"DR"+(50,-10)*{\scriptstyle (k,p)};
"DR"+(41,-10); "DR"+(46,-10) **\dir{.};
"DL"+(40,-10)*{\bullet};"DR"+(40,-10)*{\bullet};
"DT"+(40,-10)*{\bullet};
"DT"+(40,-8)*{\scriptstyle \al+\be};
"DT"+(40,-10)*{\bullet};
"DD"+(44,-6); "DD"+(36,-6) **\crv{"DD"+(40,-8)};
"DD"+(40,-10)*{\scriptstyle 2};
"DL"+(60,0)*{{\rm(b)}};
%
"DL"+(100,-3); "DD"+(100,-3) **\dir{-};
"DR"+(96,-7); "DD"+(100,-3) **\dir{-};
"DT"+(96,-7); "DR"+(96,-7) **\dir{-};
"DT"+(96,-7); "DL"+(100,-3) **\dir{-};
"DL"+(90,-3)*{\scriptstyle \be};
"DL"+(99,-3); "DL"+(94,-3) **\dir{.};
"DR"+(106,-7)*{\scriptstyle \al};
"DR"+(97,-7); "DR"+(102,-7) **\dir{.};"DL"+(115,0)*{{\rm(c)}};
"DL"+(100,-3)*{\bullet};"DR"+(96,-7)*{\bullet};
%
\endxy}}
\end{align}

\vskip -1.5em

\noindent
for $\phi^{-1}_{Q'}(\al,0)=(k,p)$ and $\phi^{-1}_{Q'}(\be,0)=(l,q)$. Then one can see that
$$\text{ $(a)$, $(b)$ are contained in $Q$ and $(b)$, $(c)$ are contained in $Q^\rev$.}$$

Furthermore, there exists $1 \le s \le \min(k,l,n+1-k,n+1-l)$ such that the length between $\al$ and $\be$, that is $|p-q|$,
is equal to $|k-l|+2s$. Conversely, for $1 \le s \le \min(k,l,n+1-k,n+1-l)$, we can draw one of {\rm (a)} $\sim$ {\rm (c)} in $\GQ$ or $\Gamma_{Q^\rev}$
satisfying whose length between $\al$ and $\be$ is $|a-b|=|k-l|+2s$, except when $k =l^*$ and $|a-b|=\mathsf{h}^\vee$ (see Remark~\ref{rem: comb refl}).
Thus our assertion follows.

\smallskip

($Q$ of type $D$) The AR-quivers $\GQ$ and $\Gamma_{Q^\rev}$ are given as follows:
\begin{align*}
&\GQ = \raisebox{3.5em}{\scalebox{0.6}{\xymatrix@R=1ex@C=0.05ex{
&&&&& \lf 1 , n-1\rf\ar[dr]  && \lf n-2 ,  -n+1 \rf\ar[dr]  && \cdots \ar[dr]  && \lf 2 , -3 \rf \ar[dr] && \lf 1 , -2 \rf \\
&&&& \lf 2 , n-1 \rf \ar[dr] \ar[ur] && \lf 1 ,  n-2 \rf\ar[dr] \ar[ur]  && \cdots \ar[dr]\ar[ur]   && \lf 2 , -4 \rf\ar[dr] \ar[ur]  && \lf 1 , -3 \rf \ar[ur]  \\
&&&  \lf 3 , n-1 \rf\ar[dr] \ar[ur]  && \lf 2 ,  n-2 \rf \ar[dr] \ar[ur] && \cdots\ar[dr] \ar[ur]  && \lf 2 , -5 \rf\ar[dr] \ar[ur]  && \lf 1 , -4 \rf \ar[ur]   \\
&&  \iddots\ar[dr] \ar[ur]  &&  \iddots \ar[dr] \ar[ur]  &&  \iddots\ar[dr] \ar[ur]   & &  \iddots\ar[dr] \ar[ur]  &&  \iddots \ar[ur] \\
& \lf n-2 , n-1\rf \ar[ddr] \ar[dr] \ar[ur]&& \lf n-3 , n-2\rf \ar[ddr] \ar[dr] \ar[ur]&& \cdots \ar[ddr] \ar[dr] \ar[ur]&& \lf 1 , 2\rf \ar[ddr] \ar[dr] \ar[ur]&&  \lf 1 , -n+1\rf \ar[ur] \\
\al_{n-1^*} \ar[ur] && \cdots\ar[ur] && \cdots  \ar[ur]&& \lf2 , n\rf \ar[ur] && \lf 1 , -n\rf \ar[ur]\\
\al_{n^*} \ar[uur] && \cdots\ar[uur] && \cdots  \ar[uur]&& \lf2 , -n\rf\ar[uur] && \lf 1 , n\rf \ar[uur]
}}} \allowdisplaybreaks\\
&\Gamma_{Q^\rev} = \raisebox{3.5em}{\scalebox{0.6}{\xymatrix@R=1ex@C=0.05ex{
 \lf 1 , -2\rf\ar[dr]  && \lf 2 ,  -3 \rf\ar[dr]  && \cdots \ar[dr]  && \lf n-2 , -n+1 \rf \ar[dr] && \lf 1 , n-1 \rf \ar[dr]\\
& \lf 1 , -3 \rf \ar[dr] \ar[ur] && \lf 2 ,  -4 \rf\ar[dr] \ar[ur]  && \cdots \ar[dr]\ar[ur]   && \lf 1 , n-2 \rf\ar[dr] \ar[ur]  && \lf 2 , n-1 \rf \ar[dr] \\
&&  \lf 1 , -4 \rf\ar[dr] \ar[ur]  && \lf 2 ,  -5 \rf \ar[dr] \ar[ur] && \cdots\ar[dr] \ar[ur]  && \lf 2 , n-2 \rf\ar[dr] \ar[ur]  && \lf 3 , n-1 \rf \ar[dr] \\
&&&  \ddots\ar[dr] \ar[ur]  &&  \ddots \ar[dr] \ar[ur]  &&  \ddots\ar[dr] \ar[ur]   &&  \ddots\ar[dr] \ar[ur]  &&  \ddots \ar[dr]\\
&&&& \lf1 , -n+1\rf \ar[ddr] \ar[dr] \ar[ur]&& \lf 1 , 2\rf \ar[ddr] \ar[dr] \ar[ur]&& \cdots \ar[ddr] \ar[dr] \ar[ur]&& \lf n-3 , n-2\rf \ar[ddr] \ar[dr] \ar[ur] &&  \lf n-2 , n-1\rf  \ar[ddr] \ar[dr]\\
&&&&& \lf1 , -n\rf \ar[ur] && \lf2 , n\rf\ar[ur] && \cdots  \ar[ur]&& \cdots \ar[ur] && \al_{n-1}\\
&&&&& \lf1 , n\rf \ar[uur] && \lf2 , -n\rf\ar[uur] && \cdots  \ar[uur]&& \cdots \ar[uur] && \al_{n}
}}}
\end{align*}

\vskip -1.5em

\noindent
Thus $\mQ_k=m^{Q^\rev}_k= n-2$ for $1 \le k \le n$.

Recall that, for any Dynkin quiver $Q'$ of type $D$, we have $0 \le \gdist_{Q'}(\al,\be) \le 2$.

\noindent
(1) If  $\gdist_{Q'}(\al,\be) = 1$,
$\al$ and $\be$ are located in $Q'$ satisfying one of the following forms:

\vskip -1.5em

\begin{align*}
& \scalebox{0.79}{{\xy
(-20,0)*{}="DL";(-10,-10)*{}="DD";(0,20)*{}="DT";(10,10)*{}="DR";
"DT"+(-30,-4); "DT"+(120,-4)**\dir{.};
"DD"+(-20,-6); "DD"+(130,-6) **\dir{.};
"DD"+(-20,-10); "DD"+(130,-10) **\dir{.};
"DT"+(-32,-4)*{\scriptstyle 1};
"DT"+(-32,-8)*{\scriptstyle 2};
"DT"+(-32,-12)*{\scriptstyle \vdots};
"DT"+(-32,-16)*{\scriptstyle \vdots};
"DT"+(-34,-36)*{\scriptstyle n-1};
"DT"+(-33,-40)*{\scriptstyle n};
"DL"+(-10,0); "DD"+(-10,0) **\dir{-};"DL"+(-10,0); "DT"+(-14,-4) **\dir{.};
"DT"+(-6,-4); "DR"+(-10,0) **\dir{.};"DR"+(-10,0); "DD"+(-10,0) **\dir{-};
"DL"+(-6,0)*{\scriptstyle \be};
"DL"+(0,0)*{{\rm(i)}};
"DR"+(-14,0)*{\scriptstyle \al};
"DL"+(-10,0)*{\bullet};
"DR"+(-10,0)*{\bullet};
"DT"+(-6,-4); "DT"+(-14,-4) **\crv{"DT"+(-10,-2)};
"DT"+(-10,0)*{\scriptstyle 2};
"DD"+(-10,0)*{\bullet};
"DD"+(-10,-2)*{\scriptstyle \al+\be};
"DL"+(15,-3); "DD"+(15,-3) **\dir{.};
"DR"+(11,-7); "DD"+(15,-3) **\dir{.};
"DT"+(11,-7); "DR"+(11,-7) **\dir{-};
"DT"+(11,-7); "DL"+(15,-3) **\dir{-};
"DL"+(19,-3)*{\scriptstyle \be};
"DR"+(7,-7)*{\scriptstyle \al};
"DL"+(30,0)*{{\rm(ii)}};
"DL"+(15,-3)*{\bullet};"DR"+(11,-7)*{\bullet};
"DD"+(28,4); "DD"+(38,-6) **\dir{-};
"DD"+(28,4); "DD"+(50,26) **\dir{.};
"DD"+(48,4); "DD"+(38,-6) **\dir{-};
"DD"+(48,4); "DD"+(70,26) **\dir{.};
"DD"+(82,18); "DD"+(74,26) **\dir{.};
"DD"+(82,18); "DD"+(58,-6) **\dir{-};
"DD"+(48,4); "DD"+(58,-6) **\dir{-};
"DD"+(82,18)*{\bullet};
"DD"+(78,18)*{\scriptstyle \al};
"DD"+(28,4)*{\bullet};
"DD"+(32,4)*{\scriptstyle \be};
"DD"+(74,26); "DD"+(70,26) **\crv{"DD"+(72,28)};
"DD"+(72,30)*{\scriptstyle 2};
"DD"+(38,14)*{\bullet};
"DD"+(43,14)*{\scriptstyle \al+\be};
"DL"+(59,0)*{{\rm(iii)}};
"DD"+(48,4); "DD"+(38,14) **\dir{.};
%
"DD"+(80,4); "DD"+(90,-6) **\dir{-};
"DD"+(96,0); "DD"+(90,-6) **\dir{-};
"DD"+(96,0); "DD"+(102,-6) **\dir{-};
"DD"+(122,16); "DD"+(102,-6) **\dir{-};
"DD"+(80,4); "DD"+(102,26) **\dir{.};
"DD"+(122,16); "DD"+(112,26) **\dir{.};
"DD"+(96,0); "DD"+(117,21) **\dir{.};
"DD"+(96,0); "DD"+(86,10) **\dir{.};
"DD"+(117,21)*{\bullet};
"DD"+(114,21)*{\scriptstyle s_1};
"DD"+(86,10)*{\bullet};
"DD"+(89,10)*{\scriptstyle s_2};
"DD"+(102,26); "DD"+(112,26) **\crv{"DD"+(107,28)};
"DD"+(107,29)*{\scriptstyle 2 >};
"DD"+(80,4)*{\bullet};
"DL"+(107,0)*{{\rm (iv)}};
"DD"+(84,4)*{\scriptstyle \be};
"DD"+(122,16)*{\bullet};
"DD"+(117,16)*{\scriptstyle \al};
\endxy}}
\allowdisplaybreaks \\
& \scalebox{0.79}{{\xy
(-20,0)*{}="DL";(-10,-10)*{}="DD";(0,20)*{}="DT";(10,10)*{}="DR";
"DT"+(-30,-4); "DT"+(120,-4)**\dir{.};
"DD"+(-20,-6); "DD"+(130,-6) **\dir{.};
"DD"+(-20,-10); "DD"+(130,-10) **\dir{.};
"DT"+(-34,-4)*{\scriptstyle 1};
"DT"+(-34,-8)*{\scriptstyle 2};
"DT"+(-34,-12)*{\scriptstyle \vdots};
"DT"+(-34,-16)*{\scriptstyle \vdots};
"DT"+(-36,-36)*{\scriptstyle n-1};
"DT"+(-34,-40)*{\scriptstyle n};
"DD"+(0,4); "DD"+(10,-6) **\dir{-};
"DD"+(0,4); "DD"+(-10,-6) **\dir{-};
"DD"+(0,4); "DD"+(15,19) **\dir{.};
"DD"+(25,9);"DD"+(10,-6) **\dir{-};
"DD"+(25,9);"DD"+(15,19) **\dir{.};
"DD"+(25,9)*{\bullet};
"DD"+(15,19)*{\bullet};
"DD"+(-10,-6)*{\circ};
"DD"+(-10,-10)*{\circ};
"DD"+(-10,-8)*{\scriptstyle \be};
"DD"+(29,9)*{\scriptstyle \al};
"DD"+(10,4)*{{\rm (v)}};
"DD"+(40,4); "DD"+(50,-6) **\dir{-};
"DD"+(40,4); "DD"+(30,-6) **\dir{-};
"DD"+(40,4); "DD"+(62,26) **\dir{.};
"DD"+(75,19);"DD"+(50,-6) **\dir{-};
"DD"+(75,19);"DD"+(68,26) **\dir{.};
"DD"+(75,19)*{\bullet};
"DD"+(50,4)*{{\rm (vi)}};
"DD"+(79,19)*{\scriptstyle \al};
"DD"+(30,-6)*{\circ};
"DD"+(30,-10)*{\circ};
"DD"+(30,-8)*{\scriptstyle \be};
"DD"+(62,26); "DD"+(68,26) **\crv{"DD"+(65,28)};
"DD"+(65,30)*{\scriptstyle 2};
%
"DD"+(92,16); "DD"+(114,-6) **\dir{-};
"DD"+(92,16); "DD"+(70,-6) **\dir{-};
"DD"+(92,4)*{ {\rm (vii)}};
"DD"+(114,-6)*{\circ};
"DD"+(114,-10)*{\circ};
"DD"+(114,-8)*{\scriptstyle \al};
"DD"+(70,-6)*{\circ};
"DD"+(70,-10)*{\circ};
"DD"+(70,-8)*{\scriptstyle \be};
\endxy}} 
\end{align*}
where $\phi^{-1}_{Q'}(\al,a)=k$, $\phi^{-1}_{Q'}(\be)=(l,b)$ and $k \le l$.

(2) If  $\gdist_{Q'}(\al,\be) = 2$,
$\al$ and $\be$ are located in $Q'$ satisfying one of the following forms:

\vskip -1.5em

$$
\scalebox{0.79}{{\xy
(-20,0)*{}="DL";(-10,-10)*{}="DD";(0,20)*{}="DT";(10,10)*{}="DR";
"DT"+(-30,-4); "DT"+(70,-4)**\dir{.};
"DD"+(-20,-6); "DD"+(80,-6) **\dir{.};
"DD"+(-20,-10); "DD"+(80,-10) **\dir{.};
"DT"+(-32,-4)*{\scriptstyle 1};
"DT"+(-32,-8)*{\scriptstyle 2};
"DT"+(-32,-12)*{\scriptstyle \vdots};
"DT"+(-32,-16)*{\scriptstyle \vdots};
"DT"+(-34,-36)*{\scriptstyle n-1};
"DT"+(-33,-40)*{\scriptstyle n};
"DD"+(-10,4); "DD"+(0,-6) **\dir{-};
"DD"+(6,0); "DD"+(0,-6) **\dir{-};
"DD"+(6,0); "DD"+(12,-6) **\dir{-};
"DD"+(32,16); "DD"+(12,-6) **\dir{-};
"DD"+(-10,4); "DD"+(12,26) **\dir{.};
"DD"+(32,16); "DD"+(22,26) **\dir{.};
"DD"+(6,0); "DD"+(27,21) **\dir{.};
"DD"+(6,0); "DD"+(-4,10) **\dir{.};
"DD"+(12,26); "DD"+(22,26) **\crv{"DD"+(17,28)};
"DD"+(17,30)*{\scriptstyle 2};
"DD"+(-10,4)*{\bullet};
"DL"+(17,0)*{{\rm(viii)}};
"DD"+(-6,4)*{\scriptstyle \be};
"DD"+(32,16)*{\bullet};
"DD"+(27,16)*{\scriptstyle \al};
"DD"+(35,4); "DD"+(45,-6) **\dir{-};
"DD"+(51,0); "DD"+(45,-6) **\dir{-};
"DD"+(51,0); "DD"+(57,-6) **\dir{-};
"DD"+(69,8); "DD"+(57,-6) **\dir{-};
"DD"+(35,4); "DD"+(54,23) **\dir{.};
"DD"+(69,8); "DD"+(54,23) **\dir{.};
"DD"+(51,0); "DD"+(64,13) **\dir{.};
"DD"+(51,0); "DD"+(41,10) **\dir{.};
"DD"+(35,4)*{\bullet};
"DL"+(62,0)*{{\rm(ix)}};
"DD"+(39,4)*{\scriptstyle \be};
"DD"+(69,8)*{\bullet};
"DD"+(65,8)*{\scriptstyle \al};
\endxy}} \qquad
 2 \le k ,l \le n-2
$$
where $\phi^{-1}_{Q'}(\al)=(k,a)$, $\phi^{-1}_{Q'}(\be)=(l,b)$  and $k \le l$.

By choosing $\al=\gamma^Q_k$, one can check that $(\al,\be)$ in {\rm (i)} $\sim$ {\rm (ix)} is contained in $\Phi_Q(k,l)\big[|a-b|\big]$ when $k \le l$.
For the case when $k \ge l$, one can easily check that $(\al,\be)$ in {\rm (i)} $\sim$ {\rm (ix)} is contained in $\Phi_{Q^\rev}(k,l)[|a-b|]$, by choosing
$\be=\theta^Q_{l^*}$.

Furthermore, the length of a path between $\al$ and $\be$, $s \seteq |a-b|$, satisfies the condition in~\eqref{eq: dpole 1} only when
$(\al,\be)$ is one of the forms {\rm (viii)} $\sim$ {\rm (ix)}.

Conversely, we can draw one of the diagrams {\rm (i)} $\sim$ {\rm (ix)} in $\GQ$ or $\Gamma_{Q^\rev}$
satisfying whose length between $\al$ and $\be$, ($\phi^{-1}_{Q}(\al)=(k,a)$, $\phi^{-1}_{Q}(\be)=(l,b)$ or $\phi^{-1}_{Q^\rev}(\al)=(k,a)$, $\phi^{-1}_{Q^\rev}(\be)=(l,b)$)
is given as
$$|a-b| =
\begin{cases}
\begin{matrix}|k-l|+2x \text{ or } \\
 2n-2-k-l+2x \end{matrix} & \text{ if } 1 \le k,l \le n-2 \text{ and } 1 \le x \le \min(k,l), \\
\ \ n-k-1+2x & \text{ if } 1 \le k \le n-2, \ l \in \{ n-1, n\} \text{ and } 1 \le k \le x, \\
\qquad 4x & \text{ if } \{k,l\}=\{n,n-1\} \text{ and } 1 \le x \le \lfloor \frac{n-1}{2} \rfloor, \\
\qquad 4x-2 & \text{ if } \{k,l\}=\{n,n-1\} \text{ and } 1 \le x \le \lfloor \frac{n}{2} \rfloor,
\end{cases}$$
except when $k =l^*$ and $|a-b|=\mathsf{h}^\vee$  (see Remark~\ref{rem: comb refl}).
Thus our assertion follows.
\end{proof}

\begin{corollary} \label{cor: dist denom}
For $k,l \in I_0$ of $U'_q(E_{6,7,8}^{(1)})$, let us denote by $\widetilde{D}_{k,l}(z;-q)$ which has roots of order $1$ and whose linear factors are the same as the ones of
$D_{k,l}(z;-q)$. Then the denominator formulas $d_{k,l}(z)$ of $U'_q(E_{6,7,8}^{(1)})$
is divisible by $\widetilde{D}_{k,l}(z;-q) \times (z-(-q)^{\mathsf{h}^\vee})^{\delta_{l,k^*}}$.
\end{corollary}

\begin{proof}
Our assertion follows from Theorem~\ref{thm: for C_Q} and Remark~\ref{rem: important observation}.
\end{proof}

\begin{theorem} \label{thm: all tensor 1}
Let $\g$ be a quantum affine algebra of type $A_n^{(1)}$ or
$D_n^{(1)}$. For any $i,j \in I_0$ and $x,y \in \ko$, we can compute
the simple socle or the simple head of the tensor product of two
fundamental representations $$V \seteq V(\varpi_i)_x \tens
V(\varpi_j)_y$$ as a $U'_q(\g)$-module. Moreover, it is a tensor product of fundamental representations.
\end{theorem}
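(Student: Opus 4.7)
The plan is to transport the problem from the quantum affine side to the KLR side via the Schur--Weyl duality functor $\F^{(1)}_Q$ of Theorem \ref{thm:gQASW duality 2}, and then to invoke the socle formula in Theorem \ref{thm: for C_Q}(3) together with the finiteness statement of Remark \ref{rem: socle}. First, by Theorem \ref{Thm: basic properties}(2), $V = V(\varpi_i)_x \tens V(\varpi_j)_y$ is already simple unless $d_{i,j}(z)$ has a zero at $y/x$ or at $x/y$; in the simple case socle, head, and $V$ itself all agree and there is nothing to do. Otherwise, Theorem \ref{Thm: basic properties}(1),(4) together with Theorem \ref{thm: denom 1} force $y/x = (-q)^t$ for some positive integer $t$ in the root locus of $d_{i,j}(z)$, after possibly exchanging $i \leftrightarrow j$ so that $t > 0$.

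By Theorem \ref{eq: dist denom} we have $d_{i,j}(z) = D_{i,j}(z;-q) \cdot (z-(-q)^{\mathtt{h}^\vee})^{\delta_{j,i^*}}$; first I would treat the case when $t$ is a root of the distance polynomial $D_{i,j}(z;-q)$. The proof of Theorem \ref{eq: dist denom} explicitly produces a Dynkin quiver $Q$ of the ambient finite type (allowing $Q$ or $Q^{\rev}$) and positive roots $\al,\be \in \Phi^+$ such that $\phi_Q(\al,0) = (i,a)$, $\phi_Q(\be,0) = (j,b)$, and $|a-b| = t$. Since the composition series of a tensor product in $\mathcal{C}_\g$ depends only on the ratio of the spectral parameters (via the standard $\ko^\times$-rescaling automorphism of $U_q'(\g)$), this rescaling identifies $V$ with $V^{(1)}_Q(\al) \tens V^{(1)}_Q(\be)$ at the level of Jordan--H\"older factors, and hence at the level of socle and head. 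Theorem \ref{thm: for C_Q}(3) then yields
\begin{align*}
\soc\!\big(V^{(1)}_Q(\al) \tens V^{(1)}_Q(\be)\big) \simeq V^{(1)}_Q\!\big(\soc_Q(\al,\be)\big),
\end{align*}
and Remark \ref{rem: socle} guarantees that $\soc_Q(\al,\be) = (\ga_1,\ldots,\ga_r)$ is a basic sequence with $r \le 3$, so $\soc(V)$ is (up to the spectral twist) the tensor product $V^{(1)}_Q(\ga_1) \tens \cdots \tens V^{(1)}_Q(\ga_r)$ of fundamental representations. The simple head is then extracted from Theorem \ref{thm: socle head}(iii), which identifies $\hd(V(\varpi_i)_x \tens V(\varpi_j)_y) \simeq \soc(V(\varpi_j)_y \tens V(\varpi_i)_x)$; the same procedure applies to the flipped tensor product.

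The hard part will be the exceptional root $j = i^*$, $t = \mathtt{h}^\vee$: this value of $t$ is \emph{not} realized by any pair of vertices within a single AR-quiver, since the window $\xiQ_i - 2\mQ_i \le p \le \xiQ_i$ characterizing $\GQ$ inside $\Z Q$ (see \eqref{eq: cha AR}) excludes precisely the translation of size $\mathtt{h}^\vee$. In this remaining case $V \simeq V(\varpi_i)_x \tens V(\varpi_{i^*})_{xp^*}$ is the classical self-dual pair of \eqref{eq: dual affine}--\eqref{eq: p star}, and its socle can be computed directly from the trace pairing ${}^*V(\varpi_i) \tens V(\varpi_i) \to \ko$: in type $A_n^{(1)}$ the socle is the trivial module $\ko$ (an empty tensor product of fundamentals), while in type $D_n^{(1)}$ Lemma \ref{lem: n-1,n D} together with the Nakayama permutation \eqref{eq: Nakayama} identifies it as either $\ko$ or a single fundamental $V(\varpi_k)_z$. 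Combining the two cases delivers the theorem.
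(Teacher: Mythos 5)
Your proposal follows the same route as the paper: reduce to $y/x=(-q)^t$ via Theorem~\ref{Thm: basic properties}, locate $\al,\be$ in some $\GQ$ or $\Gamma_{Q^{\rev}}$ via Theorem~\ref{eq: dist denom}, and then apply Theorem~\ref{thm: for C_Q}(3) together with Remark~\ref{rem: socle}; this matches the paper's argument almost step for step.

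One small inaccuracy: in the exceptional case $j=i^*$, $t=\mathtt{h}^\vee$, you suggest that for type $D^{(1)}_n$ the socle might be ``either $\ko$ or a single fundamental $V(\varpi_k)_z$.'' This is not right. In both types $A^{(1)}_n$ and $D^{(1)}_n$ the exceptional ratio is precisely $p^*$ (respectively $(p^*)^{-1}$), so the evaluation map $V(\varpi_i)\tens{}^*V(\varpi_i)\twoheadrightarrow\ko$ (and dually the coevaluation $\ko\rightarrowtail V(\varpi_i)^*\tens V(\varpi_i)$) of \eqref{eq: dual affine} identifies the relevant head or socle with $\ko$ unconditionally --- the $n-1\leftrightarrow n$ swap in the involution $^*$ when $n$ is odd changes which $\varpi_{i^*}$ shows up, but not the fact that the socle/head is the trivial module. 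The paper treats this case exactly by \eqref{eq: dual affine} and records $\hd(V)\simeq\ko$ for $a=-h$ and $\soc(V)\simeq\ko$ for $a=h$. Also, Theorem~\ref{thm: socle head} is a KLR-side statement; the quantum-affine analogue you want is already built into Theorem~\ref{thm: for C_Q} via the exact tensor functor $\F^{(1)}_Q$, so cite that instead. Neither issue affects the conclusion, since $\ko$ is the empty tensor product of fundamentals.
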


\begin{proof}
By Theorem~\ref{Thm: basic properties} and Theorem~\ref{thm: denom 1}, we suffice to consider that $x/y \seteq (-q)^{a} \in \Z[q,q^{-1}]^\times$
for some $a \in \Z \setminus \{ 0 \}$ and hence can assume that $V$ is reducible.
Theorem~\ref{eq: dist denom} tells that
there exist a Dynkin quiver $Q$ and positive roots $\al,\be \in \PR$ such that
$V^{(1)}(\varpi_i)_x =V_Q(\al)$ and $V^{(1)}(\varpi_j)_y =V_Q(\be)$ (up to parameter shift) unless
$i= j^*$ and $|a| = \mathsf{h}^\vee$.
By~\eqref{eq: dual affine} and Theorem~\ref{thm: for C_Q}, we have
$$
\begin{cases}
\hd(V) \simeq V_Q(\us) & \text{ if $a <0$ and $a \ne -\mathsf{h}^\vee$, } \\
\soc(V) \simeq V_Q(\us) & \text{ if $a >0$ and $a \ne \mathsf{h}^\vee$, }
\end{cases}
\qquad
\begin{cases}
\hd(V) \simeq \ko & \text{ if $i= j^*$ and $a = -\mathsf{h}^\vee$}, \\
\soc(V) \simeq \ko & \text{ if $i= j^*$ and $a = \mathsf{h}^\vee$},
\end{cases}
$$
where $$\text{$\us$ is equal to $\soc_Q(\al,\be)$ if $a >0$ and $a \ne \mathsf{h}^\vee$, and $\soc_Q(\be,\al)$ otherwise.}$$ Then our assertion follows from the fact that
$V_Q(\us)$ is of the form of tensor product of fundamental representations.
\end{proof}

\begin{remark}
By the similarities between untwisted and twisted affine types (\cite{H01,KKKOIV}), one can also read the denominator formulas for $A^{(2)}_{n}$
and $D^{(2)}_{n}$, which were computed in~\cite{Oh14R}, by folding $\Gamma_Q$ with respect to the map $^\star$. Furthermore,
 we have the same result of Theorem~\ref{thm: all tensor 1} for twisted affine types $A^{(2)}_{n}$ and $D^{(2)}_{n}$.
\end{remark}

\subsection{Conjectures on $E^{(1)}_{6,7,8}$} \label{subsec: E conjectures}
From Theorem~\ref{eq: dist denom}, we have a conjecture given as follows:

\begin{conjecture} \label{conj: dist E}
For any Dynkin quiver $Q$ of type $E_{6,7,8}$, the denominator formulas for $U'_q(E_{6,7,8}^{(1)})$ can be read
from $\GQ$ and $\Gamma_{Q^{\rev}}$ as follows:
\begin{align*}
d_{k,l}(z) & = D_{k,l}(z) \times (z-(-q)^{\mathsf{h}^\vee})^{\delta_{l,k^*}}
\end{align*}
where $\mathsf{h}^\vee$ is the dual Coxeter number of $E_{6,7,8}$.
\end{conjecture}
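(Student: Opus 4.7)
The plan is to mirror the proof of Theorem \ref{eq: dist denom}, reducing the statement to a combinatorial computation on $\GQ$ and then importing the denominator on the quantum affine side through the KLR-type Schur–Weyl duality. First, by Proposition \ref{prop: DQ DQ'} it is enough to fix one canonical Dynkin quiver $Q$ for each of $E_6$, $E_7$, $E_8$, namely those whose AR-quivers are exhibited in Appendix. Since each $\GQ$ has finitely many vertices, for every pair $(k,l) \in I_0 \times I_0$ and every $t \in \Z_{\ge 0}$ the set $\Phi_Q(k,l)[t]$ has a canonical representative, obtained for instance by taking $\al = \gamma^Q_k$ (or $\be = \theta^Q_{l^*}$) as in the type $D_n$ analysis; the invariant $\gdist_Q(\al,\be)$ can then be computed directly from $\prec_Q^\tb$ using Theorem \ref{thm: socle of Q-pair} and Proposition \ref{prop: good neighbor2}. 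Symmetrizing against $Q^{\rev}$ in the sense of Definition \ref{def: Dist poly Q} yields the polynomial $D_{k,l}(z;-q)$, which in practice is just a bookkeeping exercise on the pictures in Appendix.

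The second step is to match $D_{k,l}(z;-q)$ with the denominator $d_{k,l}(z)$. For types $A_n$ and $D_n$ this was done by direct comparison with the explicit formulas of Theorem \ref{thm: denom 1}; for $E_n^{(1)}$, no such formulas are known in the literature, and the natural substitute is Conjecture \ref{conj: KKK E}. Granting that conjecture, the functor $\mathcal{F}_Q^{(1)} \colon \Rep(R) \to \mathcal{C}_Q^{(1)}$ is exact and sends simples bijectively to simples, so the reducibility of $V_Q^{(1)}(\al) \tens V_Q^{(1)}(\be)$ becomes equivalent to the failure of $(\al,\be)$ being $[Q]$-simple in $\Rep(R)$. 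Combined with Theorem \ref{thm: num of distinct simples} and its affine avatar in Theorem \ref{thm: for C_Q}(5), this upgrades $\gdist_Q(\al,\be)$ into a lower bound on the order of vanishing of $\Rnorm_{V(\varpi_k),V(\varpi_l)}$ at $z = (-q)^{|a-b|}$; matching lower and upper bounds (the latter coming from Theorem \ref{Thm: basic properties}(3) applied to the canonical reading of $\GQ$) then produces the desired equality of polynomials, up to the single extra factor $(z-(-q)^{\mathtt{h}^\vee})^{\delta_{l,k^*}}$ accounting for the rigidity pairing \eqref{eq: dual affine} that $\mathtt{o}_t^{\overline{Q}}$ cannot see because the relevant pair $(\gamma^Q_k,\al_k)$ sits at the boundary of the fundamental domain \eqref{eq: cha AR}.

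The principal obstacle is, of course, Conjecture \ref{conj: KKK E} itself: without an exact Schur–Weyl functor for $E_n^{(1)}$ the bridge between the combinatorics of $\gdist_Q$ and the zeros of $d_{k,l}(z)$ is severed. A secondary difficulty, present even granting the conjecture, is that composition length in $\mathcal{C}_Q^{(1)}$ constrains the order of vanishing of $\Rnorm$ only after one excludes spurious zeros of $d_{k,l}(z)$ lying in $\C[[q^{1/m}]]q^{1/m} \setminus \Z[q^{\pm 1}]^\times$ allowed by Theorem \ref{Thm: basic properties}(1); in the $A_n$, $D_n$ proofs this was automatic from Theorem \ref{thm: denom 1} but here would require a separate rationality argument, perhaps along the lines of \cite{AK}. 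An orthogonal strategy, which sidesteps Conjecture \ref{conj: KKK E}, is to compute $d_{k,l}(z)$ for $U_q'(E_n^{(1)})$ directly via universal $R$-matrix techniques on each of the finitely many Dynkin quivers of type $E_n$ and compare the outcome against the $D_{k,l}(z;-q)$ produced in the first step; this is the route implicitly suggested by the author's tabulation of conjectural $E_6^{(1)}$ denominators.
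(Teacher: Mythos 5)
This statement is labeled as a conjecture in the paper, and no proof is given: the author arrives at it by formal analogy with Theorem \ref{eq: dist denom} for types $A_n$ and $D_n$, where the identity $d_{k,l}(z) = D_{k,l}(z;-q)\cdot (z-(-q)^{\mathtt{h}^\vee})^{\delta_{l,k^*}}$ is checked by comparing $D_{k,l}$, computed on the AR-quiver, against the explicitly known denominator formulas of Theorem \ref{thm: denom 1}. For $E_n^{(1)}$ no such formulas are available, which is exactly why the paper can only record conjectural denominators for $E_6^{(1)}$ (namely, the polynomials $D_{k,l}$ read off $\GQ$) and cannot prove the statement. You correctly recognize this status and identify the missing input.

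That said, the bridge you propose from Conjecture \ref{conj: KKK E} to the multiplicities of zeros of $d_{k,l}(z)$ does not hold up. Granting that $\mathcal{F}_Q^{(1)}$ is exact and sends simples to simples would give you the location of the zeros---reducibility of $V_Q^{(1)}(\al) \tens V_Q^{(1)}(\be)$ corresponds to non-$[Q]$-simplicity of $(\al,\be)$, as in Remark \ref{eq: important observation}---but nothing in the paper relates composition length of the tensor product to the order of vanishing of $d_{k,l}(z)$ at that point. Theorem \ref{thm: num of distinct simples} and Theorem \ref{thm: for C_Q}(5) give lower bounds on composition length via $\len_Q(\up)$, and Theorem \ref{Thm: basic properties}(3) asserts existence of a head presentation, but neither converts into a bound on the multiplicity of a root of a denominator. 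In the $A_n$, $D_n$ case the multiplicities (including the double zeros in \eqref{eq: dpole 1}) are matched by direct inspection against Theorem \ref{thm: denom 1}; they are not derived from composition-length data. So the step "upgrades $\gdist_Q(\al,\be)$ into a lower bound on the order of vanishing" is a genuine gap, not merely a secondary difficulty. There is also a circularity worth flagging: Conjecture \ref{conj: KKK E} already presupposes $\Gamma^{\mathsf{SWD}}\cong Q^{\rev}$, which encodes the low-order part of the denominators---indeed the paper's own logic runs in the opposite direction, deducing $\Gamma^{\mathsf{SWD}}\cong Q^{\rev}$ from Conjecture \ref{conj: dist E}. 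The only route you suggest that would actually settle the matter is the last one: an independent computation of $d_{k,l}(z)$ for $E_n^{(1)}$, as was done for the classical types, followed by comparison with $D_{k,l}(z;-q)$.
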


Since $\gdist_{Q}(\al_i,\al_j)=1$ for $\Delta(i,j)=1$ and $\gdist_{Q}(\al_i,\al_j)=0$ for $\Delta(i,j)>1$, the Schur-Weyl quiver $\Gamma^{\Xi}$
determined by $(U'_q(E_{6,7,8}^{(1)}),\phi_Q^{-1})$ is isomorphic to $Q^\rev$ under the assumption that Conjecture~\ref{conj: dist E} holds. Moreover, one can prove the existence of
the functor $\mathcal{F}_Q^{(1)}$ satisfying {\rm (a)} and {\rm (b)} in Theorem~\ref{thm:gQASW duality 2} by using the same arguments in~\cite{KKK13B},
under the same assumption.

Now we give a conjectural denominator formulas $d_{k,l}(z)$ for $U_q'(E_6^{(1)})$:
\begin{align*}
& d_{1,1}(z)=d_{5,5}(z)=(z-q^2)(z-q^8),  \allowdisplaybreaks\\
& d_{1,2}(z)=d_{2,1}(z)=d_{4,5}(z)=d_{5,4}(z)=(z+q^3)(z+q^7)(z+q^9),\allowdisplaybreaks\\
& d_{1,3}(z)=d_{3,1}(z)=d_{3,5}(z)=d_{5,3}(z)=(z-q^4)(z-q^6)(z-q^8)(z-q^{10}),\allowdisplaybreaks\\
& d_{1,4}(z)=d_{4,1}(z)=d_{2,5}(z)=d_{5,2}(z)=(z+q^5)(z+q^7)(z+q^{11}),\allowdisplaybreaks\\
& d_{1,5}(z)=d_{5,1}(z)=(z-q^6)(z-q^{12}),   \allowdisplaybreaks\\
& d_{1,6}(z)=d_{6,1}(z)=d_{5,6}(z)=d_{6,5}(z)=(z+q^5)(z+q^9),\allowdisplaybreaks\\
& d_{2,2}(z)=d_{4,4}(z)=(z-q^2)(z-q^4)(z-q^6)(z-q^8)^2(z-q^{10}),\allowdisplaybreaks\\
& d_{2,3}(z)=d_{3,2}(z)=d_{3,4}(z)=d_{4,3}(z)=(z+q^3)(z+q^5)^2(z+q^7)^2(z+q^9)^2(z+q^{11}),\allowdisplaybreaks\\
& d_{2,4}(z)=d_{4,2}(z)=(z-q^4)(z-q^6)^2(z-q^8)(z-q^{10})(z-q^{12}),\allowdisplaybreaks\\
& d_{2,6}(z)=d_{6,2}(z)=d_{4,6}(z)=d_{6,4}(z)=(z-q^4)(z-q^6)(z-q^8)(z-q^{10}),\allowdisplaybreaks\\
& d_{3,3}(z)=(z-q^2)(z-q^4)^2(z-q^6)^2(z-q^8)^3(z-q^{10})^2(z-q^{12}),\allowdisplaybreaks\\
& d_{3,6}(z)=d_{6,3}(z)=(z+q^3)(z+q^5)(z+q^7)^2(z+q^9)(z+q^{11}),\allowdisplaybreaks\\
& d_{6,6}(z)=(z-q^2)(z-q^6)(z-q^8)(z-q^{12}).
\end{align*}

Now we give a conjectural denominator formulas $d_{k,l}(z)$ for $U_q'(E_7^{(1)})$ here $(p=q^{18})$:
\begin{align*}
d_{1,1}(z)&=(z-q^{2})(z-q^{8})(z-q^{12})(z-q^{18}),   \allowdisplaybreaks \\
d_{1,3}(z)&=(z+q^{3})(z+q^{7})(z+q^{9})(z+q^{11})(z+q^{13})(z+q^{17}),  \allowdisplaybreaks \\
d_{1,4}(z)&=(z-q^{4})(z-q^{6})(z-q^{8})(z-q^{10})^2(z-q^{12})(z-q^{14})(z-q^{16}),   \allowdisplaybreaks \\
d_{1,2}(z)&=(z+q^{5})(z+q^{9})(z+q^{11})(z+q^{15}),   \allowdisplaybreaks \\
d_{1,5}(z)&=(z+q^{5})(z+q^{7})(z+q^{9})(z+q^{11})(z+q^{13})(z+q^{15}),  \allowdisplaybreaks \\
d_{1,6}(z)&=(z-q^{6})(z-q^{8})(z-q^{12})(z-q^{14}),   \allowdisplaybreaks \\
d_{1,7}(z)&=(z+q^{7})(z+q^{13}),    \allowdisplaybreaks \\
d_{3,3}(z)&=(z-q^{2})(z-q^{4})(z-q^{6})(z-q^{8})^2(z-q^{10})^2(z-q^{12})^2(z-q^{14})(z-q^{16})(z-q^{18}),  \allowdisplaybreaks \\
d_{3,4}(z)&=(z+q^{3})(z+q^{5})^2(z+q^{7})^2(z+q^{9})^2(z+q^{11})^3(z+q^{13})^2(z+q^{15})^2(z+q^{17}),   \allowdisplaybreaks \\
d_{3,2}(z)&=(z-q^{4})(z-q^{6})(z-q^{8})(z-q^{10})^2(z-q^{12})(z-q^{14})(z-q^{16}),    \allowdisplaybreaks \\
d_{3,5}(z)&=(z-q^{4})(z-q^{6})^2(z-q^{8})^2(z-q^{10})^2(z-q^{12})^2(z-q^{14})^2(z-q^{16}),  \allowdisplaybreaks \\
d_{3,6}(z)&=(z+q^{5})(z+q^{7})^2(z+q^{9})(z+q^{11})(z+q^{13})^2 (z+q^{15}),  \allowdisplaybreaks \\
d_{3,7}(z)&=(z-q^{6})(z-q^{8})(z-q^{12})(z-q^{14}),    \allowdisplaybreaks \\
d_{4,4}(z)&=(z-q^{2})(z-q^{4})^2(z-q^{6})^2(z-q^{8})^3(z-q^{10})^3(z-q^{12})^4(z-q^{14})^3(z-q^{16})^2(z-q^{18}), \allowdisplaybreaks \\
d_{4,2}(z)&=(z+q^{3})(z+q^{5})(z+q^{7})^2(z+q^{9})^2(z+q^{11})^2(z+q^{13})^2(z+q^{15})(z+q^{17}),   \allowdisplaybreaks \\
d_{4,5}(z)&=(z+q^{3})(z+q^{5})^2(z+q^{7})^2(z+q^{9})^3(z+q^{11})^2(z+q^{13})^3(z+q^{15})^2(z+q^{17}),    \allowdisplaybreaks \\
d_{4,6}(z)&=(z-q^{4})(z-q^{6})^2(z-q^{8})^2(z-q^{10})^2(z-q^{12})^2(z-q^{14})^2(z-q^{16}),   \allowdisplaybreaks \\
d_{4,7}(z)&=(z+q^{5})(z+q^{7})(z+q^{9})(z+q^{11})(z+q^{13})(z+q^{15}), \allowdisplaybreaks \\
d_{2,2}(z)&=(z-q^{2})(z-q^{6})(z-q^{8})(z-q^{10})(z-q^{12})(z-q^{14})(z-q^{18}),  \allowdisplaybreaks \\
d_{2,5}(z)&=(z-q^{4})(z-q^{6})(z-q^{8})^2(z-q^{10})(z-q^{12})^2(z-q^{14})(z-q^{16}),  \allowdisplaybreaks \\
d_{2,6}(z)&=(z+q^{5})(z+q^{7})(z+q^{9})(z+q^{11})(z+q^{13})(z+q^{15}),    \allowdisplaybreaks \\
d_{2,7}(z)&=(z-q^{6})(z-q^{10})(z-q^{14}),   \allowdisplaybreaks \\
d_{5,5}(z)&=(z-q^{2})(z-q^{4})^2(z-q^{6})^2(z-q^{8})^2(z-q^{10})^3(z-q^{12})^2(z-q^{14})^3 (z-q^{16})(z-q^{18}),   \allowdisplaybreaks \\
d_{5,6}(z)&=(z+q^{3})(z+q^{5})(z+q^{7})(z+q^{9})^2(z+q^{11})^2(z+q^{13})(z+q^{15})(z+q^{17}),   \allowdisplaybreaks \\
d_{5,7}(z)&=(z-q^{4})(z-q^{8})(z-q^{10})(z-q^{12})(z-q^{16}),\allowdisplaybreaks  \\
d_{6,6}(z)&=(z-q^{2})(z-q^{4})(z-q^{8})(z-q^{10})^2(z-q^{12})(z-q^{16})(z-q^{18}),  \allowdisplaybreaks \\
d_{6,7}(z)&=(z+q^{3})(z+q^{9})(z+q^{11})(z+q^{17}),  \allowdisplaybreaks \\
d_{7,7}(z)&=(z-q^{2})(z-q^{10})(z-q^{18}).    \allowdisplaybreaks \\
\end{align*}


\appendix

\section{Dynkin quiver $Q$ of type $E_6$, its AR-quiver $\GQ$ and Dorey's rule for $E_6^{(1)}$} \label{Sec:Dynkin E6}

(1) Let us consider the Dynkin quiver $Q$ given as follows:
$  \raisebox{1.3em}{\xymatrix@R=3ex{ && *{\circ}<3pt>\ar@{->}[d]^<{6} \\
*{ \circ }<3pt> \ar@{->}[r]_<{1}  &*{\circ}<3pt>
\ar@{->}[r]_<{2} &*{ \circ }<3pt> \ar@{->}[r]_<{3} &*{\circ}<3pt>
\ar@{->}[r]_<{4} &*{\circ}<3pt>
\ar@{-}[l]^<{\ \ 5}}}
$

Its AR-quiver $\GQ$ can be drawn as follows
$\left({\scriptstyle\prt{a_1a_2a_3}{a_4a_5a_6}} \seteq (a_1a_2a_3a_4a_5a_6).\right)$:
$$ \scalebox{0.9}{\xymatrix@R=1.5ex@C=1.3ex{
(i/p) & 1 & 2& 3 &4 & 5 & 6 & 7  & 8 & 9 & 10 & 11 & 12 & 13 & 14 & 15 \\
1&{\scriptstyle\prt{000}{010}} \ar[dr] && {\scriptstyle\prt{000}{100}} \ar[dr] && {\scriptstyle\prt{001}{000}} \ar[dr]&& {\scriptstyle\prt{011}{111}}\ar[dr]
&&{\scriptstyle\prt{111}{100}}\ar[dr] && {\scriptstyle\prt{001}{001}}\ar[dr] && {\scriptstyle\prt{010}{000}} \ar[dr]&& {\scriptstyle\prt{100}{000}} \\
2&& {\scriptstyle\prt{000}{110}}\ar[dr]\ar[ur] && {\scriptstyle\prt{001}{100}}\ar[dr]\ar[ur] && {\scriptstyle\prt{012}{111}}\ar[dr]\ar[ur]
&& {\scriptstyle\prt{122}{211}}\ar[dr]\ar[ur] && {\scriptstyle\prt{112}{101}}\ar[dr]\ar[ur] && {\scriptstyle\prt{011}{001}}\ar[dr]\ar[ur]
&& {\scriptstyle\prt{110}{000}}\ar[ur] \\
3&&& {\scriptstyle\prt{001}{110}}\ar[ddr]\ar[dr]\ar[ur]&& {\scriptstyle\prt{012}{211}}\ar[ddr]\ar[dr]\ar[ur]&& {\scriptstyle\prt{123}{211}}\ar[ddr]\ar[dr]\ar[ur]
&& {\scriptstyle\prt{123}{212}}\ar[ddr]\ar[dr]\ar[ur] && {\scriptstyle\prt{122}{101}}\ar[ddr]\ar[dr]\ar[ur]&& {\scriptstyle\prt{111}{001}}\ar[dr]\ar[ur] \\
6&&&& {\scriptstyle\prt{001}{111}}\ar[ur]&& {\scriptstyle\prt{011}{100}}\ar[ur]&& {\scriptstyle\prt{112}{111}}\ar[ur]&& {\scriptstyle\prt{011}{101}}\ar[ur]
&& {\scriptstyle\prt{111}{000}}\ar[ur]&& {\scriptstyle\prt{000}{001}} \\
4&&&& {\scriptstyle\prt{011}{110}}\ar[uur]\ar[dr]&& {\scriptstyle\prt{112}{211}}\ar[uur]\ar[dr]&& {\scriptstyle\prt{012}{101}}\ar[uur]\ar[dr]
&& {\scriptstyle\prt{122}{111}}\ar[uur]\ar[dr]&& {\scriptstyle\prt{111}{101}}\ar[uur] \\
5&&&&& {\scriptstyle\prt{111}{110}}\ar[ur]&& {\scriptstyle\prt{001}{101}}\ar[ur]&& {\scriptstyle\prt{011}{000}}\ar[ur]&& {\scriptstyle\prt{111}{111}}\ar[ur] \\
}}
$$

By reading pairs and their socles, we can obtain Dorey's rule for $E_6^{(1)}$. Here we list several Dorey's type morphisms:
\begin{align*}
V(\varpi_6) & \hookrightarrow  V(\varpi_6)_{ q^{4}} \otimes V(\varpi_6)_{q^{-4}}, &
V(\varpi_3) &  \hookrightarrow  V(\varpi_1)_{q^{2}} \otimes V(\varpi_2)_{-q^{-1}}, &
V(\varpi_4) &  \hookrightarrow  V(\varpi_1)_{-q^{3}} \otimes V(\varpi_6)_{q^{-2}},
\\
V(\varpi_6) & \hookrightarrow  V(\varpi_1)_{-q^{3}} \otimes V(\varpi_5)_{-q^{-3}}, &
V(\varpi_5) &  \hookrightarrow  V(\varpi_2)_{-q^{9}} \otimes V(\varpi_3)_{q^{-2}}, &
V(\varpi_5) &  \hookrightarrow  V(\varpi_1)_{q^{4}} \otimes V(\varpi_1)_{q^{-4}},
\\
V(\varpi_4) & \hookrightarrow  V(\varpi_1)_{-q^{9}} \otimes V(\varpi_3)_{-q^{-1}}, &
V(\varpi_2) &  \hookrightarrow  V(\varpi_1)_{-q^{5}} \otimes V(\varpi_4)_{q^{-2}},&
V(\varpi_1) &  \hookrightarrow  V(\varpi_1)_{-q^{5}} \otimes V(\varpi_6)_{-q^{-5}},
\\
V(\varpi_5) & \hookrightarrow  V(\varpi_2)_{-q^{5}} \otimes V(\varpi_6)_{-q^{-5}}, &
V(\varpi_3) &  \hookrightarrow  V(\varpi_3)_{q^{4}} \otimes V(\varpi_3)_{q^{-4}}, &
V(\varpi_6) &  \hookrightarrow  V(\varpi_3)_{-q^{5}} \otimes V(\varpi_3)_{-q^{-5}},
\end{align*}
\vskip -3.3em
\begin{align*}
V(\varpi_6)_{q^{2}} \otimes V(\varpi_5)_{-q^{1}} & \hookrightarrow  V(\varpi_1)_{-q^{5}} \otimes V(\varpi_4), &
V(\varpi_6)_{q^{2}} \otimes V(\varpi_4)_{q^{2}} & \hookrightarrow  V(\varpi_2)_{-q^{4}} \otimes V(\varpi_2),
\\
V(\varpi_1)_{q^{6}} \otimes V(\varpi_6)_{q^{4}} & \hookrightarrow  V(\varpi_2)_{-q^{7}} \otimes V(\varpi_1), &
V(\varpi_6)_{-q^{3}} \otimes V(\varpi_4)_{-q^{3}} & \hookrightarrow  V(\varpi_3)_{q^{4}} \otimes V(\varpi_1),
\end{align*}
\vskip -3.3em
\begin{align*}
V(\varpi_1)_{-q^{1}} \otimes V(\varpi_1)_{-q^{5}} \otimes V(\varpi_5)_{-q^{3}} & \hookrightarrow  V(\varpi_2)_{q^{6}} \otimes V(\varpi_2).
\end{align*}

(2) The convex order $\prec_{[\redez]}$ in {\rm (b)} of Example~\ref{ex: D4} can be visualized by the result of~\cite{OS15} as follows:
$$\scalebox{0.83}{\xymatrix@C=0.1ex@R=1ex{
1 &&&& {\scriptstyle\prt{001}{110}} \ar@{->}[drr] &&&& {\scriptstyle\prt{011}{101}}\ar@{->}[drr] &&&& {\scriptstyle\prt{112}{111}}\ar@{->}[drr]
&&&& {\scriptstyle\prt{010}{000}} \ar@{->}[drr] &&&& {\scriptstyle\prt{100}{000}} \\
2 && {\scriptstyle\prt{001}{100}} \ar@{->}[urr]\ar@{->}[dr] &&&& {\scriptstyle\prt{012}{211}}\ar@{->}[urr]\ar@{->}[dr] &&&&{\scriptstyle\prt{123}{212}}
\ar@{->}[urr]\ar@{->}[dr]
&&&&{\scriptstyle\prt{122}{111}} \ar@{->}[urr]\ar@{->}[dr] &&&& {\scriptstyle\prt{110}{000}}\ar@{->}[urr]\\
3 & {\scriptstyle\prt{001}{000}}\ar@{->}[ur]\ar@{->}[dr] && {\scriptstyle\prt{001}{101}} \ar@{->}[dr]\ar@{->}[ddr] && {\scriptstyle\prt{011}{100}} \ar@{->}[ur]\ar@{->}[dr]
&& {\scriptstyle\prt{012}{111}} \ar@{->}[dr]\ar@{->}[ddr] && {\scriptstyle\prt{112}{211}}\ar@{->}[ur]\ar@{->}[dr] &&{\scriptstyle\prt{122}{101}}\ar@{->}[ddr]\ar@{->}[dr]
&&{\scriptstyle\prt{011}{111}}\ar@{->}[ur]\ar@{->}[dr] &&
{\scriptstyle\prt{111}{110}}\ar@{->}[dr]\ar@{->}[ddr] && {\scriptstyle\prt{111}{001}}\ar@{->}[ur]\ar@{->}[dr]\\
6 && {\scriptstyle\prt{001}{001}}\ar@{->}[ur] && {\scriptstyle\prt{000}{100}} \ar@{->}[ur] && {\scriptstyle\prt{011}{000}} \ar@{->}[ur] &&
{\scriptstyle\prt{001}{111}}\ar@{->}[ur] &&{\scriptstyle\prt{111}{100}}\ar@{->}[ur] && {\scriptstyle\prt{011}{001}} \ar@{->}[ur]&& {\scriptstyle\prt{000}{110}}
\ar@{->}[ur] &&{\scriptstyle\prt{111}{000}}\ar@{->}[ur]  &&{\scriptstyle\prt{000}{001}} \\
4 &&&& {\scriptstyle\prt{012}{101}}\ar@{->}[drr]\ar@{->}[uur] &&&& {\scriptstyle\prt{123}{211}}\ar@{->}[drr]\ar@{->}[uur]
&&&& {\scriptstyle\prt{122}{211}}\ar@{->}[drr]\ar@{->}[uur] &&&& {\scriptstyle\prt{111}{111}} \ar@{->}[drr]\ar@{->}[uur]\\
5 &&&&&& {\scriptstyle\prt{112}{101}}\ar@{->}[urr]  &&&& {\scriptstyle\prt{011}{110}} \ar@{->}[urr]
&&&& {\scriptstyle\prt{111}{101}} \ar@{->}[urr]  &&&& {\scriptstyle\prt{000}{010}}
}}
$$
\section{Dynkin quiver $Q$ of type $E_7$, its AR-quiver $\GQ$ and Dorey's rule for $E_7^{(1)}$} \label{Sec:Dynkin E7}

Let us consider the Dynkin quiver $Q$ given as follows:
$
\raisebox{1.3em}{\xymatrix@R=3ex{ && *{\circ}<3pt>\ar@{->}[d]^<{2} \\
*{ \circ }<3pt> \ar@{->}[r]_<{1}  &*{\circ}<3pt>
\ar@{->}[r]_<{3} &*{ \circ }<3pt> \ar@{->}[r]_<{4} &*{\circ}<3pt>
\ar@{->}[r]_<{5} &*{\circ}<3pt>
\ar@{->}[r]_<{6} &*{\circ}<3pt>
\ar@{-}[l]^<{7} } }
$

Its AR-quiver $\GQ$ can be drawn as follows:
$$ \scalebox{0.70}{\xymatrix@R=2ex@C=0.1ex{
(i/p)& 1 & 2& 3 &4 & 5 & 6 & 7  & 8 & 9 & 10 & 11 & 12 & 13 & 14 & 15 &16 & 17  &18 & 19 & 20  &21 & 22\\
1&&&&&&{\scriptstyle\prt{1011}{111}} \ar[dr]&&{\scriptstyle\prt{0101}{110}} \ar[dr]&&{\scriptstyle\prt{0011}{100}} \ar[dr]&&{\scriptstyle\prt{1112}{111}} \ar[dr]
&&{\scriptstyle\prt{0111}{110}} \ar[dr] &&{\scriptstyle\prt{1011}{100}} \ar[dr]&&{\scriptstyle\prt{0101}{000}} \ar[dr]
&&{\scriptstyle\prt{0010}{000}} \ar[dr]&&{\scriptstyle\prt{1000}{000}} \\
3&&&&&{\scriptstyle\prt{0011}{111}} \ar[ur]\ar[dr]&&{\scriptstyle\prt{1112}{221}}\ar[ur]\ar[dr]&&{\scriptstyle\prt{0112}{210}}\ar[ur]\ar[dr]
&&{\scriptstyle\prt{1123}{211}}\ar[ur]\ar[dr]&&{\scriptstyle\prt{1223}{221}}\ar[ur]\ar[dr]
&&{\scriptstyle\prt{1122}{210}}\ar[ur]\ar[dr]&&{\scriptstyle\prt{1112}{100}}\ar[ur]\ar[dr]&&{\scriptstyle\prt{0111}{000}}\ar[ur]\ar[dr]&&{\scriptstyle\prt{1010}{000}}\ar[ur]\\
4&&&&{\scriptstyle\prt{0001}{111}}\ar[ur]\ar[ddr]\ar[dr]&&{\scriptstyle\prt{0112}{221}}\ar[ur]\ar[ddr]\ar[dr]
&&{\scriptstyle\prt{1123}{321}}\ar[ur]\ar[ddr]\ar[dr]&&{\scriptstyle\prt{1224}{321}}\ar[ur]\ar[ddr]\ar[dr]&&{\scriptstyle\prt{1234}{321}}\ar[ur]\ar[ddr]\ar[dr]
&&{\scriptstyle\prt{2234}{321}}\ar[ur]\ar[ddr]\ar[dr]&&{\scriptstyle\prt{1223}{210}}\ar[ur]\ar[ddr]\ar[dr]\ar[ur]\ar[ddr]\ar[dr]
&&{\scriptstyle\prt{1122}{100}}\ar[ur]\ar[ddr]\ar[dr]&&{\scriptstyle\prt{1111}{000}}\ar[ur]\ar[dr]
\\
2&&&&&{\scriptstyle\prt{0101}{111}}\ar[ur]&&{\scriptstyle\prt{0011}{110}}\ar[ur]&&{\scriptstyle\prt{1112}{211}}\ar[ur]&&{\scriptstyle\prt{0112}{110}}\ar[ur]
&&{\scriptstyle\prt{1122}{211}}\ar[ur]&&{\scriptstyle\prt{1112}{110}}\ar[ur]&&{\scriptstyle\prt{0111}{100}}\ar[ur]&&{\scriptstyle\prt{1011}{000}}\ar[ur]
&&{\scriptstyle\prt{0100}{000}}
\\
5&&&{\scriptstyle\prt{0000}{111}}\ar[uur]\ar[dr]&&{\scriptstyle\prt{0001}{110}}\ar[uur]\ar[dr]&&{\scriptstyle\prt{0112}{211}}\ar[uur]\ar[dr]
&&{\scriptstyle\prt{1123}{221}}\ar[uur]\ar[dr]&&{\scriptstyle\prt{1223}{321}}\ar[uur]\ar[dr]&&{\scriptstyle\prt{1123}{210}}\ar[uur]\ar[dr]
&&{\scriptstyle\prt{1223}{221}}\ar[uur]\ar[dr]&&{\scriptstyle\prt{1122}{110}}\ar[uur]\ar[dr]&&{\scriptstyle\prt{1111}{100}}\ar[uur]
\\
6&&{\scriptstyle\prt{0000}{011}}\ar[ur]\ar[dr]&&{\scriptstyle\prt{0000}{110}}\ar[ur]\ar[dr]&&{\scriptstyle\prt{0001}{100}}\ar[ur]\ar[dr]
&&{\scriptstyle\prt{0112}{111}}\ar[ur]\ar[dr]&&{\scriptstyle\prt{1122}{221}}\ar[ur]\ar[dr]
&&{\scriptstyle\prt{1112}{210}}\ar[ur]\ar[dr]&&{\scriptstyle\prt{0112}{100}}\ar[ur]\ar[dr]&&{\scriptstyle\prt{1122}{111}}\ar[ur]\ar[dr]
&&{\scriptstyle\prt{1111}{110}}\ar[ur]
\\
7& {\scriptstyle\prt{0000}{001}}\ar[ur] &&{\scriptstyle\prt{0000}{010}}\ar[ur]&&{\scriptstyle\prt{0000}{100}}\ar[ur]&&{\scriptstyle\prt{0001}{000}}\ar[ur]
&&{\scriptstyle\prt{0111}{111}}\ar[ur] &&{\scriptstyle\prt{1011}{110}}\ar[ur]&&{\scriptstyle\prt{0101}{100}}\ar[ur]
&&{\scriptstyle\prt{0011}{000}}\ar[ur]&&{\scriptstyle\prt{1111}{111}}\ar[ur]
}}
$$
By reading pairs and their socles, we can obtain Dorey's rule for $E_7^{(1)}$. Here we list several Dorey's type morphisms:
\begin{align*}
V(\varpi_4) & \hookrightarrow  V(\varpi_3)_{-q^{1}} \otimes V(\varpi_1)_{q^{-2}}, &
V(\varpi_1) &  \hookrightarrow  V(\varpi_3)_{-q^{3}} \otimes V(\varpi_1)_{q^{-10}}, &
V(\varpi_3) &  \hookrightarrow  V(\varpi_4)_{-q^{1}} \otimes V(\varpi_1)_{-q^{-15}},
\\
V(\varpi_2) & \hookrightarrow  V(\varpi_5)_{q^{2}} \otimes V(\varpi_1)_{-q^{-13}}, &
V(\varpi_1) &  \hookrightarrow  V(\varpi_6)_{q^{4}} \otimes V(\varpi_1)_{q^{-10}}, &
V(\varpi_7) &  \hookrightarrow  V(\varpi_7)_{q^{8}} \otimes V(\varpi_1)_{-q^{-5}},
\\
V(\varpi_6) & \hookrightarrow  V(\varpi_1)_{q^{4}} \otimes V(\varpi_3)_{q^{-4}}, &
V(\varpi_5) &  \hookrightarrow  V(\varpi_2)_{q^{2}} \otimes V(\varpi_1)_{-q^{-3}},&
V(\varpi_3) &  \hookrightarrow  V(\varpi_3)_{q^{6}} \otimes V(\varpi_3)_{q^{-6}},
\\
V(\varpi_6) & \hookrightarrow  V(\varpi_4)_{q^{4}} \otimes V(\varpi_3)_{-q^{-11}}, &
V(\varpi_1) &  \hookrightarrow  V(\varpi_4)_{q^{8}} \otimes V(\varpi_4)_{q^{-8}}, &
V(\varpi_4) &  \hookrightarrow  V(\varpi_4)_{q^{6}} \otimes V(\varpi_4)_{q^{-6}},
\end{align*}
\vskip -1.8em
\begin{align*}
V(\varpi_2)_{-q^{7}} \otimes V(\varpi_7)_{-q^{5}} & \hookrightarrow  V(\varpi_3)_{-q^{9}} \otimes V(\varpi_1), &
V(\varpi_1)_{q^{6}} \otimes V(\varpi_7)_{-q^{5}} & \hookrightarrow  V(\varpi_2)_{-q^{9}} \otimes V(\varpi_1),
\\
V(\varpi_1)_{-q^{3}} \otimes V(\varpi_7)_{q^{4}} & \hookrightarrow  V(\varpi_2)_{q^{6}} \otimes V(\varpi_1), &
V(\varpi_3)_{q^{2}} \otimes V(\varpi_2)_{q^{2}} & \hookrightarrow  V(\varpi_6)_{-q^{5}} \otimes V(\varpi_5),
\end{align*}
\vskip -1.8em
\begin{align*}
V(\varpi_7)_{-q^{1}} \otimes V(\varpi_7)_{-q^{7}} \otimes V(\varpi_1)_{q^{4}} & \hookrightarrow  V(\varpi_6)_{q^{8}} \otimes V(\varpi_6).
\end{align*}

\section{Dynkin quiver $Q$ of type $E_8$ and its AR-quiver $\GQ$} \label{Sec:Dynkin E8}

Let us consider the Dynkin quiver $Q$ given as follows:
$
\raisebox{1.3em}{\xymatrix@R=3ex{ && *{\circ}<3pt>\ar@{->}[d]^<{2} \\
*{ \circ }<3pt> \ar@{->}[r]_<{1}  &*{\circ}<3pt>
\ar@{->}[r]_<{3} &*{ \circ }<3pt> \ar@{->}[r]_<{4} &*{\circ}<3pt>
\ar@{->}[r]_<{5} &*{\circ}<3pt>
\ar@{->}[r]_<{6} &*{\circ}<3pt>
\ar@{->}[r]_<{7} &*{\circ}<3pt>
\ar@{-}[l]^<{8} } }.
$

Note that $i^*=i$ for all $i\in I$.
Its AR-quiver $\GQ$ can be drawn as follows:
$$ \scalebox{0.42}{\xymatrix@R=2ex@C=0.1ex{
(i/p)& 1 & 2& 3 &4 & 5 & 6 & 7  & 8 & 9 & 10 & 11 & 12 & 13 & 14 & 15 &16 & 17  &18 & 19 & 20  &21 & 22& 23 &24 & 25 & 26 & 27  & 28 & 29 & 30&31 & 32& 33 &34 & 35   \\
1&&&&&&&\pprt{10}{11}{11}{11}\ar[dr]&&\pprt{01}{01}{11}{10}\ar[dr]&&\pprt{00}{11}{11}{00}\ar[dr]&&\pprt{11}{12}{21}{11}\ar[dr]&&\pprt{01}{12}{11}{10}\ar[dr]
&&\pprt{11}{22}{22}{11}\ar[dr]&&\pprt{11}{12}{21}{10}\ar[dr]&&\pprt{01}{12}{11}{00}\ar[dr]&&\pprt{11}{22}{21}{11}\ar[dr]&&\pprt{11}{12}{11}{10}\ar[dr]
&&\pprt{01}{11}{11}{00}\ar[dr]&&\pprt{10}{11}{10}{00}\ar[dr]&&\pprt{01}{01}{00}{00}\ar[dr]&&\pprt{00}{10}{00}{00}\ar[dr]&&\pprt{10}{00}{00}{00}
\\
3&&&&&&\pprt{00}{11}{11}{11}\ar[dr]\ar[ur]&&\pprt{11}{12}{22}{21}\ar[dr]\ar[ur]&&\pprt{01}{12}{22}{10}\ar[dr]\ar[ur]&&\pprt{11}{23}{32}{11}\ar[dr]\ar[ur]
&&\pprt{12}{24}{32}{21}\ar[dr]\ar[ur]&&\pprt{12}{34}{33}{21}\ar[dr]\ar[ur]&&\pprt{22}{34}{43}{21}\ar[dr]\ar[ur]&&\pprt{12}{24}{32}{10}\ar[dr]\ar[ur]
&&\pprt{12}{34}{32}{11}\ar[dr]\ar[ur]&&\pprt{22}{34}{32}{21}\ar[dr]\ar[ur]&&\pprt{12}{23}{22}{10}\ar[dr]\ar[ur]&&\pprt{11}{22}{21}{00}\ar[dr]\ar[ur]
&&\pprt{11}{12}{10}{00}\ar[dr]\ar[ur]&&\pprt{01}{11}{00}{00}\ar[dr]\ar[ur]&&\pprt{10}{10}{00}{00}\ar[ur]
\\
4&&&&&\pprt{00}{01}{11}{11}\ar[dr]\ar[ur]\ar[ddr]&&\pprt{01}{12}{22}{21}\ar[dr]\ar[ur]\ar[ddr]&&\pprt{11}{23}{33}{21}\ar[dr]\ar[ur]\ar[ddr]&&
\pprt{12}{24}{43}{21}\ar[dr]\ar[ur]\ar[ddr]&&\pprt{12}{35}{43}{21}\ar[dr]\ar[ur]\ar[ddr]&&\pprt{23}{46}{54}{32}\ar[dr]\ar[ur]\ar[ddr]
&&\pprt{23}{46}{54}{31}\ar[dr]\ar[ur]\ar[ddr]&&\pprt{23}{46}{54}{21}\ar[dr]\ar[ur]\ar[ddr]&&\pprt{23}{46}{53}{21}\ar[dr]\ar[ur]\ar[ddr]
&&\pprt{23}{46}{43}{21}\ar[dr]\ar[ur]\ar[ddr]&&\pprt{23}{45}{43}{21}\ar[dr]\ar[ur]\ar[ddr]&&\pprt{22}{34}{32}{10}\ar[dr]\ar[ur]\ar[ddr]
&&\pprt{12}{23}{21}{00}\ar[dr]\ar[ur]\ar[ddr]&&\pprt{11}{22}{10}{00}\ar[dr]\ar[ur]\ar[ddr]&&\pprt{11}{11}{00}{00}\ar[dr]\ar[ur]
\\
2&&&&&&\pprt{01}{01}{11}{11}\ar[ur]&&\pprt{00}{11}{11}{10}\ar[ur]&&\pprt{11}{12}{22}{11}\ar[ur]&&\pprt{01}{12}{21}{10}\ar[ur]&&\pprt{11}{23}{22}{11}\ar[ur]
&&\pprt{12}{23}{32}{21}\ar[ur]&&\pprt{11}{23}{32}{10}\ar[ur]&&\pprt{12}{23}{32}{11}\ar[ur]&&\pprt{11}{23}{21}{10}\ar[ur]&&\pprt{12}{23}{22}{11}\ar[ur]
&&\pprt{11}{22}{21}{10}\ar[ur]&&\pprt{11}{12}{11}{00}\ar[ur]&&\pprt{01}{11}{10}{00}\ar[ur]&&\pprt{10}{11}{00}{00}\ar[ur]&&\pprt{01}{00}{00}{00}
\\
5&&&&\pprt{00}{00}{11}{11}\ar[uur]\ar[dr]&&\pprt{00}{01}{11}{10}\ar[uur]\ar[dr]&&\pprt{01}{12}{22}{11}\ar[uur]\ar[dr]&&\pprt{11}{23}{32}{21}\ar[uur]\ar[dr]
&&\pprt{12}{24}{33}{21}\ar[uur]\ar[dr]&&\pprt{12}{34}{43}{21}\ar[uur]\ar[dr]&&\pprt{22}{35}{43}{21}\ar[uur]\ar[dr]&&\pprt{13}{35}{43}{21}\ar[uur]\ar[dr]
&&\pprt{22}{45}{43}{21}\ar[uur]\ar[dr]&&\pprt{23}{35}{43}{21}\ar[uur]\ar[dr]&&\pprt{12}{34}{32}{10}\ar[uur]\ar[dr]&&\pprt{22}{34}{32}{11}\ar[uur]\ar[dr]
&&\pprt{12}{23}{21}{10}\ar[uur]\ar[dr]&&\pprt{11}{22}{11}{00}\ar[uur]\ar[dr]&&\pprt{11}{11}{10}{00}\ar[uur]
\\
6&&&\pprt{00}{00}{01}{11}\ar[ur]\ar[dr]&&\pprt{00}{00}{11}{10}\ar[ur]\ar[dr]&&\pprt{00}{01}{11}{00}\ar[ur]\ar[dr]&&\pprt{01}{12}{21}{11}\ar[ur]\ar[dr]&&\pprt{11}{23}{22}{21}\ar[ur]\ar[dr]
&&\pprt{12}{23}{33}{21}\ar[ur]\ar[dr]&&\pprt{11}{23}{32}{10}\ar[ur]\ar[dr]&&\pprt{12}{24}{32}{11}\ar[ur]\ar[dr]&&\pprt{12}{34}{32}{21}\ar[ur]\ar[dr]
&&\pprt{22}{34}{33}{21}\ar[ur]\ar[dr]&&\pprt{12}{23}{32}{10}\ar[ur]\ar[dr]&&\pprt{11}{23}{21}{00}\ar[ur]\ar[dr]&&\pprt{12}{23}{21}{11}\ar[ur]\ar[dr]
&&\pprt{11}{22}{11}{10}\ar[ur]\ar[dr]&&\pprt{11}{11}{11}{00}\ar[ur]
\\
7&&\pprt{00}{00}{00}{11}\ar[ur]\ar[dr]&&\pprt{00}{00}{01}{10}\ar[ur]\ar[dr]&&\pprt{00}{00}{11}{00}\ar[ur]\ar[dr]&&\pprt{00}{01}{10}{00}\ar[ur]\ar[dr]
&&\pprt{01}{12}{11}{11}\ar[ur]\ar[dr]&&\pprt{11}{22}{22}{21}\ar[ur]\ar[dr]&&\pprt{11}{12}{22}{10}\ar[ur]\ar[dr]
&&\pprt{01}{12}{21}{00}\ar[ur]\ar[dr]&&\pprt{11}{23}{21}{11}\ar[ur]\ar[dr]&&\pprt{12}{23}{22}{21}\ar[ur]\ar[dr]&&\pprt{11}{22}{22}{10}\ar[ur]\ar[dr]
&&\pprt{11}{12}{21}{00}\ar[ur]\ar[dr]&&\pprt{01}{12}{10}{00}\ar[ur]\ar[dr]&&\pprt{11}{22}{11}{11}\ar[ur]\ar[dr] &&\pprt{11}{11}{11}{10}\ar[ur]
\\
8&\pprt{00}{00}{00}{01}\ar[ur]&&\pprt{00}{00}{00}{10}\ar[ur]&&\pprt{00}{00}{01}{00}\ar[ur]&&\pprt{00}{00}{10}{00}\ar[ur]
&&\pprt{00}{01}{00}{00}\ar[ur]&&\pprt{01}{11}{11}{11}\ar[ur]&&\pprt{10}{11}{11}{10}\ar[ur]&&\pprt{01}{01}{11}{00}\ar[ur]
&&\pprt{00}{11}{10}{00}\ar[ur]&&\pprt{11}{12}{11}{11}\ar[ur]&&\pprt{01}{11}{11}{10}\ar[ur]
&&\pprt{10}{11}{11}{00}\ar[ur]&&\pprt{01}{01}{10}{00}\ar[ur]&&\pprt{00}{11}{00}{00}\ar[ur]&&\pprt{11}{11}{11}{11}\ar[ur]
}}
$$

\end{document}